\renewcommand\subsection{\@startsection{subsection}{2}%
  \z@{-.5\linespacing\@plus-.7\linespacing}{.5\linespacing}%
  {\normalfont\scshape}}
\renewcommand\subsubsection{\@startsection{subsubsection}{3}%
  \z@{.5\linespacing\@plus.7\linespacing}{-.5em}%
  {\normalfont\scshape}}
\numberwithin{equation}{section} \swapnumbers
\newtheorem{satz}{Satz}[section]
\newtheorem{theorem}[satz]{Theorem}
\newtheorem{proposition}[satz]{Proposition}
\newtheorem{corollary}[satz]{Corollary}
\newtheorem{lemma}[satz]{Lemma}
\newtheorem{definition}[satz]{Definition}
\newtheorem{remark}[satz]{Remark}
\newcommand{\bbb}{\mathbb{B}}
\newcommand{\bbe}{\mathbb{E}}
\newcommand{\bbn}{\mathbb{N}}
\newcommand{\bbp}{\mathbb{P}}
\newcommand{\bbr}{\mathbb{R}}
\newcommand{\bbx}{\mathbb{X}}
\newcommand{\calc}{\mathcal{C}}
\newcommand{\calf}{\mathcal{F}}
\newcommand{\calg}{\mathcal{G}}
\newcommand{\call}{\mathcal{L}}
\newcommand{\scrc}{\mathscr{C}}
\newcommand{\scrd}{\mathscr{D}}
\newcommand{\bfx}{\mathbf{X}}
\newcommand{\Id}{{\rm Id}}
\newcommand{\Lip}{{\rm Lip}}
\newcommand{\tr}{{\rm tr}}
\newcommand{\la}{\langle}
\newcommand{\ra}{\rangle}
\begin{document}

\title[Mild solutions to rough partial differential equations]{Mild solutions to semilinear rough partial differential equations}
\author{Stefan Tappe}
\address{Albert Ludwig University of Freiburg, Department of Mathematical Stochastics, Ernst-Zermelo-Stra\ss{}e 1, D-79104 Freiburg, Germany}
\email{stefan.tappe@math.uni-freiburg.de}
\date{14 March, 2024}
\thanks{The author gratefully acknowledges financial support from the Deutsche Forschungsgemeinschaft (DFG, German Research Foundation) -- project number 444121509.}
\begin{abstract}
We provide an existence and uniqueness result for mild solutions to rough partial differential equations in the framework of the semigroup approach. Applications to stochastic partial differential equations driven by infinite dimensional Wiener processes and infinite dimensional fractional Brownian motion are presented as well.
\end{abstract}
\keywords{Rough partial differential equation, mild solution, rough convolution, infinite dimensional fractional Brownian motion}
\subjclass[2020]{60L20, 60L50, 60H15, 60G22}

\maketitle\thispagestyle{empty}

\section{Introduction}

In this paper we deal with rough partial differential equations (RPDEs) with time-inhomogeneous coefficients of the form
\begin{align}\label{RPDE-intro}
\left\{
\begin{array}{rcl}
dY_t & = & (A Y_t + f_0(t,Y_t)) dt + f(t,Y_t) d \bfx_t
\\ Y_0 & = & \xi.
\end{array}
\right.
\end{align}
The state space of the RPDE (\ref{RPDE-intro}) is a Banach space $W$, and the driving signal $\bfx = (X,\bbx) \in \scrc^{\alpha}([0,T],V)$ is a rough path with values in a Banach space $V$ for some index $\alpha \in (\frac{1}{3},\frac{1}{2}]$. Furthermore, the operator $A$ is the generator of a $C_0$-semigroup $(S_t)_{t \geq 0}$ on $W$, and $f_0 : [0,T] \times W \to W$ and $f : [0,T] \times W \to L(V,W)$ are appropriate mappings. We are interested in the existence of mild solutions, which means that the variations of constants formula
\begin{align}\label{mild-intro}
Y_t = S_t \xi + \int_0^t S_{t-s} f_0(s,Y_s) \, ds + \int_0^t S_{t-s} f(s,Y_s) \, d \bfx_s, \quad t \in [0,T]
\end{align}
is satisfied.

By now, there is a well-established theory of rough paths, including rough integration and rough differential equations (RDEs); see the textbook \cite{Friz-Hairer-2020}, and also the lecture notes \cite{Allan}. The goal of the present paper is to extend the existing RDE results to RPDEs with with time-inhomogeneous coefficients of the form (\ref{RPDE-intro}), and to apply these findings to stochastic partial differential equations (SPDEs) driven by infinite dimensional Wiener processes, and to SPDEs driven by infinite dimensional fractional Brownian motion.

To some extent, RPDEs and various related aspects have already been studied in the literature; we refer, for example, to \cite{Gubinelli, Hairer-PDEs, Friz-Oberhauser, Teichmann, Friz-Oberhauser-2, Gerasimovics, Gerasimovics-et-al} for results about RPDEs, to \cite{Schmalfuss-00, Hesse-Neamtu-local, Hesse-Neamtu-global-2} for RPDE results with a particular focus on fractional Brownian motion, to \cite{Schmalfuss-0, Hesse-Neamtu-global, Kuehn, Riedel} for RPDE results with a viewpoint to random dynamical systems, to \cite{Atma-book, Da_Prato, Liu-Roeckner} for results about SPDEs driven by Wiener processes, to \cite{Grecksch-Anh, Maslowski, Duncan-Maslowski, Duncan-Maslowski-2, Duncan-Maslowski-3, Maslowski-Neerven} for results concerning SPDEs driven by fractional Brownian motion, and to \cite{Duncan, Grecksch, Schmalfuss} for various related aspects of infinite dimensional fractional Brownian motion.

One of the principle challenges when dealing with mild solutions to RPDEs is an appropriate definition of the rough convolution
\begin{align}\label{conv-intro}
\int_0^t S_{t-s} Y_s \, d \mathbf{X}_s, \quad t \in [0,T]
\end{align}
for controlled rough paths $(Y,Y')$, which shows up in the variation of constants formula (\ref{mild-intro}). For this reason,  most of the aforementioned papers about RPDE results assume that the semigroup $(S_t)_{t \geq 0}$ is analytic. Considering scales of Hilbert spaces, this assumption allows to establish a mild Sewing Lemma, and then to provide a definition of the rough convolution (\ref{conv-intro}) in the spirit of the Gubinelli integral.

In this paper, we pursue a different approach. The essential idea is to provide a direct definition of rough convolutions (\ref{conv-intro}) by using the well-known Gubinelli integral, without the need of establishing a mild Sewing Lemma and a new integral definition. This is possible for controlled rough paths $(Y,Y') \in \scrd_X^{2 \alpha}([0,T],L(V,D(A^2)))$, where $D(A^2)$ denotes the second order domain of the generator; see Section \ref{sec-rough-conv} for details. Of course, this imposes additional conditions on the coefficients $f_0$ and $f$ of the RPDE (\ref{RPDE-intro}), but on the other hand, our results (see in particular Theorem \ref{thm-RDGL-main}) hold true for arbitrary $C_0$-semigroups $(S_t)_{t \geq 0}$. Moreover, in contrast to most of the aforementioned papers, we treat RPDEs with time-inhomogeneous coefficients.

The remainder of this paper is organized as follows. In Section \ref{sec-notation} we briefly explain some frequently used notation. In Section \ref{sec-rough-paths} we provide the required background about rough path theory. In Section \ref{sec-semigroups} we provide the required results about strongly continuous semigroups. In Section \ref{sec-comp-reg} we treat compositions of regular paths with functions, and in Section \ref{sec-comp-rough} we deal with compositions of controlled rough paths with functions. In Section \ref{sec-reg-conv} we consider regular convolution integrals, and in Section \ref{sec-rough-conv} we investigate rough convolution integrals. After these preparations, in Section \ref{sec-RPDEs} we deal with rough partial differential equations; in particular, we establish the announced existence and uniqueness result. Afterwards, in Section \ref{sec-Wiener} we apply our findings to stochastic partial differential equations driven by infinite dimensional Wiener processes, and in Section \ref{sec-fractional} we apply our findings to stochastic partial differential equations driven by infinite dimensional fractional Brownian motion.

\section{Frequently used notation}\label{sec-notation}

In this section we briefly explain some frequently used notation. The symbols $V$ and $W$ typically denote Banach spaces equipped with their respective norms, always written as $| \cdot |$. The space $L(V,W)$ of all continuous linear operators $T : V \to W$ is also a Banach space. We will also use the notation $L(V) = L(V,V)$. On the product space $V \times W$ we will always consider the norm
\begin{align*}
|v,w| = |v| + |w|,
\end{align*}
which makes $V \times W$ a Banach space. For another Banach space $E$ we denote by $L^{(2)}(V \times W, E)$ the space of all continuous bilinear operators $T : V \times W \to E$.

For a bounded function $f : V \to W$ we denote by $\| f \|_{\infty}$ the supremum norm. For $n \in \bbn$ we denote by $C^n = C^n(V,W)$ the space of all mappings $f : V \to W$ which are $n$-times continuously Fr\'{e}chet differentiable. Furthermore, we denote by $C_b^n = C_b^n(V,W)$ the subspace of all $f \in C^n$ such that
\begin{align*}
\| f \|_{C_b^n} := \sum_{k=0}^n \| D^k f \|_{\infty} < \infty.
\end{align*}
The space $\Lip(V,W)$ denotes the space of all Lipschitz continuous functions; that is, the space of all functions $f : V \to W$ such that
\begin{align*}
| f |_{\Lip} := \sup_{x,y \in V \atop x \neq y} \frac{|f(x) - f(y)|}{|x-y|} < \infty.
\end{align*}
Let $T \in \bbr_+$ and $\alpha \in (0,1]$ be arbitrary. We denote by $\calc^{\alpha}([0,T],V)$ the space of all $\alpha$-H\"{o}lder continuous functions; that is, the space of all functions $X : [0,T] \to V$ such that
\begin{align*}
\| X \|_{\alpha} := \sup_{s,t \in [0,T] \atop s \neq t} \frac{|X_{s,t}|}{|t-s|^{\alpha}} < \infty.
\end{align*}
Let $X \in \calc^{\alpha}([0,T],V)$ be arbitrary. For a subinterval $I \subset [0,T]$ we use the notation
\begin{align*}
\| X \|_{\alpha;I} := \sup_{s,t \in I \atop s \neq t} \frac{|X_{s,t}|}{|t-s|^{\alpha}}.
\end{align*}
Moreover, for any $h > 0$ the quantity $\| X \|_{\alpha;h}$ denotes the supremum over all $\| X \|_{\alpha;I}$, where $I \subset [0,T]$ is any subinterval with $|I| \leq h$.

\section{Rough path theory}\label{sec-rough-paths}

In this section we provide the required background about rough path theory. Further details can be found, for example, in \cite{Friz-Hairer-2020} or \cite{Allan}.

Let $V$ be a Banach space, and let $T \in \bbr_+$ be a time horizon. In the sequel, a function $X : [0,T] \to V$ will be called a \emph{path}. For a path $X : [0,T] \to V$ we agree on the notation
\begin{align*}
X_{s,t} := X_t - X_s, \quad s,t \in [0,T].
\end{align*}

\begin{definition}
Let $\alpha \in (\frac{1}{3},\frac{1}{2}]$ be arbitrary. We define the space $\scrc^{\alpha}([0,T],V)$ of all \emph{$\alpha$-H\"{o}lder rough paths} (over $V$) as the space of all pairs $\mathbf{X} = (X,\bbx)$ with paths $X : [0,T] \to V$ and $\bbx : [0,T]^2 \to V \otimes V$ such that
\begin{align*}
\| X \|_{\alpha} &:= \sup_{s,t \in [0,T] \atop s \neq t} \frac{|X_{s,t}|}{|t-s|^{\alpha}} < \infty,
\\ \| \bbx \|_{2\alpha} &:= \sup_{s,t \in [0,T] \atop s \neq t} \frac{|\bbx_{s,t}|}{|t-s|^{2\alpha}} < \infty,
\end{align*}
and \emph{Chen's relation}
\begin{align}\label{Chen-relation}
 \bbx_{s,t} - \bbx_{s,u} - \bbx_{u,t} = X_{s,u} \otimes X_{u,t} \quad \text{for all $s,u,t \in [0,T]$}
\end{align}
is satisfied.
\end{definition}

\begin{remark}
The path $\bbx$ of a rough path $\mathbf{X} = (X,\bbx)$ is also called the associated \emph{second order process} or \emph{L\'{e}vy area}.
\end{remark}

\begin{definition}
For a rough path $\mathbf{X} = (X,\bbx) \in \scrc^{\alpha}([0,T],V)$ for some index $\alpha \in (\frac{1}{3},\frac{1}{2}]$ we introduce
\begin{align*}
\interleave \mathbf{X} \interleave_{\alpha} := \| X \|_{\alpha} + \| \bbx \|_{2 \alpha}.
\end{align*}
\end{definition}

\begin{lemma}\label{lemma-alpha-beta}
Let $\beta \in (\frac{1}{3},\frac{1}{2}]$ and $\alpha \in (0,\beta)$ be arbitrary. Then for each rough path $\mathbf{X} = (X,\bbx) \in \scrc^{\beta}([0,T],V)$ we also have $\mathbf{X} \in \scrc^{\alpha}([0,T],V)$ and the estimate
\begin{align*}
\interleave \mathbf{X} \interleave_{\alpha} \leq \| X \|_{\beta} T^{\beta-\alpha} + \| \mathbb{X} \|_{2\beta} T^{2(\beta-\alpha)}.
\end{align*}
\end{lemma}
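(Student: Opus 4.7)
The plan is to observe that Chen's relation (\ref{Chen-relation}) is a purely algebraic identity involving only the values of $X$ and $\bbx$, with no dependence on any H\"older index. Hence if $\mathbf{X} = (X,\bbx) \in \scrc^{\beta}([0,T],V)$ then the same pair $(X,\bbx)$ automatically satisfies Chen's relation when regarded as a candidate element of $\scrc^{\alpha}([0,T],V)$. Consequently, the only thing to check is that the two H\"older seminorms $\| X \|_{\alpha}$ and $\| \bbx \|_{2\alpha}$ are finite, and moreover that they are controlled by the asserted expression.

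For this I would use a one-line interpolation: for every $s,t \in [0,T]$ with $s \neq t$, write
\begin{align*}
\frac{|X_{s,t}|}{|t-s|^{\alpha}} = \frac{|X_{s,t}|}{|t-s|^{\beta}} \, |t-s|^{\beta-\alpha} \leq \| X \|_{\beta} \, T^{\beta-\alpha},
\end{align*}
where I use $|t-s| \leq T$ together with $\beta - \alpha > 0$. Taking the supremum over $s \neq t$ yields $\| X \|_{\alpha} \leq \| X \|_{\beta} T^{\beta - \alpha}$. The analogous manipulation for $\bbx$, based on
\begin{align*}
\frac{|\bbx_{s,t}|}{|t-s|^{2\alpha}} = \frac{|\bbx_{s,t}|}{|t-s|^{2\beta}} \, |t-s|^{2(\beta-\alpha)} \leq \| \bbx \|_{2\beta} \, T^{2(\beta-\alpha)},
\end{align*}
gives $\| \bbx \|_{2\alpha} \leq \| \bbx \|_{2\beta} T^{2(\beta-\alpha)}$.

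Summing these two bounds gives the stated estimate on $\interleave \mathbf{X} \interleave_{\alpha}$ and simultaneously shows $\mathbf{X} \in \scrc^{\alpha}([0,T],V)$. I do not anticipate any genuine obstacle here; the argument is a direct application of the trivial interpolation $|t-s|^{\beta-\alpha} \leq T^{\beta-\alpha}$ on a bounded interval, and it relies on no deeper property of rough paths beyond the algebraic invariance of Chen's relation.
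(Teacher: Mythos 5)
Your proof is correct and follows essentially the same route as the paper's: the paper's one-line proof simply asserts $\mathbf{X} \in \scrc^{\alpha}$ and writes down the same chain $\interleave \mathbf{X} \interleave_{\alpha} = \| X \|_{\alpha} + \| \bbx \|_{2\alpha} \leq \| X \|_{\beta} T^{\beta-\alpha} + \| \bbx \|_{2\beta} T^{2(\beta-\alpha)}$, which is exactly your interpolation made terse. Your version is slightly more explicit in noting that Chen's relation is algebraic and hence index-independent, and in writing out the elementary bound $|t-s|^{\beta-\alpha} \leq T^{\beta-\alpha}$, but there is no substantive difference.
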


\begin{proof}
Since $\alpha < \beta$, we have $\mathbf{X} \in \scrc^{\alpha}([0,T],V)$. Hence, the calculation
\begin{align*}
\interleave \mathbf{X} \interleave_{\alpha} &= \| X \|_{\alpha} + \| \bbx \|_{2\alpha}
\\ &\leq \| X \|_{\beta} T^{\beta-\alpha} + \| \mathbb{X} \|_{2\beta} T^{2(\beta-\alpha)}
\end{align*}
establishes the proof.
\end{proof}

For what follows, we fix an index $\alpha \in (\frac{1}{3},\frac{1}{2}]$. The following result is well-known.

\begin{lemma}
The space $\scrc^{\alpha}([0,T],V)$, equipped with the metric
\begin{align*}
d_{\alpha}(\mathbf{X},\mathbf{Y}) = |X_0 - Y_0| + \interleave \mathbf{X} - \mathbf{Y} \interleave_{\alpha},
\end{align*}
is a complete metric space.
\end{lemma}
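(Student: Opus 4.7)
The plan is first to confirm that $d_\alpha$ defines a metric, and then to establish completeness by the standard device of constructing pointwise limits from the Cauchy condition and promoting them to convergence in the Hölder seminorms.

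Verifying the metric axioms is essentially bookkeeping. Symmetry and the triangle inequality reduce immediately to subadditivity of $|\cdot|$, $\|\cdot\|_{\alpha}$, and $\|\cdot\|_{2\alpha}$ on differences. For non-degeneracy, $d_\alpha(\mathbf{X},\mathbf{Y}) = 0$ forces $X_0 = Y_0$ and $\|X-Y\|_\alpha = 0$, hence $X_t = Y_t$ for every $t$; it also forces $\bbx_{s,t} = \bby_{s,t}$ whenever $s \neq t$, and the diagonal case $s = t$ follows by inserting $u = s$ into Chen's relation (\ref{Chen-relation}), which yields $\bbx_{s,s} = 0$ for every rough path.

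For completeness, I would start with an arbitrary Cauchy sequence $\mathbf{X}^n = (X^n,\bbx^n)$ in $(\scrc^{\alpha}([0,T],V), d_\alpha)$. The inequality $|X^n_0 - X^m_0| \leq d_\alpha(\mathbf{X}^n,\mathbf{X}^m)$ produces a limit $X_0 \in V$. For fixed $s \neq t$, the bound
\begin{equation*}
|X^n_{s,t} - X^m_{s,t}| \leq \|X^n - X^m\|_{\alpha} \cdot |t-s|^{\alpha}
\end{equation*}
shows that $(X^n_{s,t})_n$ is Cauchy in $V$, hence convergent; combined with $X^n_0 \to X_0$ this yields a pointwise limit path $X : [0,T] \to V$. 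The same argument in $V \otimes V$ produces a candidate $\bbx_{s,t}$ for every pair $(s,t)$, where we set $\bbx_{s,s} := 0$. To promote these pointwise limits to convergence in the Hölder seminorms I would apply the standard Fatou-type maneuver: given $\varepsilon > 0$, choose $N$ with $\|X^n - X^m\|_\alpha < \varepsilon$ for all $n,m \geq N$; for fixed $n \geq N$ and fixed $s \neq t$, let $m \to \infty$ in the quotient $|X^n_{s,t} - X^m_{s,t}|/|t-s|^\alpha$ and then take the supremum over $(s,t)$ to obtain $\|X^n - X\|_\alpha \leq \varepsilon$. The same argument applied to $\bbx^n$ gives $\|\bbx^n - \bbx\|_{2\alpha} \to 0$; finiteness of the two seminorms of $(X,\bbx)$ then follows from the triangle inequality.

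It remains to check that the limit $\mathbf{X} := (X,\bbx)$ actually belongs to $\scrc^{\alpha}([0,T],V)$, i.e.\ that Chen's relation (\ref{Chen-relation}) passes to the limit. For each $n$ we have
\begin{equation*}
\bbx^n_{s,t} - \bbx^n_{s,u} - \bbx^n_{u,t} = X^n_{s,u} \otimes X^n_{u,t},
\end{equation*}
and both sides converge pointwise as $n \to \infty$: the left-hand side by the convergence just established, the right-hand side by continuity of the tensor product together with pointwise convergence of $X^n_{s,u}$ and $X^n_{u,t}$. Hence Chen's relation holds for $(X,\bbx)$, and combining $X^n_0 \to X_0$, $\|X^n - X\|_\alpha \to 0$ and $\|\bbx^n - \bbx\|_{2\alpha} \to 0$ yields $d_\alpha(\mathbf{X}^n,\mathbf{X}) \to 0$. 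I do not expect any serious obstacle here: the only step requiring a moment of care is the Fatou-type passage from Cauchy-in-seminorm to convergence toward the pointwise limit, but this is an entirely standard argument.
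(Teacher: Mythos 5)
Your argument is correct and complete: the metric axioms are routine, the non-degeneracy for the diagonal values of $\bbx$ is correctly recovered from Chen's relation, and the completeness argument via pointwise limits plus the Fatou-type upgrade of Cauchy-in-seminorm to convergence-to-the-pointwise-limit is exactly the standard route, with Chen's relation passing to the limit by pointwise convergence and continuity of $\otimes$ (under the implicit assumption, standard in this setting, that the tensor norm on $V \otimes V$ is compatible, so that $\otimes$ is a bounded bilinear map). The paper itself cites this lemma as well-known and gives no proof, so there is nothing to compare against; your proof supplies precisely what was omitted.
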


\begin{definition}
We define the space $\scrc_g^{\alpha}([0,T],V)$ of \emph{weakly geometric rough paths} as the set of all $(X,\bbx) \in \scrc^{\alpha}([0,T],V)$ such that
\begin{align*}
{\rm Sym}(\bbx_{s,t}) = \frac{1}{2} X_{s,t} \otimes X_{s,t} \quad \text{for all $s,t \in [0,T]$.}
\end{align*}
\end{definition}

We list two well-known auxiliary results about weakly geometric rough paths.

\begin{lemma}\label{lemma-geom-1}
Let $X : [0,T] \to V$ be a continuous path which is piecewise of class $C^1$. More precisely, there are $n \in \bbn$ and $0 = t_0 < t_1 < \ldots < t_n = T$ such that $X|_{(t_{i-1},t_i)} \in C^1((t_{i-1},t_i),V)$ for all $i=1,\ldots,n$. Then we have $\mathbf{X} = (X,\bbx) \in \scrc_g^{\alpha}([0,T],V)$, where the second order process $\bbx : [0,T]^2 \to V \otimes V$ is given by
\begin{align*}
\bbx_{s,t} = \int_s^t X_{s,r} \otimes dX_r, \quad s,t \in [0,T].
\end{align*}
\end{lemma}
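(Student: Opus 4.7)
The plan is to check each of the four defining conditions in turn: the Hölder bound on $X$, the $2\alpha$-Hölder bound on $\bbx$, Chen's relation, and the weakly geometric identity. Throughout I would interpret $\int_s^t X_{s,r} \otimes dX_r$ as the sum of ordinary Bochner-Riemann integrals $\int_{\max(s,t_{i-1})}^{\min(t,t_i)} X_{s,r} \otimes \dot X_r \, dr$ over the subintervals cut out by the partition $0 = t_0 < \ldots < t_n = T$; this is well-defined because $\dot X$ is bounded on each smooth piece and $X$ is continuous on all of $[0,T]$.

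First I would observe that continuity on the compact interval $[0,T]$ together with piecewise $C^1$ regularity forces $X$ to be globally Lipschitz with some constant $L := \sup_{i} \sup_{r \in (t_{i-1},t_i)} |\dot X_r|$. Consequently $\|X\|_\alpha \leq L\,T^{1-\alpha}$, settling the first condition. For the second condition, the Lipschitz bound gives $|X_{s,r}| \leq L(r-s)$ and $|\dot X_r| \leq L$ on every smooth piece, so
\[
|\bbx_{s,t}| \;\leq\; \int_s^t |X_{s,r}|\,|\dot X_r|\,dr \;\leq\; \tfrac{L^2}{2}(t-s)^2 \;\leq\; \tfrac{L^2}{2}\,T^{2-2\alpha}\,(t-s)^{2\alpha},
\]
which yields $\|\bbx\|_{2\alpha} < \infty$.

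For Chen's relation, I would split $\bbx_{s,t} = \int_s^u X_{s,r} \otimes dX_r + \int_u^t X_{s,r} \otimes dX_r$, recognize the first summand as $\bbx_{s,u}$, and in the second use the decomposition $X_{s,r} = X_{s,u} + X_{u,r}$ to obtain
\[
\int_u^t X_{s,r} \otimes dX_r = X_{s,u} \otimes \int_u^t dX_r + \int_u^t X_{u,r} \otimes dX_r = X_{s,u} \otimes X_{u,t} + \bbx_{u,t}.
\]
Summing gives exactly (\ref{Chen-relation}). For the weakly geometric property, I would apply the product rule $\tfrac{d}{dr}\bigl(X_{s,r} \otimes X_{s,r}\bigr) = \dot X_r \otimes X_{s,r} + X_{s,r} \otimes \dot X_r$ on each smooth piece and sum over pieces using the continuity of $X$ at the breakpoints, which yields
\[
X_{s,t} \otimes X_{s,t} \;=\; \int_s^t X_{s,r} \otimes dX_r \;+\; \int_s^t dX_r \otimes X_{s,r} \;=\; \bbx_{s,t} + \tau(\bbx_{s,t}),
\]
where $\tau$ is the tensor-factor flip; dividing by two gives $\Sym(\bbx_{s,t}) = \tfrac{1}{2} X_{s,t} \otimes X_{s,t}$.

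The calculation is essentially routine; the only minor care needed is to handle the partition correctly, which I would do by additivity of the integral together with the continuity of $X$ (so that the boundary terms $X_{s,r} \otimes X_{s,r}\big|_{t_{i-1}}^{t_i}$ telescope across the partition). There is no substantive obstacle beyond this bookkeeping, since everything reduces to classical smooth calculus on each subinterval.
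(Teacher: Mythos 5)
The paper states this as a ``well-known auxiliary result'' and does not supply a proof, so there is no in-paper argument to compare against; your four-step verification is the standard route, and the treatment of Chen's relation, the $2\alpha$-estimate for $\bbx$, and the geometric identity via the product rule are all correct.

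The one flaw is the opening sentence, where you claim that ``continuity on the compact interval $[0,T]$ together with piecewise $C^1$ regularity forces $X$ to be globally Lipschitz.'' This is not a valid deduction from the hypotheses as written: the paper only requires $X|_{(t_{i-1},t_i)} \in C^1((t_{i-1},t_i),V)$ on the open subintervals, and continuity of $X$ on $[0,T]$ does not force $\dot X$ to be bounded there. For example, $X(t) = \sqrt{t}$ on $[0,1]$ (with $n=1$) is continuous and $C^1$ on $(0,1)$ but not Lipschitz; worse, $X(t) = t^{1/4}$ satisfies the literal hypotheses and is not even $\alpha$-H\"older for $\alpha > 1/4$, so the conclusion would fail. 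What you actually need is the standard reading of ``piecewise $C^1$'' --- namely that $\dot X$ is bounded on each piece, i.e.\ $X$ extends to a $C^1$ function on each closed subinterval $[t_{i-1},t_i]$ --- and you should invoke this as part of the hypothesis rather than present it as a consequence of continuity. With that convention made explicit, everything downstream of the definition of $L$ is correct, including the telescoping of the boundary terms across breakpoints, which indeed relies only on the continuity of $X$ at the $t_i$.
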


\begin{lemma}\label{lemma-geom-2}
$\scrc_g^{\alpha}([0,T],V)$ is a closed subset of $\scrc^{\alpha}([0,T],V)$ with respect to the metric $d_{\alpha}$.
\end{lemma}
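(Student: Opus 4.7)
My plan is to show that the defining identity $\Sym(\bbx_{s,t}) = \tfrac{1}{2}\, X_{s,t}\otimes X_{s,t}$ passes to the limit under convergence in $d_\alpha$, using only that each side is continuous in the relevant data.

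First I would take a sequence $\mathbf{X}^n = (X^n,\bbx^n) \in \scrc_g^{\alpha}([0,T],V)$ with $d_\alpha(\mathbf{X}^n,\mathbf{X}) \to 0$ for some $\mathbf{X} = (X,\bbx) \in \scrc^{\alpha}([0,T],V)$, and fix an arbitrary pair $s,t \in [0,T]$. From the definition of $d_\alpha$ and $\interleave\cdot\interleave_{\alpha}$, convergence in the metric forces both $\|X^n - X\|_\alpha \to 0$ and $\|\bbx^n - \bbx\|_{2\alpha} \to 0$, and hence the pointwise bounds
\begin{align*}
|X^n_{s,t} - X_{s,t}| \leq \|X^n - X\|_\alpha\,|t-s|^\alpha, \qquad |\bbx^n_{s,t} - \bbx_{s,t}| \leq \|\bbx^n - \bbx\|_{2\alpha}\,|t-s|^{2\alpha}
\end{align*}
show that $X^n_{s,t} \to X_{s,t}$ in $V$ and $\bbx^n_{s,t} \to \bbx_{s,t}$ in $V \otimes V$.

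Next I would pass to the limit in the weakly geometric identity $\Sym(\bbx^n_{s,t}) = \tfrac{1}{2}\, X^n_{s,t}\otimes X^n_{s,t}$. The symmetrization operator $\Sym$ is continuous linear on $V\otimes V$, so $\Sym(\bbx^n_{s,t}) \to \Sym(\bbx_{s,t})$. The tensor product map $V\times V \to V\otimes V$, $(v,w)\mapsto v\otimes w$, is continuous bilinear, so from $X^n_{s,t}\to X_{s,t}$ we get $X^n_{s,t}\otimes X^n_{s,t} \to X_{s,t}\otimes X_{s,t}$. Equating the limits yields $\Sym(\bbx_{s,t}) = \tfrac{1}{2}\, X_{s,t}\otimes X_{s,t}$, and since $s,t$ were arbitrary, $\mathbf{X} \in \scrc_g^{\alpha}([0,T],V)$.

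There is essentially no serious obstacle here; the only point worth being careful about is that $d_\alpha$ controls the H\"older seminorm of $X^n-X$ but not the sup norm of $X^n - X$, so pointwise convergence of $X^n_{s,t}$ has to be read off from the seminorm bound above rather than uniform convergence of the paths themselves. Once that is observed, the argument is just continuity of $\Sym$ and of the tensor product applied to the identity defining $\scrc_g^{\alpha}$.
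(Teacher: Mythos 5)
The paper states this lemma without proof, presenting it as a well-known auxiliary fact, so there is no in-paper argument to compare against. Your proof is correct and is the standard argument: the metric $d_\alpha$ dominates the H\"older seminorms of the increments, which gives pointwise convergence $X^n_{s,t}\to X_{s,t}$ and $\bbx^n_{s,t}\to\bbx_{s,t}$ for each fixed $(s,t)$, and then continuity of $\Sym$ and of the tensor product lets you pass to the limit in the weakly geometric identity. Your cautionary remark that $d_\alpha$ controls the H\"older seminorm of $X^n - X$ (hence increments) rather than the supremum norm is exactly the right thing to flag, and it is sufficient here since the identity to be preserved involves only increments.
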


For what follows, we fix a rough path $\mathbf{X} = (X,\bbx) \in \scrc^{\alpha}([0,T],V)$. Furthermore, let $W$ be another Banach space.

\begin{definition}
We introduce the following notions:
\begin{enumerate}
\item[(a)] We say that a path $Y \in \calc^{\alpha}([0,T],W)$ is \emph{controlled} by $X$ if there exists $Y' \in \calc^{\alpha}([0,T], L(V,W))$ such that $\| R^Y \|_{2 \alpha} < \infty$, where the remainder term $R^Y$ is given by
\begin{align*}
R_{s,t}^Y &:= Y_{s,t} - Y_s' X_{s,t} \quad \text{for all $s,t \in [0,T]$.}
\end{align*}
\item[(b)] This defines the space of \emph{controlled rough paths}, and we write
\begin{align*}
(Y,Y') \in \scrd_X^{2 \alpha}([0,T], W).
\end{align*}
\item[(c)] We call $Y'$ a \emph{Gubinelli derivative} of $Y$ (with respect to $X$).
\end{enumerate}
\end{definition}

We endow the space $\scrd_X^{2 \alpha}([0,T], W)$ with the seminorm
\begin{align*}
\| Y,Y' \|_{X,2\alpha} := \| Y' \|_{\alpha} + \| R^Y \|_{2 \alpha}.
\end{align*}
Then the space $\scrd_X^{2 \alpha}([0,T], W)$ endowed with the norm
\begin{align*}
\interleave Y,Y' \interleave_{X,2\alpha} := |Y_0| + |Y_0'| + \| Y,Y' \|_{X,2\alpha}
\end{align*}
is a Banach space. We will also consider the seminorm
\begin{align*}
| Y,Y' |_{X,2\alpha} := |Y_0'| + \| Y,Y' \|_{X,2\alpha}.
\end{align*}

\begin{lemma}\label{lemma-reg-derivative-zero}
For each $Y \in \calc^{2 \alpha}([0,T],W)$ we have $(Y,0) \in \scrd_X^{2 \alpha}([0,T],W)$ with
\begin{align*}
\| Y,Y' \|_{X,2\alpha} = \| Y \|_{2 \alpha}.
\end{align*}
\end{lemma}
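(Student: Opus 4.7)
The proof is a direct verification against the definition of a controlled rough path; there is no genuine obstacle. The plan is to set $Y' \equiv 0 \in L(V,W)$ as the proposed Gubinelli derivative and to check each item in the definition in turn.

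First I would check the regularity requirements. The constant path $Y' \equiv 0$ trivially lies in $\calc^{\alpha}([0,T],L(V,W))$ with $\|Y'\|_{\alpha} = 0$. For the path $Y$ itself, the hypothesis gives $Y \in \calc^{2\alpha}([0,T],W)$, and since $2\alpha > \alpha$ on the bounded interval $[0,T]$ we immediately obtain $Y \in \calc^{\alpha}([0,T],W)$ (with the standard embedding estimate $\|Y\|_{\alpha} \leq T^{\alpha}\|Y\|_{2\alpha}$, which is however not needed for the statement).

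Next I would compute the remainder. With $Y' \equiv 0$, the definition gives
\begin{align*}
R_{s,t}^Y = Y_{s,t} - Y_s' X_{s,t} = Y_{s,t}
\end{align*}
for all $s,t \in [0,T]$, so
\begin{align*}
\|R^Y\|_{2\alpha} = \sup_{s \neq t} \frac{|Y_{s,t}|}{|t-s|^{2\alpha}} = \|Y\|_{2\alpha} < \infty,
\end{align*}
using the hypothesis $Y \in \calc^{2\alpha}([0,T],W)$. This verifies the finiteness condition in the definition of $\scrd_X^{2\alpha}([0,T],W)$, so $(Y,0) \in \scrd_X^{2\alpha}([0,T],W)$.

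Finally, combining the two pieces,
\begin{align*}
\|Y,Y'\|_{X,2\alpha} = \|Y'\|_{\alpha} + \|R^Y\|_{2\alpha} = 0 + \|Y\|_{2\alpha} = \|Y\|_{2\alpha},
\end{align*}
which is the asserted identity.
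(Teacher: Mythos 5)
Your proof is correct and takes essentially the same approach as the paper: set $Y' \equiv 0$, observe that the remainder reduces to the increment $R_{s,t}^Y = Y_{s,t}$, and read off the norm identity. You additionally spell out the routine membership checks ($Y' \in \calc^{\alpha}$ and $Y \in \calc^{\alpha}$) that the paper leaves implicit.
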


\begin{proof}
Since $Y' = 0$, we have $R^Y = Y$, proving the statement.
\end{proof}

\begin{lemma}\label{lemma-norm-of-Y}
For all $(Y,Y') \in \scrd_X^{2 \alpha}([0,T],W)$ we have
\begin{align}\label{norm-Y-1}
\| Y' \|_{\infty} &\leq C | Y,Y' |_{X,2\alpha},
\\ \label{norm-Y-2} \| Y \|_{\alpha} &\leq C | Y,Y' |_{X,2\alpha} ( \interleave \mathbf{X} \interleave_{\alpha} + T^{\alpha} ),
\\ \label{norm-Y-3} \| Y \|_{\infty} &\leq C \interleave Y,Y' \interleave_{X,2\alpha} ( \interleave \mathbf{X} \interleave_{\alpha} + 2 ),
\end{align}
where the constant $C > 0$ depends on $\alpha$ and $T$. Furthermore, for $T \leq 1$ the estimates (\ref{norm-Y-1})--(\ref{norm-Y-3}) hold true with $C=1$.
\end{lemma}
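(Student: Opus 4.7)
The three estimates are all consequences of the defining relation $Y_{s,t} = Y'_s X_{s,t} + R^Y_{s,t}$ together with the H\"older regularity of $Y'$, and they are best proved in the order \eqref{norm-Y-1} $\Rightarrow$ \eqref{norm-Y-2} $\Rightarrow$ \eqref{norm-Y-3}. The whole argument is routine; the only things to watch are to express the constants purely in terms of $T$ and $\alpha$, and to verify that everything collapses to $C=1$ when $T\le 1$.

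For \eqref{norm-Y-1}: since $Y'\in\calc^{\alpha}([0,T],L(V,W))$, for every $t\in[0,T]$ one has $|Y'_t|\le |Y'_0|+\|Y'\|_{\alpha}\,|t|^{\alpha}\le |Y'_0|+\|Y'\|_{\alpha}T^{\alpha}$. Taking the supremum yields $\|Y'\|_{\infty}\le\max\{1,T^{\alpha}\}\,(|Y'_0|+\|Y'\|_{\alpha})\le C|Y,Y'|_{X,2\alpha}$, and when $T\le 1$ the factor $\max\{1,T^{\alpha}\}$ is $1$, so $C=1$ is admissible.

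For \eqref{norm-Y-2}: from $Y_{s,t}=Y'_s X_{s,t}+R^Y_{s,t}$ and the trivial bound $|t-s|^{\alpha}\le T^{\alpha}$ we obtain, for $s\neq t$,
\begin{align*}
\frac{|Y_{s,t}|}{|t-s|^{\alpha}}
\;\le\; |Y'_s|\,\|X\|_{\alpha}+\|R^Y\|_{2\alpha}\,|t-s|^{\alpha}
\;\le\; \|Y'\|_{\infty}\,\|X\|_{\alpha}+\|R^Y\|_{2\alpha}\,T^{\alpha}.
\end{align*}
Since $\|X\|_{\alpha}\le\interleave\mathbf{X}\interleave_{\alpha}$ and $\|R^Y\|_{2\alpha}\le|Y,Y'|_{X,2\alpha}$, applying \eqref{norm-Y-1} gives $\|Y\|_{\alpha}\le C|Y,Y'|_{X,2\alpha}\,\interleave\mathbf{X}\interleave_{\alpha}+|Y,Y'|_{X,2\alpha}\,T^{\alpha}$, which yields the desired form; for $T\le 1$ the constant in \eqref{norm-Y-1} is $1$ and the two terms combine to $|Y,Y'|_{X,2\alpha}(\interleave\mathbf{X}\interleave_{\alpha}+T^{\alpha})$.

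For \eqref{norm-Y-3}: bound $\|Y\|_{\infty}\le|Y_0|+\|Y\|_{\alpha}T^{\alpha}$, insert \eqref{norm-Y-2}, and use $|Y_0|\le\interleave Y,Y'\interleave_{X,2\alpha}$ together with $|Y,Y'|_{X,2\alpha}\le\interleave Y,Y'\interleave_{X,2\alpha}$. For $T\le 1$ one uses $T^{\alpha}(\interleave\mathbf{X}\interleave_{\alpha}+T^{\alpha})\le\interleave\mathbf{X}\interleave_{\alpha}+1$ to conclude $\|Y\|_{\infty}\le\interleave Y,Y'\interleave_{X,2\alpha}+\interleave Y,Y'\interleave_{X,2\alpha}(\interleave\mathbf{X}\interleave_{\alpha}+1)=\interleave Y,Y'\interleave_{X,2\alpha}(\interleave\mathbf{X}\interleave_{\alpha}+2)$, so again $C=1$ suffices. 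There is no real obstacle; the only point requiring any care is bookkeeping the $T^{\alpha}$ and $T^{2\alpha}$ factors so that they can be absorbed into the factor $\interleave\mathbf{X}\interleave_{\alpha}+2$ in \eqref{norm-Y-3}.
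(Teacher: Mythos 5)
Your proof is correct and follows essentially the same route as the paper: each bound is obtained in turn from the H\"older regularity of $Y'$, then the decomposition $Y_{s,t}=Y'_s X_{s,t}+R^Y_{s,t}$, then $\|Y\|_\infty\le|Y_0|+\|Y\|_\alpha T^\alpha$, with the same bookkeeping of $\max\{1,T^\alpha\}$ factors and the same observation that these all collapse to $1$ when $T\le1$.
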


\begin{proof}
For each $t \in [0,T]$ we have
\begin{align*}
|Y_t'| \leq |Y_0'| + |Y_t' - Y_0'| \leq |Y_0'| + \| Y' \|_{\alpha} t^{\alpha},
\end{align*}
and hence
\begin{align*}
\| Y' \|_{\infty} &\leq |Y_0'| + \| Y' \|_{\alpha} T^{\alpha} \leq \max \{ 1,T^{\alpha} \} \, | Y,Y' |_{X,2\alpha},
\end{align*}
proving (\ref{norm-Y-1}). Now, let $s,t \in [0,T]$ be arbitrary. Recalling that $Y_{s,t} = Y_s' X_{s,t} + R_{s,t}^Y$, we have
\begin{align*}
| Y_{s,t} | &\leq \| Y' \|_{\infty} |X_{s,t}| + |R_{s,t}^Y|
\\ &\leq \| Y' \|_{\infty} \| X \|_{\alpha} |t-s|^{\alpha} + \| R_{s,t}^Y \|_{2\alpha} |t-s|^{2\alpha}
\\ &\leq \| Y' \|_{\infty} \| X \|_{\alpha} |t-s|^{\alpha} + \| R_{s,t}^Y \|_{2\alpha} T^{\alpha} |t-s|^{\alpha}.
\end{align*}
Therefore, we obtain
\begin{align*}
\| Y \|_{\alpha} &\leq \| Y' \|_{\infty} \| X \|_{\alpha} + \| R^Y \|_{2\alpha} T^{\alpha}
\\ &\leq \max \{ 1,T^{\alpha} \} \, | Y,Y' |_{X,2\alpha} \| X \|_{\alpha} + \| R^Y \|_{2\alpha} T^{\alpha}
\\ &\leq  \max \{ 1,T^{\alpha} \} | Y,Y' |_{X,2\alpha} ( \interleave \mathbf{X} \interleave_{\alpha} + T^{\alpha} ),
\end{align*}
showing (\ref{norm-Y-2}). Using this estimate we deduce that
\begin{align*}
\| Y \|_{\infty} &\leq |Y_0| + \| Y \|_{\alpha} T^{\alpha}
\\ &\leq \interleave Y,Y' \interleave_{X,2\alpha} + \max \{ 1,T^{\alpha} \} T^{\alpha} \interleave Y,Y' \interleave_{X,2\alpha} ( \interleave \mathbf{X} \interleave_{\alpha} + T^{\alpha} )
\\ &\leq \interleave Y,Y' \interleave_{X,2\alpha} \big( \max \{ 1,T^{\alpha} \} T^{\alpha} \interleave \mathbf{X} \interleave_{\alpha} + 1 + \max \{ 1,T^{\alpha} \} T^{2\alpha} \big),
\end{align*}
proving (\ref{norm-Y-3}).
\end{proof}

\begin{lemma}\label{lemma-pairs}
Let $E$ be another Banach space. Then for all $(Y,Y') \in \scrd_X^{2\alpha}([0,T],W)$ and $(Z,Z') \in \scrd_X^{2\alpha}([0,T],E)$ we have
\begin{align}\label{vector-in-D}
((Y,Z),(Y,Z)') \in \scrd_X^{2\alpha}([0,T],W \times E),
\end{align}
where the Gubinelli derivative is given by $(Y,Z)' = (Y',Z')$, and we have the estimates
\begin{align}\label{pairs-1}
\| (Y,Z), (Y,Z)' \|_{X,2\alpha} &\leq \| Y,Y' \|_{X,2\alpha} + \| Z,Z' \|_{X,2\alpha},
\\ \label{pairs-2} | (Y,Z), (Y,Z)' |_{X,2\alpha} &\leq | Y,Y' |_{X,2\alpha} + | Z,Z' |_{X,2\alpha},
\\ \label{pairs-3} \interleave (Y,Z), (Y,Z)' \interleave_{X,2\alpha} &\leq \interleave Y,Y' \interleave_{X,2\alpha} + \interleave Z,Z' \interleave_{X,2\alpha}.
\end{align}
\end{lemma}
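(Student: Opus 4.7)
The plan is to view the pair $(Y',Z')$ as a single path with values in $L(V,W\times E)$ via the canonical identification $L(V,W)\times L(V,E)\cong L(V,W\times E)$, $(T_1,T_2)\leftrightarrow \bigl(v\mapsto (T_1 v,T_2 v)\bigr)$. Under the product norm $|w,e|=|w|+|e|$ on $W\times E$ fixed in Section~\ref{sec-notation}, one has for any bounded linear $T\in L(V,W\times E)$ associated to $(T_1,T_2)$ the estimate $|Tv|=|T_1v|+|T_2v|\leq(\|T_1\|+\|T_2\|)|v|$, so $\|T\|_{L(V,W\times E)}\leq\|T_1\|_{L(V,W)}+\|T_2\|_{L(V,E)}$.

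With this identification, the first step is to write out the remainder of $(Y,Z)$ with respect to the candidate Gubinelli derivative $(Y',Z')$: for $s,t\in[0,T]$,
\begin{align*}
R^{(Y,Z)}_{s,t} = (Y,Z)_{s,t} - (Y',Z')_s X_{s,t} = (Y_{s,t}-Y_s'X_{s,t},\, Z_{s,t}-Z_s'X_{s,t}) = (R^Y_{s,t},R^Z_{s,t}).
\end{align*}
The second step is to apply the operator-norm bound above to the increments of $(Y',Z')$ and the product-norm identity to $R^{(Y,Z)}$, obtaining
\begin{align*}
\|(Y',Z')\|_{\alpha} \leq \|Y'\|_{\alpha} + \|Z'\|_{\alpha}, \qquad \|R^{(Y,Z)}\|_{2\alpha} \leq \|R^Y\|_{2\alpha} + \|R^Z\|_{2\alpha}.
\end{align*}
Both right-hand sides are finite, which establishes (\ref{vector-in-D}).

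Adding these two inequalities yields (\ref{pairs-1}). For (\ref{pairs-2}) one additionally notes $|(Y,Z)'_0| = |Y_0'|+|Z_0'|$ by definition of the product norm, and for (\ref{pairs-3}) one further uses $|(Y_0,Z_0)| = |Y_0|+|Z_0|$; adding these to (\ref{pairs-1}) and (\ref{pairs-2}) gives the remaining two estimates. Nothing in this argument is delicate; the only point that requires a moment's care is the passage between the operator norms on $L(V,W)\times L(V,E)$ and on $L(V,W\times E)$, which is handled once and for all by the product-norm computation above.
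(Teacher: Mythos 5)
Your proof is correct and follows essentially the same route as the paper: the key computation is $R^{(Y,Z)}_{s,t}=(R^Y_{s,t},R^Z_{s,t})$, after which the estimates follow from the product-norm structure on $W\times E$. Your explicit treatment of the identification $L(V,W)\times L(V,E)\cong L(V,W\times E)$, together with the observation that only the inequality $\|(T_1,T_2)\|_{L(V,W\times E)}\leq\|T_1\|_{L(V,W)}+\|T_2\|_{L(V,E)}$ holds in general, is in fact slightly more careful than the paper, which records an equality for the increments of $(Y',Z')$ that should really be a one-sided bound in the operator norm on $L(V,W\times E)$; since only the upper bound is used, both arguments are sound.
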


\begin{proof}
Let $s,t \in [0,T]$ be arbitrary. Then we have
\begin{align*}
|(Y_t,Z_t)-(Y_s,Z_s)| &= |(Y_t-Y_s,Z_t-Z_s)| = |Y_t-Y_s| + |Z_t-Z_s|
\\ &\leq \big( \| Y \|_{\alpha} + \| Z \|_{\alpha} \big) |t-s|^{\alpha},
\end{align*}
showing that $(Y,Z) \in \calc^{\alpha}([0,T],W_1 \times W_2)$.
Similarly, we have
\begin{align*}
|(Y_t',Z_t')-(Y_s',Z_s')| &= |(Y_t'-Y_s',Z_t'-Z_s')| = |Y_t'-Y_s'| + |Z_t'-Z_s'|
\\ &\leq \big( \| Y' \|_{\alpha} + \| Z' \|_{\alpha} \big) |t-s|^{\alpha},
\end{align*}
showing that $(Y',Z') \in \calc^{\alpha}([0,T],L(V,W_1 \times W_2))$. Furthermore, we have
\begin{align*}
R_{s,t}^{(Y,Z)} = (Y_{s,t},Z_{s,t}) - (Y_s',Z_s') X_{s,t} = (R_{s,t}^Y,R_{s,t}^Z),
\end{align*}
and hence
\begin{align*}
|R_{s,t}^{(Y,Z)}| = |R_{s,t}^Y| + |R_{s,t}^Z| \leq \big( \| R^Y \|_{2\alpha} + \| R^Z \|_{2\alpha} \big) |t-s|^{2\alpha},
\end{align*}
proving (\ref{vector-in-D}). Moreover, the previous calculations reveal that
\begin{align*}
\| (Y',Z') \|_{\alpha} &\leq \| Y' \|_{\alpha} + \| Z' \|_{\alpha},
\\ \| R^{(Y,Z)} \|_{2\alpha} &\leq \| R^Y \|_{2\alpha} + \| R^Z \|_{2\alpha}.
\end{align*}
Therefore, the estimates (\ref{pairs-1})--(\ref{pairs-3}) are an immediate consequence.
\end{proof}

Now, let $(Y,Y') \in \scrd_X^{2 \alpha}([0,T],L(V,W))$ be a controlled rough path. Noting that $L(V,L(V,W)) \hookrightarrow L(V \otimes V,W)$, we consider the candidate
\begin{align}\label{candidate}
\int_0^1 Y_s \, d \bfx_s := \lim_{|\Pi| \to 0} \sum_{[s,t] \in \Pi} ( Y_s X_{s,t} + Y_s' \bbx_{s,t} )
\end{align}
for the rough integral of $Y$ against $\mathbf{X}$. For the following result we refer to \cite[Thm. 4.10]{Friz-Hairer-2020}.

\begin{theorem}[Gubinelli]\label{thm-Gubinelli}
For each controlled rough path $(Y,Y') \in \scrd_X^{2 \alpha}([0,T],L(V,W))$ the following statements are true:
\begin{enumerate}
\item[(a)] The $W$-valued rough integral (\ref{candidate}) exists.

\item[(b)] For all $s,t \in [0,T]$ we have
\begin{align*}
\bigg| \int_s^t Y_r \, d\bfx_r - Y_s X_{s,t} - Y_s' \bbx_{s,t} \bigg| \leq C \big( \| X \|_{\alpha} \| R^Y \|_{2 \alpha} + \| \bbx \|_{2 \alpha} \| Y' \|_{\alpha} \big) |t-s|^{3 \alpha}
\end{align*}
with a constant $C > 0$ depending on $\alpha$.

\item[(c)] The map
\begin{align*}
&\scrd_X^{2 \alpha}([0,T],L(V,W)) \to \scrd_X^{2 \alpha}([0,T],W),
\\ &(Y,Y') \mapsto (Z,Z') := \bigg( \int_0^{\cdot} Y_s \, d \bfx_s, Y \bigg)
\end{align*}
is a continuous linear operator between Banach spaces, and we have
\begin{align*}
\| Z,Z' \|_{X,2 \alpha} \leq \| Y \|_{\alpha} + \| Y' \|_{\infty} \| \bbx \|_{2 \alpha} + C T^{\alpha} \big( \| X \|_{\alpha} \| R^Y \|_{2 \alpha} + \| \bbx \|_{2 \alpha} \| Y' \|_{\alpha} \big)
\end{align*}
with a constant $C > 0$ depending on $\alpha$.
\end{enumerate}
\end{theorem}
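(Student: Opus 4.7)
The plan is to apply the Sewing Lemma to the local approximation
\begin{align*}
\Xi_{s,t} := Y_s X_{s,t} + Y_s' \bbx_{s,t},
\end{align*}
which is exactly the summand appearing in the Riemann sums (\ref{candidate}). To invoke the Sewing Lemma I have to control the defect $\delta \Xi_{s,u,t} := \Xi_{s,t} - \Xi_{s,u} - \Xi_{u,t}$ for $s \leq u \leq t$.

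First I would compute this defect. Using additivity $X_{s,t} = X_{s,u} + X_{u,t}$, Chen's relation (\ref{Chen-relation}), the decomposition $Y_{s,u} = Y_s' X_{s,u} + R^Y_{s,u}$, and the embedding $L(V,L(V,W)) \hookrightarrow L(V \otimes V,W)$ which identifies $(Y_s' X_{s,u}) X_{u,t}$ with $Y_s'(X_{s,u} \otimes X_{u,t})$, a short algebraic manipulation collapses four of the six terms and yields
\begin{align*}
\delta \Xi_{s,u,t} = - R^Y_{s,u} X_{u,t} - (Y_u' - Y_s') \bbx_{u,t}.
\end{align*}
Since $|u-s|^{2\alpha} |t-u|^\alpha \leq |t-s|^{3\alpha}$ and analogously for the second term, this provides
\begin{align*}
|\delta \Xi_{s,u,t}| \leq \big( \| X \|_\alpha \| R^Y \|_{2\alpha} + \| \bbx \|_{2\alpha} \| Y' \|_\alpha \big) |t-s|^{3\alpha}.
\end{align*}
Because $\alpha > 1/3$ implies $3\alpha > 1$, the Sewing Lemma applies and supplies the existence of the limit (\ref{candidate}), giving part (a), together with the bound
\begin{align*}
\Bigl| \int_s^t Y_r \, d \bfx_r - Y_s X_{s,t} - Y_s' \bbx_{s,t} \Bigr| \leq C \big( \| X \|_\alpha \| R^Y \|_{2\alpha} + \| \bbx \|_{2\alpha} \| Y' \|_\alpha \big) |t-s|^{3\alpha}
\end{align*}
on every subinterval $[s,t] \subset [0,T]$, which is part (b).

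For part (c), the natural choice $Z' := Y$ converts the estimate from (b) into an identity for the remainder of $Z$, namely
\begin{align*}
R^Z_{s,t} = Z_{s,t} - Y_s X_{s,t} = Y_s' \bbx_{s,t} + \rho_{s,t}, \qquad |\rho_{s,t}| \leq C \big( \| X \|_\alpha \| R^Y \|_{2\alpha} + \| \bbx \|_{2\alpha} \| Y' \|_\alpha \big) |t-s|^{3\alpha}.
\end{align*}
Using $|t-s|^{3\alpha} \leq T^{\alpha} |t-s|^{2\alpha}$ this gives
\begin{align*}
\| R^Z \|_{2\alpha} \leq \| Y' \|_\infty \| \bbx \|_{2\alpha} + C T^\alpha \big( \| X \|_\alpha \| R^Y \|_{2\alpha} + \| \bbx \|_{2\alpha} \| Y' \|_\alpha \big),
\end{align*}
whereas $\| Z' \|_\alpha = \| Y \|_\alpha$ is immediate. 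Adding both contributions yields the claimed estimate for $\| Z,Z' \|_{X,2\alpha}$. Linearity of the map $(Y,Y') \mapsto (Z,Z')$ follows directly from the Riemann-sum definition, and boundedness between the Banach spaces — hence continuity — results by combining this estimate with Lemma \ref{lemma-norm-of-Y} applied to $|Y_0|$, $\| Y' \|_\infty$ and $\| Y \|_\alpha$, together with $|Z_0| = 0$ and $|Z_0'| = |Y_0|$.

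The only nontrivial technical step is the cancellation producing the two-term expression for $\delta \Xi$; this is the point where Chen's relation is indispensable and where the identification $L(V,L(V,W)) \cong L(V \otimes V,W)$ has to be handled carefully. Everything else is straightforward bookkeeping around a single application of the Sewing Lemma.
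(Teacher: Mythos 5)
The paper does not prove this theorem itself — it cites it directly as Theorem~4.10 of Friz--Hairer. Your proof is a correct reconstruction of the standard argument given there: the Sewing Lemma applied to the local approximation $\Xi_{s,t} = Y_s X_{s,t} + Y_s' \bbx_{s,t}$, with the defect computation (using additivity of $X$, Chen's relation, the Gubinelli derivative, and the identification $L(V,L(V,W)) \hookrightarrow L(V\otimes V, W)$) collapsing to $\delta\Xi_{s,u,t} = -R^Y_{s,u}X_{u,t} - Y'_{s,u}\bbx_{u,t}$, and then the $3\alpha$-Hölder bound with $3\alpha > 1$. Parts (b) and (c) then follow exactly as you describe. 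This is the expected proof and it is sound.
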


\begin{lemma}\label{lemma-int-cont-embedded spaces}
Let $E$ be another Banach space such that $E \subset W$ as sets, and there is a constant $C > 0$ such that $|y|_W \leq C |y|_E$ for all $y \in E$. Then for each controlled rough path $(Y,Y') \in \scrd_X^{2 \alpha}([0,T],L(V,E))$ we also have $(Y,Y') \in \scrd_X^{2 \alpha}([0,T],L(V,W))$ and the identity
\begin{align*}
\text{{\rm ($E$-)}} \int_0^1 Y_s \, d \bfx_s = \text{{\rm ($W$-)}} \int_0^1 Y_s \, d \bfx_s,
\end{align*}
where the integrals above denote the rough integrals (\ref{candidate}) in the Banach spaces $(E,|\cdot|_E)$ and $(W,|\cdot|_W)$.
\end{lemma}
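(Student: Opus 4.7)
The plan is to first promote the controlled rough path structure from $L(V,E)$ to $L(V,W)$, and then to identify the two integrals by recognising that both arise as limits of literally the same compensated Riemann sums.

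First I would observe that the continuous inclusion $E \hookrightarrow W$ induces a continuous inclusion $L(V,E) \hookrightarrow L(V,W)$, and also $L(V,L(V,E)) \hookrightarrow L(V,L(V,W))$, each with constant at most $C$. Consequently, writing $|\cdot|_W$ and $|\cdot|_E$ for the two norms, we have $\|Y\|_{\alpha;L(V,W)} \leq C\,\|Y\|_{\alpha;L(V,E)}$, an analogous bound for $Y'$, and $\|R^Y\|_{2\alpha;L(V,W)} \leq C\,\|R^Y\|_{2\alpha;L(V,E)}$, where the remainder $R^Y_{s,t} = Y_{s,t} - Y_s' X_{s,t}$ is the same element of $L(V,E) \subset L(V,W)$ in both cases. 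This already gives $(Y,Y') \in \scrd_X^{2\alpha}([0,T],L(V,W))$.

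Next, I would look at the compensated Riemann sums. For any partition $\Pi$ of $[0,T]$, the sum
\begin{align*}
S_\Pi := \sum_{[s,t] \in \Pi} \bigl( Y_s X_{s,t} + Y_s' \bbx_{s,t} \bigr)
\end{align*}
is a single element of $E$, hence also of $W$. By Theorem \ref{thm-Gubinelli}(a) applied in the Banach space $E$, as $|\Pi| \to 0$ the net $S_\Pi$ converges in $(E,|\cdot|_E)$ to the $E$-valued rough integral. Because the inclusion $E \hookrightarrow W$ is continuous, convergence in $E$ implies convergence of the same net to the same limit in $W$. On the other hand, Theorem \ref{thm-Gubinelli}(a) applied in $W$ tells us that $S_\Pi$ converges in $(W,|\cdot|_W)$ to the $W$-valued rough integral. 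Uniqueness of limits in $W$ yields the claimed identity.

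There is no real obstacle; the essential point to keep in mind is that the Gubinelli derivative and the remainder are, as elements of the embedded space, literally the same object in both settings, so the two definitions of the rough integral see the same approximating sequence and must agree.
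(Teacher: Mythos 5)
Your argument is correct and follows essentially the same approach as the paper: the paper's proof simply notes that convergence in $E$ implies convergence in $W$, which is exactly the point you make about the compensated Riemann sums having a common limit. You have merely spelled out the two steps (transfer of the controlled structure, then identification of the limits) in more detail.
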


\begin{proof}
The result is easily verified, noting that for any sequence $(y_n)_{n \in \bbn} \subset E$ the convergence $|y_n - y|_E \to 0$ implies $|y_n - y|_W \to 0$.
\end{proof}

\begin{proposition}\label{prop-Gubinelli-triangle}
Let $(Y,Y') \in \scrd_X^{2 \alpha}([0,T],L(V,W))$ be a controlled rough path. Then for all $s,t \in [0,T]$ we have
\begin{align*}
\bigg| \int_s^t Y_r \, d\bfx_r \bigg| &\leq C \big( \| X \|_{\alpha} \| R^Y \|_{2 \alpha} + \| \bbx \|_{2 \alpha} \| Y' \|_{\alpha} \big) |t-s|^{3 \alpha}
\\ &\quad + \| Y' \|_{\infty} \| \bbx \|_{2 \alpha} |t-s|^{2 \alpha} + \| Y \|_{\infty} \| X \|_{\alpha} |t-s|^{\alpha}
\end{align*}
with a constant $C > 0$ depending on $\alpha$.
\end{proposition}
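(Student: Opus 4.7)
The plan is to derive this bound as an almost immediate corollary of Theorem \ref{thm-Gubinelli}(b), which already provides the local expansion of the rough integral as $Y_s X_{s,t} + Y_s' \bbx_{s,t}$ up to a remainder of order $|t-s|^{3\alpha}$.

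Concretely, I would start by applying the triangle inequality in the form
\begin{align*}
\bigg| \int_s^t Y_r \, d\bfx_r \bigg| \leq \bigg| \int_s^t Y_r \, d\bfx_r - Y_s X_{s,t} - Y_s' \bbx_{s,t} \bigg| + | Y_s X_{s,t} | + | Y_s' \bbx_{s,t} |.
\end{align*}
Theorem \ref{thm-Gubinelli}(b) bounds the first term on the right-hand side by exactly $C ( \| X \|_{\alpha} \| R^Y \|_{2 \alpha} + \| \bbx \|_{2 \alpha} \| Y' \|_{\alpha} ) |t-s|^{3\alpha}$, which is the first piece appearing in the claim.

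For the two remaining terms, I would use the operator norm estimates together with the H\"older bounds coming from the definitions of $\| X \|_{\alpha}$ and $\| \bbx \|_{2\alpha}$: namely $|Y_s X_{s,t}| \leq \| Y \|_\infty \| X \|_\alpha |t-s|^\alpha$ (using that $Y_s \in L(V,W)$) and $|Y_s' \bbx_{s,t}| \leq \| Y' \|_\infty \| \bbx \|_{2\alpha} |t-s|^{2\alpha}$ (using the embedding $L(V,L(V,W)) \hookrightarrow L(V \otimes V, W)$ already invoked before (\ref{candidate})). Summing the three bounds yields the stated inequality with the same constant $C$ as in Theorem \ref{thm-Gubinelli}(b).

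There is no real obstacle here; the proposition is a repackaging of the Gubinelli estimate with the two leading terms $Y_s X_{s,t}$ and $Y_s' \bbx_{s,t}$ retained rather than subtracted. The only points requiring a moment of care are the correct use of the tensor/operator norm identifications for $Y_s'$ applied to $\bbx_{s,t} \in V \otimes V$, and making sure the constant $C$ in the final bound is the same as the one supplied by Theorem \ref{thm-Gubinelli}(b) so that the dependence is only on $\alpha$.
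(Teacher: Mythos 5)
Your proposal is correct and matches the paper's proof exactly: both apply the triangle inequality to peel off $Y_s X_{s,t} + Y_s' \bbx_{s,t}$, invoke Theorem \ref{thm-Gubinelli}(b) for the remainder, and bound the two leading terms by $\| Y \|_\infty \| X \|_\alpha |t-s|^\alpha$ and $\| Y' \|_\infty \| \bbx \|_{2\alpha} |t-s|^{2\alpha}$ respectively. Nothing to add.
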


\begin{proof}
Let $s,t \in [0,T]$ be arbitrary. Then we have
\begin{align*}
\bigg| \int_s^t Y_r \, d\bfx_r \bigg| \leq \bigg| \int_s^t Y_r \, d\bfx_r - Y_s X_{s,t} - Y_s' \bbx_{s,t} \bigg| + | Y_s X_{s,t} | + | Y_s' \bbx_{s,t} |.
\end{align*}
Noting that
\begin{align*}
| Y_s X_{s,t} | &\leq |Y_s| \, |X_{s,t}| \leq \| Y \|_{\infty} \| X \|_{\alpha} |t-s|^{\alpha},
\\ | Y_s' \bbx_{s,t} | &\leq |Y_s'| \, |\bbx_{s,t}| \leq \| Y' \|_{\infty} \| \bbx \|_{2\alpha} |t-s|^{2\alpha},
\end{align*}
the statement is a consequence of Theorem \ref{thm-Gubinelli}.
\end{proof}

For the following rough Fubini theorem see \cite[Exercise 4.10]{Friz-Hairer-2020}.

\begin{proposition}[Rough Fubini]\label{prop-rough-Fubini}
Let $(\Omega,\calf,\mu)$ be a measure space. Furthermore, let $(Y,Y') : \Omega \to \scrd_X^{2\alpha}([0,T],L(V,W))$ be a measurable mapping such that
\begin{align*}
\Omega \to \bbr_+, \quad \omega \mapsto |Y(\omega)_0| + \| Y(\omega),Y(\omega)' \|_{X,2\alpha}
\end{align*}
belongs to $\call^1(\Omega,\calf,\mu)$. Then the mapping
\begin{align*}
\Omega \to W, \quad \omega \mapsto \int_0^T Y(\omega)_s \, d \mathbf{X}_s
\end{align*}
belongs to $\call^1(\Omega,\calf,\mu;W)$, the mapping
\begin{align*}
s \mapsto \int_{\Omega} Y(\omega)_s \, \mu(d\omega)
\end{align*}
belongs to $\scrd_X^{2\alpha}([0,T],L(V,W))$, and we have the identity
\begin{align*}
\int_{\Omega} \bigg( \int_0^T Y(\omega)_s \, d \mathbf{X}_s \bigg) \mu(d\omega) = \int_0^T \bigg( \int_{\Omega} Y(\omega)_s \, \mu(d\omega) \bigg) d \mathbf{X}_s.
\end{align*}
\end{proposition}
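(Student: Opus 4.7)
The plan is to construct the candidate integrated controlled rough path as a Bochner integral and then establish the identity by exchanging the Bochner integral with the Riemann-sum limit defining the rough integral.

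First, I would set
\[
\bar Y_s := \int_\Omega Y(\omega)_s\,\mu(d\omega) \quad\text{and}\quad \bar Y'_s := \int_\Omega Y(\omega)'_s\,\mu(d\omega),
\]
both understood as Bochner integrals in $L(V,W)$. By Lemma~\ref{lemma-norm-of-Y}, $\|Y(\omega)\|_\infty$ and $\|Y(\omega)'\|_\infty$ are dominated by a constant multiple of the quantity supplied by the hypothesis (extended, if needed, to include $|Y(\omega)_0'|$), so these integrands are $\mu$-integrable. Since the Bochner integral commutes with bounded linear evaluations, one obtains
\[
\bar Y_{s,t} = \int_\Omega Y(\omega)_{s,t}\,\mu(d\omega) \quad\text{and}\quad R^{\bar Y}_{s,t} = \int_\Omega R^{Y(\omega)}_{s,t}\,\mu(d\omega),
\]
and pulling the norm inside the integral yields $\|\bar Y'\|_\alpha \le \int_\Omega\|Y(\omega)'\|_\alpha\,\mu(d\omega) < \infty$ and $\|R^{\bar Y}\|_{2\alpha} \le \int_\Omega\|R^{Y(\omega)}\|_{2\alpha}\,\mu(d\omega) < \infty$, establishing $(\bar Y,\bar Y') \in \scrd_X^{2\alpha}([0,T],L(V,W))$. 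The $\call^1$-statement for $\omega \mapsto \int_0^T Y(\omega)_s\,d\mathbf{X}_s$ then follows by applying Proposition~\ref{prop-Gubinelli-triangle} with $s=0,\,t=T$: the resulting pointwise bound involves $\|X\|_\alpha,\|\bbx\|_{2\alpha}$ and path norms of $Y(\omega)$, each controlled by the integrable quantity above.

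For the Fubini identity, I would fix a partition $\Pi = \{0 = t_0 < \cdots < t_N = T\}$ and set
\[
I_\Pi(\omega) := \sum_{[s,t]\in\Pi}\bigl(Y(\omega)_s X_{s,t} + Y(\omega)'_s\bbx_{s,t}\bigr).
\]
Linearity of the Bochner integral applied to this \emph{finite} sum gives
\[
\int_\Omega I_\Pi(\omega)\,\mu(d\omega) = \sum_{[s,t]\in\Pi}\bigl(\bar Y_s X_{s,t} + \bar Y'_s\bbx_{s,t}\bigr),
\]
and as $|\Pi| \to 0$ the right-hand side converges in $W$ to $\int_0^T \bar Y_s\,d\mathbf{X}_s$ by~\eqref{candidate}. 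For the left-hand side I would telescope the local estimate of Theorem~\ref{thm-Gubinelli}(b) over the cells of $\Pi$ to obtain
\[
\Bigl|I_\Pi(\omega) - \int_0^T Y(\omega)_s\,d\mathbf{X}_s\Bigr| \le C\,T\,|\Pi|^{3\alpha-1}\bigl(\|X\|_\alpha\|R^{Y(\omega)}\|_{2\alpha} + \|\bbx\|_{2\alpha}\|Y(\omega)'\|_\alpha\bigr).
\]
Since $3\alpha - 1 > 0$ and the hypothesis makes the parenthesized quantity $\mu$-integrable, this shows $\int_\Omega I_\Pi(\omega)\,\mu(d\omega) \to \int_\Omega\int_0^T Y(\omega)_s\,d\mathbf{X}_s\,\mu(d\omega)$ in $W$. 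Equating the two limits gives the claimed identity.

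The step I expect to be the main obstacle is the last one. A naive pointwise bound on $I_\Pi(\omega)$ is not uniform in $\Pi$ (the number of summands grows like $|\Pi|^{-1}$), so ordinary dominated convergence does not directly justify swapping $\lim_{|\Pi|\to 0}$ with $\int_\Omega\cdot\,\mu(d\omega)$. The rescue is the Sewing-Lemma bookkeeping behind Theorem~\ref{thm-Gubinelli}(b), which provides a $\Pi$-uniform bound on the approximation error in terms of the $\call^1$-integrable quantity $\|Y(\omega),Y(\omega)'\|_{X,2\alpha}$; this is precisely the integrability that the hypothesis supplies.
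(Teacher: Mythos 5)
The paper does not prove this proposition itself; it only cites an exercise in Friz--Hairer, so there is no in-paper argument to compare against. Your proof is correct, and it executes the natural route: write each side as a limit of partition sums, pull the Bochner integral through the \emph{finite} sums by linearity, and swap $\lim_{|\Pi|\to 0}$ with $\int_\Omega$ by means of the partition-uniform error bound supplied by Theorem~\ref{thm-Gubinelli}(b). The crucial observation is exactly the one you isolate: since the local sewing error is $O(|t-s|^{3\alpha})$ with $3\alpha>1$, telescoping gives
\[
\sum_{[s,t]\in\Pi}|t-s|^{3\alpha}\le |\Pi|^{3\alpha-1}\sum_{[s,t]\in\Pi}|t-s| = T\,|\Pi|^{3\alpha-1}\to 0,
\]
so the error is bounded uniformly in $\Pi$ by a constant times the $\mu$-integrable quantity $\|X\|_\alpha\|R^{Y(\omega)}\|_{2\alpha}+\|\bbx\|_{2\alpha}\|Y(\omega)'\|_\alpha$; dominated convergence is not needed, just $L^1$-convergence of the approximants.

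The caveat you flag parenthetically is genuine: the stated hypothesis controls only $|Y(\omega)_0|+\|Y(\omega),Y(\omega)'\|_{X,2\alpha}$, but one also needs $|Y(\omega)_0'|$ to be $\mu$-integrable, both to make $s\mapsto\int_\Omega Y(\omega)_s'\,\mu(d\omega)$ Bochner-integrable (via Lemma~\ref{lemma-norm-of-Y}, $\|Y'\|_\infty$ is controlled by $|Y,Y'|_{X,2\alpha}=|Y_0'|+\|Y,Y'\|_{X,2\alpha}$, not by $\|Y,Y'\|_{X,2\alpha}$ alone) and to obtain the pointwise bound
\[
\bigg|\int_0^T Y(\omega)_s\,d\mathbf{X}_s\bigg|\le |Y_0|\,|X_{0,T}|+|Y_0'|\,|\bbx_{0,T}|+C\big(\|X\|_\alpha\|R^Y\|_{2\alpha}+\|\bbx\|_{2\alpha}\|Y'\|_\alpha\big)T^{3\alpha}
\]
needed for the $\call^1$ claim. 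The hypothesis should read $\interleave Y(\omega),Y(\omega)'\interleave_{X,2\alpha}\in\call^1(\mu)$ (equivalently, add $|Y(\omega)_0'|$). With that adjustment your argument is complete.
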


\section{Strongly continuous semigroups}\label{sec-semigroups}

In this section we provide the required results about strongly continuous semigroups. Further details can be found, for example, in \cite{Engel-Nagel} or \cite{Pazy}. Let $W$ be a Banach space. A family $(S_t)_{t \geq 0}$ of continuous linear operators $S_t \in L(W)$ is called a \emph{strongly continuous semigroup} (or \emph{$C_0$-semigroup}) on $W$ if the following conditions are fulfilled:
\begin{enumerate}
\item $S_0 = \Id$.

\item $S_{s+t} = S_s S_t$ for all $s,t \geq 0$.

\item $\lim_{t \to 0} S_t y = y$ for all $y \in W$.
\end{enumerate}
Let $(S_t)_{t \geq 0}$ be a $C_0$-semigroup on $W$. It is well-known that there are constants $M \geq 1$ and $\omega \in \bbr$ such that
\begin{align}\label{est-semigroup}
|S_t| \leq M e^{\omega t} \quad \text{for all $t \geq 0$.}
\end{align}
If we can choose $M=1$ and $\omega = 0$ in (\ref{est-semigroup}), that is $|S_t| \leq 1$ for all $t \geq 0$, then we call the $C_0$-semigroup $(S_t)_{t \geq 0}$ a \emph{semigroup of contractions}. Recall that the \emph{infinitesimal generator} $A : W \supset D(A) \to W$ is the  operator
\begin{align*}
Ay := \lim_{h \to 0} \frac{S_h y - y}{h}
\end{align*}
defined on the \emph{domain}
\begin{align*}
D(A) := \bigg\{ y \in W : \lim_{h \to 0} \frac{S_h y - y}{h} \text{ exists} \bigg\}.
\end{align*}
The following result is well-known.

\begin{lemma}\label{lemma-hg-rules}
Let $(S_t)_{t \geq 0}$ be a $C_0$-semigroup with infinitesimal generator $A$. Then the following statements are true:
\begin{enumerate}
\item[(a)] For every $y \in D(A)$ the mapping
\begin{align*}
\mathbb{R}_+ \rightarrow W, \quad t \mapsto S_t y
\end{align*}
belongs to class $C^1(\mathbb{R}_+;W)$.

\item[(b)] For all $t \geq 0$ and $y \in D(A)$ we have $S_t y \in D(A)$ and
\begin{align*}
\frac{d}{dt} S_t y = A S_t y = S_t A y.
\end{align*}

\item[(c)] For all $t \geq 0$ and $y \in W$ we have $\int_0^t S_s y \, ds \in D(A)$ and
\begin{align*}
A \bigg( \int_0^t S_s y \, ds \bigg) = S_t y - y.
\end{align*}

\item[(d)] For all $t \geq 0$ and $y \in D(A)$ we have
\begin{align*}
\int_0^t S_s A y \, ds = S_t y - y.
\end{align*}
\end{enumerate}
\end{lemma}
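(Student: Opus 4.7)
The plan is to prove (b) first, from which (a) and (d) follow with modest additional work, and to establish (c) by a direct computation that does not require $y \in D(A)$.

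For (b), fix $y \in D(A)$ and $t \geq 0$. The right derivative of $s \mapsto S_s y$ at $s = t$ comes from the semigroup law and the continuity of the bounded operator $S_t \in L(W)$:
\begin{align*}
\lim_{h \downarrow 0} \frac{S_{t+h} y - S_t y}{h} = S_t \left( \lim_{h \downarrow 0} \frac{S_h y - y}{h} \right) = S_t A y.
\end{align*}
Reading the same difference quotient as $h^{-1}(S_h(S_t y) - S_t y)$, the existence of this limit also shows $S_t y \in D(A)$ with $A S_t y = S_t A y$. For the left derivative at $t > 0$, for $0 < h < t$ I would use the decomposition
\begin{align*}
\frac{S_t y - S_{t-h} y}{h} - S_t A y = S_{t-h}\left( \frac{S_h y - y}{h} - A y \right) + \big( S_{t-h} A y - S_t A y \big),
\end{align*}
controlling $|S_{t-h}|$ uniformly in $h \in (0,t)$ by the growth bound (\ref{est-semigroup}) and invoking strong continuity of $(S_s)_{s \geq 0}$ applied to $A y$ to handle the second term. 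Part (a) then follows, since the derivative $t \mapsto S_t A y$ is continuous by strong continuity.

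For (c), let $y \in W$ and set $I_t := \int_0^t S_s y \, ds$. For $h > 0$, a change of variables gives
\begin{align*}
\frac{S_h - \Id}{h} I_t = \frac{1}{h} \int_0^t \big( S_{s+h} y - S_s y \big) ds = \frac{1}{h} \int_t^{t+h} S_s y \, ds - \frac{1}{h} \int_0^h S_s y \, ds,
\end{align*}
which converges to $S_t y - y$ as $h \downarrow 0$ by strong continuity of $s \mapsto S_s y$ at $s = t$ and $s = 0$. Hence $I_t \in D(A)$ and $A I_t = S_t y - y$.

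Part (d) is then immediate: by (b) the map $s \mapsto S_s y$ lies in $C^1(\bbr_+, W)$ with continuous derivative $s \mapsto S_s A y$, so the Bochner form of the fundamental theorem of calculus gives $S_t y - y = \int_0^t S_s A y \, ds$. The main obstacle is the left derivative in (b): one has to pass a limit through the operator $S_{t-h}$ whose parameter $t-h$ itself moves with $h$, so strong continuity at a single fixed point does not suffice. The remedy is the uniform bound $\sup_{h \in [0,t]} |S_{t-h}| < \infty$ furnished by (\ref{est-semigroup}), which makes the first summand in the decomposition above vanish even though the domain operator varies.
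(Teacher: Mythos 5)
The paper does not prove this lemma; it is stated as well-known, with references to \cite{Engel-Nagel} and \cite{Pazy}. Your proof is correct and is precisely the standard textbook argument one finds there: right derivative of $t\mapsto S_t y$ from the semigroup law and boundedness of $S_t$, the same difference quotient read as $h^{-1}(S_h(S_t y)-S_t y)$ to get $S_t y\in D(A)$ with $AS_t y=S_t Ay$, the left derivative via the decomposition $S_{t-h}\bigl(\tfrac{S_h y-y}{h}-Ay\bigr)+(S_{t-h}Ay-S_t Ay)$ together with the local uniform operator bound (\ref{est-semigroup}) and strong continuity at $Ay$, part (c) by the change-of-variables identity and strong continuity at the two endpoints, and part (d) as the Bochner fundamental theorem of calculus applied to the $C^1$ orbit map from (a)--(b). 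All steps are sound; your closing remark correctly identifies the only genuinely delicate point (that strong continuity at one point does not by itself control $S_{t-h}$ applied to a moving argument, whence the need for the uniform bound).
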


The domain $D(A)$ equipped with the graph norm
\begin{align*}
|y|_{D(A)} := |y| + |Ay| \quad \text{for all $y \in D(A)$}
\end{align*}
is a Banach space. Inductively, for each $n \geq 2$ we define the \emph{higher-order domain}
\begin{align*}
D(A^n) := \{ y \in D(A^{n-1}) :A^{n-1} y \in D(A) \}.
\end{align*}
Furthermore, we agree on the notation $D(A^0) := W$. Then for each $n \in \bbn_0$ the space $D(A^n)$ equipped with the graph norm
\begin{align*}
|y|_{D(A^n)} := |y| + \sum_{j=1}^n |A^j y| \quad \text{for all $y \in D(A^n)$}
\end{align*}
is a Banach space. Using Lemma \ref{lemma-hg-rules}, the upcoming auxiliary result is easy to verify.

\begin{lemma}\label{lemma-restricted-semigroup}
For every $n \in \bbn$ the restriction $(S_t|_{D(A^n)})_{t \geq 0}$ is a $C_0$-semigroup on $(D(A^n),|\cdot|_{D(A^n)})$ with generator $A$ on the domain $D(A^{n+1})$.
\end{lemma}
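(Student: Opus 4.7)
The plan is to verify the lemma in four conceptual steps, relying essentially on the fact that $A$ and $S_t$ commute on the domain (Lemma \ref{lemma-hg-rules}(b)).

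First, I would establish \emph{invariance} of $D(A^n)$ under each $S_t$. By induction on $j$, Lemma \ref{lemma-hg-rules}(b) yields $A^j S_t y = S_t A^j y$ for every $y \in D(A^n)$ and every $j = 0,1,\ldots,n$. In particular $S_t y \in D(A^n)$, and the \emph{boundedness estimate}
\begin{align*}
|S_t y|_{D(A^n)} = |S_t y| + \sum_{j=1}^n |S_t A^j y| \leq M e^{\omega t} |y|_{D(A^n)}
\end{align*}
follows directly from (\ref{est-semigroup}), showing that $S_t|_{D(A^n)} \in L(D(A^n))$. The semigroup laws $S_0 = \Id$ and $S_{s+t} = S_s S_t$ on $D(A^n)$ are immediate from the corresponding laws on $W$.

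Second, for \emph{strong continuity} in the graph norm, I fix $y \in D(A^n)$ and use the commutation relation together with strong continuity of $(S_t)_{t \geq 0}$ on $W$ applied to each of the finitely many elements $y, A y, \ldots, A^n y \in W$:
\begin{align*}
|S_t y - y|_{D(A^n)} = |S_t y - y| + \sum_{j=1}^n |S_t A^j y - A^j y| \xrightarrow{t \to 0} 0.
\end{align*}

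Third, and this is the main point, I would identify the generator $B$ of the restricted semigroup as $A$ on $D(A^{n+1})$. For the inclusion $D(A^{n+1}) \subseteq D(B)$, I fix $y \in D(A^{n+1})$; then $A^j y \in D(A)$ for $j=0,1,\ldots,n$, so the commutation identity $A^j \big( h^{-1}(S_h y - y) \big) = h^{-1}(S_h A^j y - A^j y)$ converges in $W$ to $A^{j+1} y = A^j(Ay)$ as $h \to 0$. Since this holds for every $j = 0, \ldots, n$, the difference quotient converges to $Ay$ in the graph norm of $D(A^n)$; I would also check that $Ay \in D(A^n)$, which amounts to verifying $A^{n-1}(Ay) = A^n y \in D(A)$, and this holds by the definition of $D(A^{n+1})$. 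Conversely, for $D(B) \subseteq D(A^{n+1})$, any $y \in D(B) \subseteq D(A^n)$ has $h^{-1}(S_h y - y)$ converging in $D(A^n)$ to $By$; since the graph norm dominates $|\cdot|$, the limit holds in $W$ as well, so $y \in D(A)$ and $By = Ay$. The limit $By \in D(A^n)$ then reads $Ay \in D(A^n)$, hence $y \in D(A^{n+1})$.

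The only step that is slightly more than bookkeeping is the identification of the generator, specifically the upgrade from convergence in $W$ to convergence in the graph norm. This is not really an obstacle, though, because the commutation $A^j S_h = S_h A^j$ reduces graph-norm convergence of the difference quotient to strong continuity of $S$ tested against the finitely many vectors $y, Ay, \ldots, A^n y$.
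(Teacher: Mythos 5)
Your proof is correct and complete, and it fills in exactly the "easy to verify" details the paper leaves implicit: the paper proves this lemma only by a reference to Lemma \ref{lemma-hg-rules}, and your argument is the expected route through the commutation identity $A^j S_t = S_t A^j$ on $D(A^n)$, covering invariance, the $Me^{\omega t}$ bound in graph norm, strong continuity coordinate-wise, and both inclusions in the generator identification. No gaps.
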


The following auxiliary result is also straightforward to check.

\begin{lemma}\label{lemma-graph-norms-eq}
For all $m,n \in \bbn_0$ the graph norms $|\cdot|_{D(A^{m+n})}$ and
\begin{align*}
|y|_{D(A^m)} + \sum_{j=1}^n |A^j y|_{D(A^m)}, \quad y \in D(A^{m+n})
\end{align*}
on are equivalent norms on the space $D(A^{m+n})$.
\end{lemma}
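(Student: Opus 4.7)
The plan is to expand both norms as a sum of terms of the form $|A^l y|$ for $l \in \{0, 1, \ldots, m+n\}$ and then compare the coefficients on either side, which makes the equivalence immediate.

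First, I would record that by definition (with the convention $A^0 = \Id$)
\begin{align*}
|y|_{D(A^{m+n})} = \sum_{l=0}^{m+n} |A^l y|.
\end{align*}
A routine induction on $n$, using the recursive definition $D(A^k) = \{z \in D(A^{k-1}) : A^{k-1} z \in D(A)\}$, shows that $y \in D(A^{m+n})$ implies $A^j y \in D(A^m)$ for $j = 0, 1, \ldots, n$, so the graph norms $|A^j y|_{D(A^m)}$ are well defined. Expanding each of them yields
\begin{align*}
|y|_{D(A^m)} + \sum_{j=1}^n |A^j y|_{D(A^m)} = \sum_{k=0}^m |A^k y| + \sum_{j=1}^n \sum_{k=0}^m |A^{j+k} y|.
\end{align*}

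Next, I would verify the lower bound by checking that each term $|A^l y|$ with $0 \le l \le m+n$ appears at least once on the right-hand side of the previous display. If $0 \le l \le m$, take $k = l$ in the first sum; if $m < l \le m+n$, take $j = l-m$ and $k = m$ in the double sum, which is admissible since $1 \le l-m \le n$. This immediately gives
\begin{align*}
|y|_{D(A^{m+n})} \le |y|_{D(A^m)} + \sum_{j=1}^n |A^j y|_{D(A^m)}.
\end{align*}

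For the upper bound, I would count the number of appearances of each $|A^l y|$ on the right-hand side: the first sum contributes at most once, and the double sum contributes at most $n$ times (once for each $j$). Hence each term appears at most $N := n+2$ times, so
\begin{align*}
|y|_{D(A^m)} + \sum_{j=1}^n |A^j y|_{D(A^m)} \le N \, |y|_{D(A^{m+n})},
\end{align*}
which completes the equivalence. No genuine obstacle is expected; the argument is purely combinatorial and the only preliminary point requiring a short induction is the stability statement $A^j y \in D(A^m)$ for $y \in D(A^{m+n})$.
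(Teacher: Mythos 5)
Your proof is correct, and since the paper merely asserts that this lemma is ``straightforward to check'' without giving an argument, your expansion-and-comparison approach is exactly the natural route one would take. The only cosmetic quibble is the constant: the first sum contributes at most once to any fixed $|A^l y|$ and the double sum at most $n$ times, so $N = n+1$ already suffices; your $N = n+2$ is harmlessly loose and does not affect the conclusion.
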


For what follows, we agree on the notation
\begin{align*}
S_{s,t} := S_t - S_s \quad \text{for all $s,t \in \bbr_+$.}
\end{align*}
Furthermore, we fix some $T \in \bbr_+$.

\begin{proposition}\label{prop-orbit-map}
For all $y \in D(A)$ and all $s,t \in [0,T]$ we have
\begin{align*}
| S_{s,t} y | \leq M e^{\omega T} | y |_{D(A)} \, |t-s|.
\end{align*}
\end{proposition}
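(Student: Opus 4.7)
The plan is to reduce the bound on $S_{s,t} y = S_t y - S_s y$ to an integral involving $Ay$, which is controlled by the growth bound (\ref{est-semigroup}) for the semigroup.

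Without loss of generality, assume $s \le t$ in $[0,T]$. Since $y \in D(A)$, part (d) of Lemma \ref{lemma-hg-rules}, applied first at time $t$ and then at time $s$ and subtracting (or equivalently, using parts (a)--(b) to observe that $r \mapsto S_r y$ is $C^1$ with derivative $S_r A y$), yields the representation
\begin{align*}
S_{s,t} y = S_t y - S_s y = \int_s^t S_r A y \, dr.
\end{align*}
Taking norms inside the integral and invoking the growth bound (\ref{est-semigroup}), I get
\begin{align*}
|S_{s,t} y| \le \int_s^t |S_r A y| \, dr \le \int_s^t M e^{\omega r} |Ay| \, dr \le M e^{\omega T} |Ay| \, (t-s).
\end{align*}
Finally, since $|Ay| \le |y| + |Ay| = |y|_{D(A)}$ by definition of the graph norm, this gives the desired estimate
\begin{align*}
|S_{s,t} y| \le M e^{\omega T} |y|_{D(A)} \, |t-s|.
\end{align*}

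There is no real obstacle here; the only subtle point is justifying the integral representation, but this is precisely what Lemma \ref{lemma-hg-rules}(a),(b),(d) provides for $y \in D(A)$. The case $t < s$ follows by symmetry since the asserted bound is symmetric in $s$ and $t$.
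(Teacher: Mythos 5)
Your proposal is correct and follows essentially the same route as the paper: both express $S_{s,t}y$ as $\int_s^t S_r A y\,dr$ via Lemma \ref{lemma-hg-rules} and then bound the integrand using the growth estimate (\ref{est-semigroup}) and $|Ay|\le|y|_{D(A)}$.
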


\begin{proof}
By Lemma \ref{lemma-hg-rules} and the growth estimate (\ref{est-semigroup}) we obtain
\begin{align*}
| S_{s,t} y | &= | S_{t} y - S_{s} y | = \bigg| \int_{s}^{t} S_u Ay \, du \bigg| \leq \int_{s}^{t} | S_u Ay | \, du
\\ &\leq \int_{s}^{t} | S_u | \, | Ay | \, du \leq Me^{\omega T} | y |_{D(A)} |t - s|,
\end{align*}
completing the proof.
\end{proof}

\begin{corollary}\label{cor-orbit-map-1}
For all $s,t \in [0,T]$ we have
\begin{align*}
| S_{s,t} |_{L(D(A),W)} \leq M e^{\omega T} |t-s|.
\end{align*}
\end{corollary}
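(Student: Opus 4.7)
The corollary is a direct consequence of Proposition \ref{prop-orbit-map}. My plan is to unwind the definition of the operator norm $|\cdot|_{L(D(A),W)}$: for a continuous linear operator $T : D(A) \to W$, this norm equals the supremum of $|Ty|_W$ over all $y \in D(A)$ with $|y|_{D(A)} \leq 1$.

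Applying this to $T = S_{s,t}$, I would fix arbitrary $s,t \in [0,T]$ and take any $y \in D(A)$ with $|y|_{D(A)} \leq 1$. Proposition \ref{prop-orbit-map} then yields
\begin{align*}
|S_{s,t} y|_W \leq M e^{\omega T} |y|_{D(A)} |t-s| \leq M e^{\omega T} |t-s|.
\end{align*}
Taking the supremum over all such $y$ gives the claim.

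There is no real obstacle here; the only subtlety is to ensure that $S_{s,t}$ indeed maps $D(A)$ into $W$ continuously, which is immediate since $D(A) \hookrightarrow W$ continuously (by definition of the graph norm) and $S_t - S_s \in L(W)$, so the restriction is bounded from $D(A)$ to $W$.
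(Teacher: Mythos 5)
Your proof is correct and matches the paper's intent: the corollary is stated without proof in the paper precisely because it is the immediate operator-norm reformulation of Proposition \ref{prop-orbit-map}, obtained by taking the supremum over $y \in D(A)$ with $|y|_{D(A)} \leq 1$, exactly as you do.
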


\begin{corollary}\label{cor-orbit-map-2}
Let $V$ be another Banach space. Then for all $Y \in L(V,D(A))$ and all $s,t \in [0,T]$ we have
\begin{align*}
| S_{s,t} Y |_{L(V,W)} \leq M e^{\omega T} | Y |_{L(V,D(A))} \, |t-s|.
\end{align*}
\end{corollary}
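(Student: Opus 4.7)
The plan is to reduce the statement to Corollary \ref{cor-orbit-map-1} (equivalently, to Proposition \ref{prop-orbit-map}) by evaluating the operator on an arbitrary vector $v \in V$ and then taking the supremum over the unit ball of $V$.

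More concretely, I would fix $s,t \in [0,T]$ and an arbitrary $v \in V$. By definition of the operator norm $L(V,D(A))$, the image $Yv$ lies in $D(A)$ and satisfies $|Yv|_{D(A)} \leq |Y|_{L(V,D(A))} \, |v|$. Since $S_{s,t}$ acts linearly on $W$ and $Y v \in D(A) \subset W$, I can apply Proposition \ref{prop-orbit-map} (or equivalently Corollary \ref{cor-orbit-map-1}) to obtain
\begin{align*}
|(S_{s,t} Y) v| = |S_{s,t} (Y v)| \leq M e^{\omega T} |Y v|_{D(A)} \, |t-s| \leq M e^{\omega T} |Y|_{L(V,D(A))} \, |v| \, |t-s|.
\end{align*}
Taking the supremum over $v \in V$ with $|v| \leq 1$ yields the desired estimate for $|S_{s,t} Y|_{L(V,W)}$.

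There is essentially no obstacle here: the only subtlety is making sure that the composition $S_{s,t} Y$ is interpreted as an element of $L(V,W)$ (which it is, since $S_{s,t} \in L(W)$ and $Y \in L(V,D(A)) \subset L(V,W)$ via the continuous embedding $D(A) \hookrightarrow W$), and that the bound supplied by Proposition \ref{prop-orbit-map} indeed uses the graph norm rather than the ambient norm. Both points are immediate from the preceding material, so the corollary is truly a one-line consequence.
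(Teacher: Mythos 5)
Your proof is correct and is essentially the argument the paper implicitly relies on: evaluate $S_{s,t}Y$ on a generic $v \in V$, apply Proposition \ref{prop-orbit-map} to $Yv \in D(A)$, and take the supremum over the unit ball. The paper offers no separate proof for this corollary, treating it as immediate from Proposition \ref{prop-orbit-map}, which matches your reduction exactly.
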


The following result will be crucial for the analysis of rough convolutions later on.

\begin{proposition}\label{prop-est-quad}
Let $y \in D(A^2)$ be arbitrary. Then we have
\begin{align}\label{est-quad}
| S_{s-r,t-r}y - S_{s-q,t-q}y | \leq M e^{2 \omega T} |y|_{D(A^2)} |t-s| \, |r-q|
\end{align}
for all $s,t \in [0,T]$ with $s \leq t$ and all $q,r \in [0,s]$.
\end{proposition}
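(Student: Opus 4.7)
My plan is to express the left-hand side as an iterated integral of $S_{\sigma-\rho}A^2y$ over the rectangle $[s,t]\times[q,r]$, and then apply the semigroup growth bound (\ref{est-semigroup}). Without loss of generality I assume $q\le r$, since swapping $q$ and $r$ merely flips the sign of the difference. The key point that makes this feasible is that $y\in D(A^2)$ means in particular $Ay\in D(A)$, so Lemma \ref{lemma-hg-rules}(a),(b) ensures that all relevant orbit maps are $C^1$.

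Concretely, I would first fix $\rho\in[q,r]$ and observe that, because $s\ge r\ge \rho$, the map $\sigma\mapsto S_{\sigma-\rho}Ay$ is $C^1$ on $[s,t]$ with derivative $S_{\sigma-\rho}A^2y$; the fundamental theorem of calculus then gives
\[
\int_s^t S_{\sigma-\rho}A^2y\,d\sigma \;=\; S_{t-\rho}Ay - S_{s-\rho}Ay.
\]
Next I would integrate in $\rho$ over $[q,r]$: since $y\in D(A)$ and $r\le s\le t$, the map $\rho\mapsto S_{u-\rho}y$ is $C^1$ on $[q,r]$ with derivative $-S_{u-\rho}Ay$ for both $u=s$ and $u=t$, so a second application of the fundamental theorem of calculus yields
\[
-\int_q^r\!\!\int_s^t S_{\sigma-\rho}A^2y\,d\sigma\,d\rho \;=\; (S_{t-r}-S_{t-q})y - (S_{s-r}-S_{s-q})y \;=\; S_{s-r,t-r}y - S_{s-q,t-q}y.
\]

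Finally, I would take absolute values. Since $\sigma\in[s,t]$ and $\rho\in[q,r]\subset[0,s]$, we have $\sigma-\rho\in[0,T]$, so (\ref{est-semigroup}) gives $|S_{\sigma-\rho}|\le Me^{\omega T}$. Integrating over a rectangle of area $(t-s)(r-q)$ produces
\[
|S_{s-r,t-r}y - S_{s-q,t-q}y| \;\le\; Me^{\omega T}|A^2y|\,(t-s)(r-q) \;\le\; Me^{\omega T}|y|_{D(A^2)}\,|t-s|\,|r-q|,
\]
which is slightly sharper than (\ref{est-quad}) by a factor $e^{\omega T}$ and hence implies it. There is no real obstacle beyond careful bookkeeping of signs, integration limits, and verifying that every semigroup argument remains non-negative so that Lemma \ref{lemma-hg-rules} applies.
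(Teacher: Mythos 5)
Your proof is correct, and it takes a genuinely different — and considerably more direct — route than the paper. The paper first reduces to a bounded semigroup via Lemma~\ref{lemma-change-omega}, renorms so that the semigroup becomes contractive via Lemma~\ref{lemma-semigroup-eq-norm}, invokes the Yosida approximation to replace the semigroup by a uniformly continuous \emph{group}, proves the estimate for groups (Lemma~\ref{lemma-est-group}) where the key algebraic step $ (S_{t}-S_{s})(S_{-r}-S_{-q})y$ is available because negative times make sense, and finally passes to the limit. You instead notice that the hypotheses $s\le t$ and $q,r\le s$ keep every semigroup argument $\sigma-\rho$ inside $[0,T]$, so the negative-time trick is unnecessary: an iterated application of the fundamental theorem of calculus to the $C^{1}$ orbit maps $\sigma\mapsto S_{\sigma-\rho}Ay$ (valid since $Ay\in D(A)$) and $\rho\mapsto S_{u-\rho}y$ (valid since $y\in D(A)$) directly represents the difference as $-\int_{q}^{r}\int_{s}^{t}S_{\sigma-\rho}A^{2}y\,d\sigma\,d\rho$, after which the bound (\ref{est-semigroup}) gives the estimate immediately. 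Your approach avoids the machinery of equivalent norms and Yosida approximation entirely, uses only Lemma~\ref{lemma-hg-rules}, and in fact yields the slightly sharper constant $Me^{\omega T}$ in place of the paper's $Me^{2\omega T}$. The WLOG reduction to $q\le r$ is harmless since swapping $q$ and $r$ only flips the sign of the difference.
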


Before we provide the proof of Proposition \ref{prop-est-quad}, let us state some consequences.

\begin{corollary}\label{cor-orbit-map-quad-1}
For all $s,t \in [0,T]$ with $s \leq t$ and all $q,r \in [0,s]$ we have
\begin{align*}
| S_{s-r,t-r} - S_{s-q,t-q} |_{L(D(A^2),W)} \leq M e^{\omega T} |t-s| \, |r-q|.
\end{align*}
\end{corollary}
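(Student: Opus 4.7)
The corollary is a direct consequence of Proposition \ref{prop-est-quad}, and the plan is essentially to pass from a pointwise bound on elements of $D(A^2)$ to an operator-norm bound on $L(D(A^2),W)$. Concretely, for fixed $s,t \in [0,T]$ with $s \leq t$ and $q,r \in [0,s]$, the map $T_{q,r,s,t} := S_{s-r,t-r} - S_{s-q,t-q} = (S_{t-r} - S_{s-r}) - (S_{t-q} - S_{s-q})$ is a continuous linear operator on $W$ (a linear combination of bounded semigroup operators), so in particular its restriction to the Banach space $(D(A^2), |\cdot|_{D(A^2)})$ is bounded into $W$.

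First I would apply Proposition \ref{prop-est-quad} to an arbitrary $y \in D(A^2)$, obtaining $| T_{q,r,s,t} y | \leq M e^{2\omega T} |y|_{D(A^2)} |t-s| |r-q|$. Then I would take the supremum over $y \in D(A^2)$ with $|y|_{D(A^2)} \leq 1$, which by definition of the operator norm yields
\begin{align*}
| S_{s-r,t-r} - S_{s-q,t-q} |_{L(D(A^2),W)} \leq M e^{2\omega T} |t-s| \, |r-q|.
\end{align*}
This is the announced estimate (up to the exponent on $e^{\omega T}$, which in the statement reads $e^{\omega T}$ rather than $e^{2\omega T}$; presumably a typographical shift inherited from the proposition, but in either case the argument is the same once Proposition \ref{prop-est-quad} is in hand).

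There is essentially no obstacle: the proposition already does all the real work, namely controlling the second finite difference of the orbit map by the product $|t-s||r-q|$ via two applications of the fundamental theorem of calculus together with the $D(A^2)$ regularity. The corollary is purely a repackaging into operator-norm language, which is why I would devote at most a couple of lines to it in the actual proof.
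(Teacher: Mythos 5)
Your argument is exactly the intended one: the paper gives no separate proof of the corollary, and it is plainly just the passage from the pointwise estimate of Proposition~\ref{prop-est-quad} to the operator-norm statement by taking the supremum over the unit ball of $(D(A^2),|\cdot|_{D(A^2)})$. You also correctly spot the inconsistency in the exponent: Proposition~\ref{prop-est-quad} yields the constant $M e^{2\omega T}$, so the corollary as printed (with $M e^{\omega T}$) carries a typo, and indeed when the author later invokes the analogous Corollary~\ref{cor-orbit-map-quad-2} inside the proof of Lemma~\ref{lemma-conv-2-pre} the bound $M e^{2\omega T}$ is used. Your derivation and the flagged discrepancy are both accurate; nothing is missing.
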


\begin{corollary}\label{cor-orbit-map-quad-2}
Let $V$ be another Banach space, and let $Y \in L(V,D(A^2))$ be arbitrary. Then for all $s,t \in [0,T]$ with $s \leq t$ and all $q,r \in [0,s]$ we have
\begin{align*}
| S_{s-r,t-r}Y - S_{s-q,t-q}Y |_{L(V,W)} \leq M e^{\omega T} | Y |_{L(V,D(A^2))} |t-s| \, |r-q|.
\end{align*}
\end{corollary}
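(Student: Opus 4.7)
The plan is to reduce Corollary \ref{cor-orbit-map-quad-2} to Corollary \ref{cor-orbit-map-quad-1} (equivalently, to Proposition \ref{prop-est-quad}) by a pointwise argument on the unit ball of $V$. This follows exactly the pattern used to derive Corollary \ref{cor-orbit-map-1} from Proposition \ref{prop-orbit-map}: one lifts an estimate on single vectors in $D(A^2)$ to an operator-norm estimate on $L(V,\cdot)$ by composing with $Y$.

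Concretely, I would fix $v \in V$ with $|v| \leq 1$ and set $y := Yv \in D(A^2)$. By the very definition of the norm on $L(V,D(A^2))$, one has
\begin{align*}
|y|_{D(A^2)} = |Yv|_{D(A^2)} \leq |Y|_{L(V,D(A^2))} \, |v| \leq |Y|_{L(V,D(A^2))}.
\end{align*}
Now apply Corollary \ref{cor-orbit-map-quad-1} (or directly Proposition \ref{prop-est-quad}) to $y$, which yields
\begin{align*}
\bigl| (S_{s-r,t-r}Y - S_{s-q,t-q}Y) v \bigr|_W
= \bigl| (S_{s-r,t-r} - S_{s-q,t-q}) y \bigr|_W
\leq M e^{\omega T}\, |y|_{D(A^2)} \, |t-s|\, |r-q|.
\end{align*}
Chaining the two inequalities and taking the supremum over all $v \in V$ with $|v| \leq 1$ gives the claimed operator-norm bound.

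There is no real obstacle: the only thing to check is that the composition $S_{s-r,t-r} Y$ indeed lies in $L(V,W)$, which is immediate from the continuous inclusion $D(A^2) \hookrightarrow W$ and the fact that the difference $S_{s-r,t-r} - S_{s-q,t-q}$ is a bounded operator on $W$. The hypotheses $s\le t$ and $q,r\in[0,s]$ guarantee that the time arguments $s-r,\ t-r,\ s-q,\ t-q$ lie in $[0,T]$, so that Proposition \ref{prop-est-quad} is applicable and no adjustments to the constants are needed.
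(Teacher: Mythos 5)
Your proof is correct and matches the paper's implicit derivation: Corollary \ref{cor-orbit-map-quad-2} is the routine lift of the operator-norm estimate in Corollary \ref{cor-orbit-map-quad-1} (itself a restatement of Proposition \ref{prop-est-quad}) to $L(V,W)$, obtained by applying the bound to $y := Yv$ for unit $v \in V$ and taking the supremum, exactly as Corollary \ref{cor-orbit-map-2} is derived from Proposition \ref{prop-orbit-map}. One small caveat: your parenthetical ``(or directly Proposition \ref{prop-est-quad})'' is not quite an interchangeable alternative, since Proposition \ref{prop-est-quad} carries the constant $M e^{2\omega T}$ rather than $M e^{\omega T}$; in fact the paper's Corollaries \ref{cor-orbit-map-quad-1} and \ref{cor-orbit-map-quad-2} appear to have silently dropped the factor $2$ in the exponent when passing from Proposition \ref{prop-est-quad}, so if one wishes to derive the corollary from the proposition itself rather than from Corollary \ref{cor-orbit-map-quad-1}, the resulting constant is $M e^{2\omega T}$.
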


Now, we prepare some auxiliary results for the proof of Proposition \ref{prop-est-quad}.

\begin{lemma}\label{lemma-est-group}
Suppose that $(S_t)_{t \in \bbr}$ is a $C_0$-group satisfying
\begin{align*}
| S_t | \leq e^{\omega |t|} \quad \text{for all $t \in \bbr$.}
\end{align*}
Let $y \in D(A^2)$ be arbitrary. Then we have (\ref{est-quad}) for all $s,t \in [0,T]$ with $s \leq t$ and all $q,r \in [0,s]$.
\end{lemma}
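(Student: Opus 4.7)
The plan is to exploit the group property in order to factor the common increment $S_t - S_s$ out of the difference $S_{s-r,t-r}y - S_{s-q,t-q}y$, and then estimate the two remaining factors separately. Using $S_{t-r} = S_{-r} S_t$, $S_{s-r} = S_{-r} S_s$ and the analogous identities for $q$, I first rewrite
\begin{align*}
S_{s-r,t-r}y - S_{s-q,t-q}y = (S_{-r} - S_{-q})\, z, \qquad \text{where } z := S_t y - S_s y.
\end{align*}

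Next, I would bound $z$ in the graph norm of $A$. Since $y \in D(A^2)$, we have $Ay \in D(A)$, so Lemma \ref{lemma-restricted-semigroup} lets us apply Lemma \ref{lemma-hg-rules}(d) to the restricted $C_0$-semigroup on $D(A)$; this yields the representation $z = \int_s^t S_u Ay \, du$ as an element of $D(A)$ with $A z = \int_s^t S_u A^2 y \, du$. The group assumption $|S_u| \leq e^{\omega |u|}$ gives $|S_u| \leq e^{\omega T}$ for all $u \in [-T,T]$ (note that $\omega \geq 0$ is forced by $|S_t||S_{-t}| \geq 1$), so
\begin{align*}
|Az| \leq e^{\omega T} |A^2 y| \, |t-s|.
\end{align*}

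Finally, since $z \in D(A)$ and $(S_t)_{t \in \bbr}$ is a $C_0$-group, the fundamental theorem of calculus applies in both time directions and produces
\begin{align*}
S_{-r}z - S_{-q}z = \int_{-q}^{-r} S_v Az \, dv.
\end{align*}
As $q,r \in [0,s] \subset [0,T]$, the integration range is contained in $[-T,0]$, so $|S_v| \leq e^{\omega T}$ there, and
\begin{align*}
|(S_{-r} - S_{-q})z| \leq e^{\omega T} |r-q| \cdot |Az| \leq e^{2\omega T} |A^2 y| \, |t-s| \, |r-q|.
\end{align*}
Combining the two steps and using $|A^2 y| \leq |y|_{D(A^2)}$, together with $M = 1$ in the group setting, yields (\ref{est-quad}).

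There is no genuine obstacle; the task is bookkeeping. The only points that need care are (i) justifying that $z \in D(A)$ with the explicit formula for $Az$, which is exactly where Lemma \ref{lemma-restricted-semigroup} is needed (so that Lemma \ref{lemma-hg-rules}(d) is applied to $Ay$ in the semigroup on $D(A)$ rather than only to $y$), and (ii) keeping track of the orientation of the integration limits when $r$ and $q$ are not ordered, which does not affect the absolute bound.
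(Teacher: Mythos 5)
Your proof is correct and takes essentially the same approach as the paper: both exploit commutativity to factor the difference into $(S_t-S_s)(S_{-r}-S_{-q})y$ and then apply the graph-norm Lipschitz estimate (Proposition \ref{prop-orbit-map} via Lemma \ref{lemma-restricted-semigroup}) twice, once to extract $|t-s|$ and once to extract $|r-q|$. The only difference is that you apply the two estimates in the opposite order (bounding $z=(S_t-S_s)y$ in $D(A)$ first, then $(S_{-r}-S_{-q})z$), which is purely cosmetic.
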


\begin{proof}
Using Proposition \ref{prop-orbit-map} and Lemma \ref{lemma-restricted-semigroup} we obtain
\begin{align*}
| S_{s-r,t-r}y - S_{s-q,t-q}y | &= | (S_{t-r} - S_{s-r})y - (S_{t-q} - S_{s-q})y |
\\ &= | (S_t - S_s)(S_{-r} - S_{-q}) y |
\\ &\leq e^{\omega T} | (S_{-r} - S_{-q}) y |_{D(A)} |t-s|
\\ &\leq e^{2 \omega T} |y|_{D(A^2)} |t-s| \, |r-q|,
\end{align*}
completing the proof.
\end{proof}

The following auxiliary result is easily verified.

\begin{lemma}\label{lemma-change-omega}
Let $(S_t)_{t \geq 0}$ be a $C_0$-semigroup with generator $A$ satisfying (\ref{est-semigroup}). Then the family $(T_t)_{t \geq 0}$ given by $T_t := e^{-\omega t} S_t$ for all $t \geq 0$ is a $C_0$-semigroup with generator $B := A - \omega$ and
\begin{align}\label{semigroup-bounded}
| T_t | \leq M \quad \text{for all $t \geq 0$.}
\end{align}
\end{lemma}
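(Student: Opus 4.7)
The plan is to verify directly the three defining properties of a $C_0$-semigroup for the rescaled family $(T_t)_{t \geq 0}$, then check the uniform bound (\ref{semigroup-bounded}), and finally compute the infinitesimal generator. Since this is a classical rescaling, I expect no real obstacle; the only point that needs a moment of care is that the generator identification should give $D(B) = D(A)$ (with equality, not mere inclusion).

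First I would verify the semigroup property. Clearly $T_0 = e^0 S_0 = \Id$, and for $s,t \geq 0$ the computation
\[
T_s T_t = e^{-\omega s} e^{-\omega t} S_s S_t = e^{-\omega(s+t)} S_{s+t} = T_{s+t}
\]
uses the semigroup law for $(S_t)$. Strong continuity at zero is immediate from $e^{-\omega t} \to 1$ and $S_t y \to y$ as $t \to 0$. The norm estimate follows from (\ref{est-semigroup}):
\[
|T_t| = e^{-\omega t} |S_t| \leq e^{-\omega t} \cdot M e^{\omega t} = M
\]
for all $t \geq 0$, which is exactly (\ref{semigroup-bounded}).

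For the generator, the algebraic identity
\[
\frac{T_h y - y}{h} = e^{-\omega h} \cdot \frac{S_h y - y}{h} + \frac{e^{-\omega h} - 1}{h} \, y
\]
is the main tool. Since $e^{-\omega h} \to 1$ and $(e^{-\omega h} - 1)/h \to -\omega$ as $h \to 0^+$, the left-hand side converges in $W$ if and only if $(S_h y - y)/h$ converges, i.e.\ if and only if $y \in D(A)$. In that case the limit equals $A y - \omega y$, so $D(B) = D(A)$ and $B = A - \omega$, completing the verification.
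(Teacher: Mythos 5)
Your proof is correct and is exactly the standard verification; the paper itself omits the proof, calling the lemma ``easily verified,'' and your argument (semigroup property, bound, and generator identification via the identity $T_h y - y = e^{-\omega h}(S_h y - y) + (e^{-\omega h}-1)y$) is the natural one. The care you take to establish $D(B) = D(A)$ with equality, not just inclusion, is precisely the point that justifies calling $B = A - \omega$ the generator.
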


The next auxiliary result is a consequence of Lemma I.5.1 and the proof of Thm. I.5.2 in \cite{Pazy}.

\begin{lemma}\label{lemma-semigroup-eq-norm}
Let $(T_t)_{t \geq 0}$ be a $C_0$-semigroup satisfying (\ref{semigroup-bounded}). Then there exists an equivalent norm $\| \cdot \|$ on $W$ such that
\begin{align}\label{equiv-norm}
|y| \leq \| y \| \leq M |y| \quad \text{for all $y \in W$.}
\end{align}
and $(T_t)_{t \geq 0}$ is a semigroup of contractions under the norm $\| \cdot \|$.
\end{lemma}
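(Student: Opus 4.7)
The plan is to define the desired norm by
\begin{align*}
\| y \| := \sup_{t \geq 0} | T_t y |, \quad y \in W,
\end{align*}
and then verify the three assertions: that $\|\cdot\|$ is a norm, that it satisfies the equivalence (\ref{equiv-norm}), and that $(T_t)_{t \geq 0}$ is a contraction semigroup with respect to it.

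First, I would observe that the supremum is finite for every $y \in W$ because of the bound (\ref{semigroup-bounded}): $|T_t y| \leq M |y|$ for all $t \geq 0$, which immediately yields $\|y\| \leq M|y|$. Homogeneity and the triangle inequality for $\|\cdot\|$ follow pointwise from the corresponding properties of $|\cdot|$ and the fact that each $T_t$ is linear, together with the standard fact that a supremum of seminorms is a seminorm. For positive definiteness, I use that $T_0 = \Id$, so $\|y\| \geq |T_0 y| = |y|$; this simultaneously gives the lower bound in (\ref{equiv-norm}) and shows that $\|y\| = 0$ forces $y = 0$.

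Next, I would establish the contraction property. For any $s \geq 0$ and $y \in W$, the semigroup law gives
\begin{align*}
\| T_s y \| = \sup_{t \geq 0} | T_t T_s y | = \sup_{t \geq 0} | T_{t+s} y | = \sup_{u \geq s} | T_u y | \leq \sup_{u \geq 0} | T_u y | = \| y \|,
\end{align*}
so $T_s$ has operator norm at most one with respect to $\|\cdot\|$, as required.

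There is essentially no serious obstacle here; the proof is a one-line construction followed by three routine verifications. The only point deserving any care is the positive definiteness of $\|\cdot\|$, which relies precisely on the fact that $T_0 = \Id$ so that $\|\cdot\|$ dominates $|\cdot|$; without this, one would only have a seminorm. I would not need to invoke strong continuity of $(T_t)_{t \geq 0}$ for any part of the argument, since the construction depends only on the algebraic semigroup law and the uniform bound (\ref{semigroup-bounded}).
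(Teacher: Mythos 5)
Your proof is correct and is precisely the classical renorming construction from Pazy (Lemma I.5.1 and Theorem I.5.2), which is exactly what the paper's own proof defers to; there is no meaningful difference in approach.
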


Let $(T_t)_{t \geq 0}$ be a semigroup of contractions with generator $B$. By the Hille-Yosida theorem we have $(0,\infty) \subset \rho(B)$, where $\rho(B)$ denotes the resolvent set of $B$. For $\lambda > 0$ we define the \emph{Yosida approximation} $B_{\lambda} \in L(W)$ as
\begin{align*}
B_{\lambda} := \lambda A R(\lambda,B) = \lambda ^2 R(\lambda,B) - \lambda,
\end{align*}
where $R(\lambda,B) \in L(W)$ denotes the resolvent given by
\begin{align*}
R(\lambda,B) := (\lambda - B)^{-1}.
\end{align*}
Using Lemma I.3.3, Lemma I.3.4 and equations (3.12) and (3.14) in \cite{Pazy}, we obtain the following auxiliary result.

\begin{lemma}\label{lemma-Yosida-approx}
Let $(T_t)_{t \geq 0}$ be a $C_0$-semigroup of contractions with generator $B$. Then the following statements are true:
\begin{enumerate}
\item For each $\lambda > 0$ the family $(e^{t B_{\lambda}})_{t \in \bbr}$ is a uniformly continuous group of contractions.

\item For all $t \geq 0$ and $y \in W$ we have
\begin{align*}
\lim_{\lambda \to \infty} e^{t B_{\lambda}} y = T_t y.
\end{align*}
\item For all $t \geq 0$ and $y \in D(B)$ we have
\begin{align*}
\lim_{\lambda \to \infty} B_{\lambda} y = B y.
\end{align*}
\end{enumerate}
\end{lemma}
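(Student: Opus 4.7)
The plan is to follow the classical Yosida scheme, treating (1), (3), (2) in that order, since the convergence in (2) rests on the first two.

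For (1), since $R(\lambda,B) \in L(W)$ by Hille--Yosida, the operator $B_\lambda = \lambda^2 R(\lambda,B) - \lambda\,\Id$ is bounded, so the exponential series defines a uniformly continuous group $(e^{tB_\lambda})_{t \in \bbr}$. The contraction bound on $t \geq 0$ follows from the commutativity of $\lambda^2 R(\lambda,B)$ and $-\lambda\,\Id$: the factorization
\begin{align*}
e^{tB_\lambda} = e^{-\lambda t}\, e^{\lambda^2 t R(\lambda,B)}
\end{align*}
together with the Hille--Yosida estimate $|R(\lambda,B)| \leq 1/\lambda$ (valid because $(T_t)_{t \geq 0}$ is a contraction semigroup) yields $|e^{tB_\lambda}| \leq e^{-\lambda t}\sum_{n \geq 0} (\lambda t)^n/n! = 1$.

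For (3), the core identity is $\lambda R(\lambda,B) y - y = R(\lambda,B)(\lambda y - (\lambda - B) y) = R(\lambda,B) B y$ for $y \in D(B)$, which together with $|R(\lambda,B)| \leq 1/\lambda$ gives $\lambda R(\lambda,B) y \to y$ on $D(B)$. Density of $D(B)$ in $W$ and the uniform bound $|\lambda R(\lambda,B)| \leq 1$ extend this convergence to all $y \in W$. Since $B$ commutes with $R(\lambda,B)$ on $D(B)$, one can rewrite $B_\lambda y = \lambda B R(\lambda,B) y = \lambda R(\lambda,B) B y$ for $y \in D(B)$; applying the just-proved convergence to the vector $By$ then yields $B_\lambda y \to By$.

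The main obstacle is (2). For $y \in D(B)$, I would exploit that $B_\lambda$ and $B_\mu$ are polynomials in the commuting resolvents $R(\lambda,B)$ and $R(\mu,B)$ (commutativity coming from the resolvent identity), so in particular $B_\lambda$, $B_\mu$, $e^{tB_\lambda}$ and $e^{sB_\mu}$ all pairwise commute. Differentiating $s \mapsto e^{(t-s)B_\mu} e^{sB_\lambda} y$ and integrating over $[0,t]$ produces the telescoping identity
\begin{align*}
e^{tB_\lambda} y - e^{tB_\mu} y = \int_0^t e^{(t-s)B_\mu} e^{sB_\lambda} (B_\lambda y - B_\mu y)\, ds,
\end{align*}
and the contractivity from (1) yields the key estimate $|e^{tB_\lambda} y - e^{tB_\mu} y| \leq t\, |B_\lambda y - B_\mu y|$, which tends to $0$ by (3). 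Hence $(e^{tB_\lambda} y)_{\lambda}$ is Cauchy, locally uniformly in $t \geq 0$; denote its limit by $\tilde T_t y$. Passing $\lambda \to \infty$ in $\tfrac{d}{dt} e^{tB_\lambda} y = e^{tB_\lambda} B_\lambda y$ and using (3) together with the contraction bound identifies $\tilde T_t y$ as the unique mild solution of the abstract Cauchy problem $\dot u = Bu$, $u(0) = y$, which is $T_t y$. Finally, a standard $3\varepsilon$ argument based on the uniform bound $|e^{tB_\lambda}| \leq 1$ and density of $D(B)$ extends the convergence to all $y \in W$.
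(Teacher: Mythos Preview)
Your argument is correct and reproduces the classical Yosida scheme (Pazy, Lemmas I.3.3--I.3.4 and the proof of Theorem I.3.1), which is precisely what the paper cites in lieu of a proof.

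One small logical point in (2): your identification of $\tilde T_t y$ with $T_t y$ via the mild formulation implicitly uses the convergence $e^{sB_\lambda}(By) \to \tilde T_s(By)$, but $By$ need not lie in $D(B)$, so the $3\varepsilon$ extension of $\tilde T_t$ to all of $W$ should come \emph{before} the identification, not after. Alternatively, since here $(T_t)_{t\ge 0}$ is already given, you can bypass the whole Cauchy/identification step by differentiating $s \mapsto e^{(t-s)B_\lambda}T_s y$ for $y\in D(B)$; using that $B_\lambda$ commutes with $T_s$ this yields directly
\[
|T_t y - e^{tB_\lambda} y| \le \int_0^t |e^{(t-s)B_\lambda}T_s(By - B_\lambda y)|\,ds \le t\,|By - B_\lambda y| \to 0,
\]
which gives (2) on $D(B)$ immediately and then on $W$ by density.
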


\begin{proof}
Lemma I.3.3, Lemma I.3.4 and eqn. (3.14) in \cite{Pazy}.
\end{proof}

Now, we are ready to provide the proof of Proposition \ref{prop-est-quad}.

\begin{proof}[Proof of Proposition \ref{prop-est-quad}]
By Lemma \ref{lemma-change-omega} the family $(T_t)_{t \geq 0}$ given by
$T_t := e^{-\omega t} S_t$ for all $t \geq 0$ is a $C_0$-semigroup with generator $B := A - \omega$ and we have (\ref{semigroup-bounded}). According to Lemma \ref{lemma-semigroup-eq-norm} there exists an equivalent norm $\| \cdot \|$ on $W$ such that we have (\ref{equiv-norm}) and $(T_t)_{t \geq 0}$ is a semigroup of contractions under the norm $\| \cdot \|$. For $\lambda > 0$ we define the family $(T_t^{\lambda})_{t \in \bbr}$ as $T_t^{\lambda} := e^{t B_{\lambda}}$ for all $t \in \bbr$. According to Lemma \ref{lemma-Yosida-approx} the family $(T_t^{\lambda})_{t \in \bbr}$ is a uniformly continuous group of contractions and we have
\begin{align*}
\lim_{\lambda \to \infty} T_t^{\lambda} y = T_t y \quad \text{for all $t \geq 0$ and $y \in W$.}
\end{align*}
Now, for $\lambda > 0$ define the family $(S_t^{\lambda})_{t \in \bbr}$ as $S_t^{\lambda} := e^{\omega t} T_t^{\lambda}$ for all $t \in \bbr$. Then $(S_t^{\lambda})_{t \in \bbr}$ is a uniformly continuous $C_0$-group satisfying
\begin{align*}
\| S_t^{\lambda} \| \leq e^{\omega |t|} \quad \text{for all $t \in \bbr$,}
\end{align*}
and we have
\begin{align}\label{conv-group}
\lim_{\lambda \to \infty} S_t^{\lambda} y = S_t y \quad \text{for all $t \geq 0$ and $y \in W$.}
\end{align}
Now, let $s,t \in [0,T]$ with $s \leq t$ and $q,r \in [0,s]$ be arbitrary. Using (\ref{equiv-norm}), (\ref{conv-group}) and Lemma \ref{lemma-est-group} we obtain
\begin{align*}
&| (S_{t-r} - S_{s-r})y - (S_{t-q} - S_{s-q})y | \leq \| (S_{t-r} - S_{s-r})y - (S_{t-q} - S_{s-q})y \|
\\ &= \lim_{\lambda \to \infty} \| (S_{t-r}^{\lambda} - S_{s-r}^{\lambda})y - (S_{t-q}^{\lambda} - S_{s-q}^{\lambda})y \|
\\ &\leq \lim_{\lambda \to \infty} e^{2 \omega T} \| y \|_{D(A_{\lambda}^2)} |t-s| \cdot |r-q|.
\end{align*}
Furthermore, by Lemma \ref{lemma-Yosida-approx}, Lemma \ref{lemma-restricted-semigroup} and (\ref{equiv-norm}) we have
\begin{align*}
\lim_{\lambda \to \infty} \| y \|_{D(A_{\lambda}^2)} &= \lim_{\lambda \to \infty} \big( \| y \| + \| A_{\lambda} y \| + \| A_{\lambda}^2 y \| \big)
\\ &= \| y \| + \| A y \| + \| A^2 y \|
\\ &\leq M \big( | y | + | A y | + | A^2 y | \big) = M |y|_{D(A)},
\end{align*}
completing the proof.
\end{proof}

\section{Compositions of regular paths with functions}\label{sec-comp-reg}

In this section we briefly deal with compositions of regular paths with functions. Let us fix a time horizon $T \in \bbr_+$, and let $W,\bar{W}$ be Banach spaces.

\begin{definition}
We denote by $\Lip([0,T] \times W, \bar{W})$ the space of all continuous functions $f_0 : [0,T] \times W \to \bar{W}$ such that for some constant $L > 0$ we have
\begin{align*}
| f_0(t,y) - f_0(t,z) | \leq L |y-z| \quad \text{for all $t \in [0,T]$ and $y,z \in W$.}
\end{align*}
\end{definition}

Note that for each $f_0 \in \Lip([0,T] \times W, \bar{W})$ the seminorm
\begin{align*}
\| f_0 \|_{\Lip} := \sup_{t \in [0,T]} |f(t,0)| + \sup_{t \in [0,T]} | f_0(t,\cdot) |_{\Lip}
\end{align*}
is finite.

\begin{remark}
Let $g_0 \in \Lip(W,\bar{W})$ be arbitrary, and define the function
\begin{align*}
f_0 : [0,T] \times W \to \bar{W}, \quad f_0(t,y) := g_0(y).
\end{align*}
Then we have $f_0 \in \Lip([0,T] \times W, \bar{W})$ with $\| f_0 \|_{\Lip} = |g_0(0)| + | g_0 |_{\Lip}$.
\end{remark}

\begin{lemma}\label{lemma-lin-growth}
Let $f_0 \in \Lip([0,T] \times W, \bar{W})$ be arbitrary. Then we have
\begin{align}\label{Lip-1}
| f_0(t,y) - f_0(t,z) | &\leq \| f_0 \|_{\Lip} |y-z|, \quad \text{$y,z \in W$ and $t \in [0,T]$,}
\\ \label{Lip-2} |f_0(t,y)| &\leq \| f_0 \|_{\Lip} (1 + |y|), \quad \text{$y \in W$ and $t \in [0,T]$.}
\end{align}
\end{lemma}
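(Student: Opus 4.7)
The lemma is essentially a direct unpacking of the definition of the seminorm $\|f_0\|_{\Lip}$, so the proof will be short. I would split it into the two estimates.

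For (\ref{Lip-1}), the plan is to observe that for each fixed $t \in [0,T]$ the Lipschitz seminorm $|f_0(t,\cdot)|_{\Lip}$ bounds $|f_0(t,y)-f_0(t,z)|/|y-z|$, and by definition of $\|f_0\|_{\Lip}$ we have $|f_0(t,\cdot)|_{\Lip} \leq \sup_{s \in [0,T]} |f_0(s,\cdot)|_{\Lip} \leq \|f_0\|_{\Lip}$. Multiplying through by $|y-z|$ yields the claim.

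For (\ref{Lip-2}), I would use the triangle inequality together with (\ref{Lip-1}) applied to $z=0$:
\begin{align*}
|f_0(t,y)| \leq |f_0(t,y)-f_0(t,0)| + |f_0(t,0)| \leq \|f_0\|_{\Lip}\,|y| + \sup_{s\in[0,T]}|f_0(s,0)|,
\end{align*}
and then bound $\sup_{s\in[0,T]}|f_0(s,0)| \leq \|f_0\|_{\Lip}$ using the definition of $\|f_0\|_{\Lip}$ again. Factoring gives $|f_0(t,y)| \leq \|f_0\|_{\Lip}(1+|y|)$.

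There is no real obstacle here; the only thing to be careful about is that both summands in the definition of $\|f_0\|_{\Lip}$ are nonnegative, so dropping one of them in each estimate is legitimate. The result is two lines of calculation per inequality.
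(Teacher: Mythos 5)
Your proof is correct and follows essentially the same approach as the paper: for (\ref{Lip-1}) you bound the time-uniform Lipschitz seminorm by $\|f_0\|_{\Lip}$, and for (\ref{Lip-2}) you use the triangle inequality via $f_0(t,0)$ and then bound both resulting terms by $\|f_0\|_{\Lip}$. The only cosmetic difference is that the paper introduces the intermediate constant $L := \sup_{t \in [0,T]} |f_0(t,\cdot)|_{\Lip}$ explicitly; the substance is identical.
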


\begin{proof}
Setting $L := \sup_{t \in [0,T]} | f_0(t,\cdot) |_{\Lip}$, we have
\begin{align*}
| f_0(t,y) - f_0(t,z) | &\leq L |y-z|, \quad \text{$y,z \in W$ and $t \in [0,T]$,}
\end{align*}
showing (\ref{Lip-1}). Now, for all $y \in W$ and $t \in [0,T]$ we obtain
\begin{align*}
|f_0(t,y)| &\leq |f_0(t,y) - f_0(t,0)| + |f_0(t,0)|
\\ &\leq L |y| + \sup_{s \in [0,T]} |f_0(s,0)| = \| f_0 \|_{\Lip} (1 + |y|),
\end{align*}
proving (\ref{Lip-2}).
\end{proof}

For a path $Y : [0,T] \to W$ and a function $f_0 : [0,T] \times W \to \bar{W}$ we denote by $f_0(Y) : [0,T] \to \bar{W}$ the new path
\begin{align*}
f_0(Y)_t := f_0(t,Y_t), \quad t \in [0,T].
\end{align*}
If the path $Y$ is continuous and $f_0 \in \Lip([0,T] \times W, \bar{W})$, then the path $f_0(Y)$ is continuous, and hence bounded.

\section{Compositions of controlled rough paths with functions}\label{sec-comp-rough}

In this section we deal with compositions of controlled rough paths with functions. Let $V,W,\bar{W}$ be Banach spaces. We fix an index $\alpha \in (\frac{1}{3},\frac{1}{2}]$ and a time horizon $T \in \bbr_+$. Furthermore, let $\mathbf{X} = (X,\bbx) \in \scrc^{\alpha}([0,T],V)$ be a rough path.

\subsection{Compositions with time-dependent smooth functions}

In this subsection we consider compositions of controlled rough paths with time-dependent smooth functions.

\begin{definition}
Let $\gamma \in (0,1]$ and $n \in \bbn$ be arbitrary. We denote by $C_b^{\gamma,n}([0,T] \times W, \bar{W})$ the space of all functions $f : [0,T] \times W \to \bar{W}$ such that:
\begin{enumerate}
\item We have $f(t,\cdot) \in C_b^n(W,\bar{W})$ for all $t \in [0,T]$.

\item We have $f(\cdot,y),\ldots,D_y^{n-1}f(\cdot,y) \in \calc^{\gamma}([0,T],\bar{W})$ for each $y \in W$.

\item We have
\begin{align*}
\| f \|_{C_b^{\gamma,n}} := \sup_{t \in [0,T]} \| f(t,\cdot) \|_{C_b^n} + \sum_{k=0}^{n-1} \sup_{y \in W} \| D_y^k f(\cdot,y) \|_{\gamma} < \infty.
\end{align*}
\end{enumerate}
\end{definition}

\begin{remark}
Let $g \in C_b^n(W,\bar{W})$ be arbitrary, and define the function
\begin{align*}
f : [0,T] \times W \to \bar{W}, \quad f(t,y) := g(y).
\end{align*}
Then we have $f \in C_b^{\gamma,n}([0,T] \times W, \bar{W})$ with $\| f \|_{C_b^{\gamma,n}} = \| g \|_{C_b^n}$.
\end{remark}

\begin{proposition}\label{prop-comp-Hoelder-beta}
Let $\beta, \gamma \in (0,1]$ with $\beta \leq \gamma$ be arbitrary. Furthermore, let $Y \in \calc^{\beta}([0,T],W)$ and $f \in C_b^{\gamma,1}([0,T] \times W, \bar{W})$ be arbitrary. Then we have $f(Y) \in \calc^{\beta}([0,T],\bar{W})$.
\end{proposition}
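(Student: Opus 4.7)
The plan is to show $\beta$-Hölder continuity of $f(Y)$ by decomposing the increment $f(Y)_{s,t}$ into a space-increment part (at fixed time) and a time-increment part (at fixed state), estimating each factor separately using the two ingredients of the norm $\|f\|_{C_b^{\gamma,1}}$, and then absorbing the $\gamma$-Hölder contribution into a $\beta$-Hölder bound using the time horizon $T$.

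First I would fix $s,t \in [0,T]$ and write
\begin{align*}
f(Y)_{s,t} = f(t,Y_t) - f(s,Y_s) = \bigl( f(t,Y_t) - f(t,Y_s) \bigr) + \bigl( f(t,Y_s) - f(s,Y_s) \bigr).
\end{align*}
For the first summand, since $f(t,\cdot) \in C_b^1(W,\bar W)$ with $\|D_y f(t,\cdot)\|_\infty \leq \|f\|_{C_b^{\gamma,1}}$ uniformly in $t$, the mean value inequality applied to the Fr\'echet derivative yields
\begin{align*}
| f(t,Y_t) - f(t,Y_s) | \leq \|f\|_{C_b^{\gamma,1}} \, |Y_t - Y_s| \leq \|f\|_{C_b^{\gamma,1}} \, \| Y \|_\beta \, |t-s|^\beta.
\end{align*}
For the second summand, the defining property $\sup_{y \in W} \| f(\cdot,y) \|_\gamma \leq \|f\|_{C_b^{\gamma,1}}$ gives
\begin{align*}
| f(t,Y_s) - f(s,Y_s) | \leq \| f(\cdot,Y_s) \|_\gamma \, |t-s|^\gamma \leq \|f\|_{C_b^{\gamma,1}} \, |t-s|^\gamma.
\end{align*}

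Next I would use the hypothesis $\beta \leq \gamma$ to convert the $\gamma$-Hölder estimate into a $\beta$-Hölder one by extracting the factor $|t-s|^{\gamma-\beta} \leq T^{\gamma-\beta}$. Adding both bounds,
\begin{align*}
| f(Y)_{s,t} | \leq \|f\|_{C_b^{\gamma,1}} \bigl( \|Y\|_\beta + T^{\gamma-\beta} \bigr) |t-s|^\beta,
\end{align*}
so $\| f(Y) \|_\beta < \infty$ and hence $f(Y) \in \calc^\beta([0,T],\bar W)$, which completes the proof.

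There is no substantive obstacle; the only thing to be careful about is that $C_b^{\gamma,1}$ controls $\gamma$-Hölder regularity in the time variable only for the function values (the $k=0$ term in the sum), which is exactly what is needed for the second summand, while the $C_b^1$-control of $D_y f$ in the space variable handles the first summand via the mean value inequality. The condition $\beta \leq \gamma$ is used solely to pass from a $\gamma$-Hölder bound in $t$ to a $\beta$-Hölder bound, and this is where the explicit dependence on $T$ enters.
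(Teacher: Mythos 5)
Your proof is correct and matches the paper's argument essentially line by line: the same decomposition of $f(Y)_{s,t}$ into a space increment at fixed time plus a time increment at fixed state, the same mean value/$C_b^1$ bound for the first term, the same $\gamma$-Hölder-in-time bound for the second, and the same absorption of $|t-s|^{\gamma-\beta}$ into $T^{\gamma-\beta}$. No differences worth noting.
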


\begin{proof}
Let $s,t \in [0,T]$ be arbitrary. Then we have
\begin{align*}
| f(t,Y_t) - f(s,Y_s) | &\leq | f(t,Y_t) - f(t,Y_s) | + | f(t,Y_s) - f(s,Y_s) |
\\ &\leq \| f(t,\cdot) \|_{C_b^1} | Y_t - Y_s | + \| f(\cdot,Y_s) \|_{\gamma} |t-s|^{\gamma}
\\ &\leq \| f \|_{C_b^{\gamma,1}} \| Y \|_{\beta} |t-s|^{\beta} + \| f \|_{C_b^{\gamma,1}} |t-s|^{\gamma}
\\ &\leq \| f \|_{C_b^{\gamma,1}} (\| Y \|_{\beta} + T^{\gamma-\beta}) |t-s|^{\beta},
\end{align*}
showing that $f(Y) \in \calc^{\beta}([0,T],\bar{W})$.
\end{proof}

For a controlled rough path $(Y,Y') \in \scrd_X^{2\alpha}([0,T],W)$ and a function $f \in C_b^{2\alpha,2}([0,T] \times W, \bar{W})$ we denote by $f(Y) : [0,T] \to \bar{W}$ the path
\begin{align*}
f(Y)_t := f(t,Y_t), \quad t \in [0,T],
\end{align*}
and we denote by $f(Y)' : [0,T] \to L(V,\bar{W})$ the path
\begin{align*}
f(Y)_t' := D_y f(t,Y_t) Y_t', \quad t \in [0,T].
\end{align*}

\begin{proposition}\label{prop-comp-f}
Let $(Y,Y') \in \scrd_X^{2\alpha}([0,T],W)$ and $f \in C_b^{2\alpha,2}([0,T] \times W, \bar{W})$ be arbitrary. Then we have
\begin{align}\label{comp-f-in-D}
(f(Y),f(Y)') \in \scrd_X^{2\alpha}([0,T],\bar{W}).
\end{align}
Furthermore, we have
\begin{align}\label{est-comp-Hoelder-1}
\| f(Y) \|_{\alpha} &\leq \| f \|_{C_b^{2\alpha,2}} ( \| Y \|_{\alpha} + T^{\alpha} ),
\\ \label{est-comp-Hoelder-2} \| f(Y)' \|_{\alpha} &\leq \| f \|_{C_b^{2\alpha,2}} \big( \| Y' \|_{\alpha} + \| Y \|_{\alpha} \| Y' \|_{\infty} + T^{\alpha} \| Y' \|_{\infty} \big),
\\ \label{est-comp-rem} \| R^{f(Y)} \|_{2\alpha} &\leq \| f \|_{C_b^{2\alpha,2}} \bigg( 1 + \frac{1}{2} \| Y \|_{\alpha}^2 + \| R^Y \|_{2\alpha} \bigg),
\end{align}
and for $T \leq 1$ we have the estimates
\begin{align}\label{comp-f-in-D-est}
| f(Y),f(Y)' |_{X,2\alpha} &\leq C \big( 1 + | Y,Y' |_{X,2\alpha}^2 \big),
\\ \label{comp-f-in-D-est-2} \interleave f(Y),f(Y)' \interleave_{X,2\alpha} &\leq C \big( 1 + \interleave Y,Y' \interleave_{X,2\alpha}^2 \big),
\end{align}
where the constant $C > 0$ depends on $\alpha$, $T$, $\mathbf{X}$ and $\| f \|_{C_b^{2\alpha,2}}$. Moreover, for $T \leq 1$ the constant $C$ does not depend on $T$.
\end{proposition}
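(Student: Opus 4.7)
The strategy is to verify the three Hölder-type estimates directly, with the remainder estimate \eqref{est-comp-rem} being the heart of the matter; the final two norm bounds in the case $T\leq 1$ then follow by combining these with Lemma \ref{lemma-norm-of-Y}. The membership \eqref{comp-f-in-D} is a byproduct of the three finite Hölder bounds, together with the observation that $D_y f(\cdot,Y_\cdot) Y'_\cdot \in \calc^{\alpha}([0,T], L(V,\bar W))$ follows from \eqref{est-comp-Hoelder-2}.

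For \eqref{est-comp-Hoelder-1} I would split $f(t,Y_t) - f(s,Y_s) = [f(t,Y_t) - f(t,Y_s)] + [f(t,Y_s) - f(s,Y_s)]$, then apply the Lipschitz bound $\|D_y f\|_\infty \leq \|f\|_{C_b^{2\alpha,2}}$ to the first summand and the $2\alpha$-Hölder continuity in time of $f(\cdot,y)$ to the second, using $|t-s|^{2\alpha} \leq T^\alpha |t-s|^\alpha$ to match Hölder exponents. Estimate \eqref{est-comp-Hoelder-2} is obtained by the usual three-term split
\begin{align*}
D_y f(t,Y_t) Y_t' - D_y f(s,Y_s) Y_s' &= D_y f(t,Y_t)(Y_t' - Y_s') \\
&\quad + [D_y f(t,Y_t) - D_y f(t,Y_s)] Y_s' + [D_y f(t,Y_s) - D_y f(s,Y_s)] Y_s',
\end{align*}
controlling the three terms by $\|D_y f\|_\infty \|Y'\|_\alpha$, $\|D_y^2 f\|_\infty \|Y\|_\alpha \|Y'\|_\infty$, and the time-Hölder bound $\|D_y f(\cdot,y)\|_{2\alpha}\cdot\|Y'\|_\infty$ respectively, again trading $|t-s|^{2\alpha}$ for $T^\alpha |t-s|^\alpha$.

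The key step is \eqref{est-comp-rem}. Writing
\begin{align*}
R^{f(Y)}_{s,t} = [f(t,Y_t) - f(s,Y_t)] + [f(s,Y_t) - f(s,Y_s) - D_y f(s,Y_s)(Y_t-Y_s)] + D_y f(s,Y_s) R^Y_{s,t},
\end{align*}
I would bound the first bracket by $\|f\|_{C_b^{2\alpha,2}} |t-s|^{2\alpha}$ via the time-regularity of $f(\cdot,Y_t)$, the second bracket by $\tfrac{1}{2}\|D_y^2 f\|_\infty |Y_t - Y_s|^2 \leq \tfrac{1}{2}\|f\|_{C_b^{2\alpha,2}} \|Y\|_\alpha^2 |t-s|^{2\alpha}$ via the integral form of Taylor's remainder, and the third by $\|D_y f\|_\infty \|R^Y\|_{2\alpha} |t-s|^{2\alpha}$. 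The identity $Y_t - Y_s - Y_s' X_{s,t} = R^Y_{s,t}$ is what cleanly introduces $R^Y$ in place of spurious terms involving $X$ or $Y'$; keeping this split in mind is the main subtlety, and I expect the cleanest presentation uses the spatial Taylor expansion at the \emph{frozen} time $s$ rather than at $t$.

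Finally, for \eqref{comp-f-in-D-est} and \eqref{comp-f-in-D-est-2} in the case $T\leq 1$, I would use the already-obtained \eqref{est-comp-Hoelder-2} and \eqref{est-comp-rem} together with the bounds $\|Y'\|_\infty \leq |Y,Y'|_{X,2\alpha}$ and $\|Y\|_\alpha \leq |Y,Y'|_{X,2\alpha}(\interleave \mathbf X \interleave_\alpha + 1)$ from Lemma \ref{lemma-norm-of-Y} (which come with constant $1$ for $T\leq 1$). This turns every factor $\|Y\|_\alpha \|Y'\|_\infty$, $\|Y\|_\alpha^2$, $\|Y'\|_\alpha$, $\|R^Y\|_{2\alpha}$ into a quantity bounded by $C(1 + |Y,Y'|_{X,2\alpha}^2)$ with $C$ depending on $\alpha$, $\interleave \mathbf X \interleave_\alpha$, and $\|f\|_{C_b^{2\alpha,2}}$ but not on $T$; adding $|f(Y)_0'| \leq \|f\|_{C_b^{2\alpha,2}} |Y_0'|$ and $|f(Y)_0| \leq \|f\|_{C_b^{2\alpha,2}}$ yields \eqref{comp-f-in-D-est} and \eqref{comp-f-in-D-est-2}, where the constant in the latter picks up a $|Y_0|^0$ contribution that is absorbed into the additive $1$.
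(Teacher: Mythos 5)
Your proposal is correct and follows essentially the same route as the paper's proof: the same two-term split for \eqref{est-comp-Hoelder-1}, the same three-term split (controlling $Y'$, then $Y$, then time) for \eqref{est-comp-Hoelder-2}, the same decomposition $R^{f(Y)}_{s,t} = [f(t,Y_t)-f(s,Y_t)] + [f(s,Y_t)-f(s,Y_s)-D_y f(s,Y_s)Y_{s,t}] + D_y f(s,Y_s)R^Y_{s,t}$ with the Taylor expansion frozen at time $s$, and the same use of Lemma \ref{lemma-norm-of-Y} together with the initial-value bounds to deduce \eqref{comp-f-in-D-est} and \eqref{comp-f-in-D-est-2}.
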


\begin{proof}
Let $s,t \in [0,T]$ be arbitrary. Then we have
\begin{align*}
| f(t,Y_t) - f(s,Y_s) | &\leq | f(t,Y_t) - f(t,Y_s) | + | f(t,Y_s) - f(s,Y_s) |
\\ &\leq \| f(t,\cdot) \|_{C_b^2} | Y_t - Y_s | + \| f(\cdot,Y_s) \|_{2 \alpha} |t-s|^{2\alpha}
\\ &\leq \| f \|_{C_b^{2\alpha,2}} \| Y \|_{\alpha} |t-s|^{\alpha} + \| f \|_{C_b^{2\alpha,2}} |t-s|^{2\alpha}
\\ &\leq \| f \|_{C_b^{2\alpha,2}} (\| Y \|_{\alpha} + T^{\alpha}) |t-s|^{\alpha},
\end{align*}
showing $f(Y) \in \calc^{\alpha}([0,T],\bar{W})$ and the estimate (\ref{est-comp-Hoelder-1}). Furthermore, we have
\begin{align*}
&|D_y f(t,Y_t) Y_t' - D_y f(s,Y_s) Y_s'|
\\ &\leq |D_y f(t,Y_t) Y_t' - D_y f(t,Y_t) Y_s'| + |D_y f(t,Y_t) Y_s' - D_y f(t,Y_s) Y_s'|
\\ &\quad + |D_y f(t,Y_s) Y_s' - D_y f(s,Y_s) Y_s'|
\\ &\leq | D_y f(t,Y_s) | \, |Y_t' - Y_s'| + |D_y f(t,Y_t) - D_y f(t,Y_s)| \, |Y_s'|
\\ &\quad + |D_y f(t,Y_s) - D_y f(s,Y_s)| \, |Y_s'|
\\ &\leq \| f(t,\cdot) \|_{C_b^2} \| Y' \|_{\alpha} |t-s|^{\alpha} + \| f(t,\cdot) \|_{C_b^2} |Y_t-Y_s| \, \|Y'\|_{\infty}
\\ &\quad + \| D_y f(\cdot,Y_s) \|_{2 \alpha} |t-s|^{2\alpha} \|Y'\|_{\infty}
\\ &\leq \| f \|_{C_b^{2\alpha,2}} \| Y' \|_{\alpha} |t-s|^{\alpha} + \| f \|_{C_b^{2\alpha,2}} \| Y \|_{\alpha} |t-s|^{\alpha} \|Y'\|_{\infty} + \| f \|_{C_b^{2\alpha,2}} \|Y'\|_{\infty} |t-s|^{2\alpha}
\\ &\leq \| f \|_{C_b^{2\alpha,2}} \big( \| Y' \|_{\alpha} + \| Y \|_{\alpha} \| Y' \|_{\infty} + T^{\alpha} \| Y' \|_{\infty} \big) |t-s|^{\alpha},
\end{align*}
showing $f(Y)' \in \calc^{\alpha}([0,T],L(V,\bar{W}))$ and the estimate (\ref{est-comp-Hoelder-2}). Moreover, by Taylor's theorem we obtain
\begin{align*}
|R_{s,t}^{f(Y)}| &= | f(Y)_{s,t} - f(Y)_s' X_{s,t} |
\\ &= | f(t,Y_t) - f(s,Y_s) - D_y f(s,Y_s) Y_s' X_{s,t} |
\\ &\leq | f(t,Y_t) - f(s,Y_t) | + | f(s,Y_t) - f(s,Y_s) - D_y f(s,Y_s) Y_{s,t} |
\\ &\quad + | D_y f(s,Y_s) (Y_{s,t} - Y_s' X_{s,t}) |
\\ &\leq \| f(\bullet,Y_t) \|_{2 \alpha} |t-s|^{2\alpha} + \frac{1}{2} \| f(s,\bullet) \|_{C_b^2} |Y_t-Y_s|^2 + \| f(s,\bullet) \|_{C_b^2} |R_{s,t}^Y|
\\ &\leq \| f \|_{C_b^{2\alpha,2}} |t-s|^{2 \alpha} + \frac{1}{2} \| f \|_{C_b^{2\alpha,2}} \| Y \|_{\alpha}^2 |t-s|^{2\alpha} + \| f \|_{C_b^{2\alpha,2}} \| R^Y \|_{2\alpha} |t-s|^{2\alpha}
\\ &\leq \| f \|_{C_b^{2\alpha,2}} \bigg( 1 + \frac{1}{2} \| Y \|_{\alpha}^2 + \| R^Y \|_{2\alpha} \bigg) |t-s|^{2\alpha},
\end{align*}
proving (\ref{comp-f-in-D}) and (\ref{est-comp-rem}). Moreover, note that
\begin{align*}
|f(Y)_0| &= |f(0,Y_0)| \leq \| f \|_{C_b^{2\alpha,2}},
\\ |f(Y)_0'| &= |D_y f(0,Y_0) Y_0'| \leq |D_y f(0,Y_0)| \, |Y_0'| \leq \| f \|_{C_b^{2\alpha,2}} | Y,Y' |_{X,2\alpha}.
\end{align*}
Together with Lemma \ref{lemma-norm-of-Y} we obtain (\ref{comp-f-in-D-est}) and (\ref{comp-f-in-D-est-2}).
\end{proof}

\subsection{Compositions with bilinear operators}

In this subsection we consider compositions of controlled rough paths with bilinear operators. First of all, let us consider compositions with linear operators. For a controlled rough path $(Y,Y') \in \scrd_X^{2\alpha}([0,T],W)$ and a continuous linear operator $\varphi \in L(W,\bar{W})$ we denote by $\varphi(Y) : [0,T] \to \bar{W}$ the path
\begin{align*}
\varphi(Y)_t := \varphi(Y_t), \quad t \in [0,T],
\end{align*}
and we denote by $\varphi(Y)' : [0,T] \to L(V,\bar{W})$ the path
\begin{align*}
\varphi(Y)_t' := \varphi(Y_t'), \quad t \in [0,T].
\end{align*}

\begin{proposition}\label{prop-comp-linear-0}
Let $(Y,Y') \in \scrd_X^{2\alpha}([0,T],W)$ and $\varphi \in L(W,\bar{W})$ be arbitrary. Then we have
\begin{align}\label{comp-linear-0-1}
(\varphi(Y),\varphi(Y)') \in \scrd_X^{2\alpha}([0,T],\bar{W})
\end{align}
and the estimates
\begin{align}\label{comp-linear-0-2}
\| \varphi(Y), \varphi(Y)' \|_{X,2\alpha} &\leq | \varphi | \, \| Y,Y' \|_{X,2\alpha},
\\ \label{comp-linear-0-3} | \varphi(Y), \varphi(Y)' |_{X,2\alpha} &\leq | \varphi | \, | Y,Y' |_{X,2\alpha},
\\ \label{comp-linear-0-4} \interleave \varphi(Y), \varphi(Y)' \interleave_{X,2\alpha} &\leq | \varphi | \, \interleave Y,Y' \interleave_{X,2\alpha}.
\end{align}
\end{proposition}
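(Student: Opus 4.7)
The proof is essentially a direct computation exploiting the linearity and boundedness of $\varphi$, which makes it commute with all the algebraic operations used to define the controlled rough path structure. The key observation is that $\varphi(Y)'_t = \varphi \circ Y'_t$ (viewed as an element of $L(V,\bar{W})$), so that for each fixed argument $v \in V$, $\varphi(Y'_s)(v) = \varphi(Y'_s v)$; hence $\varphi$ can be pulled outside of every difference, product with $X_{s,t}$, and remainder.

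First, I would verify Hölder regularity of $\varphi(Y)$ and $\varphi(Y)'$. For $s,t \in [0,T]$, the identity $\varphi(Y)_{s,t} = \varphi(Y_{s,t})$ and the operator bound $|\varphi(Y_{s,t})| \leq |\varphi| \, |Y_{s,t}|$ yield $\|\varphi(Y)\|_\alpha \leq |\varphi| \, \|Y\|_\alpha$, so $\varphi(Y) \in \calc^\alpha([0,T],\bar{W})$. The analogous estimate $|\varphi \circ Y'_t - \varphi \circ Y'_s|_{L(V,\bar{W})} \leq |\varphi| \, |Y'_t - Y'_s|_{L(V,W)}$ gives $\|\varphi(Y)'\|_\alpha \leq |\varphi| \, \|Y'\|_\alpha$, so $\varphi(Y)' \in \calc^\alpha([0,T],L(V,\bar{W}))$.

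Next I would identify the remainder. By linearity,
\begin{align*}
R^{\varphi(Y)}_{s,t} = \varphi(Y)_{s,t} - \varphi(Y)'_s X_{s,t} = \varphi(Y_{s,t}) - \varphi(Y'_s X_{s,t}) = \varphi(R^Y_{s,t}),
\end{align*}
so $|R^{\varphi(Y)}_{s,t}| \leq |\varphi| \, |R^Y_{s,t}| \leq |\varphi| \, \|R^Y\|_{2\alpha} |t-s|^{2\alpha}$. This proves $\|R^{\varphi(Y)}\|_{2\alpha} < \infty$, establishing (\ref{comp-linear-0-1}), and combining with the bound on $\|\varphi(Y)'\|_\alpha$ yields the seminorm estimate (\ref{comp-linear-0-2}). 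Finally, from $|\varphi(Y)'_0| = |\varphi(Y'_0)| \leq |\varphi| \, |Y'_0|$ we obtain (\ref{comp-linear-0-3}), and adjoining $|\varphi(Y)_0| = |\varphi(Y_0)| \leq |\varphi| \, |Y_0|$ gives (\ref{comp-linear-0-4}).

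There is no real obstacle here; the only point requiring mild care is keeping track of which operator norm is being used at each step (in particular, distinguishing $|\varphi|_{L(W,\bar{W})}$ from $|Y'_s|_{L(V,W)}$ and their composition in $L(V,\bar{W})$), and noting that the action $\varphi(Y'_s) X_{s,t}$ is genuinely $\varphi \circ (Y'_s X_{s,t})$ so that linearity can be applied to form the remainder.
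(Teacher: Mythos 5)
Your proof is correct and follows essentially the same route as the paper: pulling $\varphi$ through the difference, Gubinelli derivative, and remainder by linearity, yielding $R^{\varphi(Y)}_{s,t} = \varphi(R^Y_{s,t})$, and then adjoining the initial-value bounds to pass from the seminorm estimate to the full-norm estimates. The only minor difference is that you explicitly verify $\varphi(Y) \in \calc^\alpha$, which the paper leaves implicit; otherwise the arguments coincide.
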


\begin{proof}
Let $s,t \in [0,T]$ be arbitrary. Then we have
\begin{align*}
| \varphi(Y_t') - \varphi(Y_s') | = |\varphi(Y_t' - Y_s')| \leq | \varphi | \, |Y_t' - Y_s'| \leq | \varphi | \, \| Y' \|_{\alpha} |t-s|^{\alpha},
\end{align*}
showing $\| \varphi(Y') \|_{\alpha} \leq | \varphi | \, \| Y' \|_{\alpha}$. Furthermore, we have
\begin{align*}
R_{s,t}^{\varphi(Y)} = \varphi(Y_{s,t}) - \varphi(Y_s') X_{s,t} = \varphi(Y_{s,t} - Y_s' X_{s,t}) = \varphi(R_{s,t}^Y),
\end{align*}
and hence, we obtain
\begin{align*}
| R_{s,t}^{\varphi(Y)} | \leq | \varphi | \, |R_{s,t}^Y| \leq | \varphi | \, \| R^Y \|_{2\alpha} |t-s|^{2 \alpha}.
\end{align*}
This shows $\| R^{\varphi(Y)} \|_{2 \alpha} \leq | \varphi | \, \| R^Y \|_{2 \alpha}$, and we deduce (\ref{comp-linear-0-1}). Noting that $|\varphi(Y_0)| \leq | \varphi | \, |Y_0|$ and $|\varphi(Y_0')| \leq | \varphi | \, |Y_0'|$, the desired estimates (\ref{comp-linear-0-2})--(\ref{comp-linear-0-4}) follow.
\end{proof}

Now, let $E$ be another Banach space. For controlled rough paths $(Y,Y') \in \scrd_X^{2\alpha}([0,T],W)$, $(Z,Z') \in \scrd_X^{2\alpha}([0,T],E)$ and a continuous bilinear operator $B \in L^{(2)}(W \times E,\bar{W})$ we denote by $B(Y,Z) : [0,T] \to \bar{W}$ the path
\begin{align*}
B(Y,Z)_t := B(Y_t,Z_t), \quad t \in [0,T],
\end{align*}
and we denote by $B(Y,Z)' : [0,T] \to L(V,\bar{W})$ the path
\begin{align*}
B(Y,Z)_t' := B(Y_t,Z_t') + B(Y_t',Z_t), \quad t \in [0,T].
\end{align*}

\begin{proposition}\label{prop-bilinear}
Let $(Y,Y') \in \scrd_X^{2\alpha}([0,T],W)$, $(Z,Z') \in \scrd_X^{2\alpha}([0,T],E)$ and $B \in L^{(2)}(W \times E,\bar{W})$ be arbitrary. Then we have
\begin{align}\label{bilinear-controlled}
(B(Y,Z),B(Y,Z)') \in \scrd_X^{2\alpha}([0,T],\bar{W}),
\end{align}
and the estimate
\begin{align}\label{est-bilinear}
\interleave B(Y,Z),B(Y,Z)' \interleave_{X,2\alpha} \leq C \interleave Y,Y' \interleave_{X,2\alpha} \interleave Z,Z' \interleave_{X,2\alpha},
\end{align}
where the constant $C > 0$ depends on $\alpha$, $T$, $\mathbf{X}$ and $| B |$. Moreover, for $T \leq 1$ the constant $C$ does not depend on $T$.
\end{proposition}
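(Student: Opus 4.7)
The plan is to establish (\ref{bilinear-controlled}) by a direct computation that mirrors the Leibniz rule for controlled rough paths, and then to reduce all arising norms of $Y,Y',Z,Z'$ to the controlled rough path norms via Lemma \ref{lemma-norm-of-Y}. Throughout, I read the Gubinelli derivative as the element of $L(V,\bar W)$ acting by $v \mapsto B(Y_t, Z_t'v) + B(Y_t'v, Z_t)$, so that $B(Y,Z)_s' X_{s,t}$ unambiguously means $B(Y_s, Z_s'(X_{s,t})) + B(Y_s'(X_{s,t}), Z_s)$.

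First I would check that $B(Y,Z)' \in \calc^{\alpha}([0,T], L(V,\bar W))$. Expanding by bilinearity,
\begin{align*}
B(Y,Z)_t' - B(Y,Z)_s' = B(Y_{s,t}, Z_t') + B(Y_s, Z_{s,t}') + B(Y_{s,t}', Z_t) + B(Y_s', Z_{s,t}),
\end{align*}
and each of the four terms is dominated by $|B|$ times a product of an $\alpha$-H\"older factor and a supremum factor, which produces the bound $\|B(Y,Z)'\|_{\alpha} \leq |B| ( \|Y\|_{\alpha}\|Z'\|_{\infty} + \|Y\|_{\infty}\|Z'\|_{\alpha} + \|Y'\|_{\alpha}\|Z\|_{\infty} + \|Y'\|_{\infty}\|Z\|_{\alpha} )$.

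The key algebraic step is the computation of the remainder. Using the telescoping identity
\begin{align*}
B(Y_t,Z_t) - B(Y_s,Z_s) = B(Y_{s,t}, Z_s) + B(Y_s, Z_{s,t}) + B(Y_{s,t}, Z_{s,t}),
\end{align*}
and substituting $Y_{s,t} = Y_s' X_{s,t} + R_{s,t}^Y$ into the first term and $Z_{s,t} = Z_s' X_{s,t} + R_{s,t}^Z$ into the second, the linear-in-$X_{s,t}$ pieces cancel exactly against $B(Y,Z)_s' X_{s,t}$, leaving
\begin{align*}
R_{s,t}^{B(Y,Z)} = B(R_{s,t}^Y, Z_s) + B(Y_s, R_{s,t}^Z) + B(Y_{s,t}, Z_{s,t}).
\end{align*}
The first two summands are already of order $|t-s|^{2\alpha}$ by definition of the remainder, while the third is quadratic in $\alpha$-H\"older increments and hence also of order $|t-s|^{2\alpha}$. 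This gives $\|R^{B(Y,Z)}\|_{2\alpha} \leq |B|( \|R^Y\|_{2\alpha}\|Z\|_{\infty} + \|Y\|_{\infty}\|R^Z\|_{2\alpha} + \|Y\|_{\alpha}\|Z\|_{\alpha} )$ and proves (\ref{bilinear-controlled}).

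For the norm estimate (\ref{est-bilinear}) I would handle the initial values via $|B(Y_0,Z_0)| \leq |B|\,|Y_0|\,|Z_0|$ and $|B(Y_0,Z_0') + B(Y_0',Z_0)| \leq |B|(|Y_0|\,|Z_0'| + |Y_0'|\,|Z_0|)$, and then insert the bounds from Lemma \ref{lemma-norm-of-Y} to replace $\|Y\|_{\infty}, \|Y\|_{\alpha}, \|Y'\|_{\infty}$ (and their $Z$-analogues) by multiples of $\interleave Y,Y' \interleave_{X,2\alpha}$ times a polynomial in $\interleave \mathbf X \interleave_{\alpha}$ and $T^{\alpha}$. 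Summing all the pieces yields the product estimate with a constant $C$ depending on $\alpha, T, \mathbf X, |B|$, and inspection of Lemma \ref{lemma-norm-of-Y} for $T \leq 1$ shows that the $T^\alpha$ factors are absorbed into constants independent of $T$, giving the final assertion. The only subtle point is the algebraic bookkeeping in the cancellation that yields the three-term formula for $R_{s,t}^{B(Y,Z)}$; everything thereafter is a routine application of the previously proved estimates.
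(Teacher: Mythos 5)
Your proof is correct and takes essentially the same approach as the paper: telescope the bilinear increment to obtain the three-term remainder formula $R_{s,t}^{B(Y,Z)} = B(R_{s,t}^Y,Z_s) + B(Y_s,R_{s,t}^Z) + B(Y_{s,t},Z_{s,t})$, bound the Hölder seminorms of $B(Y,Z)$ and $B(Y,Z)'$ by bilinearity, and then reduce all supremum and Hölder factors to $\interleave\cdot\interleave_{X,2\alpha}$-norms via Lemma~\ref{lemma-norm-of-Y}. The only cosmetic difference is the exact form of the telescoping identity used for $B(Y,Z)'$; the key algebraic cancellation producing the remainder formula and the final bookkeeping are identical.
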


\begin{proof}
Let $s,t \in [0,T]$ be arbitrary. Then we have
\begin{align*}
| B(Y_t,Z_t) - B(Y_s,Z_s) | &\leq |B(Y_t,Z_t) - B(Y_t,Z_s)| + |B(Y_t,Z_s) - B(Y_s,Z_s)|
\\ &= |B(Y_t,Z_{s,t})| + |B(Y_{s,t},Z_s)|
\\ &\leq | B | \big( |Y_t| \, |Z_{s,t}| + |Y_{s,t}| \, |Z_s| \big)
\\ &\leq | B | \big( \| Y \|_{\infty} \| Z \|_{\alpha} + \| Y \|_{\alpha} \| Z \|_{\infty} \big) |t-s|^{\alpha},
\end{align*}
showing that $B(Y,Z) \in C^{\alpha}([0,T],\bar{W})$. Furthermore, we have
\begin{align*}
| B(Y_t,Z_t') - B(Y_s,Z_s') | &\leq | B(Y_t,Z_t') - B(Y_t,Z_s') | + | B(Y_t,Z_s') - B(Y_s,Z_s') |
\\ &= | B(Y_t,Z_{s,t}') | + | B(Y_{s,t},Z_s') |
\\ &\leq | B | \big( |Y_t| \, |Z_{s,t}'| + |Y_{s,t}| \, |Z_s'| \big)
\\ &\leq | B \| \big( \| Y \|_{\infty} \| Z' \|_{\alpha} + \| Y \|_{\alpha} \| Z' \|_{\infty} \big) |t-s|^{\alpha}
\end{align*}
as well as
\begin{align*}
| B(Y_t',Z_t) - B(Y_s',Z_s) | &\leq | B(Y_t',Z_t) - B(Y_t',Z_s) | + | B(Y_t',Z_s) - B(Y_s',Z_s) |
\\ &= | B(Y_t',Z_{s,t}) | + | B(Y_{s,t}',Z_s) |
\\ &\leq | B | \big( |Y_t'| \, |Z_{s,t}| + |Y_{s,t}'| \, |Z_s| \big)
\\ &\leq | B | \big( \| Y' \|_{\infty} \| Z \|_{\alpha} + \| Y' \|_{\alpha} \| Z \|_{\infty} \big) |t-s|^{\alpha},
\end{align*}
showing that $B(Y,Z)' \in C^{\alpha}([0,T],L(V,\bar{W})$. Moreover, we have
\begin{align*}
R_{s,t}^{B(Y,Z)} &= B(Y_t,Z_t) - B(Y_s,Z_s) - B(Y_s,Z_s)' X_{s,t}
\\ &= B(Y_t,Z_{s,t}) + B(Y_{s,t},Z_s) - \big( B(Y_s,Z_s' X_{s,t}) + B(Y_s' X_{s,t}, Z_s) \big)
\\ &= B(Y_{s,t}-Y_s' X_{s,t},Z_s) + B(Y_s,Z_{s,t}- Z_s' X_{s,t}) + B(Y_{s,t},Z_{s,t})
\\ &= B(R_{s,t}^Y,Z_s) + B(Y_s,R_{s,t}^Z) + B(Y_{s,t},Z_{s,t}),
\end{align*}
and hence
\begin{align*}
| R_{s,t}^{B(Y,Z)} | &\leq | B | \big( |R_{s,t}^Y| \, |Z_s| + |Y_s| \, | R_{s,t}^Z | + |Y_{s,t}| \, |Z_{s,t}| \big)
\\ &\leq | B | \big( \| R^Y \|_{2 \alpha} \| Z \|_{\infty} + \| Y \|_{\infty} \| R^Y \|_{2 \alpha} + \| Y \|_{\alpha} \| Z \|_{\alpha} \big) |t-s|^{2 \alpha},
\end{align*}
which shows (\ref{bilinear-controlled}). Finally, noting that
\begin{align*}
|B(Y_0,Z_0)| &\leq | B | \, |Y_0| \, |Z_0|,
\\ |B(Y_0,Z_0')| &\leq | B | \, |Y_0| \, |Z_0'|,
\\ |B(Y_0',Z_0)| &\leq | B | \, |Y_0'| \, |Z_0|,
\end{align*}
applying Lemma \ref{lemma-norm-of-Y} proves (\ref{est-bilinear}).
\end{proof}

\begin{lemma}\label{lemma-chain-rule}
Let $E,F$ be two other Banach spaces. Furthermore, let $h : W \to L(W,F)$ be of class $C^1$, and let $B \in L(E,W)$ be arbitrary. Then the mapping
\begin{align*}
h_B : W \to L(E,F), \quad h_B(y) := h(y) B
\end{align*}
is of class $C^1$, and we have
\begin{align*}
D h_B(y)v = (D h(y)v) B \quad \text{for all $y,v \in W$.}
\end{align*}
\end{lemma}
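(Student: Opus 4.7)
The plan is to realize $h_B$ as the composition $\Phi \circ h$, where $\Phi : L(W,F) \to L(E,F)$ is the right-composition operator $\Phi(T) := T \circ B$, and then apply the standard chain rule together with the fact that the Fr\'echet derivative of a continuous linear map is itself.

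First I would verify that $\Phi$ is a well-defined continuous linear operator. Linearity is immediate from the definition of operator addition and scalar multiplication, and the estimate $|\Phi(T)|_{L(E,F)} = |T \circ B|_{L(E,F)} \leq |T|_{L(W,F)} \, |B|_{L(E,W)}$ shows that $\Phi \in L(L(W,F),L(E,F))$ with $|\Phi| \leq |B|$. In particular $\Phi$ is Fr\'echet differentiable with $D\Phi(T) = \Phi$ for all $T$.

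Next I would observe that $h_B = \Phi \circ h$ by definition. Since $h$ is of class $C^1$ by hypothesis and $\Phi$ is of class $C^1$ (indeed smooth) as a continuous linear map, the chain rule yields that $h_B$ is of class $C^1$ with
\begin{align*}
Dh_B(y) = D\Phi(h(y)) \circ Dh(y) = \Phi \circ Dh(y).
\end{align*}
Evaluating at $v \in W$ gives $Dh_B(y)v = \Phi(Dh(y)v) = (Dh(y)v) \circ B = (Dh(y)v) B$, which is exactly the claimed formula.

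Finally, the continuity of $y \mapsto Dh_B(y)$ as a map into $L(W,L(E,F))$ follows from the continuity of $Dh$ and the bound $|Dh_B(y) - Dh_B(z)|_{L(W,L(E,F))} \leq |B| \cdot |Dh(y) - Dh(z)|_{L(W,L(W,F))}$, confirming that $h_B \in C^1$. There is no real obstacle here; the only mild point is to keep the spaces straight, since the derivative of $h$ lives in $L(W,L(W,F))$ while the derivative of $h_B$ lives in $L(W,L(E,F))$, and the operator $\Phi$ is precisely what mediates between these two target spaces.
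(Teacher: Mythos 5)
Your proof takes essentially the same route as the paper: you factor $h_B = \Phi \circ h$ with $\Phi$ the continuous linear right-composition operator (the paper calls it $\ell$), observe $|\Phi| \leq |B|$, and apply the chain rule using that a continuous linear map is its own Fr\'echet derivative. The argument is correct, and the extra remark on the continuity of $y \mapsto Dh_B(y)$ is a harmless elaboration of a step the paper leaves implicit.
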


\begin{proof}
We have $h_B = \ell \circ h$, where $\ell : L(W,F) \to L(E,F)$ is given by $\ell(z) = zB$. Note that $\ell$ is a linear operator. Moreover, we have
\begin{align*}
| \ell(z) | \leq | z | \, |B| \quad \text{for all $z \in L(W,F)$,}
\end{align*}
showing that $\ell$ is continuous. Therefore, the mapping $h_B$ is of class $C^1$, and by the chain rule we obtain
\begin{align*}
D h_B(y)v &= D (\ell \circ h)(y)v = D \ell(h(y)) \circ D h(y) v
\\ &= \ell \circ D h(y) v = (D h(y)v) B,
\end{align*}
completing the proof.
\end{proof}

\begin{lemma}\label{lemma-bilinear-fg-time}
Let $f \in C_b^{2 \alpha,3}([0,T] \times W,\bar{W})$ be arbitrary. Then there exists $g \in C_b^{2 \alpha,2}([0,T] \times (W \times W), L(W,\bar{W}))$ with $\| g \|_{C_b^{2 \alpha,2}} \leq \| f \|_{C_b^{2 \alpha,3}}$ such that
\begin{align}\label{identity-Taylor}
f(t,y_1) - f(t,y_2) = g(t,y) (y_1-y_2)
\end{align}
for all $t \in [0,T]$ and all $y = (y_1,y_2) \in W \times W$.
\end{lemma}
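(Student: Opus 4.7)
The natural candidate is obtained by the fundamental theorem of calculus: define
\begin{equation*}
g(t,y_1,y_2) \, v := \int_0^1 D_y f\bigl(t,\theta y_1 + (1-\theta) y_2\bigr) v \, d\theta, \qquad v \in W.
\end{equation*}
Then for fixed $t$ and $y = (y_1,y_2)$, the map $[0,1] \to \bar{W}$, $\theta \mapsto f(t,\theta y_1 + (1-\theta)y_2)$ is of class $C^1$ with derivative $\theta \mapsto D_y f(t,\theta y_1 + (1-\theta)y_2)(y_1-y_2)$, so the identity (\ref{identity-Taylor}) is an immediate consequence of the fundamental theorem of calculus, and $g(t,y) \in L(W,\bar{W})$ with $|g(t,y)|_{L(W,\bar{W})} \leq \|D_y f(t,\cdot)\|_\infty$.

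The plan is then to verify the three defining properties of $C_b^{2\alpha,2}([0,T] \times (W\times W), L(W,\bar{W}))$. For spatial smoothness at fixed $t$, differentiation under the integral sign gives
\begin{align*}
D_{(y_1,y_2)} g(t,y)(v_1,v_2) \cdot w &= \int_0^1 D_y^2 f(t,\theta y_1 + (1-\theta)y_2)\bigl(\theta v_1 + (1-\theta) v_2, w \bigr) d\theta,
\end{align*}
and analogously for the second derivative in $(y_1,y_2)$, which brings in $D_y^3 f$. Taking operator norms we obtain the pointwise bounds $|g(t,\cdot)|_{\infty} \leq \|D_y f(t,\cdot)\|_\infty$, $|Dg(t,\cdot)|_{\infty} \leq \|D_y^2 f(t,\cdot)\|_\infty$, $|D^2 g(t,\cdot)|_{\infty} \leq \|D_y^3 f(t,\cdot)\|_\infty$, hence
\begin{equation*}
\sup_{t \in [0,T]} \| g(t,\cdot) \|_{C_b^2} \leq \sup_{t \in [0,T]} \| f(t,\cdot) \|_{C_b^3}.
\end{equation*}

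For the time regularity, the same integral representation yields, for all $s,t \in [0,T]$ and $y \in W\times W$,
\begin{align*}
| g(t,y) - g(s,y) |_{L(W,\bar{W})} &\leq \int_0^1 \bigl| D_y f(t,z_\theta) - D_y f(s,z_\theta) \bigr|_{L(W,\bar{W})} \, d\theta \leq \sup_{z \in W} \| D_y f(\cdot, z) \|_{2\alpha} \, |t-s|^{2\alpha},
\end{align*}
with $z_\theta := \theta y_1 + (1-\theta) y_2$, and similarly
\begin{equation*}
| D g(t,y) - Dg(s,y) | \leq \sup_{z \in W} \| D_y^2 f(\cdot, z) \|_{2\alpha} \, |t-s|^{2\alpha}.
\end{equation*}
Summing the four contributions gives
\begin{equation*}
\| g \|_{C_b^{2\alpha,2}} \leq \sup_{t} \| f(t,\cdot) \|_{C_b^3} + \sup_{z} \| D_y f(\cdot,z) \|_{2\alpha} + \sup_{z} \| D_y^2 f(\cdot,z) \|_{2\alpha} \leq \| f \|_{C_b^{2\alpha,3}},
\end{equation*}
where the last inequality drops the non-negative summand $\sup_z \| f(\cdot,z) \|_{2\alpha}$ from $\| f \|_{C_b^{2\alpha,3}}$.

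There is no real obstacle here; the only care point is the bookkeeping of derivatives. In particular, one needs that $g$ is $L(W,\bar{W})$-valued (not merely $\bar{W}$-valued after pairing with $y_1-y_2$), which is why the argument is phrased via the integral of $D_y f$, and one needs one extra order of spatial regularity of $f$ (namely three) to secure the $C^2$ spatial regularity of $g$ and the $2\alpha$-Hölder regularity of $Dg$ in time.
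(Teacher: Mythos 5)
Your proposal is correct and takes essentially the same approach as the paper: both define $g(t,y)$ as $\int_0^1 D_y f(t,\theta y_1 + (1-\theta)y_2)\,d\theta$, obtain \eqref{identity-Taylor} by the fundamental theorem of calculus / Taylor, differentiate under the integral (the paper organizes this via the auxiliary linear map $B_\theta(y_1,y_2) = \theta y_1 + (1-\theta)y_2$ with $|B_\theta|\le 1$ and Lemma~\ref{lemma-chain-rule}), and sum the resulting bounds to conclude $\|g\|_{C_b^{2\alpha,2}} \le \|f\|_{C_b^{2\alpha,3}}$. The bookkeeping matches the paper's proof in every substantive detail.
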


\begin{proof}
We define $g : [0,T] \times W \times W \to L(W,\bar{W})$ as
\begin{align*}
g(t,y) := \int_0^1 D_2 f(t,\theta y_1 + (1-\theta)y_2) \, d\theta.
\end{align*}
Then by Taylor's theorem identity (\ref{identity-Taylor}) is satisfied for all $t \in [0,T]$ and all $y = (y_1,y_2) \in W \times W$. For $\theta \in [0,1]$ we consider the linear operator $B_{\theta} : W \times W \to W$ given by
\begin{align*}
B_{\theta}(y) := \theta y_1 + (1-\theta) y_2, \quad y = (y_1,y_2) \in W \times W.
\end{align*}
Then for all $y = (y_1,y_2) \in W \times W$ we have
\begin{align*}
|B_{\theta}(y)| = |\theta y_1 + (1-\theta)y_2| \leq \theta |y_1| + (1-\theta) |y_2| \leq |y_1| + |y_2| = |y|.
\end{align*}
Therefore, for each $\theta \in [0,1]$ we have $B_{\theta} \in L(W \times W,W)$ with
\begin{align}\label{B-norm}
|B_{\theta}| \leq 1 \quad \text{for all $\theta \in [0,1]$.}
\end{align}
Now, we fix an arbitrary $t \in [0,T]$. Then we have
\begin{align*}
g(t,y) = \int_0^1 D_2 f(t,B_{\theta}(y)) \, d\theta, \quad y \in W \times W.
\end{align*}
Therefore, we have $g(t,\cdot) \in C^2(W \times W,L(W,\bar{W}))$. Let $y \in W \times W$ be arbitrary. By the chain rule we have
\begin{align*}
D_y g(t,y) &= \int_0^1 D_y \big( D_2 f(t,B_{\theta}(y)) \big) \, d\theta = \int_0^1 D_2^2 f(t,B_{\theta}(y)) B_{\theta} \, d\theta,
\end{align*}
Moreover, by Lemma \ref{lemma-chain-rule} and the chain rule for all $v \in W \times W$ we obtain
\begin{align*}
D_y^2 g(t,y)v &= \int_0^1 D_y \big( D_2^2 f(t,B_{\theta}(y)) B_{\theta} \big) v \, d\theta
\\ &= \int_0^1 \big( D_2^3 f(t,B_{\theta}(y)) B_{\theta}(v) \big) B_{\theta} \, d\theta.
\end{align*}
Therefore, noting (\ref{B-norm}) we have $g(t,\cdot) \in C_b^2(W \times W,L(W,\bar{W}))$ with
\begin{align}\label{g-fct-1}
\| D_y^k g(t,\cdot) \|_{\infty} \leq \| D_y^{k+1} f(t,\cdot) \|_{\infty}, \quad k=0,1,2.
\end{align}
Now, let $y \in W \times W$ be arbitrary. Furthermore, let $s,t \in [0,T]$ be arbitrary. Then we have
\begin{align*}
|g(t,y)-g(s,y)| &\leq \int_0^1 |D_2 f(t,B_{\theta}(y)) - D_2 f(s,B_{\theta}(y)) | \, d\theta
\\ &\leq \int_0^1 \| D_2 f(\cdot,B_{\theta}(y)) \|_{2\alpha} |t-s|^{2\alpha} \, d\theta.
\end{align*}
Moreover, by (\ref{B-norm}) we obtain
\begin{align*}
|D_y g(t,y) - D_y g(s,y)| &\leq \int_0^1 |D_2^2 f(t,B_{\theta}(y)) B_{\theta} - D_2^2 f(s,B_{\theta}(y)) B_{\theta} | \, d\theta
\\ &\leq \int_0^1 |D_2^2 f(t,B_{\theta}(y)) - D_2^2 f(s,B_{\theta}(y)) | \, d\theta
\\ &\leq \int_0^1 \| D_2^2 f(\cdot,B_{\theta}(y)) \|_{2\alpha} |t-s|^{2\alpha} \, d\theta.
\end{align*}
Consequently, we have $g(\cdot,y), D_y g(\cdot,y) \in \calc^{2\alpha}([0,T],W)$ with
\begin{align}\label{g-fct-2}
\sup_{y \in W} \| D_y^k g(\cdot,y) \|_{2\alpha} \leq \sup_{y \in W} \| D_y^{k+1} f(\cdot,y) \|_{2\alpha}, \quad k=0,1.
\end{align}
From (\ref{g-fct-1}) and (\ref{g-fct-2}) we obtain $\| g \|_{C_b^{2\alpha,2}} \leq \| f \|_{C_b^{2 \alpha,3}} < \infty$, which completes the proof.
\end{proof}

\begin{proposition}\label{prop-diff-f}
Let $(Y,Y'), (Z,Z') \in \scrd_X^{2\alpha}([0,T],W)$ and $f \in C_b^{2\alpha,3}([0,T] \times W, \bar{W})$ be arbitrary. Then we have
\begin{align*}
\interleave (f(Y),f(Y)')-(f(Z),f(Z)') \interleave_{X,2\alpha} &\leq C \big( 1 + | Y,Y' |_{X,2\alpha}^2 + | Z,Z' |_{X,2\alpha}^2 \big)
\\ &\quad \interleave (Y,Y') - (Z,Z') \interleave_{X,2\alpha},
\end{align*}
where the constant $C > 0$ depends on $\alpha$, $T$, $\mathbf{X}$ and $\| f \|_{C_b^{2 \alpha,3}}$. Moreover, for $T \leq 1$ the constant $C$ does not depend on $T$.
\end{proposition}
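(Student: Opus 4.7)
The strategy is to factor the difference $f(Y) - f(Z)$ as a bilinear product and then combine the composition and product estimates already established. First I would invoke Lemma \ref{lemma-bilinear-fg-time} to obtain a function $g \in C_b^{2\alpha,2}([0,T] \times (W \times W), L(W,\bar{W}))$ with $\| g \|_{C_b^{2\alpha,2}} \leq \| f \|_{C_b^{2\alpha,3}}$ such that
\[
f(t, y_1) - f(t, y_2) = g(t,(y_1,y_2))(y_1-y_2) \qquad \text{for all } t \in [0,T],\ y_1, y_2 \in W.
\]
By Lemma \ref{lemma-pairs}, the pair $((Y,Z),(Y',Z'))$ defines a controlled rough path in $\scrd_X^{2\alpha}([0,T], W \times W)$, with all three norms controlled by the sum of the corresponding norms of $(Y,Y')$ and $(Z,Z')$. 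Applying Proposition \ref{prop-comp-f} to the composition of $g$ with this paired path yields a controlled rough path $(h, h') := (g((Y,Z)), g((Y,Z))') \in \scrd_X^{2\alpha}([0,T], L(W,\bar{W}))$. Since $|g(t,\cdot)| \leq \| g \|_{C_b^{2\alpha,2}}$ is uniformly bounded, the zeroth-order contribution to $\interleave h, h' \interleave_{X,2\alpha}$ is a constant, and combining estimate (\ref{comp-f-in-D-est}) with Lemma \ref{lemma-pairs} gives
\[
\interleave h, h' \interleave_{X,2\alpha} \leq C \bigl( 1 + | Y,Y' |_{X,2\alpha}^2 + | Z,Z' |_{X,2\alpha}^2 \bigr).
\]

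Next, $(Y-Z, Y'-Z')$ belongs to $\scrd_X^{2\alpha}([0,T], W)$ by linearity of the space, with $\interleave Y-Z, Y'-Z' \interleave_{X,2\alpha} = \interleave (Y,Y')-(Z,Z') \interleave_{X,2\alpha}$. Proposition \ref{prop-bilinear} applied to the evaluation map $B : L(W,\bar{W}) \times W \to \bar{W}$, $B(L, w) := L(w)$ (a bilinear operator of norm at most $1$), then produces a controlled rough path $(U, U') \in \scrd_X^{2\alpha}([0,T], \bar{W})$ with
\[
\interleave U, U' \interleave_{X,2\alpha} \leq C \interleave h, h' \interleave_{X,2\alpha} \cdot \interleave (Y,Y')-(Z,Z') \interleave_{X,2\alpha},
\]
where
\[
U_t = h_t(Y_t - Z_t), \qquad U_t' = h_t (Y_t'-Z_t') + h_t'(Y_t - Z_t).
\]
Chaining the two displayed estimates yields a bound of the desired form, and it only remains to identify $(U, U')$ with $(f(Y) - f(Z), f(Y)' - f(Z)')$.

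The identification $U_t = f(Y)_t - f(Z)_t$ is immediate from the defining property of $g$. The main obstacle, and the step worth writing out explicitly, is the identification of Gubinelli derivatives, i.e.\ the pointwise identity
\[
h_t (Y_t' - Z_t') + D_y g(t, (Y_t, Z_t))(Y_t', Z_t')(Y_t - Z_t) = D_y f(t, Y_t) Y_t' - D_y f(t, Z_t) Z_t'.
\]
This is verified by applying the fundamental theorem of calculus to the auxiliary curve
\[
F(\theta) := D_y f\bigl( t,\, \theta Y_t + (1-\theta) Z_t \bigr)\bigl( \theta Y_t' + (1-\theta) Z_t' \bigr), \quad \theta \in [0,1],
\]
so that the right-hand side equals $F(1) - F(0) = \int_0^1 F'(\theta)\, d\theta$. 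Differentiating $F$ via the chain rule and invoking Schwarz's theorem on the symmetry of $D_y^2 f$, together with the explicit expressions for $g$ and $D_y g$ derived in the proof of Lemma \ref{lemma-bilinear-fg-time}, matches the integrand of $\int_0^1 F'(\theta)\, d\theta$ with the left-hand side above. Once this identification is in place, assembling the two estimates displayed in the previous paragraphs and noting $|Y,Y'|_{X,2\alpha} \leq \interleave Y,Y' \interleave_{X,2\alpha}$ yields the claim, with $C$ independent of $T$ when $T \leq 1$ by the corresponding statements in Propositions \ref{prop-comp-f} and \ref{prop-bilinear}.
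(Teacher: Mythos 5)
Your proof is correct and follows the paper's argument step for step: factor $f(t,y)-f(t,z)=g(t,y,z)(y-z)$ via Lemma \ref{lemma-bilinear-fg-time}, bound $g(Y,Z)$ using Proposition \ref{prop-comp-f} and Lemma \ref{lemma-pairs}, and conclude via the bilinear estimate of Proposition \ref{prop-bilinear} applied to the evaluation map. The identification of the Gubinelli derivative of $B(g(Y,Z),Y-Z)$ with $f(Y)'-f(Z)'$, which you verify explicitly through the auxiliary curve $F(\theta)$ and the symmetry of $D_y^2 f$, is left implicit in the paper's opening chain of equalities, so your write-up is if anything slightly more complete than the original.
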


\begin{proof}
According to Lemma \ref{lemma-bilinear-fg-time} there exists a mapping $g \in C_b^{2 \alpha,2}([0,T] \times (W \times W), L(W,\bar{W}))$ with
\begin{align}\label{g-f-norms}
\| g \|_{C_b^{2 \alpha,2}} \leq \| f \|_{C_b^{2 \alpha,3}}
\end{align}
such that
\begin{align*}
f(t,y) - f(t,z) = g(t,y,z)(y-z), \quad t \in [0,T] \text{ and } y,z \in W.
\end{align*}
We define the bilinear operator $B : L(W,\bar{W}) \times W \to \bar{W}$ as
\begin{align*}
B(T,y) := Ty.
\end{align*}
Then for all $(T,y) \in L(W,\bar{W}) \times W$ we have
\begin{align*}
|B(T,y)| \leq |T| \, |y|.
\end{align*}
Therefore, we have $B \in L^{(2)}(L(W,\bar{W}) \times W, \bar{W})$ with $|B| \leq 1$. Hence, using Proposition \ref{prop-bilinear} we obtain
\begin{align*}
&\interleave (f(Y),f(Y)')-(f(Z),f(Z)') \interleave_{X,2\alpha}
\\ &= \interleave g(Y,Z)(Y-Z), (g(Y,Z)(Y-Z))' \interleave_{X,2\alpha}
\\ &= \interleave B(g(Y,Z),Y-Z), (B(g(Y,Z),Y-Z))' \interleave_{X,2\alpha}
\\ &\leq C \interleave g(Y,Z), g(Y,Z)' \interleave_{X,2\alpha} \interleave (Y,Y') - (Z,Z') \interleave_{X,2\alpha},
\end{align*}
where the constant $C > 0$ depends on $\alpha$, $T$ and $\mathbf{X}$, and does not depend on $T \leq 1$. Furthermore, by Proposition \ref{prop-comp-f} and Lemma \ref{lemma-pairs} we have
\begin{align*}
\interleave g(Y,Z), g(Y,Z)' \interleave_{X,2\alpha} &= |g(0,Y_0,Z_0)| + |g(Y,Z), g(Y,Z)'|_{X,2\alpha}
\\ &\leq C ( 1 + | (Y,Z), (Y,Z)' |_{X,2\alpha}^2 )
\\ &\leq C ( 1 + |Y,Y'|_{X,2\alpha}^2 + |Z,Z'|_{X,2\alpha}^2 ),
\end{align*}
where the constant $C > 0$ depends on $\alpha$, $T$, $\mathbf{X}$ and $\| g \|_{C_b^{2 \alpha,2}}$, and does not depend on $T \leq 1$. Taking into account (\ref{g-f-norms}), this completes the proof.
\end{proof}

\subsection{Compositions with time-dependent linear operators}

In this subsection we consider compositions of controlled rough paths with time-dependent linear operators. For a controlled rough path $(Y,Y') \in \scrd_X^{2 \alpha}([0,T],W)$ and a mapping $\varphi : [0,T] \times W \to \bar{W}$ such that $\varphi(t,\cdot) \in L(W,\bar{W})$ for each $t \in [0,T]$ we denote by $\varphi(Y) : [0,T] \to \bar{W}$ the path
\begin{align*}
\varphi(Y)_t := \varphi(t,Y_t), \quad t \in [0,T],
\end{align*}
and we denote by $\varphi(Y)' : [0,T] \to L(V,\bar{W})$ the path
\begin{align*}
\varphi(Y)_t' := \varphi(t,Y_t'), \quad t \in [0,T].
\end{align*}

\begin{proposition}\label{prop-comp-linear}
Let $(Y,Y') \in \scrd_X^{2 \alpha}([0,T],W)$ be arbitrary, and let $\varphi : [0,T] \times W \to \bar{W}$ be a function such that $\varphi(t,\cdot) \in L(W,\bar{W})$ for each $t \in [0,T]$. We assume there are constants $K,L > 0$ such that
\begin{align*}
&| \varphi(t,\cdot) | \leq K, \quad t \in [0,T],
\\ &| \varphi(t,y) - \varphi(s,y) | \leq L |y| \, |t-s|^{2\alpha}, \quad s,t \in [0,T] \text{ and } y \in W.
\end{align*}
Then we have
\begin{align}\label{comp-linear-1}
(\varphi(Y),\varphi(Y)') \in \scrd_X^{2 \alpha}([0,T],\bar{W}).
\end{align}
Furthermore, we have
\begin{align}\label{comp-linear-2}
\| \varphi(Y)' \|_{\alpha} &\leq L T^{\alpha} \| Y' \|_{\infty} + K \| Y' \|_{\alpha},
\\ \label{comp-linear-3} \| R^{\varphi(Y)} \|_{2 \alpha} &\leq L \| Y \|_{\infty} + K \| R^Y \|_{2 \alpha},
\end{align}
and we have the estimate
\begin{align}\label{comp-linear-4}
\interleave \varphi(Y),\varphi(Y)' \interleave_{X,2\alpha} \leq C \interleave Y,Y' \interleave_{X,2 \alpha},
\end{align}
where the constant $C > 0$ depends on $\alpha$, $T$, $\mathbf{X}$ and $K$, $L$. Moreover, for $T \leq 1$ the constant $C$ does not depend on $T$.
\end{proposition}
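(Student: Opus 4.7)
The plan is to verify the two parts of the controlled rough path structure by splitting each difference into a ``space'' part (where linearity of $\varphi(t,\cdot)$ is used) and a ``time'' part (where the H\"older-in-time hypothesis is used), and then to combine the resulting bounds with Lemma \ref{lemma-norm-of-Y}.

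First I would establish (\ref{comp-linear-2}). For $s,t \in [0,T]$, write
\begin{align*}
\varphi(Y)_t' - \varphi(Y)_s' = \varphi(t,Y_t') - \varphi(t,Y_s') + \varphi(t,Y_s') - \varphi(s,Y_s').
\end{align*}
By linearity of $\varphi(t,\cdot)$ the first difference equals $\varphi(t, Y_t' - Y_s')$, which is bounded by $K \| Y' \|_{\alpha} |t-s|^{\alpha}$. The second difference is bounded by $L |Y_s'| |t-s|^{2\alpha} \leq L \| Y' \|_{\infty} T^{\alpha} |t-s|^{\alpha}$. Adding these gives (\ref{comp-linear-2}) and shows $\varphi(Y)' \in \calc^{\alpha}([0,T], L(V,\bar{W}))$.

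Next I would compute the remainder. Using that $\varphi(Y)_s' X_{s,t} = \varphi(s, Y_s' X_{s,t})$ (again by linearity) and adding and subtracting $\varphi(s, Y_t)$,
\begin{align*}
R_{s,t}^{\varphi(Y)} &= \varphi(t,Y_t) - \varphi(s,Y_s) - \varphi(s, Y_s' X_{s,t}) \\
&= \bigl( \varphi(t,Y_t) - \varphi(s,Y_t) \bigr) + \varphi(s, Y_{s,t} - Y_s' X_{s,t}) \\
&= \bigl( \varphi(t,Y_t) - \varphi(s,Y_t) \bigr) + \varphi(s, R_{s,t}^Y).
\end{align*}
The first term is bounded by $L |Y_t| |t-s|^{2\alpha} \leq L \| Y \|_{\infty} |t-s|^{2\alpha}$, and the second by $K \| R^Y \|_{2\alpha} |t-s|^{2\alpha}$. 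This yields (\ref{comp-linear-3}) and, together with (\ref{comp-linear-2}), proves (\ref{comp-linear-1}).

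Finally, I would note $|\varphi(Y)_0| \leq K|Y_0|$ and $|\varphi(Y)_0'| \leq K|Y_0'|$, and combine with (\ref{comp-linear-2}) and (\ref{comp-linear-3}). Using Lemma \ref{lemma-norm-of-Y} to bound $\| Y \|_{\infty}$, $\| Y' \|_{\infty}$ and $\| Y' \|_{\alpha}$ by a constant depending on $\alpha$, $T$ and $\mathbf{X}$ times $\interleave Y,Y' \interleave_{X,2\alpha}$, we obtain (\ref{comp-linear-4}) with a constant $C > 0$ depending on $\alpha$, $T$, $\mathbf{X}$, $K$ and $L$; the last statement in Lemma \ref{lemma-norm-of-Y} ensures that for $T \leq 1$ the constant can be chosen independently of $T$. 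There is no real obstacle here: the only slight subtlety is keeping track of which term must be estimated via $\| Y \|_{\infty}$ versus $\| Y \|_{\alpha}$, but the decomposition above makes this unambiguous.
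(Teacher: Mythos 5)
Your proof is correct and takes essentially the same approach as the paper: the same split of $\varphi(Y)'_t - \varphi(Y)'_s$ into a space part handled by linearity and a time part handled by the H\"older hypothesis, the same add-and-subtract of $\varphi(s,Y_t)$ to rewrite the remainder as $(\varphi(t,Y_t)-\varphi(s,Y_t)) + \varphi(s,R_{s,t}^Y)$, and the same final invocation of Lemma \ref{lemma-norm-of-Y} to control $\|Y\|_\infty$, $\|Y'\|_\infty$, $\|Y'\|_\alpha$.
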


\begin{proof}
Let $s,t \in [0,T]$ be arbitrary. Then we have
\begin{align*}
| \varphi(t,Y_t) - \varphi(s,Y_s) | &\leq | \varphi(t,Y_t) - \varphi(t,Y_s) | + | \varphi(t,Y_s) - \varphi(s,Y_s) |
\\ &\leq | \varphi(t,\cdot) | \, | Y_t - Y_s | + L |Y_s| \, |t-s|^{2 \alpha}
\\ &\leq M \| Y \|_{\alpha} |t-s|^{\alpha} + L \| Y \|_{\infty} |t-s|^{2 \alpha}
\\ &\leq (M \| Y \|_{\alpha} + L T^{\alpha} \| Y \|_{\infty}) |t-s|^{\alpha},
\end{align*}
and hence $\varphi(Y) \in C^{\alpha}([0,T],\bar{W})$. Furthermore, we have
\begin{align*}
| \varphi(t,Y_t') - \varphi(s,Y_s') | &\leq | \varphi(t,Y_t') - \varphi(t,Y_s') | + | \varphi(t,Y_s') - \varphi(s,Y_s') |
\\ &\leq | \varphi(t,\cdot) | \, | Y_t' - Y_s' | + L |Y_s'| \, |t-s|^{2\alpha}
\\ &\leq M \| Y' \|_{\alpha} |t-s|^{\alpha} + L \| Y' \|_{\infty} |t-s|^{2 \alpha}
\\ &\leq (M \| Y' \|_{\alpha} + L T^{\alpha} \| Y' \|_{\infty}) |t-s|^{\alpha},
\end{align*}
showing $\varphi(Y)' \in C^{\alpha}([0,T],L(V,\bar{W}))$ and the estimate (\ref{comp-linear-2}). Moreover, we have
\begin{align*}
R_{s,t}^{\varphi(Y)} &= \varphi(Y_{s,t}) - \varphi(Y_s') X_{s,t}
\\ &= \varphi(t,Y_t) - \varphi(s,Y_s) - \varphi(s,Y_s' X_{s,t})
\\ &= \varphi(t,Y_t) - \varphi(s,Y_t) + \varphi(s,Y_t) - \varphi(s,Y_s) - \varphi(s,Y_s' X_{s,t})
\\ &= \varphi(t,Y_t) - \varphi(s,Y_t) + \varphi(s,Y_{s,t} - Y_s' X_{s,t}).
\end{align*}
Therefore, we obtain
\begin{align*}
| R_{s,t}^{\varphi(Y)} | &\leq | \varphi(t,Y_t) - \varphi(s,Y_t) | + | \varphi(s,Y_{s,t} - Y_s' X_{s,t}) |
\\ &\leq L \| Y \|_{\infty} |t-s|^{2\alpha} + \| \varphi(s,\cdot) \| \, | R_{s,t}^Y |
\\ &\leq L \| Y \|_{\infty} |t-s|^{2\alpha} + M \| R^Y \|_{2 \alpha} |s-t|^{2 \alpha}
\\ &= \big( L \| Y \|_{\infty} + M \| R^Y \|_{2 \alpha} \big) |s-t|^{2 \alpha},
\end{align*}
proving (\ref{comp-linear-1}) and (\ref{comp-linear-3}). Furthermore, note that
\begin{align*}
|\varphi(Y)_0| &= |\varphi(0,Y_0)| \leq |\varphi(0,\cdot)| \, |Y_0| \leq M |Y_0|,
\\ |\varphi(Y)_0'| &= |\varphi(0,Y_0')| \leq |\varphi(0,\cdot)| \, |Y_0'| \leq M |Y_0'|.
\end{align*}
Hence, the estimate (\ref{comp-linear-4}) is a consequence of Lemma \ref{lemma-norm-of-Y}.
\end{proof}

For what follows, the upcoming auxiliary result will be useful.

\begin{lemma}\label{lemma-embedding-operators}
Let $E$ be another Banach space, and let $\varphi \in L(E,W)$ be arbitrary. Setting
\begin{align*}
L(V,E) \to L(V,W), \quad S \mapsto \varphi S,
\end{align*}
we may regard $\varphi$ as an element from $L(L(V,E),L(V,W))$. Furthermore, we have
\begin{align*}
| \varphi |_{L(L(V,E),L(V,W))} \leq | \varphi |_{L(E,W)}.
\end{align*}
\end{lemma}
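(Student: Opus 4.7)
The plan is to check the three standard requirements for $\varphi$ (viewed via $S \mapsto \varphi S$) to be a bounded linear operator from $L(V,E)$ to $L(V,W)$, and then to read off the operator norm bound directly from a single estimate.

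First, I would observe well-definedness: for any $S \in L(V,E)$, the composition $\varphi S : V \to W$ is linear as a composition of linear maps and continuous as a composition of continuous maps, so $\varphi S \in L(V,W)$. Linearity of the map $S \mapsto \varphi S$ in $S$ is immediate from the fact that composition of linear maps is bilinear; one simply writes $(\varphi(\lambda S_1 + \mu S_2))(v) = \varphi(\lambda S_1 v + \mu S_2 v) = \lambda \varphi S_1 v + \mu \varphi S_2 v$ for $v \in V$ and $\lambda,\mu$ scalars.

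For the norm bound, fix $S \in L(V,E)$ and $v \in V$. Using the operator norm inequalities in $L(E,W)$ and $L(V,E)$,
\begin{align*}
|(\varphi S) v|_W = |\varphi (Sv)|_W \leq | \varphi |_{L(E,W)} \, |Sv|_E \leq | \varphi |_{L(E,W)} \, | S |_{L(V,E)} \, |v|_V.
\end{align*}
Taking the supremum over $v \in V$ with $|v|_V \leq 1$ yields $|\varphi S|_{L(V,W)} \leq |\varphi|_{L(E,W)} |S|_{L(V,E)}$, which both confirms $\varphi S \in L(V,W)$ and shows that the induced map $S \mapsto \varphi S$ lies in $L(L(V,E),L(V,W))$. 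Taking the supremum over $S \in L(V,E)$ with $|S|_{L(V,E)} \leq 1$ then gives $|\varphi|_{L(L(V,E),L(V,W))} \leq |\varphi|_{L(E,W)}$, which is the claimed inequality.

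There is no real obstacle: the statement is essentially a bookkeeping lemma recording that left-composition by $\varphi$ is a contraction (up to $|\varphi|$) between the appropriate operator spaces. The only care needed is to keep the two operator norms cleanly distinguished in the notation, exactly as stated in the lemma.
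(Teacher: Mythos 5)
Your proof is correct and takes essentially the same approach as the paper: both establish the operator norm bound $|\varphi S|_{L(V,W)} \leq |\varphi|_{L(E,W)} |S|_{L(V,E)}$ and then take the supremum over $|S| \leq 1$. Your version is slightly more verbose in that it explicitly checks linearity of $S \mapsto \varphi S$ and unfolds the estimate one level further down to $v \in V$, but the mathematical content is identical.
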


\begin{proof}
We have
\begin{align*}
| \varphi |_{L(L(V,E),L(V,W))} &= \sup_{| S | \leq 1} | \varphi S |_{L(V,W)}
\\ &\leq \sup_{| S | \leq 1} | \varphi |_{L(E,W)} | S |_{L(V,E)} \leq | \varphi |_{L(E,W)},
\end{align*}
completing the proof.
\end{proof}

Now, let $A$ be the generator of a $C_0$-semigroup $(S_t)_{t \geq 0}$ on $W$. Then there are constants $M \geq 1$ and $\omega \in \bbr$ such that the estimate (\ref{est-semigroup}) is satisfied.

\begin{proposition}\label{prop-conv-well-defined}
Let $(Y,Y') \in \scrd_X^{2 \alpha}([0,T],L(V,D(A)))$ be arbitrary, and let $t \in [0,T]$ be arbitrary. We define the paths $Z : [0,t] \to L(V,W)$ and $Z' : [0,t] \to L(V,L(V,W))$ as
\begin{align*}
Z_s &:= S_{t-s} Y_s, \quad s \in [0,t],
\\ Z_s' &:= S_{t-s} Y_s', \quad s \in [0,t].
\end{align*}
Then we have
\begin{align*}
(Z,Z') \in \scrd_X^{2 \alpha}([0,t],L(V,W)).
\end{align*}
Furthermore, we have
\begin{align*}
\interleave Z,Z' \interleave_{X,2\alpha} \leq C \interleave Y,Y' \interleave_{X,2\alpha},
\end{align*}
where the constant $C > 0$ depends on $\alpha$, $T$, $\mathbf{X}$ and $M$, $\omega$. Moreover, for $T \leq 1$ the constant $C$ does not depend on $T$.
\end{proposition}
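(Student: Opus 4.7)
The plan is to realize the map $(Y,Y') \mapsto (Z,Z')$ as an instance of the time-dependent linear composition handled by Proposition~\ref{prop-comp-linear}. Specifically, I would define
\[
\varphi : [0,t] \times L(V,D(A)) \to L(V,W), \quad \varphi(s,U) := S_{t-s} U,
\]
where for $U \in L(V,D(A))$ the composition $S_{t-s}U$ is viewed as an element of $L(V,W)$ via Lemma~\ref{lemma-embedding-operators}. With this identification one has $\varphi(Y)_s = Z_s$ and $\varphi(Y)_s' = Z_s'$, so it suffices to check the two hypotheses of Proposition~\ref{prop-comp-linear} for $\varphi$ on the interval $[0,t]$ (where the Banach space $W$ of that proposition is played by $L(V,D(A))$ and $\bar W$ by $L(V,W)$).

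For the uniform operator-norm bound, Lemma~\ref{lemma-embedding-operators} combined with $|S_{t-s}|_{L(D(A),W)} \leq |S_{t-s}|_{L(W)} \leq M e^{\omega T}$ (the first inequality uses that the graph norm dominates the $W$-norm) gives
\[
|\varphi(s,\cdot)|_{L(L(V,D(A)),L(V,W))} \leq M e^{\omega T}
\]
uniformly in $s \in [0,t]$, so the first hypothesis holds with $K := M e^{\omega T}$.

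For the time regularity, I apply Corollary~\ref{cor-orbit-map-2} to $(S_{t-s} - S_{t-r})U = S_{t-r,t-s}U$, obtaining
\[
|\varphi(s,U) - \varphi(r,U)|_{L(V,W)} \leq M e^{\omega T}\,|U|_{L(V,D(A))}\,|s-r|.
\]
Since $2\alpha \leq 1$, the elementary estimate $|s-r| \leq T^{1-2\alpha}|s-r|^{2\alpha}$ converts Lipschitz-in-time regularity into the required $2\alpha$-Hölder-in-time regularity with $L := M e^{\omega T} T^{1-2\alpha}$ (and $L \leq M e^{\omega}$ when $T \leq 1$). This trade is the only subtle step of the argument, and it is made possible precisely by the assumption that the Gubinelli derivative $Y'$ takes values in $L(V,D(A))$ rather than merely $L(V,W)$; without the extra regularity supplied by $D(A)$, Corollary~\ref{cor-orbit-map-2} would be unavailable and no such time-Hölder bound for $\varphi$ could be extracted.

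With both hypotheses verified, Proposition~\ref{prop-comp-linear} applied to the restriction of $(Y,Y')$ to $[0,t]$ immediately yields $(Z,Z') \in \scrd_X^{2\alpha}([0,t],L(V,W))$ together with the estimate $\interleave Z,Z' \interleave_{X,2\alpha} \leq C \interleave Y,Y' \interleave_{X,2\alpha}$, where $C$ depends on $\alpha$, $T$, $\mathbf{X}$, $K$ and $L$, hence on $\alpha$, $T$, $\mathbf{X}$, $M$ and $\omega$. The independence of $C$ from $T$ when $T \leq 1$ follows from the corresponding statement in Proposition~\ref{prop-comp-linear} together with the bounds on $K$ and $L$ above.
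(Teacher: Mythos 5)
Your proof is correct and follows essentially the same route as the paper: define $\varphi(s,y) = S_{t-s}y$, check the uniform operator bound and the time-Lipschitz (hence $2\alpha$-Hölder) bound via Lemma~\ref{lemma-embedding-operators} and Corollary~\ref{cor-orbit-map-2}, and invoke Proposition~\ref{prop-comp-linear}. The only difference is that you spell out the $|s-r|\le T^{1-2\alpha}|s-r|^{2\alpha}$ conversion, which the paper leaves implicit in this particular proof (though it does the same explicitly in the analogous Lemmas~\ref{lemma-conv-1-pre} and~\ref{lemma-conv-2-pre}).
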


\begin{proof}
In view of Lemma \ref{lemma-embedding-operators}, we can define
\begin{align*}
\varphi : [0,t] \times L(V,D(A)) \to L(V,W), \quad \varphi(s,y) := S_{t-s} y.
\end{align*}
Let $s \in [0,t]$ be arbitrary. By Lemma \ref{lemma-embedding-operators} we have
\begin{align*}
\varphi(s,\cdot) \in L(L(V,D(A)), L(V,W)).
\end{align*}
Furthermore, by (\ref{est-semigroup}) for each $y \in L(V,D(A))$ we have
\begin{align*}
|\varphi(s,y)|_{L(V,W)} = |S_{t-s} y|_{L(V,W)} &\leq |S_{t-s}|_{L(D(A),W)} |y|_{L(V,D(A))}
\\ &\leq M e^{\omega T} |y|_{L(V,D(A))},
\end{align*}
showing that
\begin{align*}
|\varphi(s,\cdot)| \leq M e^{\omega T}.
\end{align*}
Now, let $s,t \in [0,T]$ and $y \in L(V,D(A))$ be arbitrary. Then by Corollary \ref{cor-orbit-map-2} we have
\begin{align*}
| \varphi(s,y) - \varphi(r,y) |_{L(V,W)} &= |S_{t-s} y - S_{t-r} y|_{L(V,W)} = |S_{t-r,t-s} y|_{L(V,W)}
\\ &\leq M e^{\omega T} |y|_{L(V,D(A))} |s-r|.
\end{align*}
Hence the statement follows from Proposition \ref{prop-comp-linear}.
\end{proof}

Let $(Y,Y') \in \scrd_X^{2 \alpha}([0,T],L(V,D(A)))$ be a controlled rough path. Then Proposition \ref{prop-conv-well-defined} allows us to define the \emph{rough convolution} $N : [0,T] \to W$ as follows. For each $t \in [0,T]$ let $N_t$ be the Gubinelli integral
\begin{align*}
N_t := \int_0^t S_{t-s} Y_s \, d\mathbf{X}_s
\end{align*}
according to Theorem \ref{thm-Gubinelli}. We will investigate this rough convolution further in Section \ref{sec-rough-conv}.

\begin{lemma}\label{lemma-conv-1-pre}
Let $(Y,Y') \in \scrd_X^{2 \alpha}([0,T],L(V,D(A^2)))$ be arbitrary, and let $s,t \in [0,T]$ with $s \leq t$ be arbitrary. We define the paths $Z : [s,t] \to L(V,W)$ and $Z' : [s,t] \to L(V,L(V,W))$ as
\begin{align*}
Z_r &:= (S_{t-r} - \Id) Y_r, \quad r \in [s,t],
\\ Z_r' &:= (S_{t-r} - \Id) Y_r', \quad t \in [s,t].
\end{align*}
Then we have
\begin{align}\label{conv-1}
(Z,Z') \in \scrd_X^{2 \alpha}([s,t],L(V,W)).
\end{align}
Furthermore, we have
\begin{align}\label{conv-1a}
\| Z' \|_{\alpha} &\leq M e^{\omega T} \big( T^{1-\alpha} \| Y' \|_{\infty} + \| Y' \|_{\alpha} |t-s| \big)
\\ \label{conv-1b} \| R^Z \|_{2\alpha} &\leq M e^{\omega T} \big( T^{1 - 2\alpha} \| Y  \|_{\infty} + \| R^Y \|_{2 \alpha} |t-s| \big),
\\ \label{conv-1c} \| Z \|_{\infty} &\leq M e^{\omega T} \| Y \|_{\infty} |t-s|,
\\ \label{conv-1d} \| Z' \|_{\infty} &\leq M e^{\omega T} \| Y' \|_{\infty} |t-s|.
\end{align}
\end{lemma}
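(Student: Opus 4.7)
The key tool is that, by Corollary \ref{cor-orbit-map-1} together with Lemma \ref{lemma-embedding-operators}, the operators $S_{t-r}-\Id$ and $S_{t-u}-S_{t-r}$ map $L(V,D(A))$ continuously into $L(V,W)$ (and likewise $L(V,L(V,D(A)))$ into $L(V,L(V,W))$) with norms bounded by $Me^{\omega T}|t-r|$ and $Me^{\omega T}|u-r|$, respectively. Since $Y$ takes values in $L(V,D(A^2))\subset L(V,D(A))$, these bounds apply to every quantity we need to estimate, and the extra multiplicative factor $|t-r|$ or $|u-r|$ is precisely what produces the $|t-s|$, $T^{1-\alpha}$, and $T^{1-2\alpha}$ factors in the target estimates.

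The two $\infty$-bounds (\ref{conv-1c}) and (\ref{conv-1d}) are immediate:
\begin{align*}
|Z_r|_{L(V,W)}=|(S_{t-r}-\Id)Y_r|_{L(V,W)}\leq Me^{\omega T}|t-r|\,\|Y\|_{\infty}\leq Me^{\omega T}|t-s|\,\|Y\|_{\infty},
\end{align*}
and analogously for $Z'$. For (\ref{conv-1a}) I would telescope
\begin{align*}
Z_u'-Z_r' = (S_{t-u}-\Id)(Y_u'-Y_r') + (S_{t-u}-S_{t-r})Y_r',
\end{align*}
bound the first summand by $Me^{\omega T}|t-u|\,\|Y'\|_{\alpha}|u-r|^{\alpha}\leq Me^{\omega T}|t-s|\,\|Y'\|_{\alpha}|u-r|^{\alpha}$, and the second by $Me^{\omega T}|u-r|\,\|Y'\|_{\infty}\leq Me^{\omega T}T^{1-\alpha}\|Y'\|_{\infty}|u-r|^{\alpha}$, using $|u-r|\leq T^{1-\alpha}|u-r|^{\alpha}$.

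The main step is (\ref{conv-1b}), and here the obvious three-term expansion of $R^Z_{r,u}$ (obtained by independently decomposing $Z_u-Z_r$ and $Z_r'X_{r,u}$) contains a cross term $(S_{t-u}-S_{t-r})Y_r'X_{r,u}$ whose natural bound involves $\|Y'\|_{\infty}\|X\|_{\alpha}$ and does \emph{not} fit the claimed form. The crucial algebraic observation, which I expect to be the main obstacle, is the clean two-term identity
\begin{align*}
R^Z_{r,u} = (S_{t-u}-S_{t-r})Y_u + (S_{t-r}-\Id)R^Y_{r,u},
\end{align*}
obtained by substituting $Y_u=Y_r+Y_r'X_{r,u}+R^Y_{r,u}$ into $R^Z_{r,u}=Z_u-Z_r-Z_r'X_{r,u}$ and using $(S_{t-u}-S_{t-r})(Y_r+Y_r'X_{r,u})=(S_{t-u}-S_{t-r})(Y_u-R^Y_{r,u})$ to collapse the would-be cross term. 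The first summand is then bounded by $Me^{\omega T}|u-r|\,\|Y\|_{\infty}\leq Me^{\omega T}T^{1-2\alpha}\|Y\|_{\infty}|u-r|^{2\alpha}$ (valid since $\alpha\leq 1/2$), and the second by $Me^{\omega T}|t-r|\,\|R^Y\|_{2\alpha}|u-r|^{2\alpha}\leq Me^{\omega T}|t-s|\,\|R^Y\|_{2\alpha}|u-r|^{2\alpha}$. Finally the membership (\ref{conv-1}) follows by assembling the pieces: $Z'\in\calc^{\alpha}([s,t],L(V,L(V,W)))$ from (\ref{conv-1a}), the $2\alpha$-Hölder control on $R^Z$ from (\ref{conv-1b}), and $Z\in\calc^{\alpha}([s,t],L(V,W))$ from $Z_u-Z_r=Z_r'X_{r,u}+R^Z_{r,u}$ together with the sup bound on $Z'$.
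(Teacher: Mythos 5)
Your proof is correct, and it is essentially the paper's own argument, just inlined. The two-term identity $R^Z_{r,u} = (S_{t-u}-S_{t-r})Y_u + (S_{t-r}-\Id)R^Y_{r,u}$ that you highlight as the crucial step is precisely the identity established in the proof of Proposition~\ref{prop-comp-linear} (there in the form $R_{s,t}^{\varphi(Y)} = \varphi(t,Y_t)-\varphi(s,Y_t)+\varphi(s,R^Y_{s,t})$). The paper proves Lemma~\ref{lemma-conv-1-pre} by defining $\varphi(r,y)=(S_{t-r}-\Id)y$, checking the hypotheses of Proposition~\ref{prop-comp-linear} via Corollaries~\ref{cor-orbit-map-1} and~\ref{cor-orbit-map-2}, and citing that general result to obtain (\ref{conv-1}), (\ref{conv-1a}), (\ref{conv-1b}); the sup-norm bounds (\ref{conv-1c}) and (\ref{conv-1d}) are then handled by direct estimates exactly as you do. So the mathematical content coincides; the only difference is that the paper factors the computation through a reusable proposition about composing controlled rough paths with time-dependent linear operators (which it also applies in Lemma~\ref{lemma-conv-2-pre} and Proposition~\ref{prop-conv-well-defined}), whereas you carry out the same computation directly for this specific $\varphi$.
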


\begin{proof}
We define $\varphi : [s,t] \times L(V,D(A^2)) \to L(V,W)$ as
\begin{align*}
\varphi(r,y) := (S_{t-r} - \Id) y.
\end{align*}
Let $r \in [s,t]$ be arbitrary. Then we have
\begin{align*}
\varphi(r,\cdot) \in L(L(V,D(A^2)),L(V,W)).
\end{align*}
Furthermore, by Corollary \ref{cor-orbit-map-1} for each $y \in L(V,D(A^2))$ we have
\begin{align*}
|\varphi(r,y)|_{L(V,W)} &= |(S_{t-r} - \Id) y|_{L(V,W)}
\\ &\leq |S_{t-r} - \Id|_{L(D(A^2),W)} |y|_{L(V,D(A^2))}
\\ &\leq |S_{0,t-r}|_{L(D(A),W)} |y|_{L(V,D(A^2))}
\\ &\leq M e^{\omega T} |t-s| \, |y|_{L(V,D(A^2))},
\end{align*}
showing that
\begin{align*}
| \varphi(r,\cdot) | \leq M e^{\omega T} |t-s|.
\end{align*}
Furthermore, we obtain
\begin{align*}
| Z_r |_{L(V,W)} = |\varphi(r,Y_r)|_{L(V,W)} \leq M e^{\omega T} |t-s| \, |Y_r|_{L(V,D(A^2))},
\end{align*}
showing (\ref{conv-1c}). Now, let $y \in L(V,D(A^2))$ and $r,q \in [s,t]$ be arbitrary. Then by Corollary \ref{cor-orbit-map-2} we have
\begin{align*}
| \varphi(r,y) - \varphi(q,y) |_{L(V,W)} &= |S_{t-r} y - S_{t-q} y|_{L(V,W)}
\\ &= | S_{t-q,t-r} y |_{L(V,W)}
\\ &\leq M e^{\omega T} |y|_{L(V,D(A^2))} |r-q|
\\ &\leq M e^{\omega T} |y|_{L(V,D(A^2))} T^{1-2\alpha} |r-q|^{2\alpha}.
\end{align*}
Hence, from Proposition \ref{prop-comp-linear} we obtain (\ref{conv-1}) and (\ref{conv-1a}), (\ref{conv-1b}). Now, we define $\Phi : [s,t] \times L(V,L(V,D(A^2))) \to L(V,L(V,W))$ as
\begin{align*}
\Phi(r,y') := (S_{t-r} - \Id) y'.
\end{align*}
Let $r \in [s,t]$ be arbitrary. Then by Lemma \ref{lemma-embedding-operators} and Corollary \ref{cor-orbit-map-1} for each $y' \in L(V,L(V,D(A^2)))$ we have
\begin{align*}
|\Phi(r,y')|_{L(V,L(V,W))} &= |(S_{t-r} - \Id) y'|_{L(V,L(V,W))}
\\ &\leq |S_{t-r} - \Id|_{L(L(V,D(A^2)),L(V,W))} |y'|_{L(V,L(V,D(A^2)))}
\\ &\leq |S_{t-r} - \Id|_{L(D(A^2),W)} |y'|_{L(V,L(V,D(A^2)))}
\\ &\leq |S_{0,t-r}|_{L(D(A),W)} |y'|_{L(V,L(V,D(A^2)))}
\\ &\leq M e^{\omega T} |t-s| \, |y'|_{L(V,L(V,D(A^2)))}.
\end{align*}
Therefore, we obtain
\begin{align*}
| Z_r' |_{L(V,L(V,W))} = |\Phi(r,Y_r')|_{L(V,L(V,W))} \leq M e^{\omega T} |t-s| \, |Y_r'|_{L(V,L(V,D(A^2)))},
\end{align*}
showing (\ref{conv-1d}).
\end{proof}

\begin{lemma}\label{lemma-conv-2-pre}
Let $(Y,Y') \in \scrd_X^{2 \alpha}([0,T],L(V,D(A^2)))$ be arbitrary, and let $s,t \in [0,T]$ with $s \leq t$ be arbitrary. We define the paths $Z : [0,s] \to L(V,W)$ and $Z' : [0,s] \to L(V,L(V,W))$ as
\begin{align*}
Z_r &:= (S_{t-r} - S_{s-r}) Y_r, \quad r \in [0,s],
\\ Z_r' &:= (S_{t-r} - S_{s-r}) Y_r', \quad r \in [0,s].
\end{align*}
Then we have
\begin{align}\label{conv-2}
(Z,Z') \in \scrd_X^{2 \alpha}([0,s],L(V,W)).
\end{align}
Furthermore, we have
\begin{align}\label{conv-2a}
\| Z' \|_{\alpha} &\leq M e^{\omega T} \big( T^{1-\alpha} e^{\omega T} \| Y' \|_{\infty} + \| Y' \|_{\alpha} \big) |t-s|,
\\ \label{conv-2b} \| R^Z \|_{2\alpha} &\leq M e^{\omega T} \big( T^{1-2\alpha} e^{\omega T} \| Y  \|_{\infty} + \| R^Y \|_{2 \alpha} \big) |t-s|,
\\ \label{conv-2c} \| Z \|_{\infty} &\leq M e^{\omega T} \| Y \|_{\infty} |t-s|,
\\ \label{conv-2d} \| Z' \|_{\infty} &\leq M e^{\omega T} \| Y' \|_{\infty} |t-s|.
\end{align}
\end{lemma}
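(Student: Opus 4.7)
The plan is to mirror the proof of Lemma \ref{lemma-conv-1-pre} by applying Proposition \ref{prop-comp-linear} to the time-dependent linear operator
\begin{align*}
\varphi : [0,s] \times L(V,D(A^2)) \to L(V,W), \quad \varphi(r,y) := (S_{t-r} - S_{s-r}) y.
\end{align*}
First I would verify that $\varphi(r,\cdot)$ is a bounded linear operator on $L(V,D(A^2))$ with values in $L(V,W)$ and identify the boundedness constant $K$. Writing $S_{t-r} - S_{s-r} = S_{s-r,t-r}$ and using Corollary \ref{cor-orbit-map-1} together with the continuous embedding $D(A^2) \hookrightarrow D(A)$ (with $|\cdot|_{D(A)} \leq |\cdot|_{D(A^2)}$) and Lemma \ref{lemma-embedding-operators}, one obtains $|\varphi(r,\cdot)|_{L(L(V,D(A^2)),L(V,W))} \leq K$ with $K := M e^{\omega T} |t-s|$.

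The key step is to upgrade the time regularity of $\varphi$ to the correct H\"older scale: I would show that $|\varphi(r,y) - \varphi(q,y)|_{L(V,W)} \leq L \, |y|_{L(V,D(A^2))} \, |r-q|^{2\alpha}$ for a suitable $L$. This is precisely where Proposition \ref{prop-est-quad} (equivalently, Corollary \ref{cor-orbit-map-quad-2} applied via Lemma \ref{lemma-embedding-operators}) enters, giving $|(S_{t-r} - S_{s-r})y - (S_{t-q} - S_{s-q})y|_{L(V,W)} \leq M e^{2\omega T} |y|_{L(V,D(A^2))} |t-s|\,|r-q|$. Turning $|r-q|$ into $|r-q|^{2\alpha}$ at the cost of the factor $T^{1-2\alpha}$ yields $L := M e^{2\omega T} T^{1-2\alpha} |t-s|$. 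Plugging $K$ and $L$ into the two conclusions of Proposition \ref{prop-comp-linear},
\begin{align*}
\| \varphi(Y)' \|_{\alpha} &\leq L T^{\alpha} \| Y' \|_{\infty} + K \| Y' \|_{\alpha}, \\
\| R^{\varphi(Y)} \|_{2\alpha} &\leq L \| Y \|_{\infty} + K \| R^Y \|_{2\alpha},
\end{align*}
gives directly the estimates (\ref{conv-2a}) and (\ref{conv-2b}), together with the statement (\ref{conv-2}) that $(Z,Z') \in \scrd_X^{2\alpha}([0,s],L(V,W))$.

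Finally, the pointwise estimates (\ref{conv-2c}) and (\ref{conv-2d}) follow at once from the boundedness $|\varphi(r,\cdot)| \leq K$: applying it to $Y_r \in L(V,D(A^2))$ gives (\ref{conv-2c}), and applying Lemma \ref{lemma-embedding-operators} once more to promote $\varphi(r,\cdot)$ to an operator from $L(V,L(V,D(A^2)))$ to $L(V,L(V,W))$ with the same norm, and then evaluating at $Y_r'$, gives (\ref{conv-2d}). I do not expect a real obstacle; the only place requiring care is the bookkeeping that produces the factor $e^{2\omega T}$ in $L$, which is what causes the split $e^{\omega T} \cdot e^{\omega T}$ visible inside the brackets of (\ref{conv-2a}) and (\ref{conv-2b}). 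The whole argument is strictly parallel to Lemma \ref{lemma-conv-1-pre}, with Proposition \ref{prop-est-quad} playing the role that Corollary \ref{cor-orbit-map-1} played there, yielding an additional factor $|t-s|$ in every bound.
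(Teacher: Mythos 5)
Your proposal is correct and follows essentially the same route as the paper: define $\varphi(r,y) = S_{s-r,t-r}y$, bound its operator norm by $K = M e^{\omega T}|t-s|$ via Corollary \ref{cor-orbit-map-1}, upgrade the time-H\"older modulus via Corollary \ref{cor-orbit-map-quad-2} (Proposition \ref{prop-est-quad}) to get $L = M e^{2\omega T} T^{1-2\alpha}|t-s|$, and feed $K,L$ into Proposition \ref{prop-comp-linear}, with (\ref{conv-2c}), (\ref{conv-2d}) handled by direct $\|\cdot\|_\infty$ bounds plus Lemma \ref{lemma-embedding-operators}. The only cosmetic difference is that the paper introduces a separately named map $\Phi$ on $L(V,L(V,D(A^2)))$ for the $Z'$ bound rather than reusing $\varphi$, but the estimate is identical.
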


\begin{proof}
We define $\varphi : [0,s] \times L(V,D(A^2)) \to L(V,W)$ as
\begin{align*}
\varphi(r,y) := S_{s-r,t-r} y.
\end{align*}
Let $r \in [0,s]$ be arbitrary. Then we have
\begin{align*}
\varphi(r,\cdot) \in L(L(V,D(A^2)),L(V,W)).
\end{align*}
Furthermore, by Corollary \ref{cor-orbit-map-1} for each $y \in L(V,D(A^2))$ we have
\begin{align*}
|\varphi(r,y)|_{L(V,W)} &= |(S_{t-r} - S_{s-r}) y|_{L(V,W)}
\\ &\leq |S_{t-r} - S_{s-r}|_{L(D(A^2),W)} |y|_{L(V,D(A^2))}
\\ &\leq |S_{s-r,t-r}|_{L(D(A),W)} |y|_{L(V,D(A^2))}
\\ &\leq M e^{\omega T} |t-s| \, |y|_{L(V,D(A^2))},
\end{align*}
showing that
\begin{align*}
| \varphi(r,\cdot) | \leq M e^{\omega T} |t-s|.
\end{align*}
Furthermore, we obtain
\begin{align*}
| Z_r |_{L(V,W)} = |\varphi(r,Y_r)|_{L(V,W)} \leq M e^{\omega T} |t-s| \, |Y_r|_{L(V,D(A^2))},
\end{align*}
showing (\ref{conv-2c}). Now, let $y \in L(V,D(A^2))$ and $q,r \in [0,s]$ be arbitrary. Then by Corollary \ref{cor-orbit-map-quad-2} we have
\begin{align*}
| \varphi(r,y) - \varphi(q,y) |_{L(V,W)} &= |S_{s-r,t-r} y - S_{s-q,t-q} y|_{L(V,W)}
\\ &\leq M e^{2 \omega T} |y|_{L(V,D(A^2))} |t-s| \, |r-q|
\\ &\leq M e^{2 \omega T} |y|_{L(V,D(A^2))} T^{1-2\alpha} |t-s| \, |r-q|^{2\alpha}.
\end{align*}
Hence, from Proposition \ref{prop-comp-linear} we obtain (\ref{conv-2}) and (\ref{conv-2a}), (\ref{conv-2b}). Now, we define $\Phi : [0,s] \times L(V,L(V,D(A^2))) \to L(V,L(V,W))$ as
\begin{align*}
\Phi(r,y') := S_{s-r,t-r} y'.
\end{align*}
Let $r \in [0,s]$ be arbitrary. Then by Lemma \ref{lemma-embedding-operators} and Corollary \ref{cor-orbit-map-1} for each $y' \in L(V,L(V,D(A^2)))$ we have
\begin{align*}
|\Phi(r,y')|_{L(V,L(V,W))} &= |S_{s-r,t-r} y'|_{L(V,L(V,W))}
\\ &\leq |S_{s-r,t-r}|_{L(L(V,D(A^2)),L(V,W))} |y'|_{L(V,L(V,D(A^2)))}
\\ &\leq |S_{s-r,t-r}|_{L(D(A^2),W)} |y'|_{L(V,L(V,D(A^2)))}
\\ &\leq |S_{s-r,t-r}|_{L(D(A),W)} |y'|_{L(V,L(V,D(A^2)))}
\\ &\leq M e^{\omega T} |t-s| \, |y'|_{L(V,L(V,D(A^2)))}.
\end{align*}
Therefore, we obtain
\begin{align*}
| Z_r' |_{L(V,L(V,W))} = |\Phi(r,Y_r')|_{L(V,L(V,W))} \leq M e^{\omega T} |t-s| \, |Y_r'|_{L(V,L(V,D(A^2)))},
\end{align*}
showing (\ref{conv-2d}).
\end{proof}

\section{Regular convolution integrals}\label{sec-reg-conv}

In this section we consider regular convolution integrals. We fix $\alpha \in (0,\frac{1}{2}]$ and a time horizon $T \in \bbr_+$. Let $\mathbf{X} = (X,\bbx) \in \scrc^{\alpha}([0,T],V)$ be a rough path with values in a Banach space $V$. Furthermore, let $A$ be the generator of a $C_0$-semigroup $(S_t)_{t \geq 0}$ on a Banach space $W$. Then there are constants $M \geq 1$ and $\omega \in \bbr$ such that the estimate (\ref{est-semigroup}) is satisfied.

\begin{proposition}\label{prop-orbit-Hoelder}
Let $\xi \in D(A)$ be arbitrary. We define the path $\Xi : [0,T] \to W$ as $\Xi_t := S_t \xi$ for all $t \in [0,T]$. Then we have
\begin{align*}
(\Xi,0) \in \scrd_X^{2\alpha}([0,T],W).
\end{align*}
Furthermore, we have
\begin{align*}
\| \Xi,0 \|_{X,2\alpha} \leq M e^{\omega T} |\xi|_{D(A)} T^{1-2\alpha}.
\end{align*}
\end{proposition}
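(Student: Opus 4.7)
The plan is to reduce the statement to an application of Lemma \ref{lemma-reg-derivative-zero}, which says that if $\Xi \in \calc^{2\alpha}([0,T],W)$, then $(\Xi,0) \in \scrd_X^{2\alpha}([0,T],W)$ with $\|\Xi,0\|_{X,2\alpha} = \|\Xi\|_{2\alpha}$. So the work reduces to controlling the $2\alpha$-Hölder seminorm of the orbit map $t \mapsto S_t \xi$.

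First I would apply Proposition \ref{prop-orbit-map} to $\xi \in D(A)$, which yields the Lipschitz-type estimate
\begin{align*}
|\Xi_{s,t}| = |S_{s,t}\xi| \leq M e^{\omega T} |\xi|_{D(A)} |t-s|
\end{align*}
for all $s,t \in [0,T]$. Since $\alpha \in (0,\tfrac{1}{2}]$, we have $1 - 2\alpha \geq 0$, so we can convert this linear-in-time bound to a $2\alpha$-Hölder bound via
\begin{align*}
\frac{|\Xi_{s,t}|}{|t-s|^{2\alpha}} \leq M e^{\omega T} |\xi|_{D(A)} |t-s|^{1-2\alpha} \leq M e^{\omega T} |\xi|_{D(A)} T^{1-2\alpha}.
\end{align*}
Taking the supremum over $s \neq t$ yields $\Xi \in \calc^{2\alpha}([0,T],W)$ with $\|\Xi\|_{2\alpha} \leq M e^{\omega T} |\xi|_{D(A)} T^{1-2\alpha}$.

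Finally, Lemma \ref{lemma-reg-derivative-zero} gives $(\Xi,0) \in \scrd_X^{2\alpha}([0,T],W)$ together with the identity $\|\Xi,0\|_{X,2\alpha} = \|\Xi\|_{2\alpha}$, so the desired estimate follows immediately. There is no real obstacle here: the only subtle point is the use of the inequality $1 - 2\alpha \geq 0$ to absorb the factor $|t-s|^{1-2\alpha}$ into $T^{1-2\alpha}$, which is exactly where the constraint $\alpha \leq \tfrac{1}{2}$ enters.
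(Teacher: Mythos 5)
Your proposal is correct and follows essentially the same route as the paper: use Proposition \ref{prop-orbit-map} to get the Lipschitz bound on the orbit map, convert it to a $2\alpha$-H\"older bound via $|t-s|^{1-2\alpha} \leq T^{1-2\alpha}$, and invoke Lemma \ref{lemma-reg-derivative-zero}. Nothing to add.
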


\begin{proof}
Let $s,t \in [0,T]$ be arbitrary. Then by Proposition \ref{prop-orbit-map} we have
\begin{align*}
| \Xi_{s,t} | = | S_t \xi - S_s \xi | \leq M e^{\omega T} |\xi|_{D(A)} |t-s| \leq M e^{\omega T} |\xi|_{D(A)} T^{1-2\alpha} |t-s|^{2\alpha},
\end{align*}
showing that $\Xi \in \calc^{2\alpha}([0,T],W)$. Together with Lemma \ref{lemma-reg-derivative-zero}, this completes the proof.
\end{proof}

\begin{lemma}\label{lemma-reg-conv-Hoelder}
Let $Y : [0,T] \to D(A)$ be measurable and bounded. We define the path $N : [0,T] \to W$ as
\begin{align*}
N_t := \int_0^t S_{t-s} Y_s \, ds, \quad t \in [0,T].
\end{align*}
Then we have
\begin{align*}
|N_{s,t}| \leq (1+T) M e^{\omega T} \| Y \|_{\infty} |t-s| \quad \text{for all $s,t \in [0,T]$.}
\end{align*}
\end{lemma}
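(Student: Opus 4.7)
The plan is to reduce to the case $s \leq t$ by symmetry, and then write the increment as the sum of a boundary term and a semigroup-difference term. Concretely, for $0 \leq s \leq t \leq T$, I would decompose
\begin{align*}
N_t - N_s = \int_s^t S_{t-r} Y_r \, dr + \int_0^s (S_{t-r} - S_{s-r}) Y_r \, dr,
\end{align*}
which is obtained by splitting the integral defining $N_t$ at the point $s$ and subtracting $N_s$.

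For the first summand, I would use the growth estimate (\ref{est-semigroup}) for the $C_0$-semigroup together with $|Y_r| \leq \| Y \|_\infty$ to obtain the bound $M e^{\omega T} \| Y \|_\infty (t-s)$. For the second summand, the key observation is that since $Y_r \in D(A)$ for every $r$, Proposition \ref{prop-orbit-map} applies with $y = Y_r$ and the time pair $(s-r, t-r) \in [0,T]^2$, yielding
\begin{align*}
|(S_{t-r} - S_{s-r}) Y_r| = |S_{s-r,t-r} Y_r| \leq M e^{\omega T} |Y_r|_{D(A)} (t-s).
\end{align*}
Integrating this inequality over $r \in [0,s]$ and estimating $s \leq T$ together with $|Y_r|_{D(A)} \leq \| Y \|_\infty$ (interpreting $\| \cdot \|_\infty$ as the supremum of the graph norm, as is consistent with $Y$ taking values in $D(A)$) produces $T M e^{\omega T} \| Y \|_\infty (t-s)$. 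Adding the two bounds yields the factor $(1+T)$.

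There is no real obstacle; the only point that requires attention is making sure that the $D(A)$-valuedness of $Y$ is used only through Proposition \ref{prop-orbit-map}, so that the linear-in-$(t-s)$ scaling (rather than a weaker H\"older scaling) is obtained. The case $t < s$ follows by interchanging the roles of $s$ and $t$, noting that the constant in the bound is symmetric.
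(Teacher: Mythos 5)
Your proposal follows exactly the paper's argument: the same decomposition of $N_{s,t}$ into the boundary integral over $[s,t]$ and the semigroup-difference integral over $[0,s]$, the same growth estimate for the first term, and the same invocation of Proposition \ref{prop-orbit-map} for the second term. Your explicit remark that $\| Y \|_\infty$ is the supremum of the graph norm $|Y_r|_{D(A)}$ is a correct reading of what the paper uses implicitly, so the proof is complete and matches the paper's.
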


\begin{proof}
Let $s,t \in [0,T]$ with $s \leq t$ be arbitrary. Then we have
\begin{align*}
N_{s,t} &= \int_0^t S_{t-r} Y_r \, dr - \int_0^s S_{s-r} Y_r \, dr
\\ &= \int_s^t S_{t-r} Y_r \, dr + \int_0^s (S_{t-r} - S_{s-r}) Y_r \, dr.
\end{align*}
Therefore, using the estimate (\ref{est-semigroup}) and Proposition \ref{prop-orbit-map} we obtain
\begin{align*}
|N_{s,t}| &\leq \bigg| \int_s^t S_{t-r} Y_r \, dr \bigg| + \bigg| \int_0^s (S_{t-r} - S_{s-r}) Y_r \, dr \bigg|
\\ &\leq \int_s^t |S_{t-s} Y_r| \, dr + \int_0^s |S_{t-r} Y_r - S_{s-r} Y_r| \, dr
\\ &\leq M e^{\omega T} \int_s^t |Y_r| \, dr + M e^{\omega T} \int_0^s |Y_r|_{D(A)} |t-s| \, dr.
\\ &\leq M e^{\omega T} \|Y\|_{\infty} |t-s| + M e^{\omega T} T \|Y\|_{\infty} |t-s|,
\end{align*}
completing the proof.
\end{proof}

\begin{proposition}\label{prop-reg-conv-Hoelder}
Let $Y : [0,T] \to D(A)$ be measurable and bounded. We define the path $N : [0,T] \to W$ as
\begin{align*}
N_t := \int_0^t S_{t-s} Y_s \, ds, \quad t \in [0,T].
\end{align*}
Then $N$ is $D(A)$-valued, we have $(N,0) \in \scrd_X^{2\alpha}([0,T],W)$ and
\begin{align*}
\| N,0 \|_{X,2\alpha} \leq (1+T) M e^{\omega T} \| Y \|_{\infty} T^{1-2 \alpha}.
\end{align*}
\end{proposition}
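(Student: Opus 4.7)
My plan has three ingredients. To show that $N$ takes values in $D(A)$, I would reinterpret the integrand $s \mapsto S_{t-s} Y_s$ as a map from $[0,t]$ into $(D(A), |\cdot|_{D(A)})$ rather than into $W$. By Lemma \ref{lemma-restricted-semigroup}, the restrictions $(S_u|_{D(A)})_{u \geq 0}$ form a $C_0$-semigroup on $D(A)$, so the growth estimate (\ref{est-semigroup}) applied on $D(A)$ yields $|S_{t-s} Y_s|_{D(A)} \leq M e^{\omega T} |Y_s|_{D(A)} \leq M e^{\omega T} \|Y\|_{\infty}$, while strong continuity of the restricted semigroup together with measurability of $Y$ into $D(A)$ shows that $s \mapsto S_{t-s} Y_s$ is measurable into $D(A)$. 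Hence the Bochner integral $\int_0^t S_{t-s} Y_s \, ds$ converges in $D(A)$, proving $N_t \in D(A)$ for every $t \in [0,T]$; by the continuous embedding $D(A) \hookrightarrow W$ this coincides with the $W$-valued integral defining $N_t$.

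Next, I would simply invoke Lemma \ref{lemma-reg-conv-Hoelder}, which already supplies the Lipschitz bound $|N_{s,t}| \leq (1+T) M e^{\omega T} \|Y\|_{\infty} |t-s|$ on $[0,T]$. Since $2\alpha \leq 1$ and $|t-s| \leq T$, the elementary observation $|t-s| \leq T^{1-2\alpha} |t-s|^{2\alpha}$ upgrades this to the $(2\alpha)$-H\"older bound $\|N\|_{2\alpha} \leq (1+T) M e^{\omega T} \|Y\|_{\infty} T^{1-2\alpha}$, and in particular $N \in \calc^{2\alpha}([0,T],W)$.

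Finally, I would apply Lemma \ref{lemma-reg-derivative-zero} with the trivial Gubinelli derivative $N' = 0$ to conclude $(N,0) \in \scrd_X^{2\alpha}([0,T],W)$ together with $\|N,0\|_{X,2\alpha} = \|N\|_{2\alpha}$, from which the claimed estimate follows. The only delicate step is the first one, where one must justify that the integral lies in $D(A)$; everything else is a direct combination of Lemma \ref{lemma-reg-conv-Hoelder} with Lemma \ref{lemma-reg-derivative-zero}.
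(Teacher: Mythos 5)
Your proposal is correct and follows essentially the same route as the paper's own (one-line) proof: apply Lemma \ref{lemma-reg-conv-Hoelder} to get the Lipschitz bound on $N$, upgrade to a $2\alpha$-Hölder bound using $|t-s| \leq T^{1-2\alpha}|t-s|^{2\alpha}$, and then invoke Lemma \ref{lemma-reg-derivative-zero} with $N'=0$. The one thing you add beyond the paper's terse proof is an explicit justification that $N$ is $D(A)$-valued by viewing the Bochner integral in $(D(A),|\cdot|_{D(A)})$ via Lemma \ref{lemma-restricted-semigroup}; the paper leaves this implicit, and your argument is the standard and correct way to fill that in.
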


\begin{proof}
This is a consequence of Lemma \ref{lemma-reg-conv-Hoelder} and Lemma \ref{lemma-reg-derivative-zero}.
\end{proof}

\begin{corollary}\label{cor-reg-conv-Hoelder}
Let $Y,Z : [0,T] \to D(A)$ be measurable and bounded. We define the paths $N,P : [0,T] \to W$ as
\begin{align*}
N_t &:= \int_0^t S_{t-s} Y_s \, ds, \quad t \in [0,T],
\\ P_t &:= \int_0^t S_{t-s} Z_s \, ds, \quad t \in [0,T].
\end{align*}
Then we have $(N,0),(P,0) \in \scrd_X^{2\alpha}([0,T],W)$ and
\begin{align*}
\| (N,0) - (P,0) \|_{X,2\alpha} \leq (1+T) M e^{\omega T} \| Y-Z \|_{\infty} T^{1-2 \alpha}.
\end{align*}
\end{corollary}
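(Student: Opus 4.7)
The plan is to reduce the statement directly to Proposition \ref{prop-reg-conv-Hoelder} by exploiting linearity of the convolution operator. First, I would note that Proposition \ref{prop-reg-conv-Hoelder} already yields $(N,0),(P,0) \in \scrd_X^{2\alpha}([0,T],W)$, so only the quantitative estimate remains.

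Next, I would introduce the difference path $U_s := Y_s - Z_s$, which is again measurable as a map $[0,T] \to D(A)$, and bounded with $\| U \|_{\infty} \leq \| Y - Z \|_{\infty}$ (with respect to the norm of $W$, which is how $\|\cdot\|_\infty$ is understood here). By linearity of the Bochner integral,
\begin{align*}
N_t - P_t = \int_0^t S_{t-s} (Y_s - Z_s) \, ds = \int_0^t S_{t-s} U_s \, ds, \quad t \in [0,T].
\end{align*}
Since the map $(Y,Y') \mapsto (Y,Y')$ in $\scrd_X^{2\alpha}$ is linear and the Gubinelli derivative of $N-P$ is $0-0=0$, I would identify $(N,0)-(P,0)$ with the controlled rough path associated to $U$ via Proposition \ref{prop-reg-conv-Hoelder}.

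Applying Proposition \ref{prop-reg-conv-Hoelder} to $U$ then gives
\begin{align*}
\| (N,0)-(P,0) \|_{X,2\alpha} = \| N-P, 0 \|_{X,2\alpha} \leq (1+T) M e^{\omega T} \| U \|_{\infty} T^{1-2\alpha},
\end{align*}
and using $\| U \|_{\infty} \leq \| Y - Z \|_{\infty}$ yields the claimed bound. There is no real obstacle here; the only point to be careful about is that the seminorm $\| \cdot \|_{X,2\alpha}$ on $\scrd_X^{2\alpha}$ is linear in its argument (which follows from the fact that both $\|Y'\|_\alpha$ and $\|R^Y\|_{2\alpha}$ behave linearly under addition of controlled rough paths with Gubinelli derivative $0$), so that subtracting the two zero-derivative paths produces exactly the convolution of $Y-Z$.
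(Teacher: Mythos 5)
Your proof is correct and takes essentially the same route as the paper: write $N-P$ as the convolution of the difference $Y-Z$ and apply Proposition \ref{prop-reg-conv-Hoelder}. One small imprecision: since $U=Y-Z$ we have $\|U\|_\infty=\|Y-Z\|_\infty$ exactly (not merely $\leq$), and the relevant supremum norm is the one on $D(A)$ (as used in Lemma \ref{lemma-reg-conv-Hoelder}), not on $W$; neither point affects the argument.
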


\begin{proof}
Noting that
\begin{align*}
N_t - P_t = \int_0^t S_{t-s} (Y_s - Z_s) \, ds, \quad t \in [0,T],
\end{align*}
this is an immediate consequence of Proposition \ref{prop-reg-conv-Hoelder}.
\end{proof}

\section{Rough convolution integrals}\label{sec-rough-conv}

In this section we investigate rough convolution integrals. We fix $\alpha \in (\frac{1}{3},\frac{1}{2}]$ and a time horizon $T \in \bbr_+$. Let $\mathbf{X} = (X,\bbx) \in \scrc^{\alpha}([0,T],V)$ be a rough path with values in a Banach space $V$. Furthermore, let $A$ be the generator of a $C_0$-semigroup $(S_t)_{t \geq 0}$ on a Banach space $W$. Then there are constants $M \geq 1$ and $\omega \in \bbr$ such that the estimate (\ref{est-semigroup}) is satisfied. For what follows, let a controlled rough path $(Y,Y') \in \scrd_X^{2 \alpha}([0,T],L(V,D(A^2)))$ be given. According to Proposition \ref{prop-conv-well-defined}, we can define the rough convolution $M : [0,T] \to W$ as
\begin{align*}
N_t := \int_0^t S_{t-s} Y_s \, d \mathbf{X}_s, \quad t \in [0,T].
\end{align*}
Note that the path $N$ is actually $D(A)$-valued. We also define the Gubinelli integral $I : [0,T] \to D(A^2)$ as
\begin{align*}
I_t := \int_0^t Y_s \, d \mathbf{X}_s, \quad t \in [0,T].
\end{align*}
By Theorem \ref{thm-Gubinelli} we have $(I,I') \in \scrd_X^{2\alpha}([0,T],D(A^2))$ with $I' = Y$. Furthermore, for all $s,t \in [0,T]$ with $s \leq t$ we have
\begin{equation}\label{eqn-diff-Z-I}
\begin{aligned}
N_{s,t} - I_{s,t} &= \int_0^t S_{t-r} Y_r \, d \mathbf{X}_r - \int_0^s S_{s-r} Y_r \, d \mathbf{X}_r - \int_s^t Y_r \, d \mathbf{X}_r
\\ &= \int_s^t S_{t-r} Y_r \, d \mathbf{X}_r + \int_0^s (S_{t-r} - S_{s-r}) Y_r \, d \mathbf{X}_r - \int_s^t Y_r \, d \mathbf{X}_r
\\ &= \int_s^t ( S_{t-r} - \Id ) Y_r \, d \mathbf{X}_r + \int_0^s (S_{t-r} - S_{s-r}) Y_r \, d \mathbf{X}_r.
\end{aligned}
\end{equation}

\begin{lemma}\label{lemma-comp-semigroup-1}
For all $s,t \in [0,T]$ with $s \leq t$ we have
\begin{align*}
&\bigg| \int_s^t (S_{t-r} - \Id) Y_r \, d \mathbf{X}_r \bigg|
\\ &\leq C \Big( \| X \|_{\alpha} \big( \| Y \|_{\infty} + \| R^Y \|_{2\alpha} \big) + \| \bbx \|_{2 \alpha} \big( \| Y' \|_{\infty} + \| Y' \|_{\alpha} \big) \Big) |t-s|^{3\alpha},
\end{align*}
where the constant $C > 0$ depends on $\alpha$, $T$ and $M$, $\omega$. Moreover, for $T \leq 1$ the constant $C$ does not depend on $T$.
\end{lemma}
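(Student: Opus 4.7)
The plan is to apply Proposition \ref{prop-Gubinelli-triangle} to the controlled rough path $(Z,Z')$ defined on $[s,t]$ by $Z_r := (S_{t-r}-\Id)Y_r$ and $Z_r' := (S_{t-r}-\Id)Y_r'$, and then insert the size estimates for $\| Z\|_\infty$, $\|Z'\|_\infty$, $\|Z'\|_\alpha$ and $\|R^Z\|_{2\alpha}$ which are exactly what Lemma \ref{lemma-conv-1-pre} provides.

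First I would invoke Lemma \ref{lemma-conv-1-pre} to obtain $(Z,Z')\in \scrd_X^{2\alpha}([s,t],L(V,W))$ together with the four bounds \eqref{conv-1a}--\eqref{conv-1d}. Since by definition
\[
\int_s^t (S_{t-r}-\Id)Y_r\, d\bfx_r = \int_s^t Z_r\, d\bfx_r,
\]
Proposition \ref{prop-Gubinelli-triangle} yields
\[
\Big| \int_s^t Z_r\, d\bfx_r\Big| \leq C\bigl(\|X\|_\alpha\|R^Z\|_{2\alpha} + \|\bbx\|_{2\alpha}\|Z'\|_\alpha\bigr)|t-s|^{3\alpha} + \|Z'\|_\infty\|\bbx\|_{2\alpha}|t-s|^{2\alpha} + \|Z\|_\infty\|X\|_\alpha|t-s|^\alpha.
\]

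Now I would substitute the bounds from Lemma \ref{lemma-conv-1-pre} term-by-term and collect powers of $|t-s|$. The first two terms already carry a factor $|t-s|^{3\alpha}$; since \eqref{conv-1a} and \eqref{conv-1b} give $\|Z'\|_\alpha$ and $\|R^Z\|_{2\alpha}$ as sums of a constant times $\|Y'\|_\infty$ (resp.\ $\|Y\|_\infty$) and a factor of $\|Y'\|_\alpha$ (resp.\ $\|R^Y\|_{2\alpha}$) multiplied by $|t-s|\le T$, both pieces fit into a bound of the form required. The third term, using \eqref{conv-1d}, yields a factor $|t-s|^{2\alpha+1}$; the fourth, using \eqref{conv-1c}, yields $|t-s|^{\alpha+1}$. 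Because $\alpha\in(\tfrac13,\tfrac12]$ we have $2\alpha+1\ge 3\alpha$ and $\alpha+1\ge 3\alpha$, so the surplus powers of $|t-s|$ can be absorbed into a constant depending on $T$ via $|t-s|^{1-\alpha}\le T^{1-\alpha}$ and $|t-s|^{1-2\alpha}\le T^{1-2\alpha}$ respectively.

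The proof is then completed by collecting all the $M$, $e^{\omega T}$, $T^{1-\alpha}$, $T^{1-2\alpha}$ factors and the universal constant from Proposition \ref{prop-Gubinelli-triangle} into a single constant $C$ depending on $\alpha$, $T$, $M$, $\omega$. For $T\le 1$ the various $T^\beta$ factors are bounded by $1$, so the $T$-dependence disappears from $C$, as claimed. There is no real obstacle beyond bookkeeping: the only point that needs a bit of care is verifying the exponent inequalities $2\alpha+1\ge 3\alpha$ and $\alpha+1\ge 3\alpha$, which hold precisely because $\alpha\le 1/2$, and checking that every appearance of $\|Y\|_\infty$ and $\|Y'\|_\infty$ is combined with the appropriate multiplier ($\|X\|_\alpha$ or $\|\bbx\|_{2\alpha}$) dictated by the target inequality.
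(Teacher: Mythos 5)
Your proposal is correct and follows essentially the same route as the paper: invoke Lemma \ref{lemma-conv-1-pre} to obtain $(Z,Z')\in\scrd_X^{2\alpha}([s,t],L(V,W))$ together with the four size estimates, feed these into Proposition \ref{prop-Gubinelli-triangle}, and absorb the surplus powers of $|t-s|$ (using $3\alpha\le 1+\alpha$, i.e.\ $\alpha\le\tfrac12$) into the constant. The only cosmetic slip is attributing the inequality $2\alpha+1\ge 3\alpha$ to $\alpha\le\tfrac12$ — that one holds for any $\alpha\le 1$; only $\alpha+1\ge 3\alpha$ genuinely uses $\alpha\le\tfrac12$.
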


\begin{proof}
We define the paths $Z : [s,t] \to L(V,W)$ and $Z' : [s,t] \to L(V,L(V,W))$ as
\begin{align*}
Z_r &:= (S_{t-r} - \Id) Y_r, \quad r \in [s,t],
\\ Z_r' &:= (S_{t-r} - \Id) Y_r', \quad t \in [s,t].
\end{align*}
By Lemma \ref{lemma-conv-1-pre} we have
\begin{align*}
(Z,Z') \in \scrd_X^{2 \alpha}([s,t],L(V,W))
\end{align*}
as well as
\begin{align*}
\| Z' \|_{\alpha} &\leq M e^{\omega T} \big( T^{1-\alpha} \| Y' \|_{\infty} + \| Y' \|_{\alpha} |t-s| \big),
\\ \| R^Z \|_{2\alpha} &\leq M e^{\omega T} \big( T^{1-2\alpha} \| Y  \|_{\infty} + \| R^Y \|_{2 \alpha} |t-s| \big),
\\ \| Z \|_{\infty} &\leq M e^{\omega T} \| Y \|_{\infty} |t-s|,
\\ \| Z' \|_{\infty} &\leq M e^{\omega T} \| Y' \|_{\infty} |t-s|.
\end{align*}
Furthermore, by Proposition \ref{prop-Gubinelli-triangle} we have
\begin{align*}
\bigg| \int_s^t Z_r \, d \mathbf{X}_r \bigg| &\leq C \big( \| X \|_{\alpha} \| R^{Z} \|_{2 \alpha} + \| \bbx \|_{2 \alpha} \| Z' \|_{\alpha} \big) |t-s|^{3 \alpha}
\\ &\quad + \| Z' \|_{\infty} \| \bbx \|_{2 \alpha} |t-s|^{2 \alpha} + \| Z \|_{\infty} \| X \|_{\alpha} |t-s|^{\alpha},
\end{align*}
where the constant $C > 0$ depends on $\alpha$. Moreover, note that $\alpha \leq \frac{1}{2}$ implies $3 \alpha \leq 1 + \alpha$. This completes the proof.
\end{proof}

\begin{corollary}\label{cor-comp-semigroup-1}
For all $s,t \in [0,T]$ with $s \leq t$ we have
\begin{align*}
\bigg| \int_s^t (S_{t-r} - \Id) Y_r \, d \mathbf{X}_r \bigg| \leq C \interleave \mathbf{X} \interleave_{\alpha} \interleave Y,Y' \interleave_{X,2\alpha} |t-s|^{3\alpha},
\end{align*}
where the constant $C > 0$ depends on $\alpha$, $T$, $\mathbf{X}$ and $M$, $\omega$. Moreover, for $T \leq 1$ the constant $C$ does not depend on $T$.
\end{corollary}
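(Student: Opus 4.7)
The plan is essentially to repackage Lemma \ref{lemma-comp-semigroup-1} into the compact seminorm notation. I would start from the estimate
\[
\bigg| \int_s^t (S_{t-r} - \Id) Y_r \, d \mathbf{X}_r \bigg| \leq C \Big( \| X \|_{\alpha} \big( \| Y \|_{\infty} + \| R^Y \|_{2\alpha} \big) + \| \bbx \|_{2 \alpha} \big( \| Y' \|_{\infty} + \| Y' \|_{\alpha} \big) \Big) |t-s|^{3\alpha}
\]
and immediately use $\| X \|_\alpha,\, \| \bbx \|_{2\alpha} \leq \interleave \mathbf{X} \interleave_{\alpha}$ to factor out a single $\interleave \mathbf{X} \interleave_{\alpha}$ on the right.

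The next step is to control each of the four $Y$-norms appearing in the bracket by $\interleave Y, Y' \interleave_{X,2\alpha}$. Two of them are immediate from the definition of $\| Y, Y' \|_{X,2\alpha}$, namely $\| Y' \|_{\alpha} \leq \interleave Y, Y' \interleave_{X,2\alpha}$ and $\| R^Y \|_{2\alpha} \leq \interleave Y, Y' \interleave_{X,2\alpha}$. For the remaining two I would invoke Lemma \ref{lemma-norm-of-Y}: estimate (\ref{norm-Y-1}) yields $\| Y' \|_{\infty} \leq C \interleave Y, Y' \interleave_{X,2\alpha}$, while (\ref{norm-Y-3}) yields $\| Y \|_{\infty} \leq C (\interleave \mathbf{X} \interleave_{\alpha} + 2)\, \interleave Y, Y' \interleave_{X,2\alpha}$. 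The extra $(\interleave \mathbf{X} \interleave_{\alpha} + 2)$ factor is harmless, since the statement allows $C$ to depend on $\mathbf{X}$, so it is absorbed into the final constant. Combining these four bounds produces the claimed inequality.

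For the refined $T \leq 1$ statement, I would track constants: the constant in Lemma \ref{lemma-comp-semigroup-1} is already independent of $T$ in this regime by its own moreover clause, and the constants in (\ref{norm-Y-1})--(\ref{norm-Y-3}) equal $1$ for $T \leq 1$. Hence the overall $C$ inherits independence from $T$, while still depending on $\alpha$, $\mathbf{X}$, $M$, $\omega$. I do not anticipate a real obstacle here; the only thing that requires a little care is keeping the $\interleave \mathbf{X} \interleave_{\alpha}$ book-keeping clean, so that exactly one factor of it appears explicitly and any further $\mathbf{X}$-dependence is hidden in the constant.
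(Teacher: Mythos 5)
Your argument is correct and matches the paper's proof exactly: the paper likewise cites Lemma \ref{lemma-comp-semigroup-1} and Lemma \ref{lemma-norm-of-Y} as the two ingredients, with the same bookkeeping for the $T \leq 1$ case. Nothing further is needed.
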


\begin{proof}
This is a consequence of Lemma \ref{lemma-comp-semigroup-1} and Lemma \ref{lemma-norm-of-Y}
\end{proof}

\begin{lemma}\label{lemma-comp-semigroup-2}
For all $s,t \in [0,T]$ with $s \leq t$ we have
\begin{align*}
&\bigg| \int_0^s (S_{t-r} - S_{s-r}) Y_r \, d \mathbf{X}_r \bigg|
\\ &\leq C \Big( \| X \|_{\alpha} \big( \| Y \|_{\infty} + \| R^Y \|_{2\alpha} \big) + \| \bbx \|_{2 \alpha} \big( \| Y' \|_{\infty} + \| Y' \|_{\alpha} \big) \Big) |t-s|,
\end{align*}
where the constant $C > 0$ depends on $\alpha$, $T$ and $M$, $\omega$. Moreover, for $T \leq 1$ the constant $C$ does not depend on $T$.
\end{lemma}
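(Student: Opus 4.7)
The plan is to follow the same pattern as the proof of Lemma \ref{lemma-comp-semigroup-1}, with Lemma \ref{lemma-conv-2-pre} playing the role of Lemma \ref{lemma-conv-1-pre}. Concretely, I would introduce the paths
\begin{align*}
Z_r &:= (S_{t-r} - S_{s-r}) Y_r, \quad r \in [0,s],
\\ Z_r' &:= (S_{t-r} - S_{s-r}) Y_r', \quad r \in [0,s],
\end{align*}
which by Lemma \ref{lemma-conv-2-pre} satisfy $(Z,Z') \in \scrd_X^{2\alpha}([0,s], L(V,W))$ together with the four estimates (\ref{conv-2a})--(\ref{conv-2d}). The decisive feature of those estimates is that each of $\| Z' \|_{\alpha}$, $\| R^Z \|_{2\alpha}$, $\| Z \|_{\infty}$, $\| Z' \|_{\infty}$ carries a factor $|t-s|$, produced by the $D(A^2)$-regularity of $Y$ together with Corollary \ref{cor-orbit-map-quad-2}.

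Then I would apply Proposition \ref{prop-Gubinelli-triangle} to the controlled rough path $(Z,Z')$ on the interval $[0,s]$, yielding
\begin{align*}
\bigg| \int_0^s Z_r \, d \mathbf{X}_r \bigg| &\leq C \big( \| X \|_{\alpha} \| R^Z \|_{2 \alpha} + \| \bbx \|_{2 \alpha} \| Z' \|_{\alpha} \big) s^{3 \alpha}
\\ &\quad + \| Z' \|_{\infty} \| \bbx \|_{2 \alpha} s^{2 \alpha} + \| Z \|_{\infty} \| X \|_{\alpha} s^{\alpha},
\end{align*}
with a constant $C > 0$ depending only on $\alpha$.

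To finish, I would substitute in the bounds from Lemma \ref{lemma-conv-2-pre}, pull out the common factor $|t-s|$, and bound each remaining factor of $s$ by the corresponding power of $T$. This absorbs $M$, $e^{\omega T}$, as well as the powers $T^{\alpha}$, $T^{2\alpha}$, $T^{3\alpha}$, $T^{1-\alpha}$, $T^{1-2\alpha}$ into a single constant $C > 0$ depending on $\alpha$, $T$, $M$ and $\omega$. The four surviving term groups then collapse into the two combinations $\| X \|_{\alpha} (\| Y \|_{\infty} + \| R^Y \|_{2\alpha})$ and $\| \bbx \|_{2\alpha} (\| Y' \|_{\infty} + \| Y' \|_{\alpha})$, giving exactly the claimed bound linear in $|t-s|$. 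The assertion about $T \leq 1$ comes for free, because in that regime all the powers of $T$ in the Gubinelli bound and in Lemma \ref{lemma-conv-2-pre} are bounded by $1$, so the constant $C$ can be chosen independently of $T$. There is no essential obstacle in this proof; the crucial analytic input, namely the additional $|t-s|$-factor generated by the second-order domain estimate of Proposition \ref{prop-est-quad}, has already been extracted in Lemma \ref{lemma-conv-2-pre}, and what remains is only a bookkeeping assembly via Proposition \ref{prop-Gubinelli-triangle}.
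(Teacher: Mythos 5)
Your proposal reproduces the paper's proof essentially verbatim: the same auxiliary paths $Z_r = (S_{t-r}-S_{s-r})Y_r$ and $Z_r' = (S_{t-r}-S_{s-r})Y_r'$, the same appeal to Lemma \ref{lemma-conv-2-pre} to extract the $|t-s|$ factor from all four quantities, followed by Proposition \ref{prop-Gubinelli-triangle} and the final bookkeeping of powers of $T$. The reasoning is correct and complete.
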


\begin{proof}
We define the paths $Z : [0,s] \to L(V,W)$ and $Z' : [0,s] \to L(V,L(V,W))$ as
\begin{align*}
Z_r &:= (S_{t-r} - S_{s-r}) Y_r, \quad r \in [0,s],
\\ Z_r' &:= (S_{t-r} - S_{s-r}) Y_r', \quad r \in [0,s].
\end{align*}
By Lemma \ref{lemma-conv-2-pre} we have
\begin{align*}
(Z,Z') \in \scrd_X^{2 \alpha}([0,s],L(V,W))
\end{align*}
and
\begin{align*}
\| Z' \|_{\alpha} &\leq M e^{\omega T} \big( T^{1-\alpha} e^{\omega T} \| Y' \|_{\infty} + \| Y' \|_{\alpha} \big) |t-s|,
\\ \| R^Z \|_{2\alpha} &\leq M e^{\omega T} \big( T^{1-2\alpha} e^{\omega T} \| Y  \|_{\infty} + \| R^Y \|_{2 \alpha} \big) |t-s|,
\\ \| Z \|_{\infty} &\leq M e^{\omega T} \| Y \|_{\infty} |t-s|,
\\ \| Z' \|_{\infty} &\leq M e^{\omega T} \| Y' \|_{\infty} |t-s|.
\end{align*}
Furthermore, by Proposition \ref{prop-Gubinelli-triangle} we have
\begin{align*}
\bigg| \int_0^s Z_r \, d \mathbf{X}_r \bigg| &\leq C \big( \| X \|_{\alpha} \| R^{Z} \|_{2 \alpha} + \| \bbx \|_{2 \alpha} \| Z' \|_{\alpha} \big) |s|^{3 \alpha}
\\ &\quad + \| Z' \|_{\infty} \| \bbx \|_{2 \alpha} |s|^{2 \alpha} + \| Z \|_{\infty} \| X \|_{\alpha} |s|^{\alpha},
\end{align*}
where the constant $C > 0$ depends on $\alpha$. This completes the proof.
\end{proof}

\begin{corollary}\label{cor-comp-semigroup-2}
For all $s,t \in [0,T]$ with $s \leq t$ we have
\begin{align*}
\bigg| \int_0^s (S_{t-r} - S_{s-r}) Y_r \, d \mathbf{X}_r \bigg| \leq C \interleave \mathbf{X} \interleave_{\alpha} \interleave Y,Y' \interleave_{X,2\alpha} |t-s|,
\end{align*}
where the constant $C > 0$ depends on $\alpha$, $T$, $\mathbf{X}$ and $M$, $\omega$. Moreover, for $T \leq 1$ the constant $C$ does not depend on $T$.
\end{corollary}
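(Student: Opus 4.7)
The proof proceeds exactly in parallel with Corollary \ref{cor-comp-semigroup-1}: combine the pointwise bound from Lemma \ref{lemma-comp-semigroup-2} with the norm estimates of Lemma \ref{lemma-norm-of-Y}.

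First, Lemma \ref{lemma-comp-semigroup-2} gives
\begin{align*}
\bigg| \int_0^s (S_{t-r}-S_{s-r}) Y_r\,d\mathbf{X}_r\bigg|
&\leq C_1 \Big(\|X\|_{\alpha}(\|Y\|_{\infty}+\|R^Y\|_{2\alpha}) \\
&\qquad + \|\bbx\|_{2\alpha}(\|Y'\|_{\infty}+\|Y'\|_{\alpha})\Big)|t-s|,
\end{align*}
with $C_1>0$ depending on $\alpha$, $T$, $M$, $\omega$ (and independent of $T$ when $T\leq 1$). Next I would bound each of the four norms appearing in the bracket in terms of $\interleave Y,Y'\interleave_{X,2\alpha}$. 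By Lemma \ref{lemma-norm-of-Y} we have
\begin{align*}
\|Y'\|_{\infty} &\leq C_2 \interleave Y,Y'\interleave_{X,2\alpha},\\
\|Y\|_{\infty} &\leq C_2 \interleave Y,Y'\interleave_{X,2\alpha}(\interleave\mathbf{X}\interleave_{\alpha}+2),
\end{align*}
while trivially $\|Y'\|_{\alpha}\leq \|Y,Y'\|_{X,2\alpha}\leq \interleave Y,Y'\interleave_{X,2\alpha}$ and $\|R^Y\|_{2\alpha}\leq \interleave Y,Y'\interleave_{X,2\alpha}$.

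Plugging these estimates into the bound above and using that $\|X\|_{\alpha}\leq \interleave\mathbf{X}\interleave_{\alpha}$ and $\|\bbx\|_{2\alpha}\leq \interleave\mathbf{X}\interleave_{\alpha}$, each of the four summands inside the bracket is bounded by a constant multiple of $\interleave\mathbf{X}\interleave_{\alpha}\interleave Y,Y'\interleave_{X,2\alpha}$ (the factor $\interleave\mathbf{X}\interleave_{\alpha}+2$ arising in $\|Y\|_{\infty}$ gets absorbed since it is multiplied by the extra $\|X\|_{\alpha}\leq \interleave\mathbf{X}\interleave_{\alpha}$, giving at worst a quadratic dependence on $\interleave\mathbf{X}\interleave_{\alpha}$, which can be absorbed into the constant since $\mathbf{X}$ is fixed and $C$ is allowed to depend on $\mathbf{X}$). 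This yields
\begin{align*}
\bigg|\int_0^s (S_{t-r}-S_{s-r}) Y_r\,d\mathbf{X}_r\bigg| \leq C\interleave\mathbf{X}\interleave_{\alpha}\interleave Y,Y'\interleave_{X,2\alpha}|t-s|,
\end{align*}
with $C$ depending on $\alpha$, $T$, $\mathbf{X}$ and $M$, $\omega$, and independent of $T$ for $T\leq 1$ since each of $C_1$ and $C_2$ has this property. There is no real obstacle here; the only point to watch is that the constant $C$ is allowed to depend on $\mathbf{X}$, which lets us absorb the factor $\interleave\mathbf{X}\interleave_{\alpha}+2$ from the estimate on $\|Y\|_{\infty}$.
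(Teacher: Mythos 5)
Your proof is correct and follows exactly the approach the paper indicates: it combines the pointwise bound from Lemma \ref{lemma-comp-semigroup-2} with the norm estimates from Lemma \ref{lemma-norm-of-Y}, and your observation that the factor $\interleave\mathbf{X}\interleave_{\alpha}+2$ is harmless (since the constant is allowed to depend on $\mathbf{X}$) is precisely the point being implicitly used in the paper's one-line proof.
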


\begin{proof}
This is a consequence of Lemma \ref{lemma-comp-semigroup-2} and Lemma \ref{lemma-norm-of-Y}
\end{proof}

\begin{corollary}\label{cor-Z-I}
We have
\begin{align*}
| N_{s,t} - I_{s,t} | \leq C \interleave \mathbf{X} \interleave_{\alpha} \interleave Y,Y' \interleave_{X,2\alpha} |t-s|^{3 \alpha}, \quad s,t \in [0,T],
\end{align*}
where the constant $C > 0$ depends on $\alpha$, $T$, $\mathbf{X}$ and $M$, $\omega$. Moreover, for $T \leq 1$ the constant $C$ does not depend on $T$.
\end{corollary}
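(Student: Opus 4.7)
The key structural identity is (\ref{eqn-diff-Z-I}), already displayed in the text, which writes
\[
N_{s,t} - I_{s,t} = \int_s^t (S_{t-r}-\Id) Y_r \, d\mathbf{X}_r + \int_0^s (S_{t-r}-S_{s-r}) Y_r \, d\mathbf{X}_r
\]
as a sum of a ``new-interval'' rough integral and an ``adjustment'' rough integral. Each of these two summands has already been controlled in one of the two preceding corollaries, so the plan is essentially to take absolute values, apply the triangle inequality, substitute the two estimates, and collect constants.

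First I would apply Corollary \ref{cor-comp-semigroup-1} to the first summand, which directly produces a bound of the form $C\interleave\mathbf{X}\interleave_\alpha\interleave Y,Y'\interleave_{X,2\alpha}|t-s|^{3\alpha}$, with a constant having exactly the dependence on $\alpha$, $T$, $\mathbf{X}$, $M$, $\omega$ claimed in the present statement, and $T$-independent when $T\leq 1$. Second I would apply Corollary \ref{cor-comp-semigroup-2} to the second summand, which yields the weaker-looking $|t-s|^1$ bound $C\interleave\mathbf{X}\interleave_\alpha\interleave Y,Y'\interleave_{X,2\alpha}|t-s|$, again with a constant of the right kind.

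Finally I would merge the two estimates into a single one with the exponent $3\alpha$. Writing $|t-s|=|t-s|^{3\alpha}\cdot|t-s|^{1-3\alpha}$ and using the a priori bound $|t-s|\leq T$, the factor $|t-s|^{1-3\alpha}$ is absorbed into the (already $T$-dependent) constant $C$; for $T\leq 1$ the two previous corollaries already supply $T$-independent constants, so the merged constant inherits the same property. Summing then delivers a single bound of the form $C\interleave\mathbf{X}\interleave_\alpha\interleave Y,Y'\interleave_{X,2\alpha}|t-s|^{3\alpha}$ with the stated dependencies.

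The only step requiring any attention at all is this reconciliation of the two H\"{o}lder exponents $3\alpha$ and $1$, which is pure bookkeeping of $T$-factors; the genuine analytic content has already been dispatched in Lemmas \ref{lemma-conv-1-pre}--\ref{lemma-comp-semigroup-2} and their corollaries.
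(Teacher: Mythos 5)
Your decomposition into the ``new-interval'' and ``adjustment'' integrals and the appeal to Corollaries \ref{cor-comp-semigroup-1} and \ref{cor-comp-semigroup-2} is exactly the paper's approach, but the exponent reconciliation at the end contains a sign error, and this is precisely the step where care is needed. Since $\alpha > \frac{1}{3}$ we have $1 - 3\alpha < 0$, so the factor $|t-s|^{1-3\alpha}$ you propose to ``absorb into the constant using $|t-s| \leq T$'' is not bounded above: for a negative exponent the inequality reverses, giving $|t-s|^{1-3\alpha} \geq T^{1-3\alpha}$, and the factor in fact blows up as $|t-s| \to 0$. A term of size $|t-s|$ therefore cannot be upgraded to $|t-s|^{3\alpha}$ when $3\alpha > 1$. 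The absorption works only in the opposite direction: $|t-s|^{3\alpha} = |t-s|\cdot|t-s|^{3\alpha-1} \leq T^{3\alpha-1}\,|t-s|$, so the $|t-s|^{3\alpha}$ contribution from Corollary \ref{cor-comp-semigroup-1} is subsumed by the $|t-s|$ contribution from Corollary \ref{cor-comp-semigroup-2}, and the combined bound on $N_{s,t}-I_{s,t}$ naturally carries exponent $\min(3\alpha,1)=1$, not $3\alpha$.

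It is also worth noting that the stated exponent $3\alpha$ cannot hold in general: a uniform bound $|N_{s,t}-I_{s,t}| \leq C|t-s|^{3\alpha}$ with $3\alpha>1$ would force $N-I$ to be constant, hence $N\equiv I$, which is false unless $S_t\equiv\Id$. The paper's own cryptic remark ``$\alpha>\frac{1}{3}$ implies $1<3\alpha$'' is consistent with combining into exponent $1$ rather than $3\alpha$, and downstream (e.g.\ in the proof of Proposition \ref{prop-conv-rough}) the corollary is only used to control $N-I$ at H\"{o}lder rates $\alpha$ and $2\alpha$, both of which are $\leq 1$ and hence follow from the Lipschitz bound. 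Your proof inherits this imprecision in the target statement, but the explicit assertion that $|t-s|^{1-3\alpha}$ is bounded by a constant is a separate, genuine error: it asserts a false inequality and would break the argument outright if the full exponent $3\alpha$ were ever needed.
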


\begin{proof}
Note that $\alpha > \frac{1}{3}$ implies $1 < 3 \alpha$. Therefore, taking into account equation (\ref{eqn-diff-Z-I}), the assertion an immediate consequence Corollaries \ref{cor-comp-semigroup-1}, \ref{cor-comp-semigroup-2}.
\end{proof}

\begin{proposition}\label{prop-conv-rough}
Let $(Y,Y') \in \scrd_X^{2 \alpha}([0,T],L(V,D(A^2)))$ be arbitrary. We define the paths $N : [0,T] \to W$ and $N' : [0,T] \to L(V,W)$ as
\begin{align*}
N_t &:= \int_0^t S_{t-s} Y_s \, d \mathbf{X}_s, \quad t \in [0,T],
\\ N_t' &:= Y_t, \quad t \in [0,T].
\end{align*}
Then $N$ is $D(A)$-valued and we have
\begin{align}\label{M-conv-in-space}
(N,N') \in \scrd_X^{2\alpha}([0,T],W).
\end{align}
Furthermore, we have
\begin{align}\label{M-conv-est}
\| N,N' \|_{X,2\alpha} \leq C \interleave Y,Y' \interleave_{X,2\alpha} (  \interleave \mathbf{X} \interleave_{\alpha} + T^{\alpha} ),
\end{align}
where the constant $C > 0$ depends on $\alpha$, $T$, $\mathbf{X}$ and $M$, $\omega$. Moreover, for $T \leq 1$ the constant $C$ does not depend on $T$.
\end{proposition}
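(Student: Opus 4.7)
The plan is to write $N$ as a perturbation of the plain Gubinelli integral $I_t := \int_0^t Y_s \, d\mathbf{X}_s$, whose controlled-rough-path structure is already supplied by Theorem \ref{thm-Gubinelli}, and then to invoke Corollary \ref{cor-Z-I} to control the difference $N - I$.

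First I would establish that $N$ is $D(A)$-valued. By Lemma \ref{lemma-restricted-semigroup}, $(S_t|_{D(A)})_{t \geq 0}$ is a $C_0$-semigroup on $D(A)$ with generator $A$ on domain $D(A^2)$, so Proposition \ref{prop-conv-well-defined} applied with $D(A)$ in place of $W$ shows that the integrand $s \mapsto S_{t-s} Y_s$ is a controlled rough path in $\scrd_X^{2\alpha}([0,t], L(V, D(A)))$; the associated Gubinelli integral, which agrees with the $W$-valued integral defining $N_t$ by Lemma \ref{lemma-int-cont-embedded spaces}, thus lies in $D(A)$.

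Next, since $N'_s = Y_s = I'_s$, the remainder of $(N, N')$ in $W$ decomposes as
\begin{align*}
R^N_{s,t} = N_{s,t} - Y_s X_{s,t} = (N_{s,t} - I_{s,t}) + R^I_{s,t}.
\end{align*}
Corollary \ref{cor-Z-I} supplies $|N_{s,t} - I_{s,t}| \leq C \interleave \mathbf{X}\interleave_\alpha \interleave Y, Y'\interleave_{X,2\alpha} |t-s|^{3\alpha}$, and since $\alpha > 1/3$ we have $|t-s|^{3\alpha} \leq T^\alpha |t-s|^{2\alpha}$, giving the required $2\alpha$-H\"{o}lder control of this term. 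The remainder $\|R^I\|_{2\alpha}$ is bounded via Theorem \ref{thm-Gubinelli}(c) in terms of $\|X\|_\alpha$, $\|\bbx\|_{2\alpha}$, $\|Y\|_\alpha$, $\|Y'\|_\alpha$, $\|Y'\|_\infty$ and $\|R^Y\|_{2\alpha}$.

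For $\|N'\|_\alpha = \|Y\|_\alpha$ and the auxiliary norms of $(Y, Y')$ just mentioned, Lemma \ref{lemma-norm-of-Y} (applied to $(Y, Y')$ viewed in $L(V, W)$ via the continuous inclusion $L(V, D(A^2)) \hookrightarrow L(V, W)$) provides a uniform bound by $C \interleave Y, Y'\interleave_{X,2\alpha}(\interleave \mathbf{X}\interleave_\alpha + T^\alpha)$. Assembling these estimates yields $\|N, N'\|_{X,2\alpha} \leq C \interleave Y, Y'\interleave_{X,2\alpha}(\interleave \mathbf{X}\interleave_\alpha + T^\alpha)$, and tracing the $T$-dependence through Corollary \ref{cor-Z-I} and Lemma \ref{lemma-norm-of-Y} confirms the $T \leq 1$ refinement. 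The main work is already absorbed into Corollary \ref{cor-Z-I} (and the supporting Lemmas \ref{lemma-conv-1-pre}--\ref{lemma-conv-2-pre}); what remains is essentially bookkeeping, with the only mildly subtle point being the $D(A)$-valuedness, which is justified by reinterpreting the convolution via the restricted semigroup on $D(A)$.
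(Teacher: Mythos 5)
Your proposal is correct and follows essentially the same route as the paper: both write $R^N_{s,t} = (N_{s,t}-I_{s,t}) + R^I_{s,t}$, control the first term with Corollary \ref{cor-Z-I} (using $3\alpha>1>2\alpha$ so $|t-s|^{3\alpha}\leq T^{\alpha}|t-s|^{2\alpha}$), control $R^I$ via Theorem \ref{thm-Gubinelli}, and then pass to $\interleave Y,Y'\interleave_{X,2\alpha}(\interleave\mathbf{X}\interleave_{\alpha}+T^{\alpha})$ through Lemma \ref{lemma-norm-of-Y}. The only cosmetic difference is that you invoke Theorem \ref{thm-Gubinelli}(c) to bound $\|R^I\|_{2\alpha}$ in one stroke, whereas the paper splits $R^I_{s,t}$ into $Y_s'\bbx_{s,t}$ plus the triangle estimate from part (b); both versions deliver identical bounds, and your justification of the $D(A)$-valuedness via the restricted semigroup and Lemma \ref{lemma-int-cont-embedded spaces} is exactly the argument the paper uses implicitly.
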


\begin{proof}
By Proposition \ref{prop-Gubinelli-triangle} we have $I \in \calc^{\alpha}([0,T],W)$, and by Corollary \ref{cor-Z-I} we have $N-I \in \calc^{3\alpha}([0,T],W)$. Therefore, it follows that $N \in \calc^{\alpha}([0,T],W)$. Furthermore, we have $N' = Y \in \calc^{\alpha}([0,T],L(V,W))$. Now, let $s,t \in [0,T]$ be arbitrary. Then we have
\begin{align*}
R_{s,t}^N &= N_{s,t} - N_s' X_{s,t} = N_{s,t} - Y_s X_{s,t}
\\ &= Y_s' \bbx_{s,t} + (N_{s,t} - I_{s,t}) + (I_{s,t} - Y_s X_{s,t} - Y_s' \bbx_{s,t}).
\end{align*}
Furthermore, we have
\begin{align*}
|Y_s' \bbx_{s,t}| \leq \| Y' \|_{\infty} |\bbx_{s,t}| \leq \| Y' \|_{\infty} \| \bbx \|_{2\alpha} |t-s|^{2\alpha}.
\end{align*}
Moreover, by Theorem \ref{thm-Gubinelli} we have
\begin{align*}
| I_{s,t} - Y_s X_{s,t} - Y_s' \bbx_{s,t} | &\leq C \big( \| X \|_{\alpha} \| R^Y \|_{2 \alpha} + \| \bbx \|_{2 \alpha} \| Y' \|_{\alpha} \big) |t-s|^{3 \alpha},
\end{align*}
where the constant $C > 0$ depends on $\alpha$. Therefore, together with Corollary \ref{cor-Z-I} we obtain $\| R^N \|_{2 \alpha} < \infty$, proving (\ref{M-conv-in-space}). Moreover, by the previous estimates and Corollary \ref{cor-Z-I} we have
\begin{align*}
\| N,N' \|_{X,2\alpha} &= \| N' \|_{\alpha} + \| R^N \|_{2 \alpha} = \| Y \|_{\alpha} + \| R^N \|_{2 \alpha}
\\ &\leq \| Y \|_{\alpha} + \| Y' \|_{\infty} \| \bbx \|_{2\alpha} + C \interleave \mathbf{X} \interleave_{\alpha} \interleave Y,Y' \interleave_{X,2\alpha}
\\ &\quad + C \big( \| X \|_{\alpha} \| R^Y \|_{2 \alpha} + \| \bbx \|_{2 \alpha} \| Y' \|_{\alpha} \big),
\end{align*}
where the constant $C > 0$ depends on $\alpha$, $T$, $\mathbf{X}$ and $M$, $\omega$, and does not depend on $T \leq 1$. Hence, using Lemma \ref{lemma-norm-of-Y} provides (\ref{M-conv-est}).
\end{proof}

\begin{corollary}\label{cor-conv-rough}
Let $(Y,Y'),(Z,Z') \in \scrd_X^{2\alpha}([0,T],L(V,D(A^2)))$ be arbitrary. We define the paths $N,P : [0,T] \to W$ as
\begin{align*}
N_t &:= \int_0^t S_{t-s} Y_s \, d\mathbf{X}_s, \quad t \in [0,T],
\\ P_t &:= \int_0^t S_{t-s} Z_s \, d\mathbf{X}_s, \quad t \in [0,T].
\end{align*}
Then we have
\begin{align*}
(N,Y), (P,Z) \in \scrd_X^{2\alpha}([0,T],W).
\end{align*}
Furthermore, for $T \leq 1$ we have
\begin{align*}
\| (N,Y) - (P,Z) \|_{X,2\alpha} \leq C \interleave (Y,Y') - (Z,Z') \interleave_{X,2\alpha} ( \interleave \mathbf{X} \interleave_{\alpha} + T^{\alpha} ),
\end{align*}
where the constant $C > 0$ depends on $\alpha$, $\mathbf{X}$ and $M$, $\omega$.
\end{corollary}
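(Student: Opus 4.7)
The plan is to reduce the estimate to a direct application of Proposition \ref{prop-conv-rough} by exploiting linearity. First, the inclusions $(N,Y), (P,Z) \in \scrd_X^{2\alpha}([0,T],W)$ follow immediately from Proposition \ref{prop-conv-rough} applied separately to $(Y,Y')$ and to $(Z,Z')$, since both live in $\scrd_X^{2\alpha}([0,T],L(V,D(A^2)))$ and the Gubinelli derivatives of $N$ and $P$ are prescribed there as $Y$ and $Z$ respectively.

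Next I would use linearity. The space $\scrd_X^{2\alpha}([0,T],L(V,D(A^2)))$ is a vector space, so setting $\tilde Y := Y-Z$ and $\tilde Y' := Y'-Z'$ yields a controlled rough path
\begin{align*}
(\tilde Y,\tilde Y') \in \scrd_X^{2\alpha}([0,T],L(V,D(A^2)))
\end{align*}
with $\interleave \tilde Y,\tilde Y'\interleave_{X,2\alpha} = \interleave (Y,Y')-(Z,Z')\interleave_{X,2\alpha}$. Since each operator $S_{t-s}$ is linear and the rough integral is linear in the integrand (Theorem \ref{thm-Gubinelli}(c)), the rough convolution
\begin{align*}
\tilde N_t := \int_0^t S_{t-s}\tilde Y_s\, d\mathbf{X}_s, \quad t \in [0,T],
\end{align*}
coincides with $N-P$, and its Gubinelli derivative is $\tilde Y = Y-Z$.

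Applying Proposition \ref{prop-conv-rough} to $(\tilde Y,\tilde Y')$ then gives $(\tilde N,\tilde Y)\in \scrd_X^{2\alpha}([0,T],W)$ together with the estimate
\begin{align*}
\|\tilde N,\tilde Y\|_{X,2\alpha} \leq C \interleave \tilde Y,\tilde Y'\interleave_{X,2\alpha}(\interleave \mathbf{X}\interleave_{\alpha}+T^{\alpha}),
\end{align*}
with $C>0$ depending on $\alpha$, $\mathbf{X}$, $M$, $\omega$ and independent of $T$ for $T \leq 1$. Rewriting the left-hand side as $\|(N,Y)-(P,Z)\|_{X,2\alpha}$ and the right-hand side in terms of $(Y,Y')-(Z,Z')$ yields the claim.

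There is essentially no hard step here: the whole argument is a linearity observation combined with Proposition \ref{prop-conv-rough}, so the only thing to verify carefully is that the rough integral, evaluated path by path in the definition $N_t-P_t = \int_0^t S_{t-s}(Y_s-Z_s)\,d\mathbf{X}_s$, genuinely inherits linearity from Gubinelli's theorem (which is explicit in Theorem \ref{thm-Gubinelli}(c)), and that forming the difference $(Y,Y')-(Z,Z')$ preserves membership in the controlled rough path space -- both of which are routine.
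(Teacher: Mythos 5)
Your argument is correct and coincides with the paper's own proof: note that $N_t - P_t = \int_0^t S_{t-s}(Y_s - Z_s)\,d\mathbf{X}_s$ by linearity of the rough integral, and then apply Proposition \ref{prop-conv-rough} to the controlled rough path $(Y-Z, Y'-Z')$. The paper records this in a single line; your elaboration of the linearity and membership checks is sound but not a different route.
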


\begin{proof}
Noting that
\begin{align*}
N_t - P_t = \int_0^t S_{t-s} (Y_s - Z_s) \, d\mathbf{X}_s, \quad t \in [0,T],
\end{align*}
the result follows from Proposition \ref{prop-conv-rough}.
\end{proof}

\section{Rough partial differential equations}\label{sec-RPDEs}

In this section we deal with rough partial differential equations. Let $V,W$ be Banach spaces, and let $A$ be the generator of a $C_0$-semigroup $(S_t)_{t \geq 0}$ on $W$. Then there are constants $M \geq 1$ and $\omega \in \bbr$ such that the estimate (\ref{est-semigroup}) is satisfied. We fix a time horizon $T \in \bbr_+$. Consider the rough partial differential equation (RPDE)
\begin{align}\label{RPDE}
\left\{
\begin{array}{rcl}
dY_t & = & (A Y_t + f_0(t,Y_t)) dt + f(t,Y_t) d \bfx_t
\\ Y_0 & = & \xi
\end{array}
\right.
\end{align}
with a rough path $\bfx = (X,\bbx) \in \scrc^{\alpha}([0,T],V)$ for some index $\alpha \in (\frac{1}{3},\frac{1}{2}]$, and appropriate mappings $f_0 : [0,T] \times W \to W$ and $f : [0,T] \times W \to L(V,W))$.

\subsection{Solution concepts}

In this subsection we introduce the required solution concepts. Let $\bfx = (X,\bbx) \in \scrc^{\alpha}([0,T],V)$ be a rough path for some index $\alpha \in (\frac{1}{3},\frac{1}{2}]$.

\begin{definition}
Suppose that $f_0 \in \Lip([0,T] \times W,W)$ and $f \in C_b^{2\alpha,2}([0,T] \times W, L(V,W))$. Furthermore, let $\xi \in D(A)$ be arbitrary. A path $(Y,Y') \in \scrd_X^{2 \alpha}([0,T_0],W)$ for some $T_0 \in (0,T]$ is called a \emph{local strong solution} to the RPDE (\ref{RPDE}) with $Y_0 = \xi$ if $Y \in D(A)$, the path $AY : [0,T] \to W$ is bounded, and we have $Y' = f(Y)$ as well as
\begin{align}\label{strong-solution}
Y_t = \xi + \int_0^t \big( A Y_s + f_0(s,Y_s) \big) \, ds + \int_0^t f(s,Y_s) \, d \bfx_s, \quad t \in [0,T_0].
\end{align}
If we can choose $T_0 = T$, then we also call $(Y,Y')$ a \emph{(global) strong solution} to the RPDE (\ref{RPDE}) with $Y_0 = \xi$.
\end{definition}

\begin{remark}
Note that the right-hand side of (\ref{strong-solution}) is well-defined. Indeed, let $(Y,Y') \in \scrd_X^{2 \alpha}([0,T_0],W)$ for some $T_0 \in (0,T]$ be arbitrary. We define the paths $B(Y), \Phi(Y) : [0,T_0] \to W$ as
\begin{align}
B(Y)_t &:= \int_0^t \big( A Y_s + f_0(s,Y_s) \big) \, ds,
\\ \Phi(Y)_t &:= \int_0^t f(s,Y_s) \, d \bfx_s.
\end{align}
By assumption, the path $A Y : [0,T_0] \to W$ is bounded. Furthermore, the path $f_0(Y) : [0,T_0] \to W$ is continuous, and hence bounded.
Therefore, by Proposition \ref{prop-reg-conv-Hoelder} (applied with $S_t = \Id$ for all $t \geq 0$) it follows that
\begin{align*}
(B(Y),0) \in \scrd_X^{2\alpha}([0,T_0],W).
\end{align*}
Moreover, by Proposition \ref{prop-comp-f} we have
\begin{align*}
(f(Y),f(Y)') \in \scrd_X^{2\alpha}([0,T_0],L(V,W)),
\end{align*}
and hence by Proposition \ref{prop-conv-rough} (applied with $S_t = \Id$ for all $t \geq 0$) it follows that
\begin{align*}
(\Phi(Y),f(Y)) \in \scrd_X^{2\alpha}([0,T_0],W).
\end{align*}
Consequently, the right-hand side of (\ref{strong-solution}) is an element of $\calc^{\alpha}([0,T_0],W)$.
\end{remark}

\begin{definition}
Suppose that $f_0 \in \Lip([0,T] \times W,D(A))$ and $f \in C_b^{2\alpha,2}([0,T] \times W, L(V,D(A^2)))$. Furthermore, let $\xi \in D(A)$ be arbitrary. A path $(Y,Y') \in \scrd_X^{2 \alpha}([0,T_0],W)$ for some $T_0 \in (0,T]$ is called a \emph{local mild solution} to the RPDE (\ref{RPDE}) with $Y_0 = \xi$ if $Y' = f(Y)$ and
\begin{align}\label{mild-solution}
Y_t = S_t \xi + \int_0^t S_{t-s} f_0(s,Y_s) \, ds + \int_0^t S_{t-s} f(s,Y_s) \, d \bfx_s, \quad t \in [0,T_0].
\end{align}
If we can choose $T_0 = T$, then we also call $(Y,Y')$ a \emph{(global) mild solution} to the RPDE (\ref{RPDE}) with $Y_0 = \xi$.
\end{definition}

\begin{remark}\label{rem-well-defined}
Note that the right-hand side of (\ref{mild-solution}) is well-defined. Indeed, let $(Y,Y') \in \scrd_X^{2 \alpha}([0,T_0],W)$ for some $T_0 \in (0,T]$ be arbitrary. We define the paths $\Xi, \Gamma(Y), \Psi(Y) : [0,T_0] \to W$ as
\begin{align}\label{map-part-1}
\Xi_t &:= S_t \xi,
\\ \label{map-part-2} \Gamma(Y)_t &:= \int_0^t S_{t-s} f_0(s,Y_s) \, ds,
\\ \label{map-part-3} \Psi(Y)_t &:= \int_0^t S_{t-s} f(s,Y_s) \, d \bfx_s.
\end{align}
The path $\Xi$ is $D(A)$-valued, and by Proposition \ref{prop-orbit-Hoelder} we have
\begin{align*}
(\Xi,0) \in \scrd_X^{2\alpha}([0,T_0],W).
\end{align*}
Furthermore, the path $f_0(Y) : [0,T_0] \to D(A)$ is continuous, and hence bounded. Therefore, by Proposition \ref{prop-reg-conv-Hoelder} it follows that $\Gamma(Y)$ is $D(A)$-valued and
\begin{align*}
(\Gamma(Y),0) \in \scrd_X^{2\alpha}([0,T_0],W).
\end{align*}
Moreover, by Proposition \ref{prop-comp-f} we have
\begin{align*}
(f(Y),f(Y)') \in \scrd_X^{2\alpha}([0,T_0],L(V,D(A^2))),
\end{align*}
and hence by Proposition \ref{prop-conv-rough} it follows that $\Psi(Y)$ is $D(A)$-valued and
\begin{align*}
(\Psi(Y),f(Y)) \in \scrd_X^{2\alpha}([0,T_0],W).
\end{align*}
Consequently, the right-hand side of (\ref{mild-solution}) is an element of $\calc^{\alpha}([0,T_0],W)$, and we obtain that the path $Y$ is $D(A)$-valued.
\end{remark}

\begin{remark}\label{rem-der-solutions}
Suppose that $f_0 \in \Lip([0,T] \times W,D(A))$ and $f \in C_b^{2\alpha,2}([0,T] \times W, L(V,D(A^2)))$. Furthermore, let $\xi \in D(A)$ be arbitrary, and let $(Y,Y') \in \scrd_X^{2 \alpha}([0,T_0],W)$ be a local mild solution to the RPDE (\ref{RPDE}) with $Y_0 = \xi$ for some $T_0 \in (0,T]$. Then we have $Y' = f(Y)$, and hence
\begin{align*}
f(Y)' = Df(Y)Y' = Df(Y)f(Y),
\end{align*}
where $Df(Y)f(Y) : [0,T_0] \to L(V,L(V,D(A^2)))$ denotes the path
\begin{align*}
Df(Y)f(Y)_t := D_y f(t,Y_t) f(t,Y_t), \quad t \in [0,T_0].
\end{align*}
\end{remark}

In view of the upcoming result, recall that according to Remark \ref{rem-well-defined} for every local mild solution $(Y,Y')$ to the RPDE (\ref{RPDE}) we have $Y \in D(A)$.

\begin{proposition}\label{prop-mild-strong}
Suppose that $f_0 \in \Lip([0,T] \times W,D(A))$ and $f \in C_b^{2\alpha,2}([0,T] \times W, L(V,D(A^2)))$. Let $\xi \in D(A)$ be arbitrary, and let $(Y,Y') \in \scrd_X^{2\alpha}([0,T_0],W)$ be a local mild solution to the RPDE (\ref{RPDE}) with $Y_0 = \xi$ for some $T_0 \in (0,T]$ such that the path $AY : [0,T_0] \to W$ is bounded. Then $(Y,Y')$ is also a local strong solution to the RPDE (\ref{RPDE}) with $Y_0 = \xi$.
\end{proposition}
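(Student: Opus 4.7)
The plan is to start from the mild identity $Y_t = S_t \xi + \Gamma(Y)_t + \Psi(Y)_t$ of Remark~\ref{rem-well-defined} and turn each convolution-type term into its strong counterpart via the semigroup fundamental theorem $S_t y - y = \int_0^t A S_s y\, ds$ for $y \in D(A)$ (Lemma~\ref{lemma-hg-rules}). Concretely, I would prove the identity
\begin{align*}
(S_t\xi - \xi) + \Big( \Gamma(Y)_t - \int_0^t f_0(s,Y_s)\, ds \Big) + \Big( \Psi(Y)_t - \int_0^t f(s,Y_s)\, d\bfx_s \Big) = \int_0^t A Y_s\, ds,
\end{align*}
which, combined with the already built-in relation $Y' = f(Y)$ and the hypothesis that $AY$ is bounded (so that $\int_0^t A Y_s\, ds$ is well defined), immediately yields~(\ref{strong-solution}).

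First I would handle the two deterministic pieces. Since $\xi \in D(A)$, Lemma~\ref{lemma-hg-rules}(d) gives $S_t \xi - \xi = \int_0^t A S_s \xi\, ds$. For the drift convolution, the assumption $f_0(s,Y_s) \in D(A)$ combined with Lemma~\ref{lemma-hg-rules} yields $(S_{t-s} - \Id) f_0(s,Y_s) = \int_s^t A S_{r-s} f_0(s,Y_s)\, dr$; integrating in $s$ and exchanging the order of integration on the simplex $\{0 \le s \le r \le t\}$ by classical Fubini gives
\begin{align*}
\int_0^t (S_{t-s} - \Id) f_0(s,Y_s)\, ds = \int_0^t A \Gamma(Y)_r\, dr,
\end{align*}
where moving $A$ through the inner Bochner integral is justified by the closedness of $A$ together with the continuity (hence boundedness in $W$) of $s \mapsto A f_0(s,Y_s)$.

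The heart of the argument will be the analogous identity
\begin{align*}
\int_0^t (S_{t-s} - \Id) f(s,Y_s)\, d\bfx_s = \int_0^t A \Psi(Y)_r\, dr.
\end{align*}
The same strategy applies: using $f(s,Y_s) \in L(V,D(A^2))$, I first write $(S_{t-s} - \Id) f(s,Y_s) = \int_s^t A S_{r-s} f(s,Y_s)\, dr$, and then swap the outer Lebesgue and inner rough integrals by means of the rough Fubini theorem, Proposition~\ref{prop-rough-Fubini}, with $(\Omega,\mathcal{F},\mu) = ([0,t],\mathcal{B}([0,t]),\lambda)$. The estimates of Propositions~\ref{prop-conv-rough} and~\ref{prop-conv-well-defined} together with the double bound of Proposition~\ref{prop-est-quad} supply the required integrability. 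To identify $\int_0^r A S_{r-s} f(s,Y_s)\, d\bfx_s$ with $A \Psi(Y)_r$, I invoke Lemma~\ref{lemma-int-cont-embedded spaces} together with the continuity of $A : D(A^2) \to D(A)$, which allows $A$ to be pulled past the rough integral by passing to the limit in the defining Riemann sums.

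Summing the three identities and using $A Y_s = A(S_s\xi + \Gamma(Y)_s + \Psi(Y)_s)$ produces exactly~(\ref{strong-solution}). The main obstacle is the rough Fubini step: the natural parameterized integrand $s \mapsto A S_{(r-s)\vee 0} f(s,Y_s)\, \bbI_{[0,r]}(s)$ is discontinuous at $s = r$ and therefore does not literally belong to $\scrd_X^{2\alpha}([0,t], \cdot)$ jointly in $(s,r)$. I would circumvent this either by applying rough Fubini on each interval $[0,r]$ separately and verifying measurability and $L^1$-integrability of $r \mapsto ( A S_{r-\cdot} f(\cdot,Y_\cdot), (A S_{r-\cdot} f(\cdot,Y_\cdot))' )$ in the controlled rough path norm, or by smooth approximation of the indicator followed by passage to the limit using the continuity of the Gubinelli integral from Theorem~\ref{thm-Gubinelli}(c).
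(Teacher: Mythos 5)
Your proposal follows essentially the same route as the paper: decompose $Y_t$ via the mild formula into $S_t\xi + \Gamma(Y)_t + \Psi(Y)_t$, apply the semigroup identity $S_r y - y = \int_0^r A S_u y\, du$ to each piece, and swap integration orders via classical and rough Fubini. This is exactly the structure of the paper's argument.

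The subtlety you correctly flag at the rough Fubini step — that the naive parameterized integrand $u \mapsto A S_{s-u} f(u,Y_u)\, \mathbf{1}_{[0,s]}(u)$ jumps at $u = s$ and therefore does not lie in $\scrd_X^{2\alpha}([0,t],L(V,W))$ — is genuinely present in the paper's proof as well. The paper introduces precisely this $Z(s)_u$, asserts it takes values in $\scrd_X^{2\alpha}([0,t],\cdot)$, but then derives all of its estimates after restricting to the varying interval $[0,s]$; Proposition~\ref{prop-rough-Fubini} as stated is for a fixed interval $[0,T]$ and does not cover that case, so the paper's treatment is no more rigorous on this point than yours. Of your two proposed remedies, the second (mollify the indicator and pass to the limit via the continuity of the Gubinelli map, Theorem~\ref{thm-Gubinelli}(c)) is the more convincing; the first, ``applying rough Fubini on each interval $[0,r]$ separately,'' is too vague as phrased, since with both the parameter set and the time interval equal to $[0,r]$ one does not directly recover the identity on $[0,t]$ without further argument. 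One bookkeeping correction: the uniform-in-$s$ bound on $\interleave Z(s),Z(s)' \interleave_{X,2\alpha}$ required for integrability comes from Propositions~\ref{prop-conv-well-defined} and~\ref{prop-comp-linear-0} together with Proposition~\ref{prop-comp-f}, not from the double estimate of Proposition~\ref{prop-est-quad}, which does not enter this argument.
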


\begin{proof}
Let $t \in [0,T_0]$ be arbitrary. By Lemma~\ref{lemma-hg-rules} we have
\begin{align*}
S_t \xi - \xi = \int_0^t A S_s \xi \, ds.
\end{align*}
Furthermore, by Lemma~\ref{lemma-hg-rules} and Fubini's theorem we have
\begin{align*}
&\int_0^t \big( S_{t-s} f_0(s, Y_s) - f_0(s, Y_s) \big) ds = \int_0^t \bigg( \int_0^{t-s} A S_u f_0(s, Y_s) du \bigg) ds
\\ &= \int_0^t \bigg( \int_u^{t} A S_{s-u} f_0(u, Y_u) ds \bigg) du = \int_0^t \bigg( \int_0^s A S_{s-u} f_0(u, Y_u) du \bigg) ds
\\ &= \int_0^t A \bigg( \int_0^s S_{s-u} f_0(u, Y_u) du \bigg) ds.
\end{align*}
For the upcoming calculation we would like to use the Rough Fubini (Proposition \ref{prop-rough-Fubini}). In order to verify that the required conditions are fulfilled, let us define the mapping $Z : [0,t] \to \scrd_X^{2\alpha}([0,t],W)$ as follows. We fix $s \in [0,t]$ and define $Z(s) : [0,t] \to W$ as
\begin{align*}
Z(s)_u := A S_{s-u} f(u,Y_u) \mathbf{1}_{[0,s]}(u), \quad u \in [0,t].
\end{align*}
Restricting the controlled rough path to the interval $[0,s]$ we have
\begin{align*}
(Y,Y') \in \scrd_X^{2\alpha}([0,s],W).
\end{align*}
By Proposition \ref{prop-comp-f} we have
\begin{align*}
(f(Y),f(Y)') \in \scrd_X^{2\alpha}([0,s],L(V,D(A^2)))
\end{align*}
and the estimate
\begin{align*}
\interleave f(Y),f(Y)' \interleave_{X,2\alpha} \leq C \big( 1 + \interleave Y,Y' \interleave_{X,2\alpha}^2 \big),
\end{align*}
where the constant $C > 0$ depends on $\alpha$, $T$, $\mathbf{X}$ and $\| f \|_{C_b^{2\alpha,2}}$. Consider the mapping $\varphi^s : [0,s] \times L(V,D(A^2)) \to L(V,D(A))$ given by
\begin{align*}
\varphi^s(u,y) = S_{s-u} y.
\end{align*}
Taking into account Lemmas \ref{lemma-restricted-semigroup} and \ref{lemma-graph-norms-eq}, by Proposition \ref{prop-conv-well-defined} we have
\begin{align*}
(\varphi^s(f(Y)),\varphi^s(f(Y))') \in \scrd_X^{2\alpha}([0,s],L(V,D(A)))
\end{align*}
and the estimate
\begin{align*}
\interleave \varphi^s(f(Y)),\varphi^s(f(Y))' \interleave_{X,2\alpha} \leq C \interleave f(Y),f(Y)' \interleave_{X,2\alpha},
\end{align*}
where the constant $C > 0$ depends on $\alpha$, $T$, $\mathbf{X}$ and $M$, $\omega$. Moreover, taking into account Lemma \ref{lemma-embedding-operators}, by Proposition \ref{prop-comp-linear-0} we have
\begin{align*}
(Z,Z') \in \scrd_X^{2\alpha}([0,s],L(V,W))
\end{align*}
and the estimate
\begin{align*}
\interleave Z,Z' \interleave_{X,2\alpha} \leq | A | \, \interleave \varphi^s(f(Y)),\varphi^s(f(Y))' \interleave_{X,2\alpha},
\end{align*}
where $A$ is regarded as a continuous linear operator $A \in L(D(A),W)$. Consequently, the mapping
\begin{align*}
[0,t] \to \bbr_+, \quad u \mapsto |Z(s)_0| + \| Z(s),Z(s)' \|_{X,2\alpha}
\end{align*}
is bounded. Therefore, we may apply the Rough Fubini (Proposition \ref{prop-rough-Fubini}), which, together with Lemma \ref{lemma-hg-rules} gives us
\begin{align*}
&\int_0^t \big( S_{t-s} f(s, Y_s) - f(s, Y_s) \big) d \mathbf{X}_s = \int_0^t \bigg( \int_0^{t-s} A S_u f(s, Y_s) du \bigg) d \mathbf{X}_s
\\ &= \int_0^t \bigg( \int_u^{t} A S_{s-u} f(u, Y_u) ds \bigg) d \mathbf{X}_u = \int_0^t \bigg( \int_0^s A S_{s-u} f(u, X_u) d \mathbf{X}_u \bigg) ds
\\ &= \int_0^t A \bigg( \int_0^s S_{s-u} f(u, X_u) d \mathbf{X}_u \bigg) ds.
\end{align*}
Since $Y$ is a local mild solution to the RPDE (\ref{RPDE}) with $Y_0 = \xi$, we have
\begin{align*}
Y_t &= S_t \xi + \int_0^{t} S_{t-s} f_0(s, Y_s) ds
+ \int_0^{t} S_{t-s} f(s, Y_s) d \mathbf{X}_s
\\ &= \xi + \int_0^t f_0(s, Y_s) ds + \int_0^t f(s, Y_s) d \mathbf{X}_s
\\ &\quad + (S_t \xi - \xi) + \int_0^t (S_{t-s} f_0(s, Y_s) - f_0(s, Y_s)) ds
\\ &\quad + \int_0^t (S_{t-s} f(s, Y_s) - f(s, Y_s)) d \mathbf{X}_s,
\end{align*}
and hence, combining the latter identities, we obtain
\begin{align*}
Y_t &= \xi + \int_0^t f_0(s, Y_s) ds + \int_0^t f(s, Y_s) d \mathbf{X}_s
\\ &\quad + \int_0^t A S_s \xi \, ds + \int_0^t A \bigg( \int_0^s S_{s-u} f_0(u, Y_u) du \bigg) ds
\\ &\quad + \int_0^t A \bigg( \int_0^s S_{s-u} f(u, Y_u) d \mathbf{X}_u \bigg) ds,
\end{align*}
which implies
\begin{align*}
Y_t &= \xi + \int_0^t f_0(s, Y_s) ds + \int_0^t f(s, Y_s) d \mathbf{X}_s
\\ &\quad + \int_0^t A \underbrace{\bigg( S_s \xi + \int_0^s S_{s-u} f_0(u, Y_u) du + \int_0^s S_{s-u} f(u, Y_u) d \mathbf{X}_u \bigg)}_{= Y_s} ds
\\ &= \xi + \int_{0}^{t} \big( A Y_s + f_0(s, Y_s) \big) ds + \int_{0}^{t} f(s, Y_s) d \mathbf{X}_s.
\end{align*}
This proves that $Y$ is also a local strong solution to the RPDE (\ref{RPDE}) with $Y_0 = \xi$.
\end{proof}

\subsection{The space for the fixed point problem}

Note that equation (\ref{mild-solution}) may be regarded as a fixed point problem. In this subsection we will analyze the space for this fixed point problem. Let $\bfx = (X,\bbx) \in \scrc^{\alpha}([0,T],V)$ be a rough path for some index $\alpha \in (\frac{1}{3},\frac{1}{2}]$. Furthermore, let $f_0 \in \Lip([0,T] \times W,D(A))$ and $f \in C_b^{2\alpha,3}([0,T] \times W, L(V,D(A^2)))$ be arbitrary. We also fix an initial condition $\xi \in D(A)$. For every $t \in [0,T]$ we define the subset $\bbb_t \subset \scrd_X^{2\alpha}([0,t],W)$ as
\begin{align*}
\bbb_t := \{ (Y,Y') \in \scrd_X^{2\alpha}([0,t],W) : Y_0 = \xi, Y_0' = f(0,\xi), \| Y,Y' \|_{X,2\alpha;[0,t]} \leq 1 \}.
\end{align*}

\begin{lemma}
For each $t \in [0,T]$ the set $\bbb_t$ equipped with the metric
\begin{align*}
d \big( (Y,Y'), (Z,Z') \big) := \| (Y,Y') - (Z,Z') \|_{X,2\alpha;[0,t]}, \quad (Y,Y'), (Z,Z') \in \bbb_t
\end{align*}
is a complete metric space.
\end{lemma}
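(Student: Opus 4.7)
The plan is to take a Cauchy sequence $((Y^n,(Y^n)'))_{n \in \bbn}$ in $\bbb_t$, construct a candidate limit $(Y,Y')$ componentwise, and then verify both that it lies in $\bbb_t$ and that the convergence holds in $d$. The key observation that makes the argument work is that the defining constraints of $\bbb_t$ pin down the initial values $Y_0$ and $Y_0'$, so the seminorm $\| \cdot, \cdot \|_{X,2\alpha;[0,t]}$ becomes a genuine metric on $\bbb_t$ and the missing norm information is trivially controlled.

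First I would handle the derivative. Since every element of $\bbb_t$ satisfies $(Y^n)'_0 = f(0,\xi) = (Y^m)'_0$, the Cauchy condition $\| (Y^n)' - (Y^m)' \|_{\alpha;[0,t]} \to 0$ together with the elementary bound $|(Y^n)'_s - (Y^m)'_s| \leq \| (Y^n)' - (Y^m)' \|_{\alpha;[0,t]}\, s^{\alpha}$ shows that $((Y^n)')_n$ is Cauchy in the Banach space $\calc^{\alpha}([0,t],L(V,W))$ endowed with the norm $|Y'_0| + \| Y' \|_{\alpha}$. Hence $(Y^n)' \to Y'$ in this norm, with $Y'_0 = f(0,\xi)$. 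Next I would construct $Y$. From $Y^n_s = \xi + f(0,\xi) X_{0,s} + R^{Y^n}_{0,s}$ together with $|R^{Y^n}_{0,s} - R^{Y^m}_{0,s}| \leq \| R^{Y^n} - R^{Y^m} \|_{2\alpha;[0,t]}\, t^{2\alpha}$, the sequence $(Y^n)$ converges uniformly on $[0,t]$ to the continuous path $Y_s := \xi + f(0,\xi) X_{0,s} + \lim_n R^{Y^n}_{0,s}$, which satisfies $Y_0 = \xi$. Defining the candidate remainder $R^Y_{s,u} := Y_{s,u} - Y'_s X_{s,u}$, the uniform convergences $Y^n \to Y$ and $(Y^n)' \to Y'$ yield pointwise convergence $R^{Y^n}_{s,u} \to R^Y_{s,u}$ for all $s,u \in [0,t]$.

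The crucial step is to upgrade this pointwise convergence to convergence in the $(X,2\alpha)$-seminorm. Given $\varepsilon > 0$, pick $N$ with $\| (Y^n,(Y^n)') - (Y^m,(Y^m)') \|_{X,2\alpha;[0,t]} < \varepsilon$ for all $n,m \geq N$. For fixed $n \geq N$ and fixed $s,u \in [0,t]$ with $s \neq u$, letting $m \to \infty$ in the pointwise estimates
\begin{align*}
|((Y^n)'_u - (Y^n)'_s) - ((Y^m)'_u - (Y^m)'_s)| &\leq \varepsilon |u-s|^{\alpha}, \\
|R^{Y^n}_{s,u} - R^{Y^m}_{s,u}| &\leq \varepsilon |u-s|^{2\alpha},
\end{align*}
and using pointwise convergence on the left-hand sides yields $|((Y^n)' - Y')_u - ((Y^n)' - Y')_s| \leq \varepsilon |u-s|^{\alpha}$ and $|R^{Y^n}_{s,u} - R^Y_{s,u}| \leq \varepsilon |u-s|^{2\alpha}$. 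Taking the supremum over $s \neq u$ gives $\| (Y^n,(Y^n)') - (Y,Y') \|_{X,2\alpha;[0,t]} \leq \varepsilon$, which shows simultaneously that $(Y,Y') \in \scrd_X^{2\alpha}([0,t],W)$ and that $d((Y^n,(Y^n)'), (Y,Y')) \to 0$. The constraint $\| Y,Y' \|_{X,2\alpha;[0,t]} \leq 1$ is preserved in the limit by the same lower semi-continuity reasoning applied to the uniform bound $\| Y^n,(Y^n)' \|_{X,2\alpha;[0,t]} \leq 1$. The main subtlety is exactly this passage from pointwise to seminorm convergence, which hinges on the standard fact that H\"{o}lder seminorms are lower semi-continuous under pointwise convergence; everything else is bookkeeping.
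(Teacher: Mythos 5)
Your proof is correct, but it takes a more elementary, hands-on route than the paper's. The paper proceeds in two short steps: (i) it cites the already-stated fact that $\scrd_X^{2\alpha}([0,t],W)$ endowed with the norm $\interleave \cdot \interleave_{X,2\alpha;[0,t]} = |Y_0| + |Y_0'| + \| \cdot \|_{X,2\alpha;[0,t]}$ is a Banach space, (ii) it observes, exactly as you do, that the constraints $Y_0 = \xi$, $Y_0' = f(0,\xi)$ make $d$ coincide with the metric induced by that norm on $\bbb_t$, and then just notes that $\bbb_t$ is closed. You instead re-derive the completeness of the ambient controlled-rough-path space from scratch — constructing $Y'$, $Y$, and $R^Y$ as limits and upgrading pointwise convergence of Hölder-type quotients to seminorm convergence via the lower-semicontinuity trick — and fold the verification that the constraints (including $\| Y,Y' \|_{X,2\alpha;[0,t]} \leq 1$) survive into the same passage to the limit. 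What your version buys is self-containedness: you never need to invoke the Banach-space property of $\scrd_X^{2\alpha}$ nor separately argue closedness of $\bbb_t$. What the paper's version buys is brevity, since that Banach-space fact is standard in rough path theory and is recorded earlier in the text; once you have it, the whole proof is two sentences.
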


\begin{proof}
The space $\scrd_X^{2\alpha}([0,t],W)$ equipped with the norm $\interleave \cdot \interleave_{X,2\alpha;[0,t]}$ is a Banach space, and hence a complete metric space. Furthermore, for all $(Y,Y'), (Z,Z') \in \bbb_t$ we have
\begin{align*}
d \big( (Y,Y'), (Z,Z') \big) &= \| Y-Z,Y'-Z' \|_{X,2\alpha;[0,t]}
\\ &= |Y_0-Z_0| + |Y_0' - Z_0'| +  \| Y-Z,Y'-Z' \|_{X,2\alpha;[0,t]}
\\ &= \interleave Y-Z,Y'-Z' \interleave_{X,2\alpha;[0,t]}.
\end{align*}
Moreover, the set $\bbb_t$ is a closed subset of $\scrd_X^{2\alpha}([0,t],W)$, completing the proof.
\end{proof}

For $t \in [0,T]$ we define the mapping $\Phi_t : \scrd_X^{2\alpha}([0,t],W) \to \scrd_X^{2\alpha}([0,t],W)$ as
\begin{align}\label{def-Phi}
\Phi_t(Y,Y') := \Xi + \Gamma(Y) + \Psi(Y),
\end{align}
where the paths $\Xi, \Gamma(Y), \Psi(Y) : [0,t] \to W$ are defined according to (\ref{map-part-1})--(\ref{map-part-3}). Note that the mapping $\Phi_t$ is well-defined due to Remark \ref{rem-well-defined}.

\begin{proposition}\label{prop-fixed-point-1}
For all $t \in [0,T]$ with $t \leq 1$ and all $(Y,Y') \in \bbb_t$ we have
\begin{align*}
\| \Phi_t(Y,Y') \|_{X,2\alpha;[0,t]} \leq C ( 1 + |\xi|_{D(A)} ) t^{1-2\alpha} + C ( \interleave \mathbf{X} \interleave_{\alpha;[0,t]} + t^{\alpha} ),
\end{align*}
where the constant $C > 0$ depends on $\alpha$, $\mathbf{X}$, $\| f_0 \|_{\Lip}$, $\| f \|_{C_b^{2\alpha,2}}$ and $M$, $\omega$.
\end{proposition}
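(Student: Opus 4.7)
The plan is to bound each of the three summands in the decomposition $\Phi_t(Y,Y') = \Xi + \Gamma(Y) + \Psi(Y)$ separately and then combine them via the triangle inequality for the seminorm $\|\cdot,\cdot\|_{X,2\alpha;[0,t]}$. By Remark~\ref{rem-well-defined} the respective Gubinelli derivatives are $0$, $0$ and $f(Y)$, so $\Phi_t(Y,Y')' = f(Y)$ and
\begin{align*}
\|\Phi_t(Y,Y'),f(Y)\|_{X,2\alpha;[0,t]} \leq \|\Xi,0\|_{X,2\alpha;[0,t]} + \|\Gamma(Y),0\|_{X,2\alpha;[0,t]} + \|\Psi(Y),f(Y)\|_{X,2\alpha;[0,t]}.
\end{align*}
The first term is handled directly by Proposition~\ref{prop-orbit-Hoelder}, giving $\|\Xi,0\|_{X,2\alpha;[0,t]} \leq M e^{\omega t}|\xi|_{D(A)} t^{1-2\alpha}$, which is already of the form $C(1+|\xi|_{D(A)})t^{1-2\alpha}$.

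For the $\Gamma$ term, I would first note that $f_0(Y):[0,t] \to D(A)$ is continuous and by Lemma~\ref{lemma-lin-growth} (applied in codomain $D(A)$) satisfies $\|f_0(Y)\|_\infty \leq \|f_0\|_{\Lip}(1+\|Y\|_\infty)$. Since $(Y,Y') \in \bbb_t$ with $Y_0=\xi$, $Y_0'=f(0,\xi)$ (so $|Y_0'| \leq \|f\|_{C_b^{2\alpha,2}}$) and $\|Y,Y'\|_{X,2\alpha;[0,t]} \leq 1$, applying Lemma~\ref{lemma-norm-of-Y} with constant $1$ (valid since $t \leq 1$) yields $\|Y\|_\infty \leq C(1+|\xi|_{D(A)})$, where $C$ depends on $\mathbf{X}$ and $\|f\|_{C_b^{2\alpha,2}}$. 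Plugging this into Proposition~\ref{prop-reg-conv-Hoelder} delivers $\|\Gamma(Y),0\|_{X,2\alpha;[0,t]} \leq C'(1+|\xi|_{D(A)})\,t^{1-2\alpha}$.

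For the $\Psi$ term, the plan is to apply Proposition~\ref{prop-conv-rough} to $(f(Y),f(Y)') \in \scrd_X^{2\alpha}([0,t],L(V,D(A^2)))$, which is a controlled rough path by Proposition~\ref{prop-comp-f}. The delicate point is that the final constant must be \emph{independent} of $\xi$, so I would carefully use the \emph{seminorm} estimate $|f(Y),f(Y)'|_{X,2\alpha;[0,t]} \leq C(1 + |Y,Y'|_{X,2\alpha;[0,t]}^2)$ from Proposition~\ref{prop-comp-f}. Here $|Y,Y'|_{X,2\alpha;[0,t]} = |Y_0'| + \|Y,Y'\|_{X,2\alpha;[0,t]} \leq \|f\|_{C_b^{2\alpha,2}} + 1$ avoids the $\xi$-dependent term $|Y_0|$. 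Together with $|f(Y)_0| = |f(0,\xi)| \leq \|f\|_{C_b^{2\alpha,2}}$, this yields $\interleave f(Y),f(Y)' \interleave_{X,2\alpha;[0,t]} \leq C$ independently of $\xi$, and Proposition~\ref{prop-conv-rough} then gives $\|\Psi(Y),f(Y)\|_{X,2\alpha;[0,t]} \leq C''(\interleave \mathbf{X} \interleave_{\alpha;[0,t]} + t^\alpha)$.

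The main obstacle is therefore not the estimates themselves but the \emph{bookkeeping of dependencies}: the $(1+|\xi|_{D(A)})$ factor in the $\Gamma$ estimate is unavoidable since the growth bound on $f_0$ forces $\|Y\|_\infty$ to appear, which carries $|Y_0|=|\xi|$; but the $\Psi$ estimate can be kept $\xi$-free precisely because Proposition~\ref{prop-comp-f} is formulated with the seminorm $|\cdot|_{X,2\alpha}$ (not containing $|Y_0|$) on the right-hand side, and $|Y_0'|=|f(0,\xi)|$ is uniformly bounded by $\|f\|_{C_b^{2\alpha,2}}$.
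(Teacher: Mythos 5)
Your proposal is correct and follows essentially the same approach as the paper: the same decomposition $\Phi_t = \Xi + \Gamma(Y) + \Psi(Y)$, the same auxiliary results (Propositions~\ref{prop-orbit-Hoelder}, \ref{prop-reg-conv-Hoelder}, \ref{prop-conv-rough}, \ref{prop-comp-f}, Lemmas~\ref{lemma-lin-growth}, \ref{lemma-norm-of-Y}), and the same key observation that one must use the seminorm $|\cdot,\cdot|_{X,2\alpha}$ rather than $\interleave\cdot,\cdot\interleave_{X,2\alpha}$ in the $\Psi$ estimate to keep that term $\xi$-free, while the $\xi$-dependence inevitably enters through $\Xi$ and $\|Y\|_\infty$ in the $\Gamma$ estimate.
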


\begin{proof}
For convenience of notation, we will skip the subscript $[0,t]$ in the following calculations. Note that
\begin{align*}
\| \Phi_t(Y,Y') \|_{X,2\alpha} \leq \| \Xi,0 \|_{X,2\alpha} + \| \Gamma(Y),0 \|_{X,2\alpha} + \| \Psi(Y),f(Y) \|_{X,2\alpha}.
\end{align*}
By Proposition \ref{prop-orbit-Hoelder} we have
\begin{align*}
\| \Xi,0 \|_{X,2\alpha} \leq M e^{\omega} |\xi|_{D(A)} t^{1-2\alpha}.
\end{align*}
Moreover, by Proposition \ref{prop-reg-conv-Hoelder} we have
\begin{align*}
\| \Gamma(Y),0 \|_{X,2\alpha} &\leq 2 M e^{\omega} \| f_0(Y) \|_{\infty} t^{1-2\alpha}.
\end{align*}
Noting that $Y_0 = \xi$, $Y_0' = f(0,\xi)$ and $\| Y,Y' \|_{X,2\alpha} \leq 1$, by Lemma \ref{lemma-lin-growth} and Lemma \ref{lemma-norm-of-Y} we obtain
\begin{align*}
\| f_0(Y) \|_{\infty} &\leq \| f_0 \|_{\Lip} ( 1 + \| Y \|_{\infty} )
\\ &\leq \| f_0 \|_{\Lip} \big( 1 + \interleave Y,Y' \interleave_{X,2\alpha} ( \interleave \mathbf{X} \interleave_{\alpha} + 2 ) \big)
\\ &\leq \| f_0 \|_{\Lip} \big( 1 + ( |\xi| + |f(0,\xi)| + 1) ( \interleave \mathbf{X} \interleave_{\alpha} + 2 ) \big)
\\ &\leq \| f_0 \|_{\Lip} \big( 1 + ( |\xi| + \| f \|_{C_b^{2\alpha,2}} + 1) ( \interleave \mathbf{X} \interleave_{\alpha} + 2 ) \big).
\end{align*}
Moreover, noting that $Y_0 = \xi$, $Y_0' = f(0,\xi)$ and $\| Y,Y' \|_{X,2\alpha} \leq 1$, by Proposition \ref{prop-conv-rough} and Proposition \ref{prop-comp-f} we have
\begin{align*}
\| \Psi(Y),f(Y) \|_{X,2\alpha} &\leq C \interleave f(Y), f(Y)' \interleave_{X,2\alpha} ( \interleave \mathbf{X} \interleave_{\alpha} + t^{\alpha} )
\\ &= C ( |f(0,\xi)| + | f(Y), f(Y)' |_{X,2\alpha} ) ( \interleave \mathbf{X} \interleave_{\alpha} + t^{\alpha} )
\\ &\lesssim C \big( 1 + | Y,Y' |_{X,2\alpha}^2 \big) ( \interleave \mathbf{X} \interleave_{\alpha} + t^{\alpha} )
\\ &\lesssim C \big( 1 + (|f(0,\xi)| + 1)^2 \big) ( \interleave \mathbf{X} \interleave_{\alpha;[0,t]} + t^{\alpha} )
\\ &\lesssim C ( \interleave \mathbf{X} \interleave_{\alpha} + t^{\alpha} ),
\end{align*}
where the constant $C > 0$, which changes from line to line, depends on $\alpha$, $\mathbf{X}$, $\| f \|_{C_b^{2\alpha,2}}$ and $M$, $\omega$. This completes the proof.
\end{proof}

\begin{proposition}\label{prop-fixed-point-2}
For all $t \in [0,T]$ with $t \leq 1$ and all $(Y,Y'), (Z,Z') \in \bbb_t$ we have
\begin{align*}
\| \Phi_t(Y,Y') - \Phi_t(Z,Z') \|_{X,2\alpha;[0,t]} &\leq C \| (Y,Y') - (Z,Z') \|_{X,2\alpha;[0,t]}
\\ &\qquad \times \big( \interleave \mathbf{X} \interleave_{\alpha;[0,t]} + t^{\alpha} + t^{1-2\alpha} \big),
\end{align*}
where the constant $C > 0$ depends on $\alpha$, $\mathbf{X}$, $\| f_0 \|_{\Lip}$, $\| f \|_{C_b^{2\alpha,3}}$ and $M$, $\omega$.
\end{proposition}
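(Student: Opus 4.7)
The plan is to decompose $\Phi_t(Y,Y') - \Phi_t(Z,Z')$ into its regular and rough convolution parts and estimate each via the results of Sections \ref{sec-reg-conv} and \ref{sec-rough-conv}, exploiting crucially that because both $(Y,Y')$ and $(Z,Z')$ lie in $\bbb_t$ we have $Y_0 = Z_0 = \xi$ and $Y_0' = Z_0' = f(0,\xi)$. Since the free term $\Xi$ is independent of the argument, it cancels, and we are left with $(\Gamma(Y) - \Gamma(Z)) + (\Psi(Y) - \Psi(Z))$ as a difference of controlled rough paths. The vanishing of the initial differences gives the identity
\begin{align*}
\interleave (Y,Y') - (Z,Z') \interleave_{X,2\alpha;[0,t]} = \| (Y,Y') - (Z,Z') \|_{X,2\alpha;[0,t]},
\end{align*}
and, applying Lemma \ref{lemma-norm-of-Y} to $(Y-Z, Y'-Z')$, also controls $\| Y - Z \|_{\infty;[0,t]}$ in terms of $\| (Y,Y') - (Z,Z') \|_{X,2\alpha;[0,t]}$ up to a factor $\interleave \bfx \interleave_{\alpha;[0,t]} + 2$.

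For the regular piece, since $f_0(Y), f_0(Z) \colon [0,t] \to D(A)$ are continuous (hence bounded), Corollary \ref{cor-reg-conv-Hoelder} applied to $f_0(Y) - f_0(Z)$ yields
\begin{align*}
\| (\Gamma(Y), 0) - (\Gamma(Z), 0) \|_{X,2\alpha;[0,t]} \leq (1+t) M e^{\omega t} \| f_0(Y) - f_0(Z) \|_{\infty;[0,t]} \, t^{1-2\alpha}.
\end{align*}
Combining this with the Lipschitz bound $\| f_0(Y) - f_0(Z) \|_{\infty;[0,t]} \leq \| f_0 \|_{\Lip} \| Y - Z \|_{\infty;[0,t]}$ from Lemma \ref{lemma-lin-growth} and the previous control of $\| Y - Z \|_{\infty;[0,t]}$ gives a contribution of order $t^{1-2\alpha} \| (Y,Y') - (Z,Z') \|_{X,2\alpha;[0,t]}$.

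For the rough piece, Corollary \ref{cor-conv-rough} applied to the controlled rough paths $(f(Y), f(Y)')$ and $(f(Z), f(Z)')$, which take values in $L(V, D(A^2))$ by assumption on $f$, yields
\begin{align*}
\| (\Psi(Y), f(Y)) &- (\Psi(Z), f(Z)) \|_{X,2\alpha;[0,t]} \\
&\leq C \interleave (f(Y), f(Y)') - (f(Z), f(Z)') \interleave_{X,2\alpha;[0,t]} (\interleave \bfx \interleave_{\alpha;[0,t]} + t^\alpha).
\end{align*}
Proposition \ref{prop-diff-f} (which needs $f \in C_b^{2\alpha,3}$, as assumed) then bounds the composition difference by
\begin{align*}
C\, (1 + |Y,Y'|_{X,2\alpha;[0,t]}^2 + |Z,Z'|_{X,2\alpha;[0,t]}^2) \interleave (Y,Y') - (Z,Z') \interleave_{X,2\alpha;[0,t]}.
\end{align*}
Since $(Y,Y'), (Z,Z') \in \bbb_t$, the seminorms $|Y,Y'|_{X,2\alpha;[0,t]}$ and $|Z,Z'|_{X,2\alpha;[0,t]}$ are uniformly bounded by $|f(0,\xi)| + 1 \leq \| f \|_{C_b^{2\alpha,3}} + 1$, so the quadratic factor is absorbed into the constant, and using again the identity $\interleave \cdot \interleave = \| \cdot \|$ for the difference, the rough piece contributes a term of order $(\interleave \bfx \interleave_{\alpha;[0,t]} + t^\alpha) \| (Y,Y') - (Z,Z') \|_{X,2\alpha;[0,t]}$.

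Adding the two contributions gives the claimed estimate. The proof is essentially careful bookkeeping rather than any new idea; the main point to watch is the consistent use of vanishing initial values to replace $\interleave \cdot \interleave$ by $\| \cdot \|$ on the differences and the uniform bound on $|Y,Y'|_{X,2\alpha}$ that is built into the definition of $\bbb_t$.
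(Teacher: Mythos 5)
Your proposal is correct and follows essentially the same approach as the paper: the same decomposition into regular piece $\Gamma(Y)-\Gamma(Z)$ and rough piece $\Psi(Y)-\Psi(Z)$, the same invocations of Corollary~\ref{cor-reg-conv-Hoelder}, Lemma~\ref{lemma-lin-growth}, Lemma~\ref{lemma-norm-of-Y}, Corollary~\ref{cor-conv-rough}, and Proposition~\ref{prop-diff-f}, and the same use of the vanishing initial differences and the uniform $\bbb_t$-bounds to replace $\interleave\cdot\interleave$ by $\|\cdot\|$ and to absorb the quadratic factor into the constant. The bookkeeping is accurate and nothing is missing.
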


\begin{proof}
For convenience of notation, we will skip the subscript $[0,t]$ in the following calculations. Note that
\begin{align*}
\| \Phi(Y,Y') - \Phi(Z,Z') \|_{X,2\alpha} &\leq \| (\Gamma(Y),0) - (\Gamma(Z),0) \|_{X,2\alpha}
\\ &\quad + \| (\Psi(Y),f(Y)) - (\Psi(Z),f(Z)) \|_{X,2\alpha}.
\end{align*}
By Corollary \ref{cor-reg-conv-Hoelder} we have
\begin{align*}
\| (\Gamma(Y),0) - (\Gamma(Z),0) \|_{X,2\alpha} \leq 2 M e^{\omega} \| f_0(Y) - f_0(Z) \|_{\infty} t^{1-2\alpha}.
\end{align*}
Furthermore, noting that $Y_0 = Z_0$ and $Y_0' = Z_0'$, by Lemma \ref{lemma-lin-growth} and Lemma \ref{lemma-norm-of-Y} we obtain
\begin{align*}
\| f_0(Y) - f_0(Z) \|_{\infty} &\leq \| f_0 \|_{\Lip} \| Y-Z \|_{\infty}
\\ &\leq \| f_0 \|_{\Lip} \| (Y,Y') - (Z,Z') \|_{X,2\alpha} ( \interleave \mathbf{X} \interleave_{\alpha} + 2).
\end{align*}
Moreover, by Corollary \ref{cor-conv-rough} and Proposition \ref{prop-diff-f} we have
\begin{align*}
&\| (\Psi(Y),f(Y)) - (\Psi(Z),f(Z)) \|_{X,2\alpha}
\\ &\leq C \interleave (f(Y),f(Y)') - (f(Z),f(Z)') \interleave_{X,2\alpha} ( \interleave \mathbf{X} \interleave_{\alpha} + t^{\alpha} )
\\ &\lesssim C ( 1 + | Y,Y' |_{X,2\alpha}^2 + | Z,Z' |_{X,2\alpha}^2 )
\\ &\qquad \times \interleave (Y,Y') - (Z,Z') \interleave_{X,2\alpha} ( \interleave \mathbf{X} \interleave_{\alpha} + t^{\alpha} ),
\end{align*}
where the constant $C>0$, which changes from line to line, depends on $\alpha$, $\mathbf{X}$, $\| f \|_{C_b^{2\alpha,3}}$ and $M$, $\omega$. Now, noting that $Y_0 = Z_0 = \xi$, $Y_0' = Z_0' = f(0,\xi)$ and $\| Y,Y' \|_{X,2\alpha}, \| Z,Z' \|_{X,2\alpha} \leq 1$, we obtain
\begin{align*}
| Y,Y' |_{X,2\alpha} &= |f(0,\xi)| + \| Y,Y' \|_{X,2\alpha} \leq \| f \|_{C_b^{2\alpha,3}} + 1,
\\ | Z,Z' |_{X,2\alpha} &= |f(0,\xi)| + \| Z,Z' \|_{X,2\alpha} \leq \| f \|_{C_b^{2\alpha,3}} + 1.
\end{align*}
Consequently, noting again that $Y_0 = Z_0$ and $Y_0' = Z_0'$ we have
\begin{align*}
\interleave (Y,Y') - (Z,Z') \interleave_{X,2\alpha} = \| (Y,Y') - (Z,Z') \|_{X,2\alpha},
\end{align*}
concluding the proof.
\end{proof}

\subsection{Auxiliary results}

In this subsection we derive auxiliary results which are required in order to establish existence and uniqueness of local mild solutions for the RPDE (\ref{RPDE}). Let $f_0 \in \Lip([0,T] \times W,D(A))$ and $f \in C_b^{2\alpha,2}([0,T] \times W, L(V,D(A^2)))$ be arbitrary.

\begin{lemma}\label{lemma-Hoelder-order-2}
Let $\bfx = (X,\bbx) \in \scrc^{\alpha}([0,T],V)$ be a rough path for some index $\alpha \in (\frac{1}{3},\frac{1}{2}]$. Furthermore, let $\xi \in D(A)$ be arbitrary, and let $(Y,Y') \in \scrd_X^{2 \alpha}([0,T_0],W)$ be a local mild solution to the RPDE (\ref{RPDE}) with $Y_0 = \xi$ for some $T_0 \in (0,T]$. Then we have
\begin{align*}
\| f(Y)' \|_{\alpha} \leq C ( 1 + \| Y \|_{\alpha} ),
\end{align*}
where the constant $C > 0$ depends on $\alpha$, $T$ and $\| f \|_{C_b^{2\alpha,2}}$.
\end{lemma}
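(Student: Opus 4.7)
The plan is to exploit the identity $Y' = f(Y)$ from the mild solution definition, together with the formula $f(Y)' = Df(Y)\,f(Y)$ observed in Remark~\ref{rem-der-solutions}, and then apply the estimate (\ref{est-comp-Hoelder-2}) from Proposition~\ref{prop-comp-f}.

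First I would record the consequences of $Y' = f(Y)$ on the norms of $Y'$. Since $f \in C_b^{2\alpha,2}$ is uniformly bounded by $\|f\|_{C_b^{2\alpha,2}}$, we get immediately
\[
\|Y'\|_{\infty} = \|f(Y)\|_{\infty} \leq \|f\|_{C_b^{2\alpha,2}}.
\]
Next, applying the $\alpha$-Hölder estimate (\ref{est-comp-Hoelder-1}) to the path $f(Y)$ yields
\[
\|Y'\|_{\alpha} = \|f(Y)\|_{\alpha} \leq \|f\|_{C_b^{2\alpha,2}} \bigl( \|Y\|_{\alpha} + T^{\alpha} \bigr).
\]

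Then I would invoke (\ref{est-comp-Hoelder-2}) applied to $(Y,Y')$ and $f$, giving
\[
\|f(Y)'\|_{\alpha} \leq \|f\|_{C_b^{2\alpha,2}} \bigl( \|Y'\|_{\alpha} + \|Y\|_{\alpha}\|Y'\|_{\infty} + T^{\alpha}\|Y'\|_{\infty} \bigr).
\]
Substituting the two preliminary bounds on $\|Y'\|_{\alpha}$ and $\|Y'\|_{\infty}$ gives a bound of the form $C(1 + \|Y\|_{\alpha})$, where $C$ depends on $\alpha$, $T$ and $\|f\|_{C_b^{2\alpha,2}}$, which is exactly the claim. Note that the factor $Df(Y)$ automatically lands in $L(W,L(V,D(A^2)))$ since $f$ takes values in $L(V,D(A^2))$, so the path $f(Y)'$ makes sense in $L(V,W)$ via the natural embeddings used throughout Section~\ref{sec-comp-rough}.

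There is no real obstacle here; the proof is essentially a bookkeeping exercise that combines Proposition~\ref{prop-comp-f} with the mild-solution identity $Y' = f(Y)$. The only point worth double-checking is that (\ref{est-comp-Hoelder-2}) may be applied with $\bar{W} = L(V,D(A^2))$ in place of $\bar{W}$, which is legitimate because Proposition~\ref{prop-comp-f} is stated for arbitrary Banach target spaces and the hypothesis $f \in C_b^{2\alpha,2}([0,T]\times W, L(V,D(A^2)))$ is exactly what we have.
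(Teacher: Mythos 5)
Your proposal is correct and follows essentially the same route as the paper: use the mild-solution identity $Y' = f(Y)$ together with (\ref{est-comp-Hoelder-1}) and the bound $\|f(Y)\|_\infty \le \|f\|_{C_b^{2\alpha,2}}$ to control $\|Y'\|_\alpha$ and $\|Y'\|_\infty$, then substitute into (\ref{est-comp-Hoelder-2}) to bound $\|f(Y)'\|_\alpha$. The paper's proof performs exactly these substitutions, citing Proposition~\ref{prop-comp-f} and Remark~\ref{rem-der-solutions} in the same way.
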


\begin{proof}
By Proposition \ref{prop-comp-f} we have
\begin{align*}
\| f(Y) \|_{\alpha} \leq \| f \|_{C_b^{2\alpha,2}} ( \| Y \|_{\alpha} + T^{\alpha} ).
\end{align*}
Therefore, by Proposition \ref{prop-comp-f} and Remark \ref{rem-der-solutions} we obtain
\begin{align*}
\| f(Y)' \|_{\alpha} &\leq \| f \|_{C_b^{2\alpha,2}} \big( \| f(Y) \|_{\alpha} + \| Y \|_{\alpha} \| f(Y) \|_{\infty} + T^{\alpha} \| f(Y) \|_{\infty} \big)
\\ &\leq \| f \|_{C_b^{2\alpha,2}} \big( \| f \|_{C_b^{2\alpha,2}} ( \| Y \|_{\alpha} + T^{\alpha} ) + \| Y \|_{\alpha} \| f \|_{C_b^{2\alpha,2}} + T^{\alpha} \| f \|_{C_b^{2\alpha,2}} \big),
\end{align*}
completing the proof.
\end{proof}

\begin{lemma}\label{lemma-remainder-mild-sol}
Let $\bfx = (X,\bbx) \in \scrc^{\alpha}([0,T],V)$ be a rough path for some index $\alpha \in (\frac{1}{3},\frac{1}{2}]$. Furthermore, let $\xi \in D(A)$ be arbitrary, and let $(Y,Y') \in \scrd_X^{2 \alpha}([0,T_0],W)$ be a local mild solution to the RPDE (\ref{RPDE}) with $Y_0 = \xi$ for some $T_0 \in (0,T]$. Let $s,t \in [0,T_0]$ with $s \leq t$ be arbitrary, and define the interval $I := [s,t]$. Then we have
\begin{align*}
| R_{s,t}^Y | &\leq C \Big( 1 + \| Y \|_{\alpha;I} + \| X \|_{\alpha;I} ( 1 + \| R^{f(Y)} \|_{2 \alpha;I} ) + \| \bbx \|_{2\alpha;I} ( 1 + \| Y \|_{\alpha;I} ) \Big) |t-s|
\\ &\quad + C | \bbx_{s,t} |,
\end{align*}
where the constant $C > 0$ depends on $\xi$, $\alpha$, $T$, $\| f_0 \|_{\Lip}$, $\| f \|_{C_b^{2\alpha,2}}$ and $M$, $\omega$.
\end{lemma}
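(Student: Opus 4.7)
The plan is to start from the mild formula $Y_t = S_t\xi + \Gamma(Y)_t + \Psi(Y)_t$, where $\Gamma(Y)_t := \int_0^t S_{t-r} f_0(r,Y_r)\,dr$ and $\Psi(Y)_t := \int_0^t S_{t-r} f(r,Y_r)\,d\bfx_r$, and to exploit that $Y'=f(Y)$, so that $R_{s,t}^Y = Y_{s,t} - f(s,Y_s) X_{s,t}$, together with the fact that the Gubinelli derivative of $\Psi(Y)$ is also $f(Y)$ by Proposition \ref{prop-conv-rough}. Taking increments yields the clean decomposition
\begin{equation*}
R_{s,t}^Y \;=\; S_{s,t}\xi \;+\; \Gamma(Y)_{s,t} \;+\; R_{s,t}^{\Psi(Y)},
\end{equation*}
reducing the proof to estimating these three pieces independently.

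For the orbit term, Proposition \ref{prop-orbit-map} immediately gives $|S_{s,t}\xi| \leq Me^{\omega T}|\xi|_{D(A)}|t-s|$, a clean $C|t-s|$ contribution. For the regular convolution I split
\begin{equation*}
\Gamma(Y)_{s,t} \;=\; \int_s^t S_{t-r}f_0(r,Y_r)\,dr \;+\; \int_0^s (S_{t-r}-S_{s-r})f_0(r,Y_r)\,dr
\end{equation*}
and argue as in Lemma \ref{lemma-reg-conv-Hoelder}, using $|f_0(r,Y_r)|_{D(A)} \leq \|f_0\|_{\Lip}(1+|Y_r|)$ from Lemma \ref{lemma-lin-growth} to bound both pieces by $C(1+\|Y\|_\infty)|t-s|$. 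Since the mild solution $Y$ is continuous on $[0,T_0]$ and its sup norm is controlled a posteriori through the mild formula by $\xi$ and the other listed parameters, this is absorbed into another $C|t-s|$ contribution.

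For $R_{s,t}^{\Psi(Y)}$ I reuse the decomposition that appears in the proof of Proposition \ref{prop-conv-rough}, writing
\begin{equation*}
R_{s,t}^{\Psi(Y)} \;=\; f(Y)'_s\,\bbx_{s,t} \;+\; \bigl(\Psi(Y)_{s,t}-I_{s,t}\bigr) \;+\; \bigl(I_{s,t}-f(Y)_s X_{s,t}-f(Y)'_s\,\bbx_{s,t}\bigr),
\end{equation*}
where $I_t := \int_0^t f(r,Y_r)\,d\bfx_r$ is the pure Gubinelli integral. The first summand is bounded by $\|f(Y)'\|_\infty\,|\bbx_{s,t}| \leq \|f\|_{C_b^{2\alpha,2}}^2\,|\bbx_{s,t}|$, producing the $C|\bbx_{s,t}|$ term of the stated bound. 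The second summand is bounded by localised versions of Corollaries \ref{cor-comp-semigroup-1} and \ref{cor-comp-semigroup-2} applied to the controlled rough path $(f(Y),f(Y)')$ on $I$; after absorbing $\|f(Y)\|_\infty$ and $\|f(Y)'\|_\infty$ into $C$ and using $|t-s|^{3\alpha} \leq T^{3\alpha-1}|t-s|$ (valid since $3\alpha>1$), this fits into the stated shape, with $\|f(Y)'\|_{\alpha;I}$ converted to $1+\|Y\|_{\alpha;I}$ via Lemma \ref{lemma-Hoelder-order-2}. The third summand is bounded directly by Theorem \ref{thm-Gubinelli}(b), applied on $I$, yielding terms of the same form.

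The principal obstacle is careful bookkeeping: keeping the right-hand side expressed in terms of only the local $I$-seminorms of $X$, $\bbx$, $Y$ and $R^{f(Y)}$, while verifying that the auxiliary quantities $\|f(Y)\|_\infty$, $\|f(Y)'\|_\infty$, $\|f_0(Y)\|_\infty$ and $|Y_s|$ that arise during the estimates are controlled only by the parameters appearing in the constant $C$. The bound on $|Y_s|$, in particular, requires appealing to the continuity of $Y$ on $[0,T_0]$ together with the a priori control of $\|Y\|_\infty$ furnished by the mild equation and the initial value $\xi$.
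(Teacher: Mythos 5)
Your decomposition is identical in substance to the paper's: the identity
$R_{s,t}^Y = S_{s,t}\xi + \Gamma(Y)_{s,t} + R_{s,t}^{\Psi(Y)}$
combined with the split of $R_{s,t}^{\Psi(Y)}$ via $I_t = \int_0^t f(Y_r)\,d\bfx_r$ reproduces, after expanding $\Psi(Y)_{s,t}-I_{s,t}$ by (\ref{eqn-diff-Z-I}), exactly the six-term bound the paper obtains. The treatment of the $\Psi(Y)$ piece — the three sub-terms, the use of the two semigroup lemmas, Theorem~\ref{thm-Gubinelli}(b), the absorption of $\|f(Y)\|_\infty \le \|f\|_{C_b^{2\alpha,2}}$ and $\|f(Y)'\|_\infty \le \|f\|_{C_b^{2\alpha,2}}^2$ into $C$, the conversion $|t-s|^{3\alpha}\le T^{3\alpha-1}|t-s|$, and the application of Lemma~\ref{lemma-Hoelder-order-2} — is all sound and essentially what the paper does (it uses Lemmas~\ref{lemma-comp-semigroup-1} and \ref{lemma-comp-semigroup-2} rather than the Corollaries, but the explicit form you need is what the Lemmas give, so you should cite those).

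The genuine gap is in your handling of the $\Gamma(Y)$ term. You claim that $\|Y\|_\infty$ is ``controlled a posteriori through the mild formula by $\xi$ and the other listed parameters'' and can therefore be absorbed into $C$, producing a pure $C|t-s|$ contribution. This is circular and cannot work. The constant $C$ in the lemma is explicitly allowed to depend only on $\xi$, $\alpha$, $T$, $\|f_0\|_{\Lip}$, $\|f\|_{C_b^{2\alpha,2}}$, $M$, $\omega$ — not on the rough path seminorms $\|X\|_\alpha$, $\|\bbx\|_{2\alpha}$, and certainly not on $\|Y\|_\infty$ or $\|Y\|_\alpha$. Any bound on $\|Y\|_\infty$ in terms of $\xi$ must go through the rough-path data; but establishing such an a priori bound is precisely the purpose of Proposition~\ref{prop-estimate}, for which the present lemma is the key input. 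If you were allowed to assume $\|Y\|_\infty \le C$ here, the whole bootstrap of Proposition~\ref{prop-estimate} would be superfluous. The correct move, which the paper makes, is elementary: bound $\|f_0(Y)\|_\infty \le \|f_0\|_{\Lip}(1+\|Y\|_\infty)$ as you do, and then use $\|Y\|_\infty \le |Y_0| + T^\alpha\|Y\|_\alpha = |\xi| + T^\alpha\|Y\|_\alpha$. This is exactly what produces the explicit $(1+\|Y\|_{\alpha;I})$ factor multiplying $|t-s|$ in the lemma's statement; without it the statement would not read as it does. Your claim that the $\Gamma$ term contributes only $C|t-s|$ is therefore inconsistent with the lemma you are proving.
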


\begin{proof}
For convenience of notation, we will skip the subscript $I$ in the following calculations. By equation (\ref{eqn-diff-Z-I}) we have
\begin{align*}
Y_{s,t} &= S_{s,t} \xi + \Gamma(Y)_{s,t} + \int_s^t (S_{t-r} - \Id)f(Y_r) \, d\mathbf{X}_r
\\ &\quad + \int_0^s (S_{t-r} - S_{s-r})f(Y_r) \, d\mathbf{X}_r + \int_s^t f(Y_r) \, d\mathbf{X}_r,
\end{align*}
where the path $\Gamma(Y) : [0,T_0] \to W$ is defined according to (\ref{map-part-2}). Therefore, by Remark \ref{rem-der-solutions} we have
\begin{align*}
|R_{s,t}^Y| &= | Y_{s,t} - Y_s' X_{s,t} | = | Y_{s,t} - f(Y_s) X_{s,t} |
\\ &\leq |S_{s,t} \xi| + | \Gamma(Y)_{s,t} | + \bigg| \int_s^t (S_{t-r} - \Id)f(Y_r) \, d\mathbf{X}_r \bigg|
\\ &\quad + \bigg| \int_0^s (S_{t-r} - S_{s-r})f(Y_r) \, d\mathbf{X}_r \bigg|
\\ &\quad + \bigg| \int_s^t f(Y_r) \, d \mathbf{X}_r - f(Y_s) X_{s,t} - Df(Y_s)f(Y_s) \bbx_{s,t} \bigg|
\\ &\quad + | Df(Y_s)f(Y_s) \bbx_{s,t} |.
\end{align*}
By Proposition \ref{prop-orbit-map} we have
\begin{align*}
|S_{s,t} \xi| \leq M e^{\omega T} |\xi|_{D(A)} |t-s|,
\end{align*}
and by Lemma \ref{lemma-reg-conv-Hoelder} and Lemma \ref{lemma-lin-growth} we have
\begin{align*}
| \Gamma(Y)_{s,t} | &\leq (1+T) M e^{\omega T} \| f_0(Y) \|_{\infty} |t-s|
\\ &\leq (1+T) M e^{\omega T} \| f_0 \|_{\Lip} (1 + \| Y \|_{\infty}) |t-s|
\\ &\leq (1+T) M e^{\omega T} \| f_0 \|_{\Lip} (1 + |Y_0| + \| Y \|_{\alpha} T^{\alpha}) |t-s|
\\ &\leq (1+T) M e^{\omega T} \| f_0 \|_{\Lip} (1 + |\xi| + \| Y \|_{\alpha} T^{\alpha}) |t-s|.
\end{align*}
Furthermore, by Lemma \ref{lemma-comp-semigroup-1} we have
\begin{align*}
&\bigg| \int_s^t (S_{t-r} - \Id)f(Y_r) \, d\mathbf{X}_r \bigg|
\\ &\leq C \Big( \| X \|_{\alpha} \big( \| f(Y) \|_{\infty} + \| R^{f(Y)} \|_{2 \alpha} \big) + \| \bbx \|_{2\alpha} \big( \| f(Y)' \|_{\infty} + \| f(Y)' \|_{\alpha} \big) \Big) |t-s|^{3\alpha},
\end{align*}
where the constant $C > 0$ depends on $\alpha$, $T$ and $M$, $\omega$. Similarly, by Lemma \ref{lemma-comp-semigroup-2} we have
\begin{align*}
&\bigg| \int_0^s (S_{t-r} - S_{s-r})f(Y_r) \, d\mathbf{X}_r \bigg|
\\ &\leq C \Big( \| X \|_{\alpha} \big( \| f(Y) \|_{\infty} + \| R^{f(Y)} \|_{2 \alpha} \big) + \| \bbx \|_{2\alpha} \big( \| f(Y)' \|_{\infty} + \| f(Y)' \|_{\alpha} \big) \Big) |t-s|,
\end{align*}
where the constant $C > 0$ depends on $\alpha$, $T$ and $M$, $\omega$. Furthermore, by Theorem \ref{thm-Gubinelli} and Remark \ref{rem-der-solutions} we have
\begin{align*}
&\bigg| \int_s^t f(Y_r) \, d \mathbf{X}_r - f(Y_s) X_{s,t} - Df(Y_s)f(Y_s) \bbx_{s,t} \bigg|
\\ &\leq C \big( \| X \|_{\alpha} \| R^{f(Y)} \|_{2\alpha} + \| \bbx \|_{2\alpha} \| f(Y)' \|_{\alpha} \big) |t-s|^{3\alpha},
\end{align*}
where the constant $C > 0$ depends on $\alpha$. Moreover, we have
\begin{align*}
| Df(Y_s)f(Y_s) \bbx_{s,t} | \leq |Df(Y_s)| \, |f(Y_s)| \, | \bbx_{s,t} | \leq \| f \|_{C_b^{2\alpha,2}}^2 | \bbx_{s,t} |.
\end{align*}
Noting that $\alpha > \frac{1}{3}$, which implies $3 \alpha > 1$, using Lemma \ref{lemma-Hoelder-order-2} the proof is complete.
\end{proof}

\begin{lemma}\label{lemma-spaces-alpha-beta}
Let $\beta \in (\frac{1}{3},\frac{1}{2}]$ be arbitrary. We choose $\alpha \in (\frac{1}{3}, \beta)$ such that $\beta \leq \frac{3}{2} \alpha$, and let
\begin{align*}
\mathbf{X} = (X,\bbx) \in \scrc^{\beta}([0,T],V) \subset \scrc^{\alpha}([0,T],V)
\end{align*}
be a rough path. Furthermore, let $\xi \in D(A)$ be arbitrary, and let $(Y,Y') \in \scrd_X^{2 \alpha}([0,T_0],W)$ be a local mild solution to the RPDE (\ref{RPDE}) with $Y_0 = \xi$ for some $T_0 \in (0,T]$. Then we even have $(Y,Y') \in \scrd_X^{2 \beta}([0,T_0],W)$.
\end{lemma}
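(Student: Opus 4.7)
The plan is to upgrade the regularity from $\alpha$ to $\beta$ by exploiting the mild solution equation through Lemma \ref{lemma-remainder-mild-sol}, whose pointwise bound on $R^Y$ does not really depend on the $\alpha$-Hölder scaling of $|t-s|$, and can therefore be reread in the $\beta$-framework. The inclusion $\scrc^{\beta}([0,T],V) \subset \scrc^{\alpha}([0,T],V)$ from Lemma \ref{lemma-alpha-beta} ensures that every $\alpha$-norm of $\mathbf{X}$ appearing in that lemma is finite, in fact controlled by the $\beta$-norms up to a power of $T$.

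First I would apply Lemma \ref{lemma-remainder-mild-sol} to the given local mild solution $(Y,Y') \in \scrd_X^{2\alpha}([0,T_0],W)$, using that $f_0 \in \Lip([0,T]\times W, D(A))$ and $f \in C_b^{2\alpha,2}([0,T]\times W, L(V,D(A^2)))$. This yields, for all $s,t \in [0,T_0]$ with $s \leq t$,
\begin{align*}
|R_{s,t}^Y| \leq C |t-s| + C |\bbx_{s,t}|,
\end{align*}
where $C$ is finite and collects the relevant $\alpha$-Hölder norms of $Y$, $X$ and $\bbx$, together with $\xi$, $M$, $\omega$ and the norms of $f_0$, $f$. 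Since $2\beta \leq 1$ I can bound $|t-s| \leq T^{1-2\beta}|t-s|^{2\beta}$, and since $\mathbf{X} \in \scrc^{\beta}([0,T],V)$, I have $|\bbx_{s,t}| \leq \|\bbx\|_{2\beta;[0,T_0]} |t-s|^{2\beta}$. Combining these estimates gives
\begin{align*}
|R_{s,t}^Y| \leq C' |t-s|^{2\beta}
\end{align*}
for a new constant $C'$, so $\|R^Y\|_{2\beta;[0,T_0]} < \infty$.

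To finish, I need $Y \in \calc^{\beta}([0,T_0],W)$ and $Y' \in \calc^{\beta}([0,T_0],L(V,W))$. For the first, I use the decomposition $Y_{s,t} = Y_s' X_{s,t} + R_{s,t}^Y$ together with $Y' = f(Y)$; since $\|f(\cdot,\cdot)\|_\infty \leq \|f\|_{C_b^{2\alpha,2}} < \infty$, $Y'$ is uniformly bounded, and using $X \in \calc^{\beta}$ together with the $2\beta$-Hölder estimate just obtained for $R^Y$, I conclude $Y \in \calc^{\beta}([0,T_0],W)$. For $Y' = f(Y) \in \calc^{\beta}$ I then invoke Proposition \ref{prop-comp-Hoelder-beta} with parameters $\gamma = 2\alpha$ and exponent $\beta$: its hypothesis $\beta \leq \gamma$ amounts to $\beta \leq 2\alpha$, which is precisely where the assumption $\beta \leq \tfrac{3}{2}\alpha$ (in fact a strictly stronger condition) is used. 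Assembling the three pieces yields $(Y,Y') \in \scrd_X^{2\beta}([0,T_0],W)$.

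The only non-routine point, and the place where the choice of $\alpha$ matters, is ensuring that the bound from Lemma \ref{lemma-remainder-mild-sol} transfers cleanly to the $\beta$-scale: the $|t-s|$ term is handled automatically by $2\beta \leq 1$, the $|\bbx_{s,t}|$ term by the $\beta$-regularity of $\bbx$, and the regularity of the Gubinelli derivative $f(Y)$ by Proposition \ref{prop-comp-Hoelder-beta} under the inequality $\beta \leq 2\alpha$ implied by $\beta \leq \tfrac{3}{2}\alpha$. No fresh estimates on the convolution integrals are needed, since Lemma \ref{lemma-remainder-mild-sol} has already packaged them.
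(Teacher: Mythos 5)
Your proof is correct and uses exactly the same three ingredients as the paper: Lemma \ref{lemma-remainder-mild-sol} to upgrade $R^Y$, the decomposition $Y_{s,t}=Y_s'X_{s,t}+R_{s,t}^Y$ for the $\beta$-Hölder regularity of $Y$, and Proposition \ref{prop-comp-Hoelder-beta} applied to $Y'=f(Y)$. The only difference is the order: you establish $\|R^Y\|_{2\beta}<\infty$ first and then use it for $Y\in\calc^\beta$, while the paper shows $Y\in\calc^\beta$ from the original $\|R^Y\|_{2\alpha}$ bound (via $\beta<2\alpha$) and saves the $2\beta$-upgrade of $R^Y$ for last; both routes are valid.
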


\begin{proof}
Let $s,t \in [0,T_0]$ be arbitrary. Since $\beta \leq \frac{3}{2} \alpha < 2\alpha$, we have
\begin{align*}
|Y_{s,t}| &= |Y_s' X_{s,t} + R_{s,t}^Y| \leq \| Y' \|_{\infty} |X_{s,t}| + |R_{s,t}^Y|
\\ &\leq \| Y' \|_{\infty} \| X \|_{\beta} |t-s|^{\beta} + \| R^Y \|_{2\alpha} |t-s|^{2\alpha}
\\ &\leq \| Y' \|_{\infty} \| X \|_{\beta} |t-s|^{\beta} + \| R^Y \|_{2\alpha} |t-s|^{\beta},
\end{align*}
showing that $Y \in \calc^{\beta}([0,T_0],W)$. Furthermore, from Remark \ref{rem-der-solutions} and Proposition \ref{prop-comp-Hoelder-beta} we obtain
\begin{align*}
Y' = f(Y) \in \calc^{\beta}([0,T_0],L(V,D(A^2))) \subset \calc^{\beta}([0,T_0],L(V,W)).
\end{align*}
Moreover, noting that $2 \beta \leq 1$, by Lemma \ref{lemma-remainder-mild-sol} we have
\begin{align*}
| R_{s,t}^Y | &\leq C \Big( 1 + \| Y \|_{\alpha;I} + \| X \|_{\alpha;I} ( 1 + \| R^{f(Y)} \|_{2 \alpha;I} ) + \| \bbx \|_{2\alpha;I} ( 1 + \| Y \|_{\alpha;I} ) \Big) |t-s|
\\ &\quad + C | \bbx_{s,t} |
\\ &\leq C \Big( 1 + \| Y \|_{\alpha;I} + \| X \|_{\alpha;I} ( 1 + \| R^{f(Y)} \|_{2 \alpha;I} ) + \| \bbx \|_{2\alpha;I} ( 1 + \| Y \|_{\alpha;I} ) \Big) T^{1-2\beta} |t-s|^{2\beta}
\\ &\quad + C \| \bbx \|_{2 \beta} |t-s|^{2 \beta},
\end{align*}
where the constant $C > 0$ depends on $\xi$, $\alpha$, $T$, $\| f_0 \|$, $\| f \|_{C_b^{2\alpha,2}}$ and $M$, $\omega$. This shows $\| R^Y \|_{2\beta} < \infty$, completing the proof.
\end{proof}

\subsection{Existence and uniqueness of local mild solutions}

In this subsection we present a result concerning existence and uniqueness of local mild solutions to the RPDE (\ref{RPDE}).

\begin{theorem}\label{thm-RDGL-main-local}
Let $\bfx = (X,\bbx) \in \scrc^{\beta}([0,T],V)$ be a rough path for some index $\beta \in (\frac{1}{3},\frac{1}{2}]$, and let $f_0 \in \Lip([0,T] \times W,D(A))$ and $f \in C_b^{2\beta,3}([0,T] \times W, L(V,D(A^2)))$ be mappings. Then for every $\xi \in D(A)$ there exist $T_0 \in (0,T]$ and a unique local mild solution $(Y,Y') \in \scrd_X^{2 \beta}([0,T_0],W)$ to the RPDE (\ref{RPDE}) with $Y_0 = \xi$.
\end{theorem}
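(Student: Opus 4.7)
The plan is to prove existence by a Banach fixed point argument for the map $\Phi_t$ defined in (\ref{def-Phi}), applied on the complete metric space $\bbb_t$, and then to upgrade the regularity of the unique fixed point from an auxiliary H\"{o}lder index $\alpha$ up to $\beta$ via Lemma \ref{lemma-spaces-alpha-beta}. First I would fix an auxiliary $\alpha \in (\frac{1}{3},\beta)$ with $\beta \leq \frac{3}{2}\alpha$, which is always possible for $\beta \in (\frac{1}{3},\frac{1}{2}]$. By Lemma \ref{lemma-alpha-beta} we have $\mathbf{X} \in \scrc^{\alpha}([0,T],V)$, and the subinterval version of the same estimate gives $\interleave \mathbf{X} \interleave_{\alpha;[0,t]} \to 0$ as $t \downarrow 0$; moreover, since $\beta > \alpha$ and $2\beta$-H\"{o}lder is stronger than $2\alpha$-H\"{o}lder on bounded intervals, the hypothesis $f \in C_b^{2\beta,3}$ implies $f \in C_b^{2\alpha,3}$, so Propositions \ref{prop-fixed-point-1} and \ref{prop-fixed-point-2} apply.

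Next I would choose $T_0 \in (0,1 \wedge T]$ small enough that the estimate in Proposition \ref{prop-fixed-point-1} gives $\| \Phi_{T_0}(Y,Y') \|_{X,2\alpha;[0,T_0]} \leq 1$ and the Lipschitz coefficient in Proposition \ref{prop-fixed-point-2} is bounded by $\tfrac{1}{2}$. Both are possible because the $t$-dependent prefactors $t^{1-2\alpha}$, $t^{\alpha}$ and $\interleave \mathbf{X} \interleave_{\alpha;[0,t]}$ all vanish as $t \downarrow 0$, whereas the constants $C$ in those propositions do not depend on $t$ for $t \leq 1$. The form of $\Phi_t$ in (\ref{def-Phi}) together with the identifications in Remark \ref{rem-well-defined} yield immediately $\Phi_t(Y,Y')_0 = S_0 \xi = \xi$ and $\Phi_t(Y,Y')'_0 = f(Y)_0 = f(0,\xi)$, so $\Phi_{T_0}$ is a self-map of $\bbb_{T_0}$ and a strict contraction. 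The Banach fixed point theorem then delivers a unique $(Y,Y') \in \bbb_{T_0}$ with $\Phi_{T_0}(Y,Y')=(Y,Y')$; by construction this satisfies (\ref{mild-solution}) on $[0,T_0]$ with $Y_0 = \xi$ and $Y'=f(Y)$, i.e.\ it is a local mild solution in $\scrd_X^{2\alpha}([0,T_0],W)$. Lemma \ref{lemma-spaces-alpha-beta} then upgrades it to $\scrd_X^{2\beta}([0,T_0],W)$.

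For uniqueness in $\scrd_X^{2\beta}([0,T_0],W)$ I would compare any second local mild solution $(\tilde{Y},\tilde{Y}')$ with the fixed point. Both solutions belong to $\scrd_X^{2\alpha}([0,T_0],W)$, and since $\| \tilde{Y},\tilde{Y}' \|_{X,2\alpha;[0,t]}$ is dominated by $t^{\beta - \alpha}$ (respectively $t^{2(\beta-\alpha)}$) times the corresponding $2\beta$-seminorm on $[0,T_0]$, it tends to zero as $t \downarrow 0$. After shrinking $T_0$ one more time, both solutions sit inside $\bbb_{T_0}$, are both fixed points of $\Phi_{T_0}$, and therefore coincide.

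The main obstacle is not any individual estimate---those are already contained in Propositions \ref{prop-fixed-point-1}, \ref{prop-fixed-point-2} and Lemma \ref{lemma-spaces-alpha-beta}---but the bookkeeping: one must arrange the auxiliary index $\alpha$ and the fixed point domain $\bbb_t$ so that every $t$-dependent factor in the relevant estimates vanishes as $t \downarrow 0$. The strict inequality $\alpha < \beta$ is crucial precisely here, since it is what forces $\interleave \mathbf{X} \interleave_{\alpha;[0,t]} \to 0$; working at the native regularity $\beta$ would leave this term bounded away from zero and obstruct both the self-map and contraction properties.
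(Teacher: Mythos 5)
Your proposal reproduces the paper's argument: fix an auxiliary index $\alpha\in(\tfrac13,\beta)$ with $\beta\le\tfrac32\alpha$, verify via Lemma \ref{lemma-alpha-beta} that all $t$-dependent prefactors in Propositions \ref{prop-fixed-point-1} and \ref{prop-fixed-point-2} vanish as $t\downarrow 0$, choose $T_0\le 1$ so that $\Phi_{T_0}$ is a self-map of and a contraction on $\bbb_{T_0}$, invoke the Banach fixed point theorem, and upgrade the regularity with Lemma \ref{lemma-spaces-alpha-beta}. This matches the paper's proof step for step; your remark that $f\in C_b^{2\beta,3}$ embeds into $C_b^{2\alpha,3}$ is the same implicit observation the paper relies on, just stated aloud. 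The one place you go further is uniqueness: the paper simply asserts that Banach gives a unique mild solution in $\scrd_X^{2\alpha}([0,T_0],W)$, whereas the fixed point argument per se only delivers uniqueness within the ball $\bbb_{T_0}$. Your shrinking argument, which drives the $2\alpha$-seminorm of a competitor into $\bbb_t$ via the $t^{\beta-\alpha}$ gain, is the right idea; be aware that after the shrink you only have agreement on the smaller interval, so a full uniqueness claim on the original $[0,T_0]$ still requires the standard continuation step (agree on $[0,T_1]$, restart from $T_1$, and iterate). Since the theorem only asserts the existence of \emph{some} $T_0$ with a unique solution, and since the paper itself glosses over this same point, this does not affect the verdict: your proof is correct and takes essentially the same approach as the paper.
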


\begin{proof}
We choose $\alpha \in (\frac{1}{3}, \beta)$ such that $\beta \leq \frac{3}{2} \alpha$. Since $\alpha < \beta$, we also have $\bfx \in \scrc^{\alpha}([0,T],V)$. For $t \in [0,T]$ we define the mapping $\Phi_t : \scrd_X^{2\alpha}([0,t],W) \to \scrd_X^{2\alpha}([0,t],W)$ according to (\ref{def-Phi}). By Propositions \ref{prop-fixed-point-1}, \ref{prop-fixed-point-2} and Lemma \ref{lemma-alpha-beta}, for every $t \in [0,T]$ with $t \leq 1$ and all $(Y,Y'), (Z,Z') \in \bbb_t$ we have
\begin{align*}
&\| \Phi_t(Y,Y') \|_{X,2\alpha;[0,t]} \leq C ( 1 + |\xi|_{D(A)} ) t^{1-2\alpha}
\\ &\quad + C \big( \| X \|_{\beta,[0,t]} t^{\beta-\alpha} + \| \bbx \|_{2\beta,[0,t]} t^{2(\beta-\alpha)} + t^{\alpha} \big),
\\ &\| \Phi(Y,Y') - \Phi(Z,Z') \|_{X,2\alpha;[0,t]} \leq C \interleave (Y,Y') - (Z,Z') \interleave_{X,2\alpha,[0,t]}
\\ &\quad \big( \| X \|_{\beta,[0,t]} t^{\beta-\alpha} + \| \bbx \|_{2\beta,[0,t]} t^{2(\beta-\alpha)} + t^{\alpha} + t^{1-2\alpha} \big),
\end{align*}
where the constant $C > 0$ depends on $\alpha$, $\mathbf{X}$, $\| f_0 \|_{\Lip}$, $\| f \|_{C_b^{2\alpha,3}}$ and $M$, $\omega$. We choose $T_0 \in (0,T]$ with $T_0 \leq 1$ small enough such that
\begin{align*}
C ( 1 + |\xi|_{D(A)} ) T_0^{1-2\alpha} + C \big( \| X \|_{\beta,[0,T_0]} T_0^{\beta-\alpha} + \| \bbx \|_{2\beta,[0,T_0]} T_0^{2(\beta-\alpha)} + T_0^{\alpha} \big) \leq 1,
\\ C \big( \| X \|_{\beta,[0,T_0]} T_0^{\beta-\alpha} + \| \bbx \|_{2\beta,[0,t]} T_0^{2(\beta-\alpha)} + T_0^{\alpha} + T_0^{1-2\alpha} \big) \leq \frac{1}{2}.
\end{align*}
Then $\Phi_{T_0}$ is a contraction, and by the Banach fixed point theorem there is a unique mild solution $(Y,Y') \in \scrd_X^{2\alpha}([0,T_0],W)$ to the RPDE (\ref{RPDE}) with $Y_0 = \xi$. Now, applying Lemma \ref{lemma-spaces-alpha-beta} completes the proof.
\end{proof}

\subsection{Further auxiliary results}

In this subsection we derive further auxiliary results which are required in order to establish existence and uniqueness of global mild solutions for the RPDE (\ref{RPDE}).

\begin{proposition}\label{prop-estimate}
Let $\bfx = (X,\bbx) \in \scrc^{\alpha}([0,T],V)$ be a rough path for some index $\alpha \in (\frac{1}{3},\frac{1}{2})$, let $f_0 \in \Lip([0,T] \times W,D(A))$ and $f \in C_b^{2\alpha,2}([0,T] \times W, L(V,D(A^2)))$ be mappings, and let $\xi \in D(A)$ be arbitrary. Then there exists a constant $K > 0$, depending on $\xi$, $\alpha$, $T$, $\mathbf{X}$, $\| f_0 \|_{\Lip}$, $\| f \|_{C_b^{2\alpha,2}}$ and $M$, $\omega$, such that for every mild solution $(Y,Y') \in \scrd_X^{2\alpha}([0,T],W)$ to the RPDE (\ref{RPDE}) with $Y_0 = \xi$ we have
\begin{align*}
| Y_t | \leq K \quad \text{for all $t \in [0,T]$.}
\end{align*}
\end{proposition}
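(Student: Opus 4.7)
The plan is to bound $\|R^Y\|_{2\alpha;I}$ and $\|Y\|_{\alpha;I}$ uniformly on short subintervals $I \subset [0,T]$, and then accumulate the increments to control $\sup_{t \in [0,T]} |Y_t|$. First, consider any subinterval $I = [s, s+h] \subset [0, T]$ with $h \leq 1$. Since the mild solution satisfies $Y' = f(Y)$ and $\|f(Y)\|_{\infty} \leq \|f\|_{C_b^{2\alpha,2}}$, the identity $Y_{s,t} = f(Y_s) X_{s,t} + R^Y_{s,t}$ yields
\begin{align*}
\|Y\|_{\alpha;I} \leq \|f\|_{C_b^{2\alpha,2}} \|X\|_{\alpha;I} + h^{\alpha} \|R^Y\|_{2\alpha;I}.
\end{align*}
Combining this with Lemma \ref{lemma-remainder-mild-sol} and with the estimate $\|R^{f(Y)}\|_{2\alpha;I} \leq \|f\|_{C_b^{2\alpha,2}}(1 + \tfrac{1}{2}\|Y\|_{\alpha;I}^2 + \|R^Y\|_{2\alpha;I})$ from Proposition \ref{prop-comp-f}, and using $\alpha \in (\tfrac{1}{3}, \tfrac{1}{2})$ so that $1 - 2\alpha > 0$, a direct substitution produces a quadratic inequality
\begin{align*}
M_I \leq A_1 + A_2 h^{1-2\alpha} M_I + A_3 h M_I^2, \qquad M_I := \|R^Y\|_{2\alpha;I},
\end{align*}
with constants $A_1, A_2, A_3 > 0$ depending only on $\xi$, $T$, $\mathbf{X}$, $\|f_0\|_{\Lip}$, $\|f\|_{C_b^{2\alpha,2}}$, $M$, $\omega$ (not on the particular solution). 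The $h$-scaling of the quadratic term comes from $h^{1-2\alpha} \cdot h^{2\alpha} = h$.

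Second, I fix $h_0 \in (0,1]$, depending only on the data, such that $A_2 h_0^{1-2\alpha} \leq \tfrac{1}{2}$ and $16 A_1 A_3 h_0 < 1$. For every $I$ of length at most $h_0$, the inequality reduces to $M_I \leq 2A_1 + 2A_3 h M_I^2$, whose solution set consists of two disjoint intervals: a lower branch $[0, M_-]$ with $M_- \approx 2A_1$ and an upper branch $[M_+, \infty)$ with $M_+ \gtrsim 1/(A_3 h_0)$. The central technical task is to force $M_I$ into the lower branch uniformly in the base point $s$. I intend to handle this by a bootstrap/connectedness argument: the set
\begin{align*}
\calg := \{s \in [0, T - h_0] : \|R^Y\|_{2\alpha;[s, s+h_0]} \leq M_-\}
\end{align*}
is nonempty (at $s = 0$ the local existence result Theorem \ref{thm-RDGL-main-local}, restricted to a shorter subinterval if needed, places the solution into the lower branch), and is both open and closed in $[0, T-h_0]$ by exploiting the monotonicity/continuity of $s \mapsto \|R^Y\|_{2\alpha;[s,s+h_0]}$ together with the definite gap between the two branches. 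Hence $\calg = [0, T-h_0]$. Rigorously establishing this dichotomy propagation is the main obstacle of the proof, as the norm $s \mapsto \|R^Y\|_{2\alpha;[s,s+h_0]}$ is only upper semicontinuous in general and the openness step requires care.

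Finally, once $M_I \leq M_-$ is secured on every subinterval of length at most $h_0$, the Hölder bound $\|Y\|_{\alpha;I} \leq \|f\|_{C_b^{2\alpha,2}} \|X\|_{\alpha} + h_0^{\alpha} M_- =: K_2$ depends only on the data. Partitioning $[0,T]$ into $N = \lceil T/h_0 \rceil$ consecutive intervals $[t_{k-1}, t_k]$ of length at most $h_0$, each increment satisfies $|Y_{t_k} - Y_{t_{k-1}}| \leq K_2 h_0^{\alpha}$, and analogously $|Y_t - Y_{t_{k-1}}| \leq K_2 h_0^{\alpha}$ for $t \in [t_{k-1}, t_k]$. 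Telescoping and using $Y_0 = \xi$ yields $|Y_t| \leq |\xi| + (N+1) K_2 h_0^{\alpha} =: K$ for every $t \in [0,T]$, which is the desired uniform bound.
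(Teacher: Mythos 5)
Your initial reductions are exactly the ones the paper makes: Lemma \ref{lemma-remainder-mild-sol} plus the remainder estimate from Proposition \ref{prop-comp-f}, fed into $Y_{s,t}=f(Y_s)X_{s,t}+R^Y_{s,t}$, yield a quadratic self-bounding inequality with a factor $h^{1-2\alpha}$ in front of the linear term and $h$ in front of the quadratic term, and your bookkeeping of the $h$-powers is correct. The divergence is in how the quadratic dichotomy is resolved, and that is where your argument has a real gap rather than just a technical nuisance.

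Your plan is to fix $h_0$, view $s\mapsto\|R^Y\|_{2\alpha;[s,s+h_0]}$ as a function of the base point, and propagate membership in the lower branch by a connectedness argument anchored at $s=0$. Two things go wrong. First, the anchoring: you invoke Theorem \ref{thm-RDGL-main-local} to ``place the solution into the lower branch'' at $s=0$, but that theorem requires $f\in C_b^{2\beta,3}$ while Proposition \ref{prop-estimate} only assumes $f\in C_b^{2\alpha,2}$, so it is not available here; even if it were, it furnishes a (possibly much shorter) interval $[0,T_0]$ on which $\|Y,Y'\|_{X,2\alpha;[0,T_0]}\le 1$, which neither guarantees $T_0\ge h_0$ nor that $1$ sits below the lower root $M_-$, since $M_-$ is dictated by $A_1,A_3,h_0$ and can be arbitrary relative to $1$. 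Second, as you yourself flag, $s\mapsto\|R^Y\|_{2\alpha;[s,s+h_0]}$ need not be continuous (only one-sided semicontinuity is immediate), so the open/closed propagation across the branch gap is not secured. The ``central technical task'' you name is therefore genuinely unresolved.

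The paper avoids both problems by running the connectedness argument in the parameter $h$ rather than $s$. Working with the sup-over-all-subintervals quantity $\|\cdot\|_{\alpha;h}$, it derives $\psi_h\le\lambda_h+\psi_h^2$ with $\psi_h:=c_4\,h^{1-2\alpha}\|Y\|_{\alpha;h}$ and $\lambda_h:=c_6(\interleave\mathbf{X}\interleave_\alpha+1)h^{1-2\alpha}$. Both $\psi_h$ and $\lambda_h$ are nondecreasing and continuous in $h$, and crucially $\psi_h\to 0$ as $h\to 0$, because the given solution already satisfies $\|Y\|_{\alpha;h}\le\|Y\|_\alpha<\infty$ and $h^{1-2\alpha}\to 0$. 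This gives a free, hypothesis-independent anchor at $h=0$; the standard Friz--Hairer argument (\cite[Prop.~8.2]{Friz-Hairer-2020}) then propagates $\psi_h$ along the lower branch up to the first $h$ with $\lambda_h<1/4$, yielding a bound on $\|Y\|_{\alpha;h_0}$ and hence on $\|Y\|_\alpha$ and $\|Y\|_\infty$. To repair your proof you should replace the local-existence anchor and the propagation in $s$ with this propagation in $h$, using $\psi_0=0$ as the starting point; your closing telescoping step then goes through unchanged.
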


\begin{proof}
Let $s,t \in [0,T]$ with $s \leq t$ be arbitrary, and define the interval $I := [s,t]$. By Lemma \ref{lemma-remainder-mild-sol} we have
\begin{align*}
| R_{s,t}^Y | &\leq C \Big( 1 + \| Y \|_{\alpha;I} + \| X \|_{\alpha;I} ( 1 + \| R^{f(Y)} \|_{2 \alpha;I} ) + \| \bbx \|_{2\alpha;I} ( 1 + \| Y \|_{\alpha;I} ) \Big) |t-s|
\\ &\quad + C \| X \|_{\alpha;I} |t-s|^{2\alpha},
\end{align*}
where the constant $C > 0$ depends on $\xi$, $\alpha$, $T$, $\| f_0 \|_{\Lip}$, $\| f \|_{C_b^{2\alpha,2}}$ and $M$, $\omega$. We set $\beta := 1-2\alpha$. Since $\alpha < \frac{1}{2}$, we have $\beta > 0$. Furthermore, let $h > 0$ be arbitrary. Then we have
\begin{equation}\label{R-1}
\begin{aligned}
\| R^Y \|_{2\alpha;h} &\leq C \| X \|_{\alpha;h}
\\ &\quad + C \Big( 1 + \| Y \|_{\alpha;h} + \| X \|_{\alpha;h} ( 1 + \| R^{f(Y)} \|_{2 \alpha;h} ) + \| \bbx \|_{2\alpha;h} ( 1 + \| Y \|_{\alpha;h} ) \Big) h^{\beta}.
\end{aligned}
\end{equation}
By Proposition \ref{prop-comp-f} we have
\begin{align*}
\| R^{f(Y)} \|_{2\alpha;I} \leq \| f \|_{C_b^{2\alpha,2}} \bigg( 1 + \frac{1}{2} \| Y \|_{\alpha;I}^2 + \| R^Y \|_{2\alpha;I} \bigg),
\end{align*}
and hence
\begin{align}\label{R-2}
\| R^{f(Y)} \|_{2\alpha;h} \leq \| f \|_{C_b^{2\alpha,2}} \big( 1 + \| Y \|_{\alpha;h}^2 + \| R^Y \|_{2\alpha;h} \big).
\end{align}
Therefore, combining (\ref{R-1}) and (\ref{R-2}), there is a constant $c_1 > 0$, only depending on $\alpha$, $T$ and $\| f \|_{C_b^{2\alpha,2}}$, such that
\begin{align*}
\| R^Y \|_{2\alpha;h} &\leq c_1 \| X \|_{\alpha;h}
\\ &\quad + c_1 \Big( 1 + \| Y \|_{\alpha;h} + \| X \|_{\alpha;h} ( 1 + \| Y \|_{\alpha;h}^2 + \| R^Y \|_{2\alpha;h} ) + \| \bbx \|_{2\alpha;h} ( 1 + \| Y \|_{\alpha;h} ) \Big) h^{\beta}.
\end{align*}
Thus, there is a constant $c_2 > 0$, only depending on $\alpha$, $T$ and $\| f \|_{C_b^{2\alpha,2}}$, such that for all $h \in (0,1]$ we obtain
\begin{equation}\label{R-3}
 \begin{aligned}
\| R^Y \|_{2\alpha;h} &\leq c_2 \big( \| X \|_{\alpha;h} + \| \bbx \|_{2 \alpha; h} + 1 \big) + c_2 \| X \|_{\alpha;h} h^{\beta} \| Y \|_{\alpha;h}^2
\\ &\quad + c_2 \| X \|_{\alpha;h} h^{\beta} \| R^Y \|_{2\alpha;h} + c_2 \big( \| \bbx \|_{2\alpha;h} + 1 \big) h^{\beta} \| Y \|_{\alpha;h}.
\end{aligned}
\end{equation}
Now we consider $h \in (0,1]$ so small such that
\begin{align}\label{choose-h}
c_2 \| X \|_{\alpha} h^{\beta} \leq \frac{1}{2} \quad \text{and} \quad c_2 \big( \| X \|_{\alpha} + \| \bbx \|_{2 \alpha} + 1 \big)^{1/2} h^{\beta} \leq \frac{1}{2}.
\end{align}
Then by (\ref{R-3}) and (\ref{choose-h}) we have
\begin{align*}
\| R^Y \|_{2\alpha;h} &\leq c_2 \big( \| X \|_{\alpha;h} + \| \bbx \|_{2 \alpha; h} + 1 \big) + \frac{1}{2} \| Y \|_{\alpha;h}^2 + \frac{1}{2} \| R^Y \|_{2\alpha;h}
\\ &\quad + \frac{1}{2} \big( \| \bbx \|_{2\alpha;h} + 1 \big)^{1/2} \| Y \|_{\alpha;h}.
\end{align*}
Therefore, using the elementary inequality $xy \leq \frac{x^2}{2} + \frac{y^2}{2}$, $x,y \in \bbr$ we obtain
\begin{equation}\label{R-4}
\begin{aligned}
\| R^Y \|_{2\alpha;h} &\leq 2 c_2 \big( \| X \|_{\alpha;h} + \| \bbx \|_{2 \alpha; h} + 1 \big) + \| Y \|_{\alpha;h}^2 + \big( \| \bbx \|_{2\alpha;h} + 1 \big)^{1/2} \| Y \|_{\alpha;h}
\\ &\leq c_3 \big( \| X \|_{\alpha;h} + \| \bbx \|_{2 \alpha; h} + 1 \big) + \frac{3}{2} \| Y \|_{\alpha;h}^2
\end{aligned}
\end{equation}
with a constant $c_3 > 0$, only depending on $\alpha$, $T$ and $\| f \|_{C_b^{2\alpha,2}}$. On the other hand, since $Y_{s,t} = f(Y_s)X_{s,t} + R_{s,t}^Y$, we have
\begin{align*}
|Y_t - Y_s| \leq |f(Y_s)X_{s,t}| + |R_{s,t}^Y| \leq \|f\|_{C_b^{2\alpha,2}} \| X \|_{\alpha} |t-s|^{\alpha} + \| R^Y \|_{2 \alpha} |t-s|^{2\alpha},
\end{align*}
and hence
\begin{align*}
\| Y \|_{\alpha;h} \leq C \big( \| X \|_{\alpha;h} + \| R^Y \|_{2\alpha;h} h^{\alpha} \big),
\end{align*}
where the constant $C > 0$ depends on $\| f \|_{C_b^{2\alpha,2}}$. Note that $\beta < \alpha$. Indeed, since $3\alpha > 1$, we have $\beta = 1-2\alpha < 3\alpha - 2\alpha = \alpha$. Therefore, we have
\begin{align*}
\| Y \|_{\alpha;h} \leq C \big( \| X \|_{\alpha;h} + \| R^Y \|_{2\alpha;h} h^{\beta} \big).
\end{align*}
Thus, using (\ref{R-4}) and (\ref{choose-h}) we obtain
\begin{align*}
\| Y \|_{\alpha;h} &\leq c_4 \| X \|_{\alpha;h} + c_4 \big( \| X \|_{\alpha;h} + \| \bbx \|_{2 \alpha; h} + 1 \big) h^{\beta} + c_4 \| Y \|_{\alpha,h}^2 h^{\beta}
\\ &\leq c_4 \| X \|_{\alpha;h} + c_5 \big( \| X \|_{\alpha;h} + \| \bbx \|_{2 \alpha; h} + 1 \big)^{1/2} + c_4 \| Y \|_{\alpha,h}^2 h^{\beta}
\end{align*}
with constants $c_4,c_5 > 0$, only depending on $\alpha$, $T$ and $\| f \|_{C_b^{2\alpha,2}}$. Hence, multiplying this inequality with $c_4 h^{\beta}$ we have
\begin{align*}
c_4 \| Y \|_{\alpha;h} h^{\beta} \leq c_6 ( \interleave \mathbf{X} \interleave_{\alpha} + 1 ) h^{\beta} + ( c_4 \| Y \|_{\alpha,h} h^{\beta} )^2,
\end{align*}
where $c_6 := c_4 + c_5$. Now, we set
\begin{align*}
\psi_h := c_4 \| Y \|_{\alpha;h} h^{\beta} \quad \text{and} \quad \lambda_h := c_6 (\interleave \mathbf{X} \interleave_{\alpha} + 1) h^{\beta}.
\end{align*}
Then we have
\begin{align*}
\psi_h \leq \lambda_h + \psi_h^2.
\end{align*}
Now, literally the same argumentation as in the proof of \cite[Prop. 8.2]{Friz-Hairer-2020} shows the existence of a constant $N > 0$, depending on $\alpha$, $T$, $\mathbf{X}$, $\| f_0 \|_{\Lip}$, $\| f \|_{C_b^{2\alpha,2}}$ and $M$, $\omega$, such that $\| Y \|_{\alpha} \leq N$. Consequently, setting $K := |\xi| + N T^{\alpha}$, we obtain $\| Y \|_{\infty} \leq |Y_0| + \| Y \|_{\alpha} T^{\alpha} \leq K$, completing the proof.
\end{proof}

The following auxiliary result shows how two local mild solutions of the RPDE (\ref{RPDE}) can be concatenated.

\begin{lemma}\label{lemma-flow-property}
Let $\bfx = (X,\bbx) \in \scrc^{\alpha}([0,T],V)$ be a rough path for some index $\alpha \in (\frac{1}{3},\frac{1}{2}]$, and let $f_0 \in \Lip([0,T] \times W,D(A))$ and $f \in C_b^{2\alpha,2}([0,T] \times W, L(V,D(A^2)))$ be mappings. Moreover, let $0 \leq q \leq r \leq s \leq T$ and $\xi \in D(A)$ be arbitrary. Let $(Y(q,r,\xi),Y(q,r,\xi)') \in \scrd_X^{2\alpha}([q,r],W)$ be a solution to the equation $Y(q,r,\xi)' = f(Y(q,r,\xi))$ and
\begin{equation}\label{flow-1}
\begin{aligned}
Y(q,r,\xi)_t &= S_{t-q} \xi + \int_q^t S_{t-u} f_0(u,Y(q,r,\xi)_u) \, du
\\ &\quad + \int_q^t S_{t-u} f(u,Y(q,r,\xi)_u) \, d \mathbf{X}_u \quad t \in [q,r].
\end{aligned}
\end{equation}
Furthermore, we set $\eta := Y(q,r,\xi)_r$ and let $(Y(r,s,\eta),Y(r,s,\eta)') \in \scrd_X^{2\alpha}([r,s],W)$ be a solution to the equation $Y(r,s,\eta)' = f(Y(r,s,\eta))$ and
\begin{equation}
\begin{aligned}
Y(r,s,\eta)_t &= S_{t-r} \eta + \int_r^t S_{t-u} f_0(u,Y(r,s,\eta)_u) \, du
\\ &\quad + \int_r^t S_{t-u} f(u,Y(r,s,\eta)_u) \, d \mathbf{X}_u, \quad t \in [r,s].
\end{aligned}
\end{equation}
We define the concatenated path $Y(q,s,\xi) : [q,s] \to W$ as
\begin{align*}
Y(q,s,\xi)_t :=
\begin{cases}
Y(q,r,\xi)_t, & t \in [q,r],
\\ Y(r,s,\eta)_t, & t \in [r,s],
\end{cases}
\end{align*}
and we define the concatenated path $Y(q,s,\xi)' : [q,s] \to L(V,W)$ as
\begin{align*}
Y(q,s,\xi)_t' :=
\begin{cases}
Y(q,r,\xi)_t', & t \in [q,r],
\\ Y(r,s,\eta)_t', & t \in [r,s].
\end{cases}
\end{align*}
Then we have $(Y(q,s,\xi),Y(q,s,\xi)') \in \scrd_X^{2\alpha}([q,r],W)$, and this path is a solution to $Y(q,s,\xi)' = f(Y(q,s,\xi))$ and
\begin{equation}\label{RPDE-flow}
\begin{aligned}
Y(q,s,\xi)_t = S_{t-q} \xi + \int_q^t S_{t-u} f_0(u,Y(q,s,\xi)_u) \, du
\\ + \int_q^t S_{t-u} f(u,Y(q,s,\xi)_u) \, d\mathbf{X}_u, \quad t \in [q,s].
\end{aligned}
\end{equation}
\end{lemma}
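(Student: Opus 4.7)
The plan is to verify in sequence three claims: (i) the pair $(Y(q,s,\xi), Y(q,s,\xi)')$ belongs to $\scrd_X^{2\alpha}([q,s],W)$; (ii) the Gubinelli derivative identity $Y(q,s,\xi)' = f(Y(q,s,\xi))$; and (iii) the mild solution equation (\ref{RPDE-flow}).

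For (i), the only nontrivial point is the behavior across the joining time $r$. By hypothesis $Y(q,r,\xi)_r = \eta = Y(r,s,\eta)_r$, so $Y(q,s,\xi)$ is continuous at $r$; moreover, since both pieces satisfy $(\cdot)' = f(\cdot)$, we have
\[
Y(q,r,\xi)_r' = f(r,Y(q,r,\xi)_r) = f(r,\eta) = Y(r,s,\eta)_r',
\]
so $Y(q,s,\xi)'$ is continuous at $r$ as well. For $s_1 \in [q,r]$ and $s_2 \in [r,s]$ the decomposition $Y(q,s,\xi)_{s_1,s_2} = Y(q,r,\xi)_{s_1,r} + Y(r,s,\eta)_{r,s_2}$ together with $|s_1-r|,|r-s_2|\leq |s_2-s_1|$ reduces the $\alpha$-Hölder check on $[q,s]$ to the sum of the $\alpha$-Hölder seminorms on the two subintervals, and likewise for $Y(q,s,\xi)'$. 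For the remainder, the analogous splitting combined with $X_{s_1,s_2} = X_{s_1,r} + X_{r,s_2}$ and the matching identity $Y(q,r,\xi)_r' = Y(r,s,\eta)_r'$ yields
\[
R_{s_1,s_2}^{Y(q,s,\xi)} = R_{s_1,r}^{Y(q,r,\xi)} + R_{r,s_2}^{Y(r,s,\eta)} + Y(q,r,\xi)_{s_1,r}'\, X_{r,s_2},
\]
in which the cross term is bounded by $\| Y(q,r,\xi)' \|_{\alpha} \| X \|_{\alpha} |s_2-s_1|^{2\alpha}$. Hence $\| R^{Y(q,s,\xi)} \|_{2\alpha} < \infty$, establishing (i). Claim (ii) is immediate from the pointwise definition of the concatenation together with the two identities $Y(q,r,\xi)' = f(Y(q,r,\xi))$ and $Y(r,s,\eta)' = f(Y(r,s,\eta))$.

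For (iii), the case $t \in [q,r]$ is exactly (\ref{flow-1}). For $t \in [r,s]$ I substitute the equation for $\eta = Y(q,r,\xi)_r$ into the identity $Y(q,s,\xi)_t = Y(r,s,\eta)_t$ and exploit the semigroup law $S_{t-r}S_{r-u} = S_{t-u}$. This gives
\begin{align*}
S_{t-r}\eta &= S_{t-q}\xi + S_{t-r}\int_q^r S_{r-u} f_0(u,Y(q,s,\xi)_u)\,du
\\ &\quad + S_{t-r}\int_q^r S_{r-u} f(u,Y(q,s,\xi)_u)\,d\mathbf{X}_u.
\end{align*}
The bounded linear operator $S_{t-r}$ commutes with the Bochner integral by standard theory, and with the rough Gubinelli integral either by its defining limit of compensated Riemann sums (\ref{candidate}) or, more abstractly, by applying Proposition \ref{prop-comp-linear-0} to $\varphi = S_{t-r}$ and comparing limits. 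Pulling $S_{t-r}$ inside and invoking $S_{t-r}S_{r-u} = S_{t-u}$ yields
\[
S_{t-r}\eta = S_{t-q}\xi + \int_q^r S_{t-u} f_0(u,Y(q,s,\xi)_u)\,du + \int_q^r S_{t-u} f(u,Y(q,s,\xi)_u)\,d\mathbf{X}_u,
\]
and adding the $[r,t]$-piece of the equation for $Y(r,s,\eta)_t$ (which equals $Y(q,s,\xi)_t$ on $[r,t]$) produces exactly (\ref{RPDE-flow}).

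The main obstacle I expect is the step of pulling the semigroup operator $S_{t-r}$ inside the rough integral over $[q,r]$; this is the one place where more than bookkeeping is needed, but it reduces to the elementary fact that a continuous linear map commutes with the Riemann-sum limit that defines the Gubinelli integral, so the argument remains short.
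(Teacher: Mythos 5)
Your proposal is correct and follows essentially the same route as the paper's proof: verify the concatenation is a controlled rough path satisfying the Gubinelli-derivative identity, then for $t \in [r,s]$ substitute the mild formula for $\eta = Y(q,r,\xi)_r$ into the one for $Y(r,s,\eta)_t$, commute $S_{t-r}$ past the two integrals, apply the semigroup law $S_{t-r}S_{r-u}=S_{t-u}$, and recombine the $[q,r]$ and $[r,t]$ pieces by additivity. Where the paper dispatches the controlled-path membership with ``easy to check,'' you spell it out — in particular your remainder decomposition $R_{s_1,s_2}^{Y(q,s,\xi)} = R_{s_1,r}^{Y(q,r,\xi)} + R_{r,s_2}^{Y(r,s,\eta)} + Y(q,r,\xi)_{s_1,r}' X_{r,s_2}$ (which does rely on the matching derivatives $Y(q,r,\xi)_r'=Y(r,s,\eta)_r'$, as you note) — and you explicitly flag the step of pulling $S_{t-r}$ through the Gubinelli integral, which the paper performs silently; both additions are sound.
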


\begin{proof}
It is easy to check that $(Y(q,s,\xi),Y(q,s,\xi)') \in \scrd_X^{2\alpha}([q,r],W)$ and $Y(q,s,\xi)' = f(Y(q,s,\xi))$. Furthermore, taking into account (\ref{flow-1}) it is evident that equation (\ref{RPDE-flow}) is satisfied for all $t \in [q,r]$. Moreover, for each $t \in [r,s]$ we obtain
\begin{align*}
Y(q,s,\xi)_t &= S_{t-r} Y(q,r,\xi)_r + \int_r^t S_{t-u} f_0(u,Y(r,s,Y(q,r,\xi)_r)_u) du
\\ &\quad + \int_r^t S_{t-u} f(u,Y(r,s,Y(q,r,\xi)_r)_u) d \mathbf{X}_u
\\ &= S_{t-r} \bigg( S_{r-q} \xi + \int_q^r S_{r-u} f_0(u,Y(q,r,\xi)_u) du + \int_q^r S_{r-u} f(u,Y(q,r,\xi)_u) d \mathbf{X}_u \bigg)
\\ &\quad + \int_r^t S_{t-u} f_0(u,Y(r,s,Y(q,r,\xi)_r)_u) du
\\ &\quad + \int_r^t S_{t-u} f(u,Y(r,s,Y(q,r,\xi)_r)_u) d \mathbf{X}_u
\\ &= S_{t-q} \xi + \int_q^r S_{t-u} f_0(u,Y(q,s,\xi)_u) du + \int_q^r S_{t-u} f(u,Y(q,s,\xi)_u) d\mathbf{X}_u
\\ &\quad + \int_r^t S_{t-u} f_0(u,Y(r,s,Y(q,r,\xi)_r)_u) du
\\ &\quad + \int_r^t S_{t-u} f(u,Y(r,s,Y(q,r,\xi)_r)_u) d \mathbf{X}_u,
\end{align*}
completing the proof.
\end{proof}

\subsection{Existence and uniqueness of global mild solutions}

In this subsection we present a result concerning existence and uniqueness of global mild solutions to the RPDE (\ref{RPDE}).

\begin{theorem}\label{thm-RDGL-main}
Let $\bfx = (X,\bbx) \in \scrc^{\beta}([0,T],V)$ be a rough path for some index $\beta \in (\frac{1}{3},\frac{1}{2}]$, and let $f_0 \in \Lip([0,T] \times W,D(A))$ and $f \in C_b^{2\beta,3}([0,T] \times W, L(V,D(A^2)))$ be mappings such that
\begin{align}\label{f0-restriction}
&f_0|_{[0,T] \times D(A)} \in \Lip([0,T] \times D(A),D(A^2)),
\\ \label{f-restriction} &f|_{[0,T] \times D(A)} \in C_b^{2\beta,3}([0,T] \times D(A), L(V,D(A^3))).
\end{align}
Then for every $\xi \in D(A^2)$ there exists a unique mild solution $(Y,Y') \in \scrd_X^{2 \beta}([0,T],W)$ to the RPDE (\ref{RPDE}) with $Y_0 = \xi$, which is also a strong solution.
\end{theorem}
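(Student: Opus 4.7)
The plan is to lift the problem to the Banach space $D(A)$ equipped with the graph norm. By Lemma~\ref{lemma-restricted-semigroup}, $(S_t|_{D(A)})_{t \geq 0}$ is a $C_0$-semigroup on $D(A)$ with generator $A$ on domain $D(A^2)$. The hypotheses~(\ref{f0-restriction}) and~(\ref{f-restriction}) are precisely what is needed so that $f_0|_{[0,T]\times D(A)}$ and $f|_{[0,T]\times D(A)}$ satisfy the assumptions of Theorem~\ref{thm-RDGL-main-local} on this new scale, with $\xi \in D(A^2)$ serving as the (higher-order) admissible initial condition.

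I would first invoke Theorem~\ref{thm-RDGL-main-local} on the $D(A)$-scale to obtain a local mild solution $(Y,Y') \in \scrd_X^{2\beta}([0,T_0],D(A))$ for some $T_0>0$. By Remark~\ref{rem-well-defined} on this scale $Y$ is $D(A^2)$-valued; by Lemma~\ref{lemma-int-cont-embedded spaces} it is also a local mild solution in $\scrd_X^{2\beta}([0,T_0],W)$. Next, let $T^*$ be the supremum of those $T' \in (0,T]$ for which a mild solution in $\scrd_X^{2\beta}([0,T'],D(A))$ with $Y_0=\xi$ exists; local uniqueness together with the concatenation property (Lemma~\ref{lemma-flow-property}) yields a $D(A)$-scale solution on $[0,T^*)$, and I would argue by contradiction that $T^*=T$.

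Suppose $T^*<T$. Picking $\alpha \in (\tfrac{1}{3},\beta)$ with $\beta \leq \tfrac{3}{2}\alpha$ as in the proof of Theorem~\ref{thm-RDGL-main-local}, Proposition~\ref{prop-estimate} applied on the $D(A)$-scale to the solution restricted to each $[0,T^*-\varepsilon]$ gives bounds $|Y_t|_{D(A)} \leq K$ and $\|Y\|_{\alpha;[0,T^*-\varepsilon]} \leq N$ that are uniform in $\varepsilon$, since the constants depend on $\mathbf{X}$ only through its $[0,T]$-norms. Hence $Y$ is uniformly $\alpha$-H\"older in $D(A)$-norm on $[0,T^*)$ and extends continuously to a limit $Y_{T^*} \in D(A)$. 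Passing to the limit term by term in~(\ref{mild-solution}) and using~(\ref{f0-restriction}), (\ref{f-restriction}) together with the $D(A)$-scale versions of Propositions~\ref{prop-orbit-Hoelder}, \ref{prop-reg-conv-Hoelder}, \ref{prop-conv-rough} applied to $f_0(Y)$ (bounded in $D(A^2)$) and $f(Y)$ (controlled rough path in $L(V,D(A^3))$), one verifies $Y_{T^*} \in D(A^2)$. Then Theorem~\ref{thm-RDGL-main-local} on the $D(A)$-scale can be reapplied at $T^*$, and gluing via Lemma~\ref{lemma-flow-property} yields a $D(A)$-scale mild solution on $[0,T^*+\delta]$ for some $\delta>0$, contradicting maximality; hence $T^*=T$.

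For the strong-solution claim, since $(Y,Y')$ is a $D(A)$-scale mild solution, $Y$ is $D(A^2)$-valued and $AY:[0,T]\to D(A)\hookrightarrow W$ is continuous, hence bounded; Proposition~\ref{prop-mild-strong} then upgrades $(Y,Y')$ to a strong solution. For uniqueness, any mild solution in $\scrd_X^{2\beta}([0,T],W)$ with $Y_0=\xi\in D(A^2)$ is automatically a $D(A)$-scale controlled rough path, because each term of the mild formula lies in $\scrd_X^{2\beta}([0,T],D(A))$ under~(\ref{f0-restriction}), (\ref{f-restriction}); uniqueness then reduces to local uniqueness in Theorem~\ref{thm-RDGL-main-local} on the $D(A)$-scale together with the flow property. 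The main obstacle is the closure step $Y_{T^*}\in D(A^2)$: the a priori estimate only controls the $D(A)$-norm of $Y$, so promoting $Y_{T^*}$ back to $D(A^2)$ at the terminal time requires careful use of the regularizing structure of the mild formula via the stronger mapping assumptions~(\ref{f0-restriction}) and~(\ref{f-restriction}).
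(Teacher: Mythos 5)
Your proposal is correct in its overall strategy but follows a genuinely different route from the paper. The paper's proof is structurally a ``uniform step size from an a priori bound'' argument, while yours is a ``maximal time / blow-up'' argument. Concretely, the paper first invokes Proposition~\ref{prop-estimate} on the $D(A)$-scale to get the a priori bound $|Y_t|_{D(A)} \leq K$, then chooses a single step size $t$ with $T = nt$ small enough (depending on $K$, not on $|\xi|_{D(A)}$) so that the $W$-scale fixed-point map $\Phi_t$ in Proposition~\ref{prop-fixed-point-1} is a self-map and contraction on each subinterval $[(k-1)t, kt]$, and glues the resulting solutions via Lemma~\ref{lemma-flow-property} and Lemma~\ref{lemma-spaces-alpha-beta}. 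Crucially, each restart only requires $Y_{(k-1)t} \in D(A)$ with $|Y_{(k-1)t}|_{D(A)} \leq K$, which the a priori bound supplies; the paper never needs to verify that the initial data at intermediate times lies in $D(A^2)$.

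Your proposal instead defines the maximal existence time $T^*$, uses the same a priori $D(A)$-bound to extend $Y$ continuously to $[0,T^*]$, and then re-enters via the \emph{$D(A)$-scale} local Theorem~\ref{thm-RDGL-main-local} at $T^*$ — which requires the initial data there to lie in $D(A^2)$, not merely $D(A)$. This is exactly the obstacle you flag, and your sketch of how to close it (pass to the limit in the mild formula, apply the $D(A)$-scale versions of Propositions~\ref{prop-orbit-Hoelder}, \ref{prop-reg-conv-Hoelder}, \ref{prop-conv-rough} under~(\ref{f0-restriction}) and~(\ref{f-restriction})) is in principle sound, but it forces you to do extra work that the paper avoids: you must upgrade $(Y,Y')$ to $\scrd_X^{2\alpha}([0,T^*],D(A))$, verify that the rough convolution at $T^*$ lands in $D(A^2)$, and only then restart. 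By fixing a uniform step size and iterating on the $W$-scale (needing only $D(A)$-data with a uniform bound at each restart), the paper sidesteps this closure step entirely. Both approaches reach the same conclusion; the paper's is the more economical in that it pushes the ``higher scale'' structure into a single a priori estimate rather than into the continuation argument. One caveat that applies to both your argument and the paper's, and deserves care: the a priori estimate on the $D(A)$-scale is stated for $D(A)$-scale controlled rough path solutions, so both arguments implicitly rely on the fact (a bootstrap via local $D(A)$-scale existence plus $W$-scale uniqueness) that the constructed $W$-solution is indeed a $D(A)$-scale solution.
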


\begin{proof}
We choose $\alpha \in (\frac{1}{3}, \beta)$ such that $\beta \leq \frac{3}{2} \alpha$. Since $\alpha < \beta$, we have $\bfx \in \scrc^{\alpha}([0,T],V)$. Taking into account (\ref{f0-restriction}), (\ref{f-restriction}) and Lemmas \ref{lemma-int-cont-embedded spaces}, \ref{lemma-restricted-semigroup}, \ref{lemma-graph-norms-eq}, by Proposition \ref{prop-estimate} for every mild solution $(Y,Y') \in \scrd_X^{2\alpha}([0,T],W)$ to the RPDE (\ref{RPDE}) with $Y_0 = \xi$ we have
\begin{align}\label{estimate-solution}
| Y_t |_{D(A)} \leq K \quad \text{for all $t \in [0,T]$,}
\end{align}
where the constant $K > 0$ depends on $\xi$, $\alpha$, $T$, $\mathbf{X}$, $\| f_0 \|_{\Lip}$, $\| f \|_{C_b^{2\alpha,3}}$ and $M$, $\omega$. For $t \in [0,T]$ we define the mapping $\Phi_t : \scrd_X^{2\alpha}([0,t],W) \to \scrd_X^{2\alpha}([0,t],W)$ according to (\ref{def-Phi}). By Propositions \ref{prop-fixed-point-1}, \ref{prop-fixed-point-2} and Lemma \ref{lemma-alpha-beta}, for every $t \in [0,T]$ with $t \leq 1$ and all $(Y,Y'), (Z,Z') \in \bbb_t$ we have
\begin{align*}
&\| \Phi_t(Y,Y') \|_{X,2\alpha;[0,t]} \leq C ( 1 + |\xi|_{D(A)} ) t^{1-2\alpha}
\\ &\quad + C \big( \| X \|_{\beta,[0,t]} t^{\beta-\alpha} + \| \bbx \|_{2\beta,[0,t]} t^{2(\beta-\alpha)} + t^{\alpha} \big),
\\ &\| \Phi(Y,Y') - \Phi(Z,Z') \|_{X,2\alpha;[0,t]} \leq C \interleave (Y,Y') - (Z,Z') \interleave_{X,2\alpha;[0,t]}
\\ &\quad ( \| X \|_{\beta,[0,t]} t^{\beta-\alpha} + \| \bbx \|_{2\beta,[0,t]} t^{2(\beta-\alpha)} + t^{\alpha} + t^{1-2\alpha} ),
\end{align*}
where the constant $C > 0$ depends on $\alpha$, $\mathbf{X}$, $\| f_0 \|_{\Lip}$, $\| f \|_{C_b^{2\alpha,3}}$ and $M$, $\omega$. We choose $t \in (0,1]$ such that $T = nt$ for some $n \in \bbn$ and
\begin{align*}
C ( 1 + K ) t^{1-2\alpha} + C \big( \| X \|_{\beta,[0,t]} t^{\beta-\alpha} + \| \bbx \|_{2\beta,[0,t]} t^{2(\beta-\alpha)} + t^{\alpha} \big) \leq 1,
\\ C \big( \| X \|_{\beta,[0,t]} t^{\beta-\alpha} + \| \bbx \|_{2\beta,[0,t]} t^{2(\beta-\alpha)} + t^{\alpha} + t^{1-2\alpha} \big) \leq \frac{1}{2}.
\end{align*}
Then $\Phi_t$ is a contraction, and by the Banach fixed point theorem there is a unique mild solution $(Y(1),Y(1)') \in \scrd_X^{2\alpha}([0,t],W)$ to the RPDE (\ref{RPDE}) with $Y(1)_0 = \xi$. Taking into account (\ref{estimate-solution}), we can inductively apply this argument to the intervals $[(k-1)t,kt]$ for all $k=2,\ldots,n$ to obtain a unique mild solution $(Y(k),Y(k)') \in \scrd_X^{2\alpha}([(k-1)t,kt])$ to the RPDE (\ref{RPDE}) with $Y(k)_{(k-1)t} = Y(k-1)_{(k-1)t}$. Using Lemma \ref{lemma-flow-property} we can concatenate these solutions and deduce the existence of a unique mild solution $(Y,Y') \in \scrd_X^{2\alpha}([0,T],W)$. By Lemma \ref{lemma-spaces-alpha-beta} we even have $(Y,Y') \in \scrd_X^{2\beta}([0,T])$, showing that $(Y,Y')$ is the unique mild solution to the RPDE (\ref{RPDE}) with $Y_0 = \xi$. By (\ref{estimate-solution}) the path $AY : [0,T] \to W$ is bounded. Hence, applying Proposition \ref{prop-mild-strong} concludes the proof.
\end{proof}

Under the conditions of Theorem \ref{thm-RDGL-main} we can consider the \emph{It\^{o}-Lyons map}
\begin{align}\label{IL-map}
(\xi,\mathbf{X}) \mapsto (Y,Y').
\end{align}
With certain adaptations, the following result regarding the continuity of the It\^{o}-Lyons map can be proven similarly as for rough ordinary differential equations; see, for example \cite[Sec. 8.6]{Friz-Hairer-2020}.

\begin{proposition}\label{prop-Ito-Lyons}
Let $\beta \in (\frac{1}{3},\frac{1}{2}]$ be an index, and let $f_0 \in \Lip([0,T] \times W,D(A))$ and $f \in C_b^{2\beta,3}([0,T] \times W, L(V,D(A^2)))$ be mappings such that (\ref{f0-restriction}) and (\ref{f-restriction}) are fulfilled. Then the It\^{o}-Lyons map
\begin{align*}
D(A^2) \times \scrc^{\beta}([0,T],V) \to \calc^{\beta}([0,T],W) \oplus \calc^{2\beta}([0,T]^2,W \otimes W)
\end{align*}
given by (\ref{IL-map}) is locally Lipschitz continuous.
\end{proposition}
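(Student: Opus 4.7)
The plan is to compare two mild solutions $(Y,Y')$ and $(\tilde Y,\tilde Y')$ associated to initial conditions $\xi,\tilde\xi\in D(A^2)$ and rough paths $\mathbf{X},\tilde{\mathbf{X}}\in\scrc^\beta([0,T],V)$, both lying in some bounded set $\mathcal{K}\subset D(A^2)\times\scrc^\beta([0,T],V)$. Using the a priori estimate from Proposition \ref{prop-estimate}, together with the arguments of Theorem \ref{thm-RDGL-main}, one first obtains a constant $R>0$, depending only on $\mathcal{K}$, $f_0$, $f$ and the semigroup constants $M,\omega$, such that both solutions satisfy $\interleave Y,Y'\interleave_{X,2\beta;[0,T]}\leq R$ and $\interleave \tilde Y,\tilde Y'\interleave_{\tilde X,2\beta;[0,T]}\leq R$. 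The comparison is then carried out by splitting the interval $[0,T]$ into finitely many subintervals of length $t\leq 1$ chosen small enough (depending only on $\mathcal{K}$) that the fixed point maps $\Phi_t$ and $\tilde\Phi_t$ associated respectively to $\mathbf{X}$ and $\tilde{\mathbf{X}}$ are both strict contractions on the balls $\bbb_t$ and $\tilde\bbb_t$ from Section \ref{sec-RPDEs}, as in the proofs of Propositions \ref{prop-fixed-point-1} and \ref{prop-fixed-point-2}.

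The core technical work is to upgrade each ingredient of the construction to a \emph{two rough path} version, so that differences can be estimated simultaneously in the integrand and the integrator. More precisely, I would establish the following Lipschitz analogues, each of which is a direct inspection of the proof of the corresponding single-rough-path statement, keeping track of the linear dependence on the data:
\begin{enumerate}
\item[(i)] a variant of Proposition \ref{prop-comp-f} bounding $\interleave f(Y),f(Y)'-(f(\tilde Y),f(\tilde Y)')\interleave$ in terms of $\interleave (Y,Y')-(\tilde Y,\tilde Y')\interleave$, plus a two-path version of Proposition \ref{prop-diff-f};
\item[(ii)] a version of Corollary \ref{cor-conv-rough} comparing the rough convolutions against $\mathbf{X}$ and $\tilde{\mathbf{X}}$, controlling the difference by $\interleave (Y,Y')-(\tilde Y,\tilde Y')\interleave_{X,2\beta}+|Y_0-\tilde Y_0|+|Y_0'-\tilde Y_0'|+d_\beta(\mathbf{X},\tilde{\mathbf{X}})$ and the uniform radius $R$;
\item[(iii)] the trivial Lipschitz dependence $|S_t\xi-S_t\tilde\xi|_{D(A)}\leq Me^{\omega T}|\xi-\tilde\xi|_{D(A^2)}$ for the free evolution term.
\end{enumerate}
Combining these through the identity
\[
\Phi_t(Y,Y')-\tilde\Phi_t(\tilde Y,\tilde Y')=\bigl(\Phi_t(Y,Y')-\Phi_t(\tilde Y,\tilde Y')\bigr)+\bigl(\Phi_t(\tilde Y,\tilde Y')-\tilde\Phi_t(\tilde Y,\tilde Y')\bigr),
\]
the first summand is controlled by the contraction constant $\leq\tfrac{1}{2}$ from Proposition \ref{prop-fixed-point-2}, while the second is bounded by $C(|\xi-\tilde\xi|_{D(A^2)}+d_\beta(\mathbf{X},\tilde{\mathbf{X}}))$ through (i)--(iii). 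A standard absorption argument yields the local Lipschitz estimate on $[0,t]$. Propagation to $[0,T]$ is then done by iterating this inequality on successive subintervals via Lemma \ref{lemma-flow-property}, noting that on the $k$-th subinterval the ``new initial condition'' $Y_{(k-1)t}$ depends Lipschitz-continuously on the original data by the previous step, and that the $D(A^2)$-norm of this initial condition remains uniformly bounded thanks to the invariance of $D(A^2)$ under the semigroup and under $f_0,f$ granted by (\ref{f0-restriction})--(\ref{f-restriction}).

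The main obstacle is the two rough path version of Corollary \ref{cor-conv-rough}, step (ii) above. The difficulty is that one must decompose
\[
\int_0^\cdot S_{\cdot-s}f(s,Y_s)\,d\mathbf{X}_s-\int_0^\cdot S_{\cdot-s}f(s,\tilde Y_s)\,d\tilde{\mathbf{X}}_s
\]
into contributions involving $f(Y)-f(\tilde Y)$ integrated against $\mathbf{X}$ on the one hand, and $f(\tilde Y)$ integrated against $\mathbf{X}-\tilde{\mathbf{X}}$ on the other, and to redo the entire analysis of Section \ref{sec-rough-conv} (Lemmas \ref{lemma-conv-1-pre}, \ref{lemma-conv-2-pre} and their use in Lemmas \ref{lemma-comp-semigroup-1}, \ref{lemma-comp-semigroup-2}) for the resulting cross-terms, in which both the controlled rough path and the driving rough path vary. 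Once this bookkeeping is carried out, the remaining assembly is mechanical, so that the local Lipschitz property of the It\^o--Lyons map follows exactly as in the RDE setting treated in \cite[Sec.~8.6]{Friz-Hairer-2020}.
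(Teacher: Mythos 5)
The paper offers no proof of this proposition: it only remarks that ``with certain adaptations'' the argument can be carried out as for RODEs and cites \cite[Sec.~8.6]{Friz-Hairer-2020}. Your sketch is therefore not being compared against a written proof but against a deliberate gap, and what you propose is a faithful and reasonable elaboration of exactly the blueprint the paper is pointing to: a priori bounds from Proposition~\ref{prop-estimate}, subdivision into small intervals where $\Phi_t$ is a uniform contraction as in Theorem~\ref{thm-RDGL-main}, the decomposition $\Phi_t(Y,Y')-\tilde\Phi_t(\tilde Y,\tilde Y')$ into a same-driver contraction term plus a same-solution cross term, and iteration via Lemma~\ref{lemma-flow-property}. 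You also correctly isolate the genuinely new technical work relative to the RODE case, namely the two-driver versions of Lemmas~\ref{lemma-conv-1-pre}, \ref{lemma-conv-2-pre} and Corollary~\ref{cor-conv-rough} for the semigroup convolutions.

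One presentational caution: when $\mathbf{X}\neq\tilde{\mathbf{X}}$, the expression $\interleave(Y,Y')-(\tilde Y,\tilde Y')\interleave_{X,2\beta}$ is not literally defined, since the two controlled rough paths live in different spaces $\scrd^{2\alpha}_X$ and $\scrd^{2\alpha}_{\tilde X}$; the correct object is the two-parameter distance $d_{X,\tilde X,2\alpha}(Y,Y';\tilde Y,\tilde Y')=\|Y'-\tilde Y'\|_\alpha+\|R^Y-R^{\tilde Y}\|_{2\alpha}$ as in Friz--Hairer, together with $d_\beta(\mathbf{X},\tilde{\mathbf{X}})$. A second small point: in the iteration the new initial condition is in $D(A)$ (from (\ref{estimate-solution})), not $D(A^2)$; the reason this is consistent is that the a priori bound is obtained at the $D(A)$-level using the restrictions (\ref{f0-restriction})--(\ref{f-restriction}), so one should phrase the propagation step at that level rather than appeal to ``invariance of $D(A^2)$.'' Neither issue affects the overall correctness of your outline.
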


Now, consider a rough ordinary differential equation (RODE) of the form
\begin{align}\label{RODE}
\left\{
\begin{array}{rcl}
dY_t & = & f_0(t,Y_t) dt + f(t,Y_t) d \bfx_t
\\ Y_0 & = & \xi.
\end{array}
\right.
\end{align}

\begin{corollary}
Let $\bfx = (X,\bbx) \in \scrc^{\beta}([0,T],V)$ be a rough path for some index $\beta \in (\frac{1}{3},\frac{1}{2}]$, and let $f_0 \in \Lip([0,T] \times W,W)$ and $f \in C_b^{2\beta,3}([0,T] \times W, L(V,W))$ be mappings. Then for every $\xi \in W$ there exists a unique solution $(Y,Y') \in \scrd_X^{2 \beta}([0,T],W)$ to the RODE (\ref{RODE}) with $Y_0 = \xi$.
\end{corollary}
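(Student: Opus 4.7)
The plan is to derive the statement as a direct consequence of Theorem \ref{thm-RDGL-main} applied to the trivial generator $A = 0 \in L(W)$. For this choice, the associated $C_0$-semigroup is $S_t = \Id$ for every $t \geq 0$ (so the constants in (\ref{est-semigroup}) may be taken as $M = 1$ and $\omega = 0$), and the higher-order domains satisfy $D(A^n) = W$ for every $n \in \bbn_0$, with the graph norms $| \cdot |_{D(A^n)}$ all equivalent to the original norm on $W$.

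Under this specialisation, the hypotheses of Theorem \ref{thm-RDGL-main} collapse to those in the statement of the corollary: since $D(A) = D(A^2) = D(A^3) = W$, the conditions $f_0 \in \Lip([0,T] \times W, D(A))$ and $f \in C_b^{2\beta,3}([0,T] \times W, L(V, D(A^2)))$ reduce to our hypotheses, and the additional restriction conditions (\ref{f0-restriction}), (\ref{f-restriction}) are automatic. Moreover, any initial value $\xi \in W$ already belongs to $D(A^2) = W$, so no regularity on $\xi$ is required.

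Finally, since $S_{t-s} = \Id$ throughout, the mild solution identity (\ref{mild-solution}) reduces to
\[
Y_t = \xi + \int_0^t f_0(s,Y_s)\,ds + \int_0^t f(s,Y_s)\,d\bfx_s, \quad t \in [0,T],
\]
which is precisely the integral form of the RODE (\ref{RODE}); here the first integral is a classical Bochner integral and the second is the Gubinelli integral from Theorem \ref{thm-Gubinelli}. Invoking Theorem \ref{thm-RDGL-main} then produces the unique solution $(Y,Y') \in \scrd_X^{2\beta}([0,T],W)$ with $Y_0 = \xi$. There is no genuine obstacle in this argument; the corollary is essentially a sanity check verifying that the RPDE framework developed in the paper subsumes the classical RODE setting (cf.\ \cite[Ch.~8]{Friz-Hairer-2020}) as the degenerate case of a trivial semigroup.
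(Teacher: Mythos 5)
Your argument is correct and is precisely the intended route: specialize Theorem \ref{thm-RDGL-main} to the trivial generator $A = 0$, under which $S_t = \Id$, every $D(A^n)$ coincides with $W$ with equivalent (in fact identical) norm, the restriction hypotheses (\ref{f0-restriction})--(\ref{f-restriction}) become vacuous, and the mild solution identity (\ref{mild-solution}) reduces to the integral form of the RODE (\ref{RODE}). The paper states this corollary without a separate proof because it is exactly this degenerate-semigroup specialization.
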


In particular, the previous result applies to time-homogeneous RODEs of the form
\begin{align}\label{RODE-th}
\left\{
\begin{array}{rcl}
dY_t & = & f_0(Y_t) dt + f(Y_t) d \bfx_t
\\ Y_0 & = & \xi.
\end{array}
\right.
\end{align}

\begin{corollary}
Let $\bfx = (X,\bbx) \in \scrc^{\beta}([0,T],V)$ be a rough path for some index $\beta \in (\frac{1}{3},\frac{1}{2}]$, and let $f_0 : W \to W$ be Lipschitz continuous and $f \in C_b^{3}(W, L(V,W))$. Then for every $\xi \in W$ there exists a unique solution $(Y,Y') \in \scrd_X^{2 \beta}([0,T],W)$ to the time-homogeneous RODE (\ref{RODE-th}) with $Y_0 = \xi$.
\end{corollary}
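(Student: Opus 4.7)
The plan is to reduce this corollary to the immediately preceding one for time-inhomogeneous RODEs by trivially extending the coefficients to be constant in the time variable. Concretely, I would define $\tilde f_0 : [0,T] \times W \to W$ by $\tilde f_0(t,y) := f_0(y)$ and $\tilde f : [0,T] \times W \to L(V,W)$ by $\tilde f(t,y) := f(y)$, and then verify that $\tilde f_0$ and $\tilde f$ satisfy the hypotheses of the previous corollary.

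The first verification uses the remark following the definition of $\Lip([0,T] \times W, \bar W)$: for $g_0 \in \Lip(W,\bar W)$, the time-independent extension lies in $\Lip([0,T] \times W, \bar W)$ with $\| \tilde g_0 \|_{\Lip} = |g_0(0)| + |g_0|_{\Lip}$. Applied to $f_0$, this gives $\tilde f_0 \in \Lip([0,T] \times W, W)$. The second verification uses the remark following the definition of $C_b^{\gamma,n}([0,T] \times W, \bar W)$: for $g \in C_b^n(W,\bar W)$, its time-independent extension lies in $C_b^{\gamma,n}([0,T] \times W,\bar W)$ with $\| \tilde g \|_{C_b^{\gamma,n}} = \| g \|_{C_b^n}$, since all time-H\"older seminorms vanish. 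Applied to $f$ with $n = 3$ and $\gamma = 2\beta$, this yields $\tilde f \in C_b^{2\beta,3}([0,T] \times W, L(V,W))$.

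Since both coefficients satisfy the required regularity and the equations
\begin{align*}
Y_t = \xi + \int_0^t \tilde f_0(s,Y_s)\,ds + \int_0^t \tilde f(s,Y_s)\,d\mathbf{X}_s
\quad\text{and}\quad
Y_t = \xi + \int_0^t f_0(Y_s)\,ds + \int_0^t f(Y_s)\,d\mathbf{X}_s
\end{align*}
coincide for any $(Y,Y') \in \scrd_X^{2\beta}([0,T],W)$, the previous corollary applied to $\tilde f_0$ and $\tilde f$ delivers the desired unique solution. There is no main obstacle; the argument is a one-line reduction using the two remarks already recorded in Sections~\ref{sec-comp-reg} and \ref{sec-comp-rough}.
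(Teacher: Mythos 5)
Your reduction is correct and matches the route the paper intends: it defines the time-independent extensions $\tilde f_0, \tilde f$, invokes the two remarks recorded in Sections~\ref{sec-comp-reg} and~\ref{sec-comp-rough} to place them in $\Lip([0,T] \times W,W)$ and $C_b^{2\beta,3}([0,T] \times W,L(V,W))$ respectively, and then applies the immediately preceding corollary for time-inhomogeneous RODEs. The paper leaves both corollaries without explicit proof, so this one-line reduction is exactly what was expected.
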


\section{Stochastic partial differential equations driven by infinite dimensional Wiener processes}\label{sec-Wiener}

In this section we will apply our existence and uniqueness result for RPDEs (see Theorem \ref{thm-RDGL-main}) to stochastic partial differential equations driven by infinite dimensional Wiener processes. For this purpose, we start with some preparations. In the sequel, $(\Omega,\calf,(\calf_t)_{t \in \bbr_+},\bbp)$ denotes a filtered probability space satisfying the usual conditions.

\subsection{Infinite dimensional Wiener process as a rough path}\label{subsec-BM-enhanced}

In this subsection we demonstrate how typical sample paths of an infinite dimensional Wiener process can be realized as rough paths. Let $U$ be a separable Hilbert space, and let $X$ be an $U$-valued $Q$-Wiener process for some nuclear, self-adjoint, positive definite linear operator $Q \in L_1^{++}(U)$; see \cite[Def. 4.2]{Da_Prato}. There exist an orthonormal basis $\{ e_k \}_{k \in \bbn}$ of $U$ and a sequence $(\lambda_k)_{k \in \bbn} \subset (0,\infty)$ with $\sum_{k=1}^{\infty} \lambda_k < \infty$ such that
\begin{align*}
Q e_k = \lambda_k e_k \quad \text{for all $k \in \bbn$.}
\end{align*}
The space $U_0 := Q^{1/2}(U)$, equipped with the inner product
\begin{align*}
\langle u,v \rangle_{U_0} := \langle Q^{-1/2}u, Q^{-1/2}v \rangle_U, \quad u,v \in U_0
\end{align*}
is another separable Hilbert space. Note that
\begin{align*}
Q^{1/2} : (U,\| \cdot \|_U) \to (U_0,\| \cdot \|_{U_0})
\end{align*}
is an isometric isomorphism, and that $\{ \sqrt{\lambda_k} e_k \}_{k \in \bbn}$ is an orthonormal basis of $U_0$. Now, let $H$ be another separable Hilbert space. We denote by $L_2(U_0,H)$ the space of all Hilbert-Schmidt operators from $U_0$ into $H$. Furthermore, we define
\begin{align*}
L(U,H)_0 := \{ \Phi|_{U_0} : \Phi \in L(U,H) \}.
\end{align*}
The next result shows that $L(U,H)$ is continuously embedded in $L_2(U_0,H)$.

\begin{lemma}\label{lemma-HS-embedded}
We have $L(U,H)_0 \subset L_2(U_0,H)$ and
\begin{align*}
| \Phi|_{U_0} |_{L_2(U_0,H)} \leq \sqrt{\tr(Q)} | \Phi |_{L(U,H)} \quad \text{for all $\Phi \in L(U,H)$.}
\end{align*}
\end{lemma}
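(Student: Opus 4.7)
The plan is to compute the Hilbert--Schmidt norm directly using the orthonormal basis of $U_0$ that was introduced just before the lemma statement, namely $\{\sqrt{\lambda_k}\, e_k\}_{k \in \bbn}$. Let $\Phi \in L(U,H)$ be arbitrary and set $\Psi := \Phi|_{U_0}$; since $U_0 \subset U$ as sets, $\Psi$ is a well-defined linear map $U_0 \to H$, and I would first note that this is what has to be checked to belong to $L_2(U_0,H)$.

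Next I would expand the Hilbert--Schmidt norm along the basis $\{\sqrt{\lambda_k}\, e_k\}_{k \in \bbn}$ of $U_0$, obtaining
\begin{align*}
|\Psi|_{L_2(U_0,H)}^2 = \sum_{k=1}^{\infty} |\Psi(\sqrt{\lambda_k}\, e_k)|_H^2 = \sum_{k=1}^{\infty} \lambda_k \, |\Phi(e_k)|_H^2,
\end{align*}
where in the last step I pull the scalar $\sqrt{\lambda_k}$ out and use $\Psi = \Phi|_{U_0}$ on the basis vectors (which lie in $U_0$ since each $e_k$ is an eigenvector of $Q^{1/2}$). Then I would estimate each summand using the operator norm of $\Phi$ on $U$: since $\{e_k\}_{k \in \bbn}$ is orthonormal in $U$, $|\Phi(e_k)|_H \leq |\Phi|_{L(U,H)}$.

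Putting these together gives
\begin{align*}
|\Psi|_{L_2(U_0,H)}^2 \leq |\Phi|_{L(U,H)}^2 \sum_{k=1}^{\infty} \lambda_k = \tr(Q)\, |\Phi|_{L(U,H)}^2,
\end{align*}
using that $\tr(Q) = \sum_{k=1}^{\infty} \lambda_k < \infty$ because $Q \in L_1^{++}(U)$ has eigenvalues $(\lambda_k)_{k \in \bbn}$. In particular the sum is finite, which proves $\Psi \in L_2(U_0,H)$, and taking the square root yields the stated estimate. There is no real obstacle here; the only thing to be a bit careful about is the identification of the orthonormal basis of $U_0$ and the scalar factor $\sqrt{\lambda_k}$ that appears inside $\Phi$.
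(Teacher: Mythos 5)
Your proof is correct and is essentially the same as the paper's: both expand the Hilbert--Schmidt norm along the orthonormal basis $\{\sqrt{\lambda_k}\,e_k\}_{k\in\bbn}$ of $U_0$, pull out $\lambda_k$, bound $|\Phi(e_k)|_H$ by $|\Phi|_{L(U,H)}$, and sum to get $\tr(Q)$.
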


\begin{proof}
Recalling that $\{ \sqrt{\lambda_k} e_k \}_{k \in \bbn}$ is an orthonormal basis of $U_0$, we have
\begin{align*}
| \Phi |_{L_2(U_0,H)}^2 = \sum_{k=1}^{\infty} | \Phi(\sqrt{\lambda_k} e_k) |^2 = \sum_{k=1}^{\infty} \lambda_k | \Phi(e_k) |^2 \leq \tr(Q) | \Phi |_{L(U,H)}^2,
\end{align*}
completing the proof.
\end{proof}

For what follows, we fix a time horizon $T \in \bbr_+$. The $H$-valued \emph{It\^{o} integral} $\int_0^T \Phi_s d X_s$ can be defined for every predictable $L_2(U_0,H)$-valued process $\Phi$ such that
\begin{align}\label{integrable-Ito}
\bbe \bigg[ \int_0^T |\Phi_s|_{L_2(U_0,H)}^2 ds \bigg] < \infty,
\end{align}
and for each such process $\Phi$ we have the \emph{It\^{o} isometry}
\begin{align*}
\bbe \Bigg[ \bigg| \int_0^T \Phi_s d X_s \bigg|^2 \Bigg] = \bbe \bigg[ \int_0^T |\Phi_s|_{L_2(U_0,H)}^2 ds \bigg].
\end{align*}
We refer to \cite{Atma-book}, \cite{Da_Prato} or \cite{Liu-Roeckner} for further details.

\begin{remark}
Actually, the It\^{o} integral $\int_0^T \Phi_s d X_s$ can even by defined for every predictable $L_2(U_0,H)$-valued process $\Phi$ satisfying
\begin{align*}
\bbp \bigg( \int_0^T |\Phi_s|_{L_2(U_0,H)}^2 ds < \infty \bigg) = 1,
\end{align*}
but this is not required here.
\end{remark}

The following Burkholder-type inequality is a consequence of \cite[Lemma 3.3.b]{Atma-book}.

\begin{proposition}\label{prop-Burkholder}
For each $p \geq 2$ there exists a constant $c_p > 0$ such that for every predictable $L_2(U_0,H)$-valued process $\Phi$ satisfying
\begin{align*}
\bbe \Bigg[ \bigg( \int_0^T | \Phi_s |_{L_2(U_0,H)}^{2} ds \bigg)^{p/2} \Bigg] < \infty
\end{align*}
we have the estimate
\begin{align*}
\bbe \Bigg[ \sup_{t \in [0,T]} \bigg| \int_0^t \Phi_s dX_s \bigg|^{p} \Bigg] \leq c_p \, \bbe \Bigg[ \bigg( \int_0^T | \Phi_s |_{L_2(U_0,H)}^{2} ds \bigg)^{p/2} \Bigg].
\end{align*}
\end{proposition}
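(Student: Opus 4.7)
The plan is to derive this Burkholder-type inequality directly from its Hilbert space analogue, which is precisely the content of \cite[Lemma 3.3.b]{Atma-book} cited immediately before the statement. The Proposition is essentially a rephrasing of that result in terms of the stochastic integral process.

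First, I would verify that the integrability hypothesis ensures $M_t := \int_0^t \Phi_s \, dX_s$, $t \in [0,T]$, is well-defined as a continuous $H$-valued square-integrable martingale. Since $p \geq 2$, Jensen's inequality (or the Cauchy--Schwarz inequality) applied to the concave function $x \mapsto x^{2/p}$ gives
\begin{align*}
\bbe \bigg[ \int_0^T |\Phi_s|_{L_2(U_0,H)}^2 \, ds \bigg] \leq T^{1-2/p} \, \bbe \Bigg[ \bigg( \int_0^T |\Phi_s|_{L_2(U_0,H)}^2 \, ds \bigg)^{p/2} \Bigg]^{2/p} < \infty,
\end{align*}
so the assumption (\ref{integrable-Ito}) is satisfied and the It\^o integral is well-defined with the It\^o isometry.

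Second, I would identify the quadratic variation of $M$. For the $H$-valued martingale $M$, the standard theory (see \cite{Da_Prato} or \cite{Atma-book}) gives that the real-valued quadratic variation process is
\begin{align*}
[M]_t = \int_0^t | \Phi_s |_{L_2(U_0,H)}^2 \, ds, \quad t \in [0,T].
\end{align*}
Applying the Hilbert-space valued Burkholder--Davis--Gundy inequality (i.e., \cite[Lemma 3.3.b]{Atma-book}) with exponent $p/2 \geq 1$ yields
\begin{align*}
\bbe \Bigg[ \sup_{t \in [0,T]} | M_t |^p \Bigg] \leq c_p \, \bbe \big[ [M]_T^{p/2} \big],
\end{align*}
which is precisely the claimed inequality.

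There is essentially no obstacle beyond bookkeeping here: the only nontrivial content is the infinite-dimensional BDG inequality itself, which is imported from \cite{Atma-book}. The main thing to be careful about is that the cited inequality is stated for the supremum of the norm of the martingale (not just the terminal value), so that it directly gives the required $\sup_{t \in [0,T]}$ on the left-hand side.
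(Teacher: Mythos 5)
Your proposal is correct and coincides with the paper's approach: the paper itself presents the proposition as a direct consequence of \cite[Lemma 3.3.b]{Atma-book} without further elaboration. Your additional bookkeeping (Jensen's inequality to verify the $L^2$-integrability hypothesis and the identification of the quadratic variation) simply makes explicit what the paper leaves implicit.
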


According to \cite[Prop. 4.3]{Da_Prato} the sequence $(\beta^k)_{k \in \bbn}$ defined as
\begin{align}\label{beta-j}
\beta^k := \frac{1}{\sqrt{\lambda_k}} \langle X,e_k \rangle_U, \quad k \in \bbn
\end{align}
is a sequence of independent real-valued standard Wiener processes, and according to \cite[Prop. 2.1.10]{Liu-Roeckner} the $Q$-Wiener process $X$ admits the series representation
\begin{align}\label{series-Wiener}
X_t = \sum_{k=1}^{\infty} \sqrt{\lambda_k} \beta_t^k e_k, \quad t \in [0,T].
\end{align}
More generally, we have the following consequence of \cite[Prop. 2.4.5]{Liu-Roeckner}.

\begin{proposition}\label{prop-series-integral}
For every predictable $L_2(U_0,H)$-valued process $\Phi$ satisfying (\ref{integrable-Ito}) we have the series representation
\begin{align*}
\int_0^t \Phi_s \, dX_s = \sum_{k=1}^{\infty} \sqrt{\lambda_k} \int_0^t \Phi_s(e_k) \, d \beta_s^k, \quad t \in [0,T].
\end{align*}
\end{proposition}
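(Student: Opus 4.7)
The plan is to prove the identity by a standard approximation argument: first verify it for simple (elementary) $L(U,H)$-valued processes, and then extend to general predictable $L_2(U_0,H)$-valued integrands by an Itô-isometry estimate.

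First, I would establish the identity for simple processes of the form $\Phi_s = \sum_{i=1}^n \Phi_i \bbI_{(t_{i-1},t_i]}(s)$, where each $\Phi_i$ is $\calf_{t_{i-1}}$-measurable, bounded, and takes values in $L(U,H)$. For such $\Phi$ the left-hand side reduces by definition of the Itô integral to $\sum_{i=1}^n \Phi_i (X_{t \wedge t_i} - X_{t \wedge t_{i-1}})$. Inserting the series representation $X_t = \sum_{k=1}^{\infty} \sqrt{\lambda_k} \beta_t^k e_k$ from \eqref{series-Wiener}, which converges in $L^2(\Omega;U)$, and using continuity of each $\Phi_i$, this equals $\sum_{i=1}^n \sum_{k=1}^{\infty} \sqrt{\lambda_k} (\beta_{t \wedge t_i}^k - \beta_{t \wedge t_{i-1}}^k) \Phi_i(e_k)$. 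Since the sum in $i$ is finite, I can interchange the two summations to obtain exactly $\sum_{k=1}^{\infty} \sqrt{\lambda_k} \int_0^t \Phi_s(e_k)\, d\beta_s^k$.

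Next, for a general predictable $L_2(U_0,H)$-valued process $\Phi$ satisfying \eqref{integrable-Ito}, I would show that the series on the right converges in $L^2(\Omega;H)$ and control its norm. Using the Itô isometry for each real-valued Brownian motion $\beta^k$ together with the mutual independence of $(\beta^k)_{k \in \bbn}$ (and hence orthogonality of the summands in $L^2(\Omega;H)$), one obtains for $N < N'$
\begin{align*}
\bbe \Bigg[ \bigg| \sum_{k=N+1}^{N'} \sqrt{\lambda_k} \int_0^t \Phi_s(e_k) \, d\beta_s^k \bigg|^2 \Bigg] = \sum_{k=N+1}^{N'} \lambda_k \, \bbe \bigg[ \int_0^t |\Phi_s(e_k)|^2 \, ds \bigg].
\end{align*}
Since $\{ \sqrt{\lambda_k} e_k \}_{k \in \bbn}$ is an orthonormal basis of $U_0$, pointwise $\sum_{k=1}^{\infty} \lambda_k |\Phi_s(e_k)|^2 = |\Phi_s|_{L_2(U_0,H)}^2$, and the expected time integral of this is finite by assumption. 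Dominated convergence then makes the tail vanish, so the partial sums form a Cauchy sequence in $L^2(\Omega;H)$; the same calculation with $N=0, N' = \infty$ identifies the second moment of the limit with $\bbe[\int_0^t |\Phi_s|_{L_2(U_0,H)}^2 ds]$, matching the Itô isometry on the left.

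Finally, I would approximate a general $\Phi$ by $L(U,H)$-valued simple processes $\Phi^{(n)}$ with $\Phi^{(n)} \to \Phi$ in $L^2(\Omega \times [0,T]; L_2(U_0,H))$. Both sides of the identity depend continuously on $\Phi$ with respect to this norm---the left-hand side by the Itô isometry, the right-hand side by the estimate just derived---so passing to the limit in the identity for simple processes yields the claim. The main technical point is establishing this density: one first truncates a general Hilbert--Schmidt-valued integrand in an orthonormal basis of $U_0$ to reduce to processes whose range is finite-dimensional and thus lies in $L(U,H)_0$ (via Lemma \ref{lemma-HS-embedded}), and then applies the standard approximation by simple predictable processes. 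This is a classical step in the construction of Hilbert-space-valued stochastic integrals, so I would simply invoke the corresponding result from \cite{Liu-Roeckner}.
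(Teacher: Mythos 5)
Your proof is correct. The paper gives no argument here at all — it simply states the proposition as a consequence of \cite[Prop.\ 2.4.5]{Liu-Roeckner} — and what you have written out is precisely the standard construction underlying that cited result: verification on elementary $L(U,H)$-valued predictable processes, the orthogonality in $L^2(\Omega;H)$ of the stochastic integrals against the independent $\beta^k$ combined with $\sum_k \lambda_k |\Phi_s(e_k)|^2 = |\Phi_s|_{L_2(U_0,H)}^2$ to control the tail, and density of simple processes together with the It\^{o} isometry on both sides to pass to the limit.
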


Moreover, the series representation (\ref{series-Wiener}) shows that
\begin{align}\label{norm-Wiener}
|X_{s,t}|^2 = \sum_{k=1}^{\infty} \lambda_k |\beta_{s,t}^k|^2, \quad s,t \in [0,T].
\end{align}

\begin{lemma}\label{lemma-Q-BM-1}
For each $p \geq 2$ there is a constant $C_p > 0$ such that
\begin{align*}
\bbe \big[ |X_{s,t}|^p \big] \leq C_p |t-s|^{p/2}, \quad s,t \in [0,T].
\end{align*}
\end{lemma}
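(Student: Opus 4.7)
The plan is to realise the increment $X_{s,t}$ as an Itô integral against its own driver and then invoke the Burkholder-type inequality from Proposition \ref{prop-Burkholder}. Let $\iota : U_0 \hookrightarrow U$ denote the canonical inclusion, viewed as a continuous linear operator. Evaluating at the orthonormal basis $\{\sqrt{\lambda_k} e_k\}_{k \in \bbn}$ of $U_0$ gives
\begin{align*}
|\iota|_{L_2(U_0,U)}^2 = \sum_{k=1}^{\infty} |\sqrt{\lambda_k} e_k|_U^2 = \sum_{k=1}^{\infty} \lambda_k = \tr(Q),
\end{align*}
so $\iota \in L_2(U_0,U)$, essentially by the same computation as in the proof of Lemma \ref{lemma-HS-embedded}.

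Next, I would identify $X_{s,t}$ with a stochastic integral. Taking the deterministic, predictable, uniformly bounded integrand $\Phi_r \equiv \iota$, Proposition \ref{prop-series-integral} combined with the series representation (\ref{series-Wiener}) yields
\begin{align*}
\int_0^t \iota \, dX_r = \sum_{k=1}^{\infty} \sqrt{\lambda_k} \int_0^t \iota(e_k) \, d\beta^k_r = \sum_{k=1}^{\infty} \sqrt{\lambda_k} \beta_t^k e_k = X_t,
\end{align*}
so $X_{s,t} = \int_s^t \iota \, dX_r$.

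Finally, applying Proposition \ref{prop-Burkholder} with $H = U$ to the predictable integrand $\iota \mathbf{1}_{(s,t]}$ (or equivalently to the time-shifted $Q$-Wiener process $(X_{s,s+u})_{u \in [0,T-s]}$) gives
\begin{align*}
\bbe\bigl[|X_{s,t}|^p\bigr] \leq c_p \, \bbe\biggl[\biggl(\int_s^t |\iota|_{L_2(U_0,U)}^2 \, dr\biggr)^{p/2}\biggr] = c_p (\tr Q)^{p/2} |t-s|^{p/2},
\end{align*}
so we may take $C_p := c_p (\tr Q)^{p/2}$. There is no genuine obstacle here; the only minor technical point is the identification of $X_{s,t}$ with an Itô integral against its own driver, which is handled cleanly by the series computation above. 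As a sanity check, for $p = 2$ this recovers the expected identity $\bbe[|X_{s,t}|^2] = \tr(Q) |t-s|$.
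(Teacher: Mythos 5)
Your proof is correct and follows essentially the same route as the paper: realise $X_{s,t}$ as the Itô integral of the (deterministic, bounded) identity/inclusion against $X$, then apply Proposition \ref{prop-Burkholder} together with the Hilbert--Schmidt norm computation $|\iota|_{L_2(U_0,U)}^2 = \tr(Q)$ (the paper invokes Lemma \ref{lemma-HS-embedded} with $\Phi = \Id$, which is the same bound). Your explicit verification that $X_{s,t} = \int_s^t \iota\, dX_r$ via the series representation is a little more careful than the paper, which simply asserts this, and your final constant $C_p = c_p(\tr Q)^{p/2}$ is the correct one (the paper drops the $\tr(Q)^{p/2}$ factor in its last displayed equality, evidently a typo).
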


\begin{proof}
Let $s,t \in [0,T]$ with $s < t$ be arbitrary. We define the predictable $L(U)$-valued process $\Phi = \Id \cdot \mathbf{1}_{(s,t]}$. Then we have
\begin{align*}
X_{s,t} = \int_0^T \Phi_r|_{U_0} dX_r.
\end{align*}
By Proposition \ref{prop-Burkholder} and Lemma \ref{lemma-HS-embedded} we obtain
\begin{align*}
\bbe \big[ |X_{s,t}|^p \big] &= \bbe \Bigg[ \bigg| \int_0^T \Phi_r|_{U_0} dX_r \bigg|^p \Bigg]
\\ &\leq c_p \, \bbe \Bigg[ \bigg( \int_0^T |\Phi_r |_{U_0}|_{L_2(U_0,U)}^2 dr \bigg)^{p/2} \Bigg]
\\ &\leq c_p \tr(Q)^{p/2} \, \bbe \Bigg[ \bigg( \int_0^T |\Phi_r|_{L(U)}^2 dr \bigg)^{p/2} \Bigg] = c_p |t-s|^{p/2},
\end{align*}
where the constant $c_p > 0$ stems from Proposition \ref{prop-Burkholder}.
\end{proof}

Since $U$ is a separable Hilbert space, we have $U \otimes U \simeq L_2(U)$. We define the predictable $L(U,L_2(U))$-valued process $\Psi$ as
\begin{align}\label{def-op-ONB}
\Psi_{s,r}(e_j)e_k = \sqrt{\lambda_k} \beta_{s,r}^k e_j, \quad s,r \in [0,T] \text{ and } j,k \in \bbn.
\end{align}
According to Lemma \ref{lemma-HS-embedded} the process $\Psi|_{U_0}$ is $L_2(U_0,L_2(U))$-valued.

\begin{lemma}\label{lemma-int-second-order}
For all $s,r \in [0,T]$ we have
\begin{align*}
| \Psi_{s,r}|_{U_0} |_{L_2(U_0,L_2(U))} = \sqrt{\tr(Q)} \, |X_{s,r}|.
\end{align*}
\end{lemma}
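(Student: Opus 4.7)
The plan is a direct computation using the orthonormal basis $\{\sqrt{\lambda_k}\,e_k\}_{k \in \bbn}$ of $U_0$ together with the series representation (\ref{norm-Wiener}).

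First, I would unravel the Hilbert--Schmidt norm of $\Psi_{s,r}|_{U_0}$ by expanding in the orthonormal basis of $U_0$:
\begin{align*}
\bigl|\Psi_{s,r}|_{U_0}\bigr|_{L_2(U_0, L_2(U))}^2
= \sum_{j=1}^{\infty} \bigl|\Psi_{s,r}(\sqrt{\lambda_j} e_j)\bigr|_{L_2(U)}^2
= \sum_{j=1}^{\infty} \lambda_j \, |\Psi_{s,r}(e_j)|_{L_2(U)}^2,
\end{align*}
using bilinearity and the fact that $\Psi_{s,r}(e_j) \in L_2(U)$ by Lemma~\ref{lemma-HS-embedded}.

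Next, I would compute $|\Psi_{s,r}(e_j)|_{L_2(U)}^2$ using the orthonormal basis $\{e_k\}_{k \in \bbn}$ of $U$ and the defining relation~(\ref{def-op-ONB}):
\begin{align*}
|\Psi_{s,r}(e_j)|_{L_2(U)}^2
= \sum_{k=1}^{\infty} |\Psi_{s,r}(e_j) e_k|_U^2
= \sum_{k=1}^{\infty} \lambda_k |\beta_{s,r}^k|^2,
\end{align*}
which by the series identity (\ref{norm-Wiener}) equals $|X_{s,r}|^2$. Note that this quantity is independent of $j$.

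Combining both computations yields
\begin{align*}
\bigl|\Psi_{s,r}|_{U_0}\bigr|_{L_2(U_0, L_2(U))}^2
= |X_{s,r}|^2 \sum_{j=1}^{\infty} \lambda_j = \tr(Q)\,|X_{s,r}|^2,
\end{align*}
and taking square roots delivers the claimed identity. There is no real obstacle here; the only point requiring care is to keep straight which Hilbert--Schmidt basis is being used on which space (the factor $\sqrt{\lambda_j}$ coming from the $U_0$-basis, and the factor $\sqrt{\lambda_k}$ arising from the definition of $\Psi_{s,r}$ itself), but a clean bookkeeping of the two indices $j$ and $k$ resolves this immediately.
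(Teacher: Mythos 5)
Your proof is correct and takes the same approach as the paper: expand the Hilbert--Schmidt norm over the orthonormal basis $\{\sqrt{\lambda_j}\,e_j\}_{j\in\bbn}$ of $U_0$, compute the inner $L_2(U)$-norm over the basis $\{e_k\}_{k\in\bbn}$ of $U$ using the defining relation (\ref{def-op-ONB}), and invoke (\ref{norm-Wiener}) to identify the inner sum as $|X_{s,r}|^2$, which factors out and leaves $\tr(Q)$.
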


\begin{proof}
Recalling that $\{ \sqrt{\lambda_j} e_j \}_{j \in \bbn}$ is an orthonormal basis of $U_0$ and that $\{ e_k \}_{k \in \bbn}$ is an orthonormal basis of $U$, by (\ref{norm-Wiener}) we have
\begin{align*}
| \Psi_{s,r}|_{U_0} |_{L_2(U_0,L_2(U))}^2 &= \sum_{j=1}^{\infty} | \Psi_{s,r}(\sqrt{\lambda_j} e_j) |_{L_2(U)}^2 = \sum_{j=1}^{\infty} \lambda_j \sum_{k=1}^{\infty} |\Psi_{s,r}(e_j)e_k|_U^2
\\ &= \sum_{j=1}^{\infty} \lambda_j \sum_{k=1}^{\infty} \lambda_k |\beta_{s,r}^k|^2 = \tr(Q) \, |X_{s,r}|^2,
\end{align*}
completing the proof.
\end{proof}

Consequently, we can define the $L_2(U)$-valued second order process $\bbx$ as the It\^{o} integral
\begin{align*}
\bbx_{s,t} := \int_s^t \Psi_{s,r}|_{U_0} d X_r, \quad s,t \in [0,T].
\end{align*}

\begin{lemma}\label{lemma-second-order-series}
For all $s,t \in [0,T]$ we have
\begin{align*}
\bbx_{s,t} &= \sum_{j,k=1}^{\infty} \sqrt{\lambda_j \lambda_k} \bigg( \int_s^t \beta_{s,r}^j d \beta_r^k \bigg) (e_j \otimes e_k).
\end{align*}
\end{lemma}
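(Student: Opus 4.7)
The plan is to decompose the defining integral of $\bbx_{s,t}$ along the sequence $(\beta^k)_{k \in \bbn}$ of independent real-valued standard Brownian motions from (\ref{beta-j}), and then to expand the resulting integrands componentwise in the orthonormal tensor basis $\{ e_j \otimes e_k \}_{j,k \in \bbn}$ of $L_2(U)$. Lemma \ref{lemma-int-second-order} together with Lemma \ref{lemma-Q-BM-1} guarantees that the integrand $\Psi|_{U_0}$ satisfies the integrability condition (\ref{integrable-Ito}) with $H = L_2(U)$, so that Proposition \ref{prop-series-integral} applies and yields
\begin{align*}
\bbx_{s,t} = \sum_{k=1}^{\infty} \sqrt{\lambda_k} \int_s^t \Psi_{s,r}(e_k) \, d \beta_r^k,
\end{align*}
with convergence in $L^2(\Omega;L_2(U))$.

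Next, the defining relation (\ref{def-op-ONB}) determines $\Psi_{s,r}(e_k)$ through its action on the basis vectors $\{ e_j \}_{j \in \bbn}$ of $U$, and rewriting this in the tensor basis of $L_2(U)$ produces a series of the form $\sum_j \sqrt{\lambda_j} \beta_{s,r}^j (e_k \otimes e_j)$ (up to the convention identifying $U \otimes U$ with $L_2(U)$, which only swaps the roles of the two tensor factors). Plugging this into the previous display and interchanging the sum over $j$ with the real-valued It\^{o} integral $\int_s^t \cdot \, d\beta_r^k$ leads to
\begin{align*}
\int_s^t \Psi_{s,r}(e_k) \, d\beta_r^k = \sum_{j=1}^{\infty} \sqrt{\lambda_j} \bigg( \int_s^t \beta_{s,r}^j \, d \beta_r^k \bigg) (e_k \otimes e_j),
\end{align*}
whence, after substitution into the $k$-series and a relabeling $j \leftrightarrow k$ of the summation indices, the claimed identity emerges.

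The substantive obstacle is the interchange of countable summation with stochastic integration in the previous display. The plan is to truncate the inner series at level $N$, to bound the tail by It\^{o}'s isometry together with $\bbe[|\beta_{s,r}^j|^2] = r-s$, and to deduce from $\sum_{j \in \bbn} \lambda_j = \tr(Q) < \infty$ that this tail vanishes in $L^2(\Omega;L_2(U))$ as $N \to \infty$. The same device controls the passage from an iterated to a double series once the $k$-sum and the $j$-sum are combined over the index set $\bbn \times \bbn$. All remaining manipulations are routine bookkeeping with the orthonormal tensor basis of $L_2(U)$.
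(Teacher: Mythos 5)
Your strategy coincides with the paper's: apply Proposition \ref{prop-series-integral} to split $\bbx_{s,t}$ along the Brownian motions $(\beta^k)$, then expand $\Psi_{s,r}(e_k)$ using the defining relation (\ref{def-op-ONB}); the paper's proof is exactly this, executed by evaluating $\bbx_{s,t}e_j$, and your proposed justification of the sum--integral interchange via truncation and It\^{o} isometry goes somewhat beyond the paper's terse two-line calculation.

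One bookkeeping slip at the end: the relabeling $j \leftrightarrow k$ does \emph{not} produce the stated identity. Applying it to $\sum_{j,k} \sqrt{\lambda_j\lambda_k}\,\bigl(\int_s^t \beta_{s,r}^j\,d\beta_r^k\bigr)(e_k\otimes e_j)$ yields $\sum_{j,k} \sqrt{\lambda_j\lambda_k}\,\bigl(\int_s^t \beta_{s,r}^k\,d\beta_r^j\bigr)(e_j\otimes e_k)$, and the integrals $\int_s^t \beta_{s,r}^k\,d\beta_r^j$ and $\int_s^t \beta_{s,r}^j\,d\beta_r^k$ are generally distinct. The remedy is not a relabeling but fixing the tensor convention $(a\otimes b)(x)=\la a,x\ra_U\, b$; then (\ref{def-op-ONB}) gives $\Psi_{s,r}(e_k)=\sum_j \sqrt{\lambda_j}\,\beta_{s,r}^j\,(e_j\otimes e_k)$, and substituting into the $k$-series immediately produces the claimed formula with no index swap needed. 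This matches the paper's computation, which reads off the coefficient of $e_k$ in $\bbx_{s,t}e_j$.
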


\begin{proof}
By Proposition \ref{prop-series-integral} we have
\begin{align*}
\bbx_{s,t} = \sum_{k=1}^{\infty} \sqrt{\lambda_k} \int_s^t \Phi_{s,r}(e_k) d\beta_r^k.
\end{align*}
Thus, taking into account (\ref{def-op-ONB}), for each $j \in \bbn$ we obtain
\begin{align*}
\bbx_{s,t}e_j = \sum_{k=1}^{\infty} \sqrt{\lambda_k} \int_s^t \Phi_{s,r}(e_k)e_j \, d\beta_r^k = \sum_{k=1}^{\infty} \sqrt{\lambda_k} \int_s^t \sqrt{\lambda_j} \beta_{s,r}^j e_k \, d\beta_r^k,
\end{align*}
completing the proof.
\end{proof}

\begin{lemma}\label{lemma-Q-BM-2}
For each $p \geq 2$ there is a constant $C_p > 0$ such that
\begin{align*}
\bbe \big[ |\bbx_{s,t}|^p \big] \leq C_p |t-s|^p, \quad s,t \in [0,T].
\end{align*}
\end{lemma}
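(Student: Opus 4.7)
The plan is to apply the Burkholder-type inequality from Proposition \ref{prop-Burkholder} to the stochastic integral
\[
\bbx_{s,t} = \int_s^t \Psi_{s,r}|_{U_0}\, dX_r,
\]
and then reduce the resulting integrand to quantities already controlled by Lemma \ref{lemma-int-second-order} and Lemma \ref{lemma-Q-BM-1}.

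First, I would fix $s \leq t$ in $[0,T]$, let $p \geq 2$, and invoke Proposition \ref{prop-Burkholder} with the predictable $L_2(U_0,L_2(U))$-valued process $r \mapsto \Psi_{s,r}|_{U_0} \bbI_{(s,t]}(r)$ to obtain
\[
\bbe\bigl[|\bbx_{s,t}|^p\bigr] \leq c_p\, \bbe\Biggl[\biggl(\int_s^t \bigl|\Psi_{s,r}|_{U_0}\bigr|_{L_2(U_0,L_2(U))}^2\, dr\biggr)^{p/2}\Biggr].
\]
By Lemma \ref{lemma-int-second-order}, the integrand equals $\tr(Q)\,|X_{s,r}|^2$, so
\[
\bbe\bigl[|\bbx_{s,t}|^p\bigr] \leq c_p\,\tr(Q)^{p/2}\, \bbe\Biggl[\biggl(\int_s^t |X_{s,r}|^2\, dr\biggr)^{p/2}\Biggr].
\]

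Next, since $p \geq 2$ and hence $x \mapsto x^{p/2}$ is convex, Jensen's inequality (applied to the normalized measure $\tfrac{1}{t-s}\,dr$ on $[s,t]$) gives
\[
\biggl(\int_s^t |X_{s,r}|^2\, dr\biggr)^{p/2} \leq (t-s)^{p/2-1} \int_s^t |X_{s,r}|^p\, dr.
\]
Applying Fubini's theorem together with Lemma \ref{lemma-Q-BM-1} then yields
\[
\bbe\Biggl[\int_s^t |X_{s,r}|^p\, dr\Biggr] = \int_s^t \bbe\bigl[|X_{s,r}|^p\bigr]\, dr \leq C_p \int_s^t |r-s|^{p/2}\, dr = \frac{C_p}{p/2+1}\,(t-s)^{p/2+1}.
\]
Multiplying by $(t-s)^{p/2-1}$ produces $(t-s)^p$ up to a multiplicative constant, which is the claimed bound after absorbing all constants into a new $C_p$.

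There is no real obstacle here: the proof is a textbook application of Burkholder followed by Jensen and Fubini, with all necessary norm identities already supplied by Lemma \ref{lemma-int-second-order} and Lemma \ref{lemma-Q-BM-1}. The only mildly delicate point is verifying that $\Psi|_{U_0}$ is indeed predictable and square-integrable so that Proposition \ref{prop-Burkholder} applies, but this is clear from the representation (\ref{def-op-ONB}) in terms of the real Wiener processes $\beta^k$ and the bound from Lemma \ref{lemma-int-second-order}.
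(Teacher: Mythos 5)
Your proof is correct. It follows the same skeleton as the paper's argument: apply the Burkholder-type inequality (Proposition~\ref{prop-Burkholder}) to the It\^{o}-integral representation of $\bbx_{s,t}$, use Lemma~\ref{lemma-int-second-order} to replace the Hilbert--Schmidt norm of $\Psi_{s,r}|_{U_0}$ by $\sqrt{\tr(Q)}\,|X_{s,r}|$, and then use convexity of $x \mapsto x^{p/2}$ to pull the $p/2$-power inside the time integral at the cost of a factor $(t-s)^{p/2-1}$. Where you diverge from the paper is the final step: the paper bounds $\int_s^t |X_{s,r}|^p\,dr \le (t-s)\sup_{r\in[s,t]}|X_{s,r}|^p$ and then applies Proposition~\ref{prop-Burkholder} a second time to control the expectation of the maximal process, while you instead apply Fubini's theorem and invoke the already-established pointwise moment bound of Lemma~\ref{lemma-Q-BM-1}, so that $\bbe\bigl[\int_s^t |X_{s,r}|^p\,dr\bigr] \le C_p \int_s^t |r-s|^{p/2}\,dr \lesssim (t-s)^{p/2+1}$. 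Both routes give the stated bound $C_p|t-s|^p$; your variant is marginally more economical in that it reuses Lemma~\ref{lemma-Q-BM-1} rather than invoking the maximal inequality again, whereas the paper's version is self-contained within the single display. Either is acceptable.
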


\begin{proof}
Applying Proposition \ref{prop-Burkholder} twice as well as H\"{o}lder's inequality and Lemma \ref{lemma-int-second-order}, we obtain
\begin{align*}
\bbe \big[ |\bbx_{s,t}|^p \big] &= \bbe \Bigg[ \bigg| \int_s^t \Psi_{s,r}|_{U_0} d X_r \bigg|^p \Bigg] \leq c_p \, \bbe \Bigg[ \bigg| \int_s^t |\Psi_{s,r}|_{U_0}|_{L_2(U_0,L_2(U))}^2 dr \bigg|^{p/2} \Bigg]
\\ &\leq c_p \, \bbe \Bigg[ \bigg| \int_s^t |\Psi_{s,r}|_{U_0}|_{L_2(U_0,L_2(U))}^p dr \bigg| \Bigg] |t-s|^{\frac{p}{2} - 1}
\\ &= c_p \, \bbe \Bigg[ \bigg| \int_s^t \big( |\Psi_{s,r}|_{U_0}|_{L_2(U_0,L_2(U))}^2 \big)^{p/2} dr \bigg| \Bigg] |t-s|^{\frac{p}{2} - 1}
\\ &\leq c_p \tr(Q)^{p/2} \, \bbe \Bigg[ \bigg| \int_s^t |X_{s,r}|^p dr \bigg| \Bigg] |t-s|^{\frac{p}{2} - 1}
\\ &\leq c_p \tr(Q)^{p/2} \, \bbe \bigg[ \sup_{r \in [s,t]} |X_{s,r}|^p \bigg] |t-s|^{\frac{p}{2}} \leq c_p^2 |t-s|^p,
\end{align*}
where the constant $c_p > 0$ stems from Proposition \ref{prop-Burkholder}.
\end{proof}

Now, we define the It\^{o}-enhanced $Q$-Wiener process $\mathbf{X} := (X,\bbx)$.

\begin{proposition}\label{prop-Wiener-rough-path}
For each $\alpha \in (\frac{1}{3},\frac{1}{2})$ we have $\bbp$-almost surely $\mathbf{X} \in \scrc^{\alpha}([0,T],U)$.
\end{proposition}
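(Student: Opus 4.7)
The plan is to establish the three defining properties of an $\alpha$-Hölder rough path for $\mathbf{X} = (X,\mathbb{X})$: Chen's relation, finiteness of $\|X\|_{\alpha}$, and finiteness of $\|\mathbb{X}\|_{2\alpha}$, each almost surely. The first is an algebraic identity inherited from the Itô integral; the other two follow from the moment bounds in Lemmas \ref{lemma-Q-BM-1}, \ref{lemma-Q-BM-2} combined with the Kolmogorov-type continuity criterion adapted to rough paths.

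First, I would verify Chen's relation. From Lemma \ref{lemma-second-order-series} (and the defining formula $\mathbb{X}_{s,t} = \int_s^t \Psi_{s,r}|_{U_0}\, dX_r$) one reads $\mathbb{X}_{s,t}$ as the iterated Itô integral $\int_s^t X_{s,r}\otimes dX_r$, interpreted componentwise through the series (\ref{series-Wiener}). For $s\leq u\leq t$, splitting the outer integral at $u$ gives
\begin{align*}
\mathbb{X}_{s,t}
&= \int_s^u X_{s,r}\otimes dX_r + \int_u^t (X_{s,u}+X_{u,r})\otimes dX_r\\
&= \mathbb{X}_{s,u} + X_{s,u}\otimes X_{u,t} + \mathbb{X}_{u,t},
\end{align*}
which is exactly Chen's relation (\ref{Chen-relation}). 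Since each component of $\mathbb{X}$ is a well-defined real Itô integral (the series in Lemma \ref{lemma-second-order-series} converges in $L^2$ by Lemma \ref{lemma-Q-BM-2}), this identity holds $\mathbb{P}$-a.s.\ simultaneously in all components; after passing to continuous modifications it holds identically in $(s,u,t)$.

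Second, I would apply the Kolmogorov continuity theorem to $X$ and to $\mathbb{X}$. Fix $\alpha\in(\tfrac13,\tfrac12)$ and pick $p\geq 2$ so large that $\alpha<\tfrac12-\tfrac1p$. Lemma \ref{lemma-Q-BM-1} yields $\mathbb{E}[|X_{s,t}|^p]\leq C_p|t-s|^{p/2}$, whence the usual one-parameter Kolmogorov-Chentsov theorem produces a continuous modification with $\|X\|_\alpha<\infty$ almost surely. For $\mathbb{X}$, Lemma \ref{lemma-Q-BM-2} gives $\mathbb{E}[|\mathbb{X}_{s,t}|^{p/2}]\leq C_{p/2}|t-s|^{p/2}$, i.e.\ the exponent $2\alpha$ analogue of what is needed. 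Here I would invoke the two-parameter Kolmogorov criterion for rough paths (Friz--Hairer, Theorem 3.1), whose hypotheses read, for some $q\geq 1$ and $\beta>1/q$,
\begin{align*}
\|X_{s,t}\|_{L^q}\lesssim |t-s|^\beta,\qquad
\|\mathbb{X}_{s,t}\|_{L^{q/2}}\lesssim |t-s|^{2\beta},
\end{align*}
and conclude the existence of a modification with $\mathbf{X}\in\mathscr{C}^{\alpha'}([0,T],U)$ for every $\alpha'<\beta-1/q$. With $\beta=1/2$ and $q=p$ this gives $\|X\|_\alpha+\|\mathbb{X}\|_{2\alpha}<\infty$ a.s.\ for any $\alpha<\tfrac12-\tfrac1p$, and choosing $p$ large enough covers the given $\alpha$.

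The only delicate point is the two-parameter continuity step for $\mathbb{X}$: one cannot simply appeal to scalar Kolmogorov--Chentsov, because $\mathbb{X}$ is not a function of a single variable, but the Chen relation plus the moment bounds on $X$ and $\mathbb{X}$ is precisely the input required by the rough-path version of the theorem, which then handles the reduction (either via a Garsia--Rodemich--Rumsey estimate or via an explicit dyadic argument using Chen's relation to control $\mathbb{X}_{s,t}$ in terms of $\mathbb{X}_{0,\cdot}$ and $X$). Having verified Chen's relation and the moment bounds above, this invocation is direct and finishes the proof.
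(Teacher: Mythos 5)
Your proposal is correct and follows essentially the same route as the paper's own proof: verify Chen's relation via the series representation of $\bbx$, then combine the moment estimates from Lemmas \ref{lemma-Q-BM-1} and \ref{lemma-Q-BM-2} with the rough-path Kolmogorov criterion (Friz--Hairer, Thm.\ 3.1), choosing the integrability exponent large enough to reach the given $\alpha$. The only difference is that you spell out the splitting argument for Chen's relation, which the paper leaves implicit.
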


\begin{proof}
Using Lemma \ref{lemma-second-order-series}, it follows that the process $\mathbf{X}$ satisfies Chen's relation (\ref{Chen-relation}). Let $\alpha \in (\frac{1}{3},\frac{1}{2})$ be arbitrary. We choose $\beta = \frac{1}{2}$ and $q > 6$ arbitrary. Then we have $\beta - \frac{1}{q} > \frac{1}{3}$. By Lemmas \ref{lemma-Q-BM-1} and \ref{lemma-Q-BM-2} we have
\begin{align*}
| X_{s,t} |_{\call^q} &\leq C_q |t-s|^{1/2} = C_q |t-s|^{\beta}, \quad s,t \in [0,T],
\\ | \bbx_{s,t} |_{\call^{q/2}} &\leq C_q |t-s| = C_q |t-s|^{2 \beta}, \quad s,t \in [0,T],
\end{align*}
with a constant $C_q > 0$. Consequently, applying \cite[Thm. 3.1]{Friz-Hairer-2020} completes the proof.
\end{proof}

\subsection{Coincidence of the two integrals}

In this subsection we show that the rough integral and the It\^{o} integral coincide. Let $\mathbf{X} = (X,\bbx)$ be the It\^{o}-enhanced $Q$-Wiener process, as introduced in Subsection \ref{subsec-BM-enhanced}. We start with two auxiliary results.

\begin{lemma}\label{lemma-approx-integral}
Let $Y$ be a continuous, adapted and bounded $L(U,H)$-valued process. Furthermore, let $(\Pi_n)_{n \in \bbn}$ be a sequence of partitions of the interval $[0,T]$ such that $|\Pi_n| \to 0$. Then we have
\begin{align*}
\sum_{[u,v] \in \Pi_n} Y_u X_{u,v} \overset{L^2}{\to} \int_0^T Y_s|_{U_0} \, dX_s \quad \text{as $n \to \infty$.}
\end{align*}
\end{lemma}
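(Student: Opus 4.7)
The plan is to recognize the Riemann-type sum on the left-hand side as the Itô integral of a simple approximating process, and then invoke the Itô isometry together with Lemma \ref{lemma-HS-embedded} to obtain $L^2$ convergence.

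More precisely, for each $n \in \bbn$ I would introduce the simple $L(U,H)$-valued process
\[
Y_s^n := \sum_{[u,v] \in \Pi_n} Y_u \, \mathbf{1}_{(u,v]}(s), \quad s \in [0,T].
\]
Since $Y$ is adapted and continuous, $Y^n$ is predictable. Using linearity of the Itô integral on elementary processes and the fact that $X_{u,v} = \int_u^v \Id|_{U_0}\,dX_s$, one checks
\[
\sum_{[u,v] \in \Pi_n} Y_u X_{u,v} = \int_0^T Y_s^n|_{U_0} \, dX_s.
\]
Then the difference between the Riemann sum and the target integral is $\int_0^T (Y_s - Y_s^n)|_{U_0} \, dX_s$.

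Next I would apply the Itô isometry in the Hilbert space $H$ together with the continuous embedding from Lemma \ref{lemma-HS-embedded} to estimate
\[
\bbe \bigg[ \bigg| \int_0^T (Y_s - Y_s^n)|_{U_0} \, dX_s \bigg|^2 \bigg] = \bbe \bigg[ \int_0^T \big| (Y_s - Y_s^n)|_{U_0} \big|_{L_2(U_0,H)}^2 \, ds \bigg] \leq \tr(Q) \, \bbe \bigg[ \int_0^T | Y_s - Y_s^n |_{L(U,H)}^2 \, ds \bigg].
\]
The remaining point is to show that the right-hand side tends to zero as $n \to \infty$. Here I would argue by dominated convergence: by hypothesis $Y$ is bounded in $L(U,H)$, say $|Y_s|_{L(U,H)} \leq K$, so that $|Y_s - Y_s^n|_{L(U,H)} \leq 2K$ uniformly in $(n,s,\omega)$, providing an integrable majorant on $\Omega \times [0,T]$. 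Pointwise, for every fixed $(\omega,s)$ with $s > 0$, the continuity of $t \mapsto Y_t(\omega)$ at $s$ together with $|\Pi_n| \to 0$ yields $Y_s^n(\omega) \to Y_s(\omega)$ in $L(U,H)$, since $Y_s^n(\omega) = Y_{u_n(s)}(\omega)$ for some $u_n(s) \in [s - |\Pi_n|, s]$. The single point $s = 0$ has Lebesgue measure zero and can be neglected. Dominated convergence therefore gives the convergence to zero, which completes the proof.

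The only mildly delicate point is the identification of the simple-process Itô integral with the Riemann sum (since the definition of the stochastic integral in the excerpt goes through processes valued in $L_2(U_0,H)$ rather than $L(U,H)$), but once the embedding from Lemma \ref{lemma-HS-embedded} is used this is immediate from the definition of the Itô integral on elementary processes; no further estimate is required.
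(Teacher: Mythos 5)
Your proof is correct and follows essentially the same route as the paper: define the elementary predictable process from the partition, identify the Riemann sum with its Itô integral, then combine the Itô isometry, the embedding $L(U,H)\hookrightarrow L_2(U_0,H)$ from Lemma~\ref{lemma-HS-embedded}, and dominated convergence (using boundedness of $Y$ for the majorant and continuity for pointwise convergence). Nothing to add.
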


\begin{proof}
Let $\Pi = \{ 0 = t_1 < \ldots < t_{K+1} = T \}$ be an arbitrary partition of the interval $[0,T]$. We define the elementary $L(U,H)$-valued process
\begin{align*}
Y^{\Pi} := \sum_{i=1}^{K} Y_{t_i} \mathbf{1}_{(t_i,t_{i+1}]}.
\end{align*}
Note that $Y^{\Pi}$ is predictable, because $Y$ is adapted. By the definition of the elementary It\^{o} integral we have
\begin{align}\label{elem-integral}
\sum_{[u,v] \in \Pi} Y_u X_{u,v} = \sum_{i=1}^{K} Y_{t_i} X_{t_i,t_{i+1}} = \int_0^T Y^{\Pi} dX_s.
\end{align}
Now, we set $Y^n := Y^{\Pi_n}$ for each $n \in \bbn$. By the continuity of $Y$ we have $Y^n \to Y$ pointwise. Therefore, and since $Y$ is bounded, by Lemma \ref{lemma-HS-embedded} and Lebesgue's dominated convergence theorem we obtain
\begin{align*}
\bbe \bigg[ \int_0^T | Y_s|_{U_0} - Y_s^n|_{U_0} |_{L_2(U_0,H)}^2 ds \bigg] \leq \tr(Q) \, \bbe \bigg[ \int_0^T | Y_s - Y_s^n |_{L(U,H)}^2 ds \bigg] \to 0.
\end{align*}
Consequently, it follows that
\begin{align*}
\int_0^T Y_s^n \, dX_s \overset{L^2}{\to} \int_0^T Y_s|_{U_0} \, dX_s \quad \text{as $n \to \infty$.}
\end{align*}
In view of (\ref{elem-integral}), this completes the proof.
\end{proof}

\begin{lemma}\label{lemma-mart}
Let $(\calg_n)_{n=0,\ldots,K}$ be a time-discrete filtration for some $K \in \bbn$, and let $(Y_n)_{n=1,\ldots,K}$ be a predictable, bounded  $L(U,H)$-valued process. Furthermore, let $(X_n)_{n=1,\ldots,K}$ be an adapted, square-integrable $U$-valued process such that $X_n$ is independent of $\calg_{n-1}$ and $\bbe[X_n] = 0$ for all $n=1,\ldots,K$. We define the $H$-valued process $S = (S_n)_{n=0,\ldots,K}$ as $S_0 := 0$ and
\begin{align*}
S_n := \sum_{j=1}^n Y_j X_j, \quad n=1,\ldots,K.
\end{align*}
Then the following statements are true:
\begin{enumerate}
\item[(a)] $S$ is a square-integrable $(\calg_n)_{n=0,\ldots,K}$-martingale.

\item[(b)] For all $n=1,\ldots,K$ we have
\begin{align*}
\bbe \Bigg[ \bigg| \sum_{j=1}^{n} (S_{j} - S_{j-1}) \bigg|^2 \Bigg] = \bbe \bigg[ \sum_{j=1}^{n} |S_{j} - S_{j-1}|^2 \bigg].
\end{align*}
\end{enumerate}
\end{lemma}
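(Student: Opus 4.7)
The plan is to treat parts (a) and (b) essentially by hand, relying only on the standard properties of Bochner/conditional expectation in the Hilbert space $H$, since both claims are martingale computations in disguise.

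For part (a), I would first establish square integrability. Let $C > 0$ be a bound for $| Y_j |_{L(U,H)}$ uniformly in $j$; then $| Y_j X_j |_H \leq C |X_j|_U$, and since $X_j$ is square-integrable, so is $Y_j X_j$, and hence $S_n \in L^2(\Omega,\calg_n,\bbp;H)$ for every $n$. The adaptedness of $(S_n)$ to $(\calg_n)$ follows from $Y_j$ being $\calg_{j-1}$-measurable (predictability) and $X_j$ being $\calg_j$-measurable. The martingale property reduces to showing $\bbe[Y_j X_j \mid \calg_{j-1}] = 0$ for each $j$. Here the key step is the identity
\begin{align*}
\bbe[Y_j X_j \mid \calg_{j-1}] = Y_j \, \bbe[X_j \mid \calg_{j-1}] = Y_j \, \bbe[X_j] = 0,
\end{align*}
where the first equality uses that $Y_j$ is $\calg_{j-1}$-measurable and bounded (the ``take out what is known'' rule for a bounded operator-valued factor, which for elementary $Y_j$ is immediate and extends by approximation), the second uses the independence of $X_j$ from $\calg_{j-1}$, and the third uses the zero-mean assumption.

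For part (b), I would expand the squared norm in $H$ using its inner product. Writing $\Delta_j := S_j - S_{j-1} = Y_j X_j$, we have
\begin{align*}
\bigg| \sum_{j=1}^n \Delta_j \bigg|^2 = \sum_{j=1}^n |\Delta_j|^2 + 2 \sum_{1 \leq j < k \leq n} \la \Delta_j, \Delta_k \ra,
\end{align*}
and I would then show that each off-diagonal expectation vanishes. For $j < k$, $\Delta_j$ is $\calg_{k-1}$-measurable, so by the tower property and the Hilbert-valued pull-out,
\begin{align*}
\bbe \big[ \la \Delta_j, \Delta_k \ra \big] = \bbe \big[ \la \Delta_j, \bbe[\Delta_k \mid \calg_{k-1}] \ra \big] = 0
\end{align*}
thanks to the martingale property just established in (a). Taking expectations of the expansion then yields the claim.

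I expect the only subtle point to be the rigorous justification of $\bbe[Y_j X_j \mid \calg_{j-1}] = Y_j \bbe[X_j]$ and of $\bbe[\la \Delta_j, \Delta_k\ra \mid \calg_{k-1}] = \la \Delta_j, \bbe[\Delta_k \mid \calg_{k-1}]\ra$, since $Y_j$ is an operator-valued random variable rather than a scalar; however, both follow by the usual approximation of $Y_j$ by simple $\calg_{j-1}$-measurable operator-valued random variables, using the uniform bound on $|Y_j|_{L(U,H)}$ and dominated convergence, after which everything else is a routine computation.
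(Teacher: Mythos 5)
Your proof is correct and takes essentially the same approach as the paper: both establish the martingale property via $\bbe[Y_jX_j\mid\calg_{j-1}] = Y_j\bbe[X_j\mid\calg_{j-1}] = Y_j\bbe[X_j] = 0$, and both prove (b) by expanding the squared norm and killing the cross terms with the tower property. Your extra remark on justifying the operator-valued pull-out by approximation with simple functions is a sound clarification that the paper leaves implicit.
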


\begin{proof}
It is clear that the process $S$ is square-integrable. Let $n=1,\ldots,K$ be arbitrary. Then we have
\begin{align*}
\bbe[S_n - S_{n-1} | \calg_{n-1}] = \bbe[Y_n X_n | \calg_{n-1}] = Y_n \, \bbe[X_n | \calg_{n-1}] = Y_n \, \bbe[X_n] = 0,
\end{align*}
proving that $S$ is a $(\calg_n)_{n=0,\ldots,K}$-martingale. Furthermore, we obtain
\begin{align*}
&\bbe \Bigg[ \bigg| \sum_{j=1}^{n} (S_{j} - S_{j-1}) \bigg|^2 \Bigg] = \bbe \Bigg[ \sum_{j=1}^{n} |S_{j} - S_{j-1}|^2 + 2 \sum_{j,k=1 \atop j < k}^{n} \la S_{j} - S_{j-1}, S_{k} - S_{k-1} \ra \Bigg].
\end{align*}
Moreover, by the martingale property of $S$ we have
\begin{align*}
&\bbe \Bigg[ \sum_{j,k=1 \atop j < k}^{n} \la S_{j} - S_{j-1}, S_{k} - S_{k-1} \ra \Bigg] = \sum_{j,k=1 \atop j < k}^{n} \bbe \big[ \bbe [ \la S_{j} - S_{j-1}, S_{k} - S_{k-1} \ra \,|\, \calf_{k-1} ] \big]
\\ &= \sum_{j,k=1 \atop j < k}^{n} \bbe \big[ \la S_{j} - S_{j-1}, \underbrace{\bbe [ S_{k} - S_{k-1} \,|\, \calf_{k-1} ]}_{= 0} \ra \big] = 0,
\end{align*}
completing the proof.
\end{proof}

For the following result we recall that $U \otimes U \simeq L_2(U)$ and $L(U,L(U,H)) \hookrightarrow L(L_2(U),H)$. Furthermore, recall that $L(U,H)$ is continuously embedded in $L_2(U_0,H)$, as stated in Lemma \ref{lemma-HS-embedded}. Let $\alpha \in (\frac{1}{3},\frac{1}{2})$ be an arbitrary index. According to Proposition \ref{prop-Wiener-rough-path} there is a $\bbp$-nullset $N_1$ such that $\mathbf{X} \in \scrc^{\alpha}([0,T],U)$ on $N_1^c$.

\begin{proposition}\label{prop-integrals-coincide}
Let $Y$ be a continuous $L(U,H)$-valued process, and let $Y'$ be a continuous $L(U,L(U,H))$-valued processes such that $(Y,Y') \in \scrd_X^{2 \alpha}([0,T],L(U,H))$ on $N_1^c$. Then the following statements are true:
\begin{enumerate}
\item[(a)] The $H$-valued rough integral
\begin{align}\label{rough-int-Wiener}
\int_0^T Y_s \, d \mathbf{X}_s = \lim_{|\Pi| \to 0} \sum_{[u,v] \in \Pi} ( Y_u X_{u,v} + Y_u' \bbx_{u,v} )
\end{align}
exists on $N_1^c$.

\item[(b)] If $Y$ and $Y'$ are adapted and bounded, then there is a $\bbp$-nullset $N$ with $N_1 \subset N$ such that
\begin{align*}
\int_0^T Y_s \, d \mathbf{X}_s = \int_0^T Y_s|_{U_0} \, dX_s \quad \text{on $N^c$.}
\end{align*}
\end{enumerate}
\end{proposition}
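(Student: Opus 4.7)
Part (a) follows immediately from Theorem \ref{thm-Gubinelli}(a): on $N_1^c$ we have $\mathbf{X} \in \scrc^{\alpha}([0,T],U)$ and $(Y,Y') \in \scrd_X^{2\alpha}([0,T],L(U,H))$, so the compensated Riemann sums in (\ref{rough-int-Wiener}) converge to the Gubinelli integral. For part (b), the plan is to fix any sequence of partitions $(\Pi_n)_{n \in \bbn}$ of $[0,T]$ with $|\Pi_n| \to 0$ and to split the $n$-th Riemann-type sum as
$$I_n := \sum_{[u,v] \in \Pi_n} \big( Y_u X_{u,v} + Y_u' \bbx_{u,v} \big) = I_n^{(1)} + I_n^{(2)}$$
into its first and second order parts. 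Lemma \ref{lemma-approx-integral} applies to $Y$ (continuous, adapted, bounded) and gives $I_n^{(1)} \to \int_0^T Y_s|_{U_0} \, dX_s$ in $L^2$, so the matter reduces to proving that $I_n^{(2)} \to 0$ in $L^2$.

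For this remainder I would exploit a martingale-difference structure. Writing $\Pi_n = \{ 0 = t_0 < \cdots < t_K = T \}$ and $\calg_k := \calf_{t_k}$, the random variable $Y_{t_k}'$ is $\calg_k$-measurable by adaptedness of $Y'$, while $\bbx_{t_k,t_{k+1}}$ is $\calg_{k+1}$-measurable, centered, and independent of $\calg_k$ by the independent-increments property of the $Q$-Wiener process together with the iterated-integral representation of Lemma \ref{lemma-second-order-series}. Hence Lemma \ref{lemma-mart}(b) applies and yields orthogonality of the summands; combined with the boundedness of $Y'$ and the second moment estimate in Lemma \ref{lemma-Q-BM-2}, this gives
$$\bbe \big[ |I_n^{(2)}|^2 \big] = \sum_{[u,v] \in \Pi_n} \bbe \big[ |Y_u' \bbx_{u,v}|^2 \big] \leq C \, \| Y' \|_{\infty}^2 \sum_{[u,v] \in \Pi_n} |v-u|^2 \leq C \, \| Y' \|_{\infty}^2 \, T \, |\Pi_n| \longrightarrow 0.$$

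Combining the two limits, $I_n \to \int_0^T Y_s|_{U_0} \, dX_s$ in $L^2$, whereas by part (a) we have $I_n \to \int_0^T Y_s \, d\mathbf{X}_s$ almost surely on $N_1^c$. Passing to a subsequence along which the $L^2$ convergence is also almost sure, and adjoining the exceptional null set to $N_1$, produces a $\bbp$-nullset $N \supset N_1$ on whose complement the two integrals coincide. The main obstacle I expect is the rigorous verification of the independence $\bbx_{t_k,t_{k+1}} \perp \calg_k$ needed to apply Lemma \ref{lemma-mart} in its stated form: once the series representation of Lemma \ref{lemma-second-order-series} is invoked this reduces to a standard consequence of the independent-increments property of the $\beta^j$, but one must also identify the appropriate operator-theoretic pairing so that the estimate $|Y_u' \bbx_{u,v}|_H \leq |Y_u'|_{L(U,L(U,H))} \, |\bbx_{u,v}|_{L_2(U)}$ is justified before bounding the $L^2$ norm of the remainder.
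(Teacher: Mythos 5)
Your proposal matches the paper's proof in all essentials: part (a) from Theorem \ref{thm-Gubinelli}, the use of Lemma \ref{lemma-approx-integral} for the first-order Riemann sums, the combination of Lemma \ref{lemma-mart} with Lemma \ref{lemma-Q-BM-2} for the $L^2$-vanishing of the L\'{e}vy-area sums, and the final subsequence argument to upgrade $L^2$ convergence to almost-sure convergence and identify the two limits. The only cosmetic difference is that you combine the two $L^2$ limits into one statement before extracting a subsequence, whereas the paper first extracts a subsequence for the first-order sums, deduces the a.s.\ limit of the L\'{e}vy-area sums equals the difference of the two integrals, and then separately shows that limit vanishes; both organizations are equivalent. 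The ``obstacles'' you flag — the independence $\bbx_{t_k,t_{k+1}} \perp \calf_{t_k}$ needed for the martingale lemma and the operator pairing $L(U,L(U,H)) \hookrightarrow L(L_2(U),H)$ — are indeed the points the paper treats implicitly rather than in detail, so you have correctly identified where the care must be taken.
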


\begin{proof}
The first statement is a consequence of Theorem \ref{thm-Gubinelli}. We proceed with the proof of the second statement. By hypothesis we have $|Y|, |Y'| \leq M$ for some constant $M > 0$. Let $(\Pi_n)_{n \in \bbn}$ be a sequence of partitions of the interval $[0,T]$ with $|\Pi_n| \to 0$. By Lemma \ref{lemma-approx-integral} we have
\begin{align*}
\sum_{[u,v] \in \Pi_n} Y_u X_{u,v} \overset{\bbp}{\to} \int_0^t Y_s|_{U_0} \, dX_s \quad \text{as $n \to \infty$.}
\end{align*}
Passing to a subsequence, if necessary, we obtain
\begin{align}\label{a-s-conv}
\sum_{[u,v] \in \Pi_n} Y_u X_{u,v} \overset{\text{a.s.}}{\to} \int_0^t Y_s \, dX_s \quad \text{as $n \to \infty$.}
\end{align}
Hence, there is a $\bbp$-nullset $N_2$ such that the above convergence (\ref{a-s-conv}) holds true on $N_2^c$. Together with (\ref{rough-int-Wiener}) we obtain
\begin{align*}
\lim_{n \to \infty} \sum_{[u,v] \in \Pi_n} Y_u' \bbx_{u,v} &= \lim_{n \to \infty} \sum_{[u,v] \in \Pi_n} ( Y_u X_{u,v} + Y_u' \bbx_{u,v} ) -  \lim_{n \to \infty} \sum_{[u,v] \in \Pi_n} Y_u X_{u,v}
\\ &= \int_0^t Y_s \, d \mathbf{X}_s - \int_0^t Y_s|_{U_0} \, dX_s
\end{align*}
on $(N_1 \cup N_2)^c$. Let $\Pi = \{ 0 = t_1 < \ldots < t_{K+1} = T \}$ be an arbitrary partition of the interval $[0,T]$. By Lemma \ref{lemma-mart} (with $U$ replaced by $L_2(U)$) the $H$-valued process $S = (S_n)_{n=0,\ldots,K}$ given by $S_0 := 0$ and
\begin{align*}
S_n := \sum_{j=1}^{n} Y_{t_j}' \bbx_{t_j,t_{j+1}}, \quad n=1,\ldots,K
\end{align*}
is a time-discrete square-integrable $(\calf_{t_{n+1}})_{n=0,\ldots,K}$-martingale. Furthermore, by Lemma \ref{lemma-mart} and Lemma \ref{lemma-Q-BM-2} we have
\begin{align*}
\bbe \Bigg[ \bigg| \sum_{[u,v] \in \Pi} Y_u' \bbx_{u,v} \bigg|^2 \Bigg] &= \bbe \big[ |S_K|^2 \big] = \bbe \Bigg[ \bigg| \sum_{j=1}^{K} (S_{j} - S_{j-1}) \bigg|^2 \Bigg]
\\ &= \bbe \bigg[ \sum_{j=1}^{K} |S_{j} - S_{j-1}|^2 \bigg] \leq M^2 \sum_{j=1}^{K} \bbe \big[ |\bbx_{t_{j},t_{j+1}}|^2 \big]
\\ &\leq M^2 C \sum_{j=1}^{K} |t_{j+1} - t_j|^2 \leq M^2 C T |\Pi|,
\end{align*}
where the constant $C > 0$ stems from Lemma \ref{lemma-Q-BM-2}. Therefore, we have
\begin{align*}
\sum_{[u,v] \in \Pi_n} Y_u' \bbx_{u,v} \overset{L^2}{\to} 0 \quad \text{as $n \to \infty$.}
\end{align*}
Thus, passing to a subsequence, if necessary, we obtain
\begin{align}\label{a-s-conv-2}
\sum_{[u,v] \in \Pi_n} Y_u' \bbx_{u,v} \overset{\text{a.s.}}{\to} 0 \quad \text{as $n \to \infty$.}
\end{align}
Hence, there is a $\bbp$-nullset $N_3$ such that the above convergence (\ref{a-s-conv-2}) holds true on $N_3^c$. Consequently, setting $N := N_1 \cup N_2 \cup N_3$ completes the proof.
\end{proof}

\subsection{Stochastic partial differential equations}

Now, we are ready for our study of stochastic partial differential equations (SPDEs). Let $\mathbf{X} = (X,\bbx)$ be the It\^{o}-enhanced $Q$-Wiener process, as introduced in Subsection \ref{subsec-BM-enhanced}. Furthermore, let $W$ be a Banach space, and let $A$ be the generator of a $C_0$-semigroup $(S_t)_{t \geq 0}$ on $W$. Consider the random RPDE
\begin{align}\label{random-SPDE}
\left\{
\begin{array}{rcl}
dY_t & = & (A Y_t + f_0(t,Y_t)) dt + f(t,Y_t) d \mathbf{X}_t
\\ Y_0 & = & \xi
\end{array}
\right.
\end{align}
with mappings $f_0 : [0,T] \times W \to W$ and $f : [0,T] \times W \to L(U,W))$. Let $\beta \in (\frac{1}{3},\frac{1}{2})$ be an index. By Proposition \ref{prop-Wiener-rough-path} there is a $\bbp$-nullset $N$ such that $\mathbf{X} \in \scrc^{\beta}([0,T],U)$ on $N^c$.

\begin{theorem}
Let $f_0 \in \Lip([0,T] \times W,D(A))$ and $f \in C_b^{2\beta,3}([0,T] \times H, L(U,D(A^2)))$ be mappings such that
\begin{align*}
&f_0|_{[0,T] \times D(A)} \in \Lip([0,T] \times D(A),D(A^2)),
\\ &f|_{[0,T] \times D(A)} \in C_b^{2\beta,3}([0,T] \times D(A), L(U,D(A^3))).
\end{align*}
Then for every $\xi \in D(A^2)$ the following statements are true:
\begin{enumerate}
\item[(a)] There exists a unique mild solution $(Y,Y') \in \scrd_X^{2 \beta}([0,T],W)$ to the random RPDE (\ref{random-SPDE}) with $Y_0 = \xi$ on $N^c$, which is also a strong solution.

\item[(b)] If $W$ is a separable Hilbert space $H$, then there is a $\bbp$-nullset $N_1$ with $N \subset N_1$ such that the associated stochastic process $Y$ restricted to $N_1^c$ is also a mild and strong solution to the It\^{o} SPDE
\begin{align*}
\left\{
\begin{array}{rcl}
dY_t & = & (A Y_t + f_0(t,Y_t)) dt + f(t,Y_t)|_{U_0} d X_t
\\ Y_0 & = & \xi.
\end{array}
\right.
\end{align*}
\end{enumerate}
\end{theorem}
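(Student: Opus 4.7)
The plan is to handle~(a) pathwise via Theorem~\ref{thm-RDGL-main} and then obtain~(b) by identifying the rough convolution against $\mathbf{X}$ with the It\^{o} stochastic convolution against $X$ using Proposition~\ref{prop-integrals-coincide}. For~(a), on $N^c$ the path $\mathbf{X}(\omega)$ lies in $\scrc^{\beta}([0,T],U)$, and the hypotheses on $f_0$, $f$, $A$ and $\xi$ are exactly those of Theorem~\ref{thm-RDGL-main} with $V = U$; applying that theorem $\omega$-wise yields a unique $(Y(\omega),Y(\omega)') \in \scrd_X^{2\beta}([0,T],W)$ which is simultaneously a mild and a strong solution with $Y(\omega)_0 = \xi$.

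For~(b) assume $W = H$. First I would check that $Y$ is adapted to $(\calf_t)_{t \geq 0}$: in the proof of Theorem~\ref{thm-RDGL-main} the solution is constructed via Picard iteration of the contraction $\Phi_t$ on finitely many subintervals, and starting from the constant path $\xi$ every iterate is adapted, since the Bochner integral in~(\ref{map-part-2}) and the rough integral in~(\ref{map-part-3})---whose defining Riemann sums in~(\ref{candidate}) use only the values of the integrand and of $\mathbf{X}$ on $[0,t]$---preserve adaptedness; the contracting limit is therefore adapted. Next, fix $t \in [0,T]$ and consider on $[0,t]$ the controlled rough path
\begin{align*}
Z_s := S_{t-s} f(s,Y_s), \qquad Z_s' := S_{t-s} D_y f(s,Y_s) f(s,Y_s).
\end{align*}
By Remark~\ref{rem-der-solutions}, Proposition~\ref{prop-comp-f} and Proposition~\ref{prop-conv-well-defined}, $(Z,Z') \in \scrd_X^{2\beta}([0,t],L(U,H))$ with $Z$ and $Z'$ continuous, adapted and bounded, so the interval-$[0,t]$ analogue of Proposition~\ref{prop-integrals-coincide}(b) furnishes a $\bbp$-nullset $N(t)$ on whose complement the rough and It\^{o} convolutions $\int_0^t S_{t-s} f(s,Y_s)\,d\mathbf{X}_s$ and $\int_0^t S_{t-s} f(s,Y_s)|_{U_0}\,dX_s$ coincide.

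The final step is to choose the exceptional set uniformly in $t$. I would pick a countable dense $D \subset [0,T]$ and put $N_1 := N \cup \bigcup_{t \in D} N(t)$, still a $\bbp$-nullset. The left-hand rough convolution is $\beta$-H\"{o}lder continuous in $t$ by the regularity results of Section~\ref{sec-rough-conv}, while the right-hand It\^{o} stochastic convolution admits a continuous modification via Proposition~\ref{prop-Burkholder} and Kolmogorov's criterion, after possibly enlarging $N_1$ by a further nullset. On $N_1^c$ the two sides then agree on $D$ and hence, by continuity, on all of $[0,T]$; together with the trivial equality of the deterministic drift terms this shows that $Y$ satisfies the It\^{o} mild equation on $N_1^c$, and Proposition~\ref{prop-mild-strong} (or the strong form already available from~(a)) promotes $Y$ to a strong solution of the It\^{o} SPDE. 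The main difficulty I anticipate lies precisely in this uniform-in-$t$ nullset upgrade: one must first fix a single continuous version of the stochastic convolution on a common exceptional set before extending the pointwise identity from $D$ to $[0,T]$.
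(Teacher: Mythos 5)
Your proof is correct and follows essentially the same route as the paper: part (a) is the pathwise application of Theorem~\ref{thm-RDGL-main}, and part (b) reduces to identifying the rough convolution against $\mathbf{X}$ with the It\^{o} convolution against $X$ via Proposition~\ref{prop-integrals-coincide}. There are two points of genuine departure worth noting. For adaptedness, the paper invokes the continuity of the It\^{o}-Lyons map (Proposition~\ref{prop-Ito-Lyons}) together with $\calf_t^{X}=\calf_t^{\mathbf{X}}\subset\calf_t$, whereas you argue through the Picard iteration, observing that $\Gamma(\cdot)$ and the Riemann sums in~(\ref{candidate}) only read data on $[0,t]$; both are valid, and yours is arguably more self-contained since it does not rely on the unproved Proposition~\ref{prop-Ito-Lyons}. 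For the nullset bookkeeping, you correctly flag a subtlety that the paper passes over in silence: Proposition~\ref{prop-integrals-coincide} must be applied with integrand $Z_s=S_{t-s}f(s,Y_s)$ for each fixed $t$, producing a nullset $N(t)$ depending on $t$; one then needs a continuous version of the It\^{o} stochastic convolution and a density argument (or an equivalent device) to pass to a single exceptional set. Your remedy---take $D\subset[0,T]$ countable and dense, set $N_1=N\cup\bigcup_{t\in D}N(t)$, and extend the identity from $D$ to $[0,T]$ using $\beta$-H\"{o}lder continuity of the rough convolution on one side and a Kolmogorov continuous modification of the stochastic convolution on the other---is the standard way to close this gap, and it makes the argument more rigorous than the paper's terse "applying Proposition~\ref{prop-integrals-coincide} concludes the proof."
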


\begin{proof}
The first statement is an immediate consequence of Theorem \ref{thm-RDGL-main}. For the proof of the second statement, note that $\calf_t^X = \calf_t^{\mathbf{X}} \subset \calf_t$ for all $t \in [0,T]$. By the continuity of the It\^{o}-Lyons maps (see Proposition \ref{prop-Ito-Lyons}) it follows that the stochastic process $(Y,Y')$ is adapted. Therefore, applying Proposition \ref{prop-integrals-coincide} concludes the proof.
\end{proof}

\begin{remark}
Compared to other SPDE results in the literature (see, for example \cite{Da_Prato} or \cite{Atma-book}) we have imposed stronger conditions on the coefficients; in particular, usually only Lipschitz type assumptions on $f$ are required. On the other hand, we immediately obtain the \emph{flow} of solutions $Y^{\xi}(\omega)$ for $\xi \in D(A^2)$ and $\omega \in N^c$.
\end{remark}

\section{Stochastic partial differential equations driven by infinite dimensional fractional Brownian motion}\label{sec-fractional}

In this section we will apply our existence and uniqueness result for RPDEs (see Theorem \ref{thm-RDGL-main}) to stochastic partial differential equations driven by infinite dimensional fractional Brownian motion. Fractional Brownian motion in Hilbert spaces has been studied, for example, in \cite{Duncan, Grecksch}.

Let $(\Omega,\calf,(\calf_t)_{t \in \bbr_+},\bbp)$ be a filtered probability space satisfying the usual conditions. Furthermore, let $U$ be a separable Hilbert space. Recall that an $U$-valued process $X$ is called a \emph{Gaussian process} if for all $n \in \bbn$ and all $t_1,\ldots,t_n \in \bbr_+$ with $t_1 < \ldots < t_n$ the $U^n$-valued random variable $(X_{t_1},\ldots,X_{t_n})$ is a Gaussian random variable. An $U$-valued Gaussian process $X$ is called \emph{centered} if $\bbe[X_t] = 0$ for all $t \in \bbr_+$. For an $U$-valued Gaussian process $X$ we introduce the \emph{covariance function}
\begin{align*}
R : \bbr_+^2 \to L_2(U), \quad (s,t) \mapsto \bbe[X_s \otimes X_t].
\end{align*}
Let $Q \in L_1^{++}(U)$ be a nuclear, self-adjoint, positive definite linear operator.

\begin{definition}
A centered Gaussian process $X$ is called a \emph{$Q$-fractional Brownian motion} with Hurst index $H \in (\frac{1}{3},\frac{1}{2}]$ if its covariance function is given by
\begin{align*}
R(s,t) =  \frac{1}{2} \Big( s^{2H} + t^{2H} - |t-s|^{2H} \Big) Q, \quad s,t \in \bbr_+.
\end{align*}
\end{definition}

For what follows, let $X$ be a $Q$-fractional Brownian motion with Hurst index $H \in (\frac{1}{3},\frac{1}{2}]$. We also fix a time horizon $T \in \bbr_+$. There exist an orthonormal basis $\{ e_k \}_{k \in \bbn}$ of $U$ and a sequence $(\lambda_k)_{k \in \bbn} \subset (0,\infty)$ with $\sum_{k=1}^{\infty} \lambda_k < \infty$ such that
\begin{align*}
Q e_k = \lambda_k e_k \quad \text{for all $k \in \bbn$.}
\end{align*}
The $Q$-fractional Brownian motion $X$ admits the series representation
\begin{align*}
X_t = \sum_{k=1}^{\infty} \sqrt{\lambda_k} \beta_t^k e_k, \quad t \in [0,T]
\end{align*}
with a sequence of independent real-valued fractional Brownian motions $(\beta^k)_{k \in \bbn}$ with Hurst index $H$; see, for example  \cite{Grecksch-Anh}, \cite{Duncan-Maslowski} or \cite{Grecksch}.

\begin{proposition}\label{prop-frac-rough-path}
There exists a L\'{e}vy area $\bbx$ such that $\bbp$-almost surely $\mathbf{X} := (X,\bbx) \in \scrc_g^{\alpha}([0,T],U)$ for each $\alpha \in (\frac{1}{3},H)$.
\end{proposition}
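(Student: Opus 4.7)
The plan is to build $\bbx$ by truncating $X$ to its first $n$ coordinates, applying the classical Gaussian rough path lift in finite dimensions, and passing to the limit in the $\scrc^{\alpha}$ topology. Set $X_t^n := \sum_{k=1}^n \sqrt{\lambda_k} \beta_t^k e_k$ and, for $j,k \in \bbn$, let
\begin{align*}
I_{s,t}^{j,k} := \int_s^t \beta_{s,r}^j \, d\beta_r^k
\end{align*}
be the Stratonovich iterated integral of the coordinate fractional Brownian motions: for $j \neq k$ the pair $(\beta^j,\beta^k)$ is independent and the integral is well defined since $H > \frac{1}{3}$, while for $j = k$ one uses the canonical convention $I^{k,k}_{s,t} = \frac{1}{2}(\beta_{s,t}^k)^2$ dictated by the geometric lift of a scalar fractional Brownian motion; compare \cite[Ch.~10 \& 15]{Friz-Hairer-2020}. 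Finite-dimensional Gaussian rough path theory then provides
\begin{align*}
\bbx_{s,t}^n := \sum_{j,k=1}^n \sqrt{\lambda_j \lambda_k}\, I_{s,t}^{j,k}\,(e_j \otimes e_k),
\end{align*}
with $(X^n,\bbx^n) \in \scrc_g^{\alpha}([0,T],U)$ almost surely for every $\alpha \in (\frac{1}{3},H)$.

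Next I would establish uniform second-moment bounds. For $j \neq k$, $I^{j,k}_{s,t}$ lies in the second Wiener chaos of the independent pair $(\beta^j,\beta^k)$; a direct It\^{o}--Wiener computation using the covariance $\bbe[\beta_u^k \beta_v^k] = \frac{1}{2}(u^{2H}+v^{2H}-|u-v|^{2H})$ yields $\bbe|I^{j,k}_{s,t}|^2 \leq C|t-s|^{4H}$ uniformly in $(j,k)$, and the diagonal bound is immediate from the fourth Gaussian moment of $\beta_{s,t}^k$. Independence of the coordinate fBMs makes the relevant cross-terms orthogonal in $L^2$, so
\begin{align*}
\bbe\,|\bbx_{s,t}^n - \bbx_{s,t}^m|_{L_2(U)}^2 \leq C\,\tr(Q)\,|t-s|^{4H} \sum_{k > m \wedge n}\lambda_k,
\end{align*}
and $(\bbx^n_{s,t})_{n \in \bbn}$ is Cauchy in $L^2(\Omega;L_2(U))$ thanks to $\tr(Q) < \infty$. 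Call its limit $\bbx_{s,t}$; both Chen's relation \eqref{Chen-relation} and the geometric identity $\Sym(\bbx_{s,t}) = \frac{1}{2} X_{s,t} \otimes X_{s,t}$ pass to the $L^2$ limit, since they hold for every $\bbx^n$.

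To upgrade $L^2$ control to H\"older regularity I would invoke Gaussian hypercontractivity: since $X_{s,t}$ and $\bbx_{s,t}$ live in the first and second Wiener chaoses of the Gaussian family $(\beta^k)_{k \in \bbn}$, one has
\begin{align*}
\|X_{s,t}\|_{L^q(\Omega;U)} \leq C_q |t-s|^H, \qquad \|\bbx_{s,t}\|_{L^q(\Omega;L_2(U))} \leq C_q |t-s|^{2H}
\end{align*}
for every $q \geq 2$. Given $\alpha \in (\frac{1}{3},H)$, choose $q$ so large that $H - \frac{1}{q} > \alpha$; the Kolmogorov-type rough path criterion \cite[Thm.~3.1]{Friz-Hairer-2020} then produces, almost surely, $\mathbf{X} = (X,\bbx) \in \scrc^{\alpha}([0,T],U)$ together with a subsequence along which $(X^n,\bbx^n) \to (X,\bbx)$ in the $\scrc^{\alpha}$ metric. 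Because each $(X^n,\bbx^n)$ is weakly geometric and $\scrc_g^{\alpha}([0,T],U)$ is closed by Lemma \ref{lemma-geom-2}, the limit is weakly geometric as well.

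The hardest step will be the uniform estimate $\bbe|I^{j,k}_{s,t}|^2 \leq C|t-s|^{4H}$; once that is in hand everything else reduces to routine Kolmogorov-and-closedness machinery. Two structural ingredients keep the infinite-dimensional sum convergent: the trace-class condition $\sum_k \lambda_k < \infty$, which together with the independence across $k$ prevents the double series from blowing up, and the hypothesis $H > \frac{1}{3}$, which both makes $I^{j,k}$ meaningful in the rough-path sense and yields $4H > 1$, the moment exponent that Kolmogorov needs in order to produce a $(2\alpha)$-H\"older second-order process for some $\alpha > \frac{1}{3}$.
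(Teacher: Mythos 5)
Your proposal is correct in spirit, and it reaches the conclusion, but it takes a genuinely different route from the paper. The paper's own proof is a short reduction: it invokes \cite[Lemma 2.4]{Hesse-Neamtu-local}, which supplies a sequence of continuous, piecewise-linear-in-time approximations $X^n$ (retaining the full infinite-dimensional range) together with their canonical iterated integrals $\bbx^n = \int X^n_{s,\cdot} \otimes dX^n$, converging $\bbp$-a.s.\ to some $(X,\bbx)$ in $\interleave \cdot \interleave_\alpha$; Lemma \ref{lemma-geom-1} identifies each $\mathbf{X}^n$ as weakly geometric, and Lemma \ref{lemma-geom-2} closes the argument. You instead truncate $X$ spatially to its first $n$ Karhunen--Lo\`{e}ve modes, lift each finite-dimensional truncation using classical Gaussian rough path theory, and then pass to the limit in $n$ via explicit second-chaos moment bounds and the Kolmogorov rough-path criterion \cite[Thm.~3.1]{Friz-Hairer-2020}. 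What your approach buys is self-containedness: you identify exactly where the two hypotheses $\tr(Q)<\infty$ and $H>\tfrac13$ enter, rather than outsourcing both to an external lemma. What it costs is that the ``hardest step'' you flag, the uniform estimate $\bbe\,|I^{j,k}_{s,t}|^2 \leq C|t-s|^{4H}$ for independent fBM coordinates, is itself a substantive result of Gaussian rough-path theory (it needs finite 2D $\rho$-variation of the fBM covariance with $\rho = 1/(2H) < 2$, not a routine It\^{o}--Wiener computation as for $H = \tfrac12$), so in practice you would still be citing roughly the same body of results as the paper, merely at a different entry point. Two small clean-ups worth making: the ``orthogonality of cross-terms'' you invoke is not needed, since $|\bbx^n_{s,t}-\bbx^m_{s,t}|_{L_2(U)}^2$ already decomposes coordinatewise because $\{e_j\otimes e_k\}$ is an orthonormal basis of $L_2(U)$; and to apply Lemma \ref{lemma-geom-2} you would need a.s.\ convergence in the $\scrc^\alpha$ metric along a subsequence, which requires applying the Kolmogorov bound to the differences $(X-X^n,\bbx-\bbx^n)$ rather than to $(X,\bbx)$ alone — alternatively, you can avoid the closedness lemma entirely by noting that the geometric identity $\Sym(\bbx_{s,t})=\tfrac12 X_{s,t}\otimes X_{s,t}$ passes to the $L^2$ limit for each fixed $(s,t)$ and then holds simultaneously by the continuity the Kolmogorov criterion already provides.
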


\begin{proof}
Let $\alpha \in (\frac{1}{3},H)$ be arbitrary. According to \cite[Lemma 2.4]{Hesse-Neamtu-local} there exist a L\'{e}vy area $\bbx$ and a sequence $(X^n)_{n \in \bbn}$ of continuous, piecewise linear functions $X^n : [0,T] \to U$ starting in zero such that $\bbp$-almost surely $\interleave \mathbf{X}^n - \mathbf{X} \interleave_{\alpha} \to 0$, where we have set $\mathbf{X}^n := (X^n,\bbx^n)$, $n \in \bbn$, and the second order processes are given by
\begin{align}\label{Levy-area-n}
\bbx_{s,t}^n = \int_s^t X_{s,r}^n \otimes dX_r^n, \quad s,t \in [0,T].
\end{align}
By Lemma \ref{lemma-geom-1} we have $\mathbf{X}^n \in \scrc_g^{\alpha}([0,T],U)$ for each $n \in \bbn$. Therefore, by Lemma \ref{lemma-geom-2} we deduce that $\bbp$-almost surely $\mathbf{X} \in \scrc_g^{\alpha}([0,T],U)$.
\end{proof}

\begin{remark}
If $H = \frac{1}{2}$, then $X$ is a $Q$-Wiener process (see Subsection \ref{subsec-BM-enhanced}), and the $\mathbf{X} = (X,\bbx)$ according to Proposition \ref{prop-frac-rough-path} is a Stratonovich-enhanced $Q$-Wiener process.
\end{remark}

\begin{remark}
For each $n \in \bbn$ the L\'{e}vy area $\bbx^n$ given by (\ref{Levy-area-n}) can be expressed as
\begin{align*}
\bbx_{s,t}^n = \sum_{j,k=1}^{\infty} \sqrt{\lambda_j \lambda_k} \bigg( \int_s^t \beta_{s,r}^{j,n} \, d\beta_r^{k,n} \bigg) (e_j \otimes e_k),
\end{align*}
where $\beta^{j,n}$ denotes the corresponding $n$-th linear interpolation of $\beta^j$.
\end{remark}

Now, let $W$ be a Banach space, and let $A$ be the generator of a $C_0$-semigroup $(S_t)_{t \geq 0}$ on $W$. Consider the random RPDE
\begin{align}\label{RPDE-fractional}
\left\{
\begin{array}{rcl}
dY_t & = & (A Y_t + f_0(t,Y_t)) dt + f(t,Y_t) d \mathbf{X}_t
\\ Y_0 & = & \xi
\end{array}
\right.
\end{align}
with mappings $f_0 : [0,T] \times W \to W$ and $f : [0,T] \times W \to L(U,W))$. Let $\beta \in (\frac{1}{3},H)$ be an index. By Proposition \ref{prop-frac-rough-path} there is a $\bbp$-nullset $N$ such that $\mathbf{X} \in \scrc_g^{\beta}([0,T],U)$ on $N^c$. As an immediate consequence of Theorem \ref{thm-RDGL-main} we obtain the following result.

\begin{theorem}\label{thm-frac}
Let $f_0 \in \Lip([0,T] \times W,D(A))$ and $f \in C_b^{2\beta,3}([0,T] \times H, L(U,D(A^2)))$ be mappings such that
\begin{align*}
&f_0|_{[0,T] \times D(A)} \in \Lip([0,T] \times D(A),D(A^2)),
\\ &f|_{[0,T] \times D(A)} \in C_b^{2\beta,3}([0,T] \times D(A), L(U,D(A^3))).
\end{align*}
Then for every $\xi \in D(A^2)$ there exists a unique mild solution $(Y,Y') \in \scrd_X^{2 \beta}([0,T],W)$ to the random RPDE (\ref{RPDE-fractional}) with $Y_0 = \xi$ on $N^c$, which is also a strong solution.
\end{theorem}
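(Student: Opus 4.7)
The plan is to treat this purely as a pathwise corollary of the deterministic existence-and-uniqueness theorem for RPDEs, namely Theorem~\ref{thm-RDGL-main}. The randomness enters only through the driving signal, and Proposition~\ref{prop-frac-rough-path} already encapsulates all the stochastic content needed to view fractional Brownian motion as a rough path.

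First I would fix an index $\beta \in (\tfrac{1}{3}, H)$ and invoke Proposition~\ref{prop-frac-rough-path}, which furnishes a L\'{e}vy area $\bbx$ and a $\bbp$-nullset $N \subset \Omega$ such that the enhanced process $\mathbf{X} = (X,\bbx)$ satisfies $\mathbf{X}(\omega) \in \scrc_g^{\beta}([0,T],U) \subset \scrc^{\beta}([0,T],U)$ for every $\omega \in N^c$. Then I would fix $\omega \in N^c$ and regard equation~(\ref{RPDE-fractional}) as a deterministic RPDE driven by the rough path $\mathbf{X}(\omega)$ with initial condition $\xi \in D(A^2)$.

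Next, I would verify that the standing hypotheses on the coefficients match exactly what is required in Theorem~\ref{thm-RDGL-main}: by assumption $f_0 \in \Lip([0,T] \times W, D(A))$ and $f \in C_b^{2\beta,3}([0,T] \times W, L(U,D(A^2)))$, together with the corresponding restriction conditions (\ref{f0-restriction})--(\ref{f-restriction}) with $V = U$. Since the hypotheses of Theorem~\ref{thm-RDGL-main} are fulfilled pathwise on $N^c$, the theorem produces, for each $\omega \in N^c$, a unique mild solution $(Y(\omega),Y(\omega)') \in \scrd_{X(\omega)}^{2\beta}([0,T], W)$ to (\ref{RPDE-fractional}) with initial value $\xi$, and this solution is simultaneously a strong solution in the sense of the last assertion of Theorem~\ref{thm-RDGL-main}. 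Collecting these pathwise solutions yields the assertion of the theorem.

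Since the whole argument is merely a pathwise application of a previously established result, there is no real obstacle; the only thing worth flagging is the bookkeeping of the nullset (it is precisely the exceptional set supplied by Proposition~\ref{prop-frac-rough-path}) and the observation that $\scrc_g^{\beta} \subset \scrc^{\beta}$, so the geometric refinement from Proposition~\ref{prop-frac-rough-path} is not actually needed here and only the $\alpha$-H\"{o}lder rough path property is used as input.
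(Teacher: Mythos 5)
Your proposal is correct and follows the same route as the paper, which states the result as an immediate consequence of Theorem~\ref{thm-RDGL-main} applied pathwise on the complement of the nullset $N$ supplied by Proposition~\ref{prop-frac-rough-path}. Your observation that only $\scrc^{\beta} \supset \scrc_g^{\beta}$ is needed as input is also accurate.
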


\begin{remark}
As for random RPDEs driven by infinite dimensional Wiener processes, we obtain the \emph{flow} of solutions $Y^{\xi}(\omega)$ for $\xi \in D(A^2)$ and $\omega \in N^c$.
\end{remark}

\begin{remark}
If $W$ is a separable Hilbert space $H$, then there exists a theory of stochastic integration (see \cite{Duncan}), and one may check that the associated stochastic process $Y$ from Theorem \ref{thm-frac} is also a mild and strong solution to the fractional SPDE
\begin{align*}
\left\{
\begin{array}{rcl}
dY_t & = & (A Y_t + f_0(t,Y_t)) dt + f(t,Y_t) d X_t
\\ Y_0 & = & \xi.
\end{array}
\right.
\end{align*}
\end{remark}

\bibliographystyle{plain}

\bibliography{RPDEs}

\end{document}